\chardef\forshowkeys=0    \chardef\refcheck=0    \chardef\showllabel=0    \chardef\sketches=0 \ifnum\forshowkeys=1      \usepackage[notref,notcite,color]{showkeys} \fi \ifnum\showllabel=1 \definecolor{colorcccc}{rgb}{0.7,0.7,0.7}   \def\llabel#1{\marginnote{\color{colorcccc}\rm\small(#1)}[-0.0cm]\notag} \else  \def\llabel#1{\notag} \fi  \ifnum\refcheck=1   \usepackage{refcheck} \fi \setcounter{MaxMatrixCols}{10}  \newtheorem{Theorem}{Theorem}[section]   \newtheorem{Lemma}[Theorem]{Lemma}   \newtheorem{definition}{Definition}[section]    \def\Pas{\indeq\mathbb{P}\text{-a.s.}}    \def\textPas{\ \mathbb{P}\text{-a.s.}}        \def\PP{{\mathbb P}}        \def\dd{d}    \def\ueps{u^{\epsilon}}    \def\un{u^{(n)}}    \def\unm{u^{(n-1)}}            \def\znm{z^{(n-1)}}        \def\vn{v^{(n)}}    \def\vnm{v^{(n-1)}}    \def\unp{u^{(n+1)}}    \def\phinm{\varphi^{(n-1)}}    \def\phin{{\varphi^{(n)}}}    \def\phinp{\varphi^{(n+1)}}                \def\phiv{\varphi_v}                    \def\zz{{z}}    \def\vv{{v}}        \def\uu{{{u}}}            \def\startnewsection#1#2{\section{#1}\label{#2}\setcounter{equation}{0}}                        \def\NNp{{\mathbb N}}        \def\RR{{\mathbb R}}         \def\WW{{\mathbb W}}    \def\EE{{\mathbb E}}    \def\comma{ {\rm ,\qquad{}} }    \def\commaone{ {\rm ,\quad{}} }    \def\fractext#1#2{{#1}/{#2}}    \def\supp{\mathop{\rm supp}\nolimits}                        \def\indeq{\qquad{}}                \def\colb{\color{black}} \definecolor{colorpppp}{rgb}{0.6,0.0,0.1} \definecolor{colorgggg}{rgb}{.0,0.4,0.0} \definecolor{colorgray}{rgb}{0.6,0.6,0.6}   \def\cole{}                  \definecolor{colororange}{rgb}{0.8,0.2,0} \definecolor{colorpurple}{rgb}{0.6,0.0,0.6}             \def\RR{\mathbb R}          \def\eps{\epsilon}        \def\tilde{\widetilde}     \def\PP{\mathbb{P}} \def\pn{P_{\le n}}  \def\pkn{P_{\le k(n)}} \def\pkm{P_{\le k(m)}} \def\pnm{P_{n, m}}  \def\tnm{\tau_{n,m}}        \def\div{\mathop{\rm div}\nolimits}   \def\supp{\mathop{\rm supp}\nolimits} \def\indeq{\quad{}}     \def\bea{\begin{align}}   \def\ena{\end{align}}            \def\bega{\begin{aligned}}   \def\enda{\end{aligned}}             
 \def\bcase{\begin{cases}}   \def\ecase{\end{cases}}  \def\bmx{\begin{bmatrix}}   \def\emx{\end{bmatrix}}  \def\cf{\mathcal{F}} \def\un{u^{(n)}} \def\um{u^{(m)}} \def\wnm{u^{(n,m)}}   \def\paren#1{\left(#1\right)} \def\dd{d} \def\uu{{{u}}} \def\WW{ W} \def\NN{\mathbb{N}}   \def\indic{\mathbf{1}} \def\NNp{{\mathbb N}}          
\begin{document} \title[Stochastic NSE with small $L^p$ data]{Global existence for the stochastic Navier-Stokes equations\\with small $L^{p}$~data} \title[Local existence of the stochastic Navier-Stokes equations in the whole space]{Local existence of the stochastic Navier-Stokes equations in the whole space} \author[I.~Kukavica]{Igor Kukavica} \address{Department of Mathematics, University of Southern California, Los Angeles, CA 90089} \email{kukavica@usc.edu} \author[F.~Wang]{Fei Wang} \address{School of Mathematical Sciences, CMA-Shanghai, Shanghai Jiao Tong University} \email{fwang256@sjtu.edu.cn} \author[F.H.~Xu]{Fanhui Xu} \address{Department of Mathematics, Harvard University, Cambridge, MA 02138} \email{fanhuixu@math.harvard.edu} \begin{abstract} We address the local well-posedness for the stochastic Navier-Stokes system with multiplicative cylindrical noise in the whole space. More specifically, we prove that there exists a unique local strong solution to the system in $L^p(\RR^3)$ for~$p>3$. \hfill \today \end{abstract} \maketitle \date{} \startnewsection{Introduction}{sec01} In this paper, we address the local solvability of the stochastic Navier-Stokes equations (SNSE)   \begin{align}   \begin{split}   &\partial_t u - \Delta u  + \mathcal{P}((u\cdot\nabla) u) = \sigma(u) \dot{W}(t),   \\   &\div u = 0   \end{split}   \label{ERTWERTHWRTWERTSGDGHCFGSDFGQSERWDFGDSFGHSDRGTEHDFGHDSFGSDGHGYUHDFGSDFASDFASGTWRT01}   \end{align} in the spatial domain~$\RR^3$. Here, $u$ denotes the velocity field of a stochastic flow, and  \begin{equation}    \label{ERTWERTHWRTWERTSGDGHCFGSDFGQSERWDFGDSFGHSDRGTEHDFGHDSFGSDGHGYUHDFGSDFASDFASGTWRT02}    u|_{t=0} = u_0  \end{equation} for a given random function $u_0$ in $L^p(\Omega, L^{p}(\mathbb{R}^{3}))$ such that $\div u_0=0$. Note that the pressure gradient in the SNSE has been eliminated by the Helmholtz-Hodge projector $\mathcal{P}$ onto the divergence-free fields. The stochastic term, $\sigma(u) \dot{W}(t)$, denotes an infinite-dimensional multiplicative noise. We assume for brevity that $\div (\sigma(u))=0$ if $\div u=0$, as otherwise, the orthogonal component of the divergence-free part in the Helmholtz decomposition can be grouped with the pressure gradient and eliminated by~$\mathcal{P}$. \par The initial value problem for the SNSE has a rich history; see \cite{F} for an introduction to existing results and a summary of technical difficulties. The earliest works, starting with Bensoussan and Temam~\cite{BeT}, considered the SNSE in a Hilbert setting as It\^{o}'s calculus heavily relied on the It\^{o} isometry. The development of stochastic integration theory encouraged considerations of the SNSE in Banach spaces; see for instance~\cite{FRS, MoS, ZBL}. Also, various notions of solutions emerged from these works and other studies of stochastic evolution equations; see~\cite{BCF,BF,BT,BR}. Showing the existence of a probabilistically strong solution is challenging. If the equation is driven by an additive noise, then the pathwise uniqueness of solutions can be proven in a similar way as the equation's deterministic counterpart; with pathwise uniqueness, one can claim that the probabilistically weak solution (i.e.~martingale solution) is a strong solution, see \cite{F} and references therein. There are also many works on stochastic evolution equations with multiplicative noise. The global existence of a strong solution was established for linear equations in $W^{m,p}$ spaces~\cite{Kr}, the Euler equations with linear noise~\cite{GV}, and the SNSE in two-dimensional cases; see~\cite{MeS, MR}. In higher dimensions, Kim addressed the SNSE with multiplicative noise in Hilbert spaces~\cite{Ki}. It was shown that a unique local strong solution exists and becomes global with a large probability if the initial datum is small in~$H^{1/2+}$. In the paper~\cite{GZ}, Glatt-Holtz and Ziane proved the local existence for the SNSE on bounded domains with multiplicative noise and $H^{1}$-initial data. This work motivated the earlier work \cite{KXZ} of two of the authors with Ziane, which established the existence of an $L^p(\mathbb{T}^{3})$-strong solution when~$p>5$. The result of \cite{KXZ} was improved in~\cite{KX}, where the exponent was lowered to~$p>3$, together with an $L^p$-type energy estimate. In the meantime, Agresti and Veraar obtained a local existence result in \cite{AV} for Besov spaces, including $B^{-1+3/q}_{3,3}$ with a range of $q$ including $q=3$, using maximal regularity. For results on other aspects of the well-posedness theory of solutions (see \cite{CC,DZ,FS,KV}), while for the deterministic case, see~\cite{FJR,K}. \par The existence of a strong $L^{p}$-solution of the stochastic Navier-Stokes system with multiplicative noise was previously considered in \cite{KXZ,KX} on the torus $\mathbb{T}^{d}$. But it was unclear if the $L^p$-solution still exists when the spatial domain is the whole space. In the present paper, we answer this question assuming that the initial datum $u_0$ belongs to $L^p(\Omega, L^{p}(\mathbb{R}^{3}))$ with~$p>3$. We obtain a positive result by constructing a sequence of approximate solutions and showing the convergence in the strong topology. While proving the theorem, we encounter similar obstacles as in~\cite{KX, KXZ} and some other works addressing the well-posedness of stochastic evolution equations, including the superlinearity of the equation, the step-dependent energy estimates, and a possibly degenerate time interval of convergence. In addition, for the case of $\mathbb{R}^{d}$, we need to cope with the non-compactness of $\mathbb{R}^{d}$ and the lack of a Poincar\'e-type inequality. Due to the non-compactness of $\mathbb{R}^{d}$, the Galerkin scheme employed in \cite{KX,KXZ} is not suitable in the present situation as it uses the eigenfunctions of the Laplacian. Also, without a Poincar\'e-type inequality, one can not control the possibly growing $L^2$-based energy of approximate solutions, which is the cost of linearization of the equation. One of the main ideas in the present paper is to introduce a convolution-type projector and a cut-off function that linearizes the convective term $(u\cdot \nabla)u$. Moreover, a Poincar\'e-type inequality holds under the projection. We approximate the initial data in $L^{p}(\mathbb{R}^{3})$ using functions in $L^{2}(\mathbb{R}^{3})\cap L^{p}(\mathbb{R}^{3})$. Certainly, the $L^2$-energy of the approximate solutions may grow, but we offset its impact during the $L^p$-energy derivation by properly choosing a subfamily of projectors and employing a Poincar\'e-type inequality. \par A primary tool in our construction of solutions is the quantity $\int \sum_{j} \int | \nabla (|\uu_j(s,x)|^{p/2})|^2 \,dx ds$, which absorbs higher-order terms resulting from the H\"older and Gagliardo-Nirenberg inequalities. It is tempting to define stopping times for every approximate solution using this quantity, so that the solutions' energy is uniformly bounded, and the Cauchy condition is pairwise verified up to the corresponding stopping times. However, this quantity is highly non-linear and does not satisfy the triangle inequality. This turns out to be a problem when estimating the distance of successive stopping times. Note that we need to find a time interval shared by all approximate solutions in which the Cauchy condition holds, and the hope of such search lies in the closeness of successive stopping times. We choose the same norm for the topology of convergence and the stopping times. By resorting to the Sobolev inequality, we manage to close the estimates. \par  In addition to the integrability and divergence-free conditions of the initial data, we impose the following assumptions on the noise coefficient $\sigma(u)$, \begin{align}         & \Vert\sigma(u)\Vert_{\mathbb{L}^p} \le C (\Vert u\Vert_{(3p/2)-}^2 + 1), \label{ERTWERTHWRTWERTSGDGHCFGSDFGQSERWDFGDSFGHSDRGTEHDFGHDSFGSDGHGYUHDFGSDFASDFASGTWRT03}  \\   & \Vert\sigma(u_1)-\sigma(u_2)\Vert_{\mathbb{L}^p} \le C\Vert ( |u_1| + |u_2|)^{1/2} |u_1-u_2|\Vert_{p}, \label{ERTWERTHWRTWERTSGDGHCFGSDFGQSERWDFGDSFGHSDRGTEHDFGHDSFGSDGHGYUHDFGSDFASDFASGTWRT04}   \\ & \Vert \nabla\sigma(u)\Vert_{\mathbb{L}^p} \le  C(\Vert u\Vert_{3p/2}^2 + 1), \label{ERTWERTHWRTWERTSGDGHCFGSDFGQSERWDFGDSFGHSDRGTEHDFGHDSFGSDGHGYUHDFGSDFASDFASGTWRT05} \end{align} where $\Vert\cdot \Vert_{\mathbb{L}^p}$ follows the definition~\eqref{ERTWERTHWRTWERTSGDGHCFGSDFGQSERWDFGDSFGHSDRGTEHDFGHDSFGSDGHGYUHDFGSDFASDFASGTWRT18} in Section 2, and $\Vert \cdot \Vert_{(3p/2)-}$ refers to a norm $\Vert \cdot \Vert_{(3p/2)-\epsilon}$ for an arbitrarily small positive constant~$\epsilon$. But the noise growth rate has to be strictly smaller than $3p/2$ for us to control the higher-order terms in multiple estimates; see for instance \eqref{ERTWERTHWRTWERTSGDGHCFGSDFGQSERWDFGDSFGHSDRGTEHDFGHDSFGSDGHGYUHDFGSDFASDFASGTWRT105} and \eqref{ERTWERTHWRTWERTSGDGHCFGSDFGQSERWDFGDSFGHSDRGTEHDFGHDSFGSDGHGYUHDFGSDFASDFASGTWRT123} below. To obtain $L^2$-energy conservation for approximate solutions, we also assume that \begin{equation}\label{ERTWERTHWRTWERTSGDGHCFGSDFGQSERWDFGDSFGHSDRGTEHDFGHDSFGSDGHGYUHDFGSDFASDFASGTWRT003} \Vert\sigma(u)\Vert_{\mathbb{L}^2} \le C (\Vert u\Vert_{2} + 1). \end{equation} Observe that the set of operators satisfying \eqref{ERTWERTHWRTWERTSGDGHCFGSDFGQSERWDFGDSFGHSDRGTEHDFGHDSFGSDGHGYUHDFGSDFASDFASGTWRT05} is not empty. In fact, any $\sigma$ such that   \begin{equation}     \sigma(u) = \phi*P(u)    \llabel{8Th sw ELzX U3X7 Ebd1Kd Z7 v 1rN 3Gi irR XG KWK0 99ov BM0FDJ Cv k opY NQ2 aN9 4Z 7k0U nUKa mE3OjU 8D F YFF okb SI2 J9 V9gV lM8A LWThDP nP u 3EL 7HP D2V Da ZTgg zcCC mbvc70 qq P cC9 mt6 0og cr TiA3 HEjw TK8ymK eu J Mc4 q6d Vz2 00 XnYU tLR9 GYjPXv FO V r6W 1zU K1W bP ToaW JJuK nxBLnd 0f t DEb Mmj 4lo HY yhZy MjM9 1zQS4p 7z 8 eKa 9h0 Jrb ac ekci rexG 0z4n3x z0 Q OWS vFj 3jL hW XUIU 21iI AwJtI3 Rb W a90 I7r zAI qI 3UEl UJG7 tLtUXz w4 K QNE TvX zqW au jEMe nYlN IzLGxg B3 A uJ8 6VS 6Rc PJ 8OXW w8im tcKZEz Ho p 84G 1gS As0 PC owMI 2fLK TdD60y nH g 7lk NFj JLq Oo Qvfk fZBN G3o1Dg Cn 9 hyU h5V SP5 z6 1qvQ wceU dVJJsB vX D G4E LHQ HIa PT bMTr sLsm tXGyOB 7p 2 Os4 3US bq5 ik 4Lin 769O TkUxmp I8 u GYn fBK bYI 9A QzCF w3h0 geJftZ ZK U 74r Yle ajm km ZJdi TGHO OaSt1N nl B 7Y7 h0y oWJ ry rVrT zHO8 2S7oub QA W x9d z2X YWB e5 Kf3A LsUF vqgtM2 O2 I dim rjZ 7RN 28 4KGY trVa WW4nTZ XV b RVo Q77 hVL X6 K2kq FWFm aZnsF9 Ch p 8Kx rsc SGP iS tVXB J3xZ cD5IP4 Fu 9 Lcd TR2 Vwb cL DlGK 1ro3 EEyqEA zw 6 sKe Eg2 sFf jz MtrZ 9kbd xNw66c xf t lzD GZh xQA WQ KkSX jqmm rEpNuG 6P y loq 8hH lSf Ma LXm5 RzEX W4Y1Bq ib 3 UOh Yw9 5h6 f6 o8kw 6frZ wg6fIy XP n ae1 TQJ Mt2 TT fWWf jJrX ilpYGr Ul Q 4uM 7Ds p0r Vg 3gIE mQOz TFh9LA KO 8 csQ u6m h25 r8 WqRI DZWg SYkWDu lL 8 Gpt ZW1 0Gd SY FUXL zyQZ hVZMn9 am P 9aE Wzk au0 6d ZghM ym3R jfdePG ln 8 s7x HYC IV9 Hw Ka6v EjH5 J8Ipr7 Nk C xWR 84T Wnq s0 fsiP qGgs Id1fs5 3A T 71q RIc zPX 77 Si23 GirL 9MQZ4F pi g dru NYt h1K 4M Zilv rRk6 B4W5B8 Id 3 Xq9 nhx EN4 P6 ipZl a2UQ Qx8mda g7 r VD3 zdD rhB vk LDJo tKyV 5IrmyJ R5 e txS 1cv EsY xG zj2T rfSR myZo4L m5 D mqN iZd acg GQ 0KRw QKGX g9o8v8 wm B fUu tCO cKc zz kx4U fhuA a8pYzW Vq 9 Sp6 CmA cZL Mx ceBX Dwug sjWuii Gl v JDb 08h BOV C1 pEQ06}   \end{equation} belongs to this set if $\phi$ is a function in $C_c^\infty(\RR^\dd)$ and $P$ is an operator satisfying \eqref{ERTWERTHWRTWERTSGDGHCFGSDFGQSERWDFGDSFGHSDRGTEHDFGHDSFGSDGHGYUHDFGSDFASDFASGTWRT03} and~\eqref{ERTWERTHWRTWERTSGDGHCFGSDFGQSERWDFGDSFGHSDRGTEHDFGHDSFGSDGHGYUHDFGSDFASDFASGTWRT04}. \colb \par The following is the structure of the paper. In Section~\ref{sec02}, we introduce the notation, state the main result (see Theorem~\ref{T01}), and prove several preliminary results on convolutions for $\ell^{2}$-valued functions with a useful Poincar\'e-type statement. In Section~\ref{sec4}, we prove a statement on the $L^{p}$ existence for the stochastic heat equations with an $L^p$-type energy estimate, along with the corresponding $L^{p}$ energy convergence lemma. The next section contains the existence and uniqueness theorem for a truncated SNSE, while in the last one we pass to the limit, thus proving the main theorem. \par
\startnewsection{Preliminaries and Main Results}{sec02} \subsection{Basic Notation}\colb In the sequel, $\NNp$ denotes the set of positive integers, and $C$ a generic positive constant, with additional dependence indicated when necessary. For a function $u(t,x)$ defined in $[0,T]\times \RR^d$, we write the partial derivatives as $\partial_t u, \partial_1 u, \ldots, \partial_d u$, the spatial gradient as $\nabla u$, and the Laplacian of $u$ as~$\Delta u$. If $u$ is vector-valued, we use $u_j$ to refer to the $j$-th component of~$u$. \par We denote by $C_c^{\infty}( \RR^d)$ the set of infinitely differentiable functions with compact support in $ \RR^d$, and by $\mathcal{D}'(\RR^d)$ its dual, the space of distributions. Also, $\mathcal{S}(\RR^d)$ denotes the Schwartz space in $\RR^d$ and $\mathcal{S}'(\RR^d)\subseteq \mathcal{D}'( \RR^d)$ its dual, the space of tempered distributions.  \par The Fourier transform of an integrable function $f$ is defined by \begin{equation} \hat{f}(\xi)=\mathcal{F}(f)(\xi)    =\int_{\RR^d} e^{- 2\pi i \xi\cdot x}f(x)\,dx \comma \xi\in \RR^d ,    \llabel{ qq P cC9 mt6 0og cr TiA3 HEjw TK8ymK eu J Mc4 q6d Vz2 00 XnYU tLR9 GYjPXv FO V r6W 1zU K1W bP ToaW JJuK nxBLnd 0f t DEb Mmj 4lo HY yhZy MjM9 1zQS4p 7z 8 eKa 9h0 Jrb ac ekci rexG 0z4n3x z0 Q OWS vFj 3jL hW XUIU 21iI AwJtI3 Rb W a90 I7r zAI qI 3UEl UJG7 tLtUXz w4 K QNE TvX zqW au jEMe nYlN IzLGxg B3 A uJ8 6VS 6Rc PJ 8OXW w8im tcKZEz Ho p 84G 1gS As0 PC owMI 2fLK TdD60y nH g 7lk NFj JLq Oo Qvfk fZBN G3o1Dg Cn 9 hyU h5V SP5 z6 1qvQ wceU dVJJsB vX D G4E LHQ HIa PT bMTr sLsm tXGyOB 7p 2 Os4 3US bq5 ik 4Lin 769O TkUxmp I8 u GYn fBK bYI 9A QzCF w3h0 geJftZ ZK U 74r Yle ajm km ZJdi TGHO OaSt1N nl B 7Y7 h0y oWJ ry rVrT zHO8 2S7oub QA W x9d z2X YWB e5 Kf3A LsUF vqgtM2 O2 I dim rjZ 7RN 28 4KGY trVa WW4nTZ XV b RVo Q77 hVL X6 K2kq FWFm aZnsF9 Ch p 8Kx rsc SGP iS tVXB J3xZ cD5IP4 Fu 9 Lcd TR2 Vwb cL DlGK 1ro3 EEyqEA zw 6 sKe Eg2 sFf jz MtrZ 9kbd xNw66c xf t lzD GZh xQA WQ KkSX jqmm rEpNuG 6P y loq 8hH lSf Ma LXm5 RzEX W4Y1Bq ib 3 UOh Yw9 5h6 f6 o8kw 6frZ wg6fIy XP n ae1 TQJ Mt2 TT fWWf jJrX ilpYGr Ul Q 4uM 7Ds p0r Vg 3gIE mQOz TFh9LA KO 8 csQ u6m h25 r8 WqRI DZWg SYkWDu lL 8 Gpt ZW1 0Gd SY FUXL zyQZ hVZMn9 am P 9aE Wzk au0 6d ZghM ym3R jfdePG ln 8 s7x HYC IV9 Hw Ka6v EjH5 J8Ipr7 Nk C xWR 84T Wnq s0 fsiP qGgs Id1fs5 3A T 71q RIc zPX 77 Si23 GirL 9MQZ4F pi g dru NYt h1K 4M Zilv rRk6 B4W5B8 Id 3 Xq9 nhx EN4 P6 ipZl a2UQ Qx8mda g7 r VD3 zdD rhB vk LDJo tKyV 5IrmyJ R5 e txS 1cv EsY xG zj2T rfSR myZo4L m5 D mqN iZd acg GQ 0KRw QKGX g9o8v8 wm B fUu tCO cKc zz kx4U fhuA a8pYzW Vq 9 Sp6 CmA cZL Mx ceBX Dwug sjWuii Gl v JDb 08h BOV C1 pni6 4TTq Opzezq ZB J y5o KS8 BhH sd nKkH gnZl UCm7j0 Iv Y jQE 7JN 9fd ED ddys 3y1x 52pbiG Lc a 71j G3e uli Ce uzv2 R40Q 50JZUB uK d U3m May 0uo S7 ulWD h7qG 2FKw2T JX z BES 2Jk Q4U Dy 4aJ2 IXs4 RNH41s py T GNh hk0 w5Z C8 B3nU Bp9p 8eLKh8 UO 4 fMq Y6w lcA GM xCHt vlOx MqAJoQ QU 1 e8a 2aX 9Y6 2r lIS6 dejK Y3KCUm 25 7 oCl VeE e8p 1z UJSv bmLd Fy7ObQ FN l J6F RdF kEm qM N0Fd NZJ0 8DYuq2 pL X JNz 4rO ZkZ X2 IjTD 1fVt z4BmFI Pi 0 GKD R2W PhO zH zTLP lbAE OT9XW0 gb T Lb3 XRQ qGG 8o 4TPE 6WRc uMqMXh s6 x Ofv 8st jDiEQ143} \end{equation} where $d$ is the space dimension. The inverse Fourier transform of an integrable function $g$ reads   \begin{equation}   \check{g}(\xi)=(\mathcal{F}^{-1}g)(x)    =\int_{\RR^d} e^{ 2\pi i \xi\cdot x}g(\xi)\,d\xi   \comma x\in \RR^d   .    \llabel{ci rexG 0z4n3x z0 Q OWS vFj 3jL hW XUIU 21iI AwJtI3 Rb W a90 I7r zAI qI 3UEl UJG7 tLtUXz w4 K QNE TvX zqW au jEMe nYlN IzLGxg B3 A uJ8 6VS 6Rc PJ 8OXW w8im tcKZEz Ho p 84G 1gS As0 PC owMI 2fLK TdD60y nH g 7lk NFj JLq Oo Qvfk fZBN G3o1Dg Cn 9 hyU h5V SP5 z6 1qvQ wceU dVJJsB vX D G4E LHQ HIa PT bMTr sLsm tXGyOB 7p 2 Os4 3US bq5 ik 4Lin 769O TkUxmp I8 u GYn fBK bYI 9A QzCF w3h0 geJftZ ZK U 74r Yle ajm km ZJdi TGHO OaSt1N nl B 7Y7 h0y oWJ ry rVrT zHO8 2S7oub QA W x9d z2X YWB e5 Kf3A LsUF vqgtM2 O2 I dim rjZ 7RN 28 4KGY trVa WW4nTZ XV b RVo Q77 hVL X6 K2kq FWFm aZnsF9 Ch p 8Kx rsc SGP iS tVXB J3xZ cD5IP4 Fu 9 Lcd TR2 Vwb cL DlGK 1ro3 EEyqEA zw 6 sKe Eg2 sFf jz MtrZ 9kbd xNw66c xf t lzD GZh xQA WQ KkSX jqmm rEpNuG 6P y loq 8hH lSf Ma LXm5 RzEX W4Y1Bq ib 3 UOh Yw9 5h6 f6 o8kw 6frZ wg6fIy XP n ae1 TQJ Mt2 TT fWWf jJrX ilpYGr Ul Q 4uM 7Ds p0r Vg 3gIE mQOz TFh9LA KO 8 csQ u6m h25 r8 WqRI DZWg SYkWDu lL 8 Gpt ZW1 0Gd SY FUXL zyQZ hVZMn9 am P 9aE Wzk au0 6d ZghM ym3R jfdePG ln 8 s7x HYC IV9 Hw Ka6v EjH5 J8Ipr7 Nk C xWR 84T Wnq s0 fsiP qGgs Id1fs5 3A T 71q RIc zPX 77 Si23 GirL 9MQZ4F pi g dru NYt h1K 4M Zilv rRk6 B4W5B8 Id 3 Xq9 nhx EN4 P6 ipZl a2UQ Qx8mda g7 r VD3 zdD rhB vk LDJo tKyV 5IrmyJ R5 e txS 1cv EsY xG zj2T rfSR myZo4L m5 D mqN iZd acg GQ 0KRw QKGX g9o8v8 wm B fUu tCO cKc zz kx4U fhuA a8pYzW Vq 9 Sp6 CmA cZL Mx ceBX Dwug sjWuii Gl v JDb 08h BOV C1 pni6 4TTq Opzezq ZB J y5o KS8 BhH sd nKkH gnZl UCm7j0 Iv Y jQE 7JN 9fd ED ddys 3y1x 52pbiG Lc a 71j G3e uli Ce uzv2 R40Q 50JZUB uK d U3m May 0uo S7 ulWD h7qG 2FKw2T JX z BES 2Jk Q4U Dy 4aJ2 IXs4 RNH41s py T GNh hk0 w5Z C8 B3nU Bp9p 8eLKh8 UO 4 fMq Y6w lcA GM xCHt vlOx MqAJoQ QU 1 e8a 2aX 9Y6 2r lIS6 dejK Y3KCUm 25 7 oCl VeE e8p 1z UJSv bmLd Fy7ObQ FN l J6F RdF kEm qM N0Fd NZJ0 8DYuq2 pL X JNz 4rO ZkZ X2 IjTD 1fVt z4BmFI Pi 0 GKD R2W PhO zH zTLP lbAE OT9XW0 gb T Lb3 XRQ qGG 8o 4TPE 6WRc uMqMXh s6 x Ofv 8st jDi u8 rtJt TKSK jlGkGw t8 n FDx jA9 fCm iu FqMW jeox 5Akw3w Sd 8 1vK 8c4 C0O dj CHIs eHUO hyqGx3 Kw O lDq l1Y 4NY 4I vI7X DE4c FeXdFV bC F HaJ sb4 OC0 hu Mj65 J4fa vgGo7q Y5EQ143}   \end{equation} The Fourier transform is an automorphism of $\mathcal{S}(\RR^d)$ and induces an automorphism of its dual, i.e.,~$\mathcal{F}^{-1}\mathcal{F}=\mathcal{F}\mathcal{F}^{-1}=\text{Id}_{\mathcal{S}'(\RR^d)}$. As usual, $W^{s,p}( \RR^d)$, where $p>1$, represents the class of functions $f$ in $\mathcal{S}'(\RR^d)$ for which $ \Vert  f\Vert_{s,p} = \Vert J^s f\Vert_p <\infty $    , where $\Vert \cdot \Vert_p$ is the $L^{p}$ norm and   \begin{equation}   J^s f (x) :=\int_{\RR^d} e^{ 2\pi i \xi\cdot x}           (1+4\pi^2| \xi|^{2})^{s/2} \mathcal{F}(f)(\xi)   \,d\xi   \comma x\in \mathbb{R}^d   \commaone s\in{\mathbb R}   .    \llabel{ 1gS As0 PC owMI 2fLK TdD60y nH g 7lk NFj JLq Oo Qvfk fZBN G3o1Dg Cn 9 hyU h5V SP5 z6 1qvQ wceU dVJJsB vX D G4E LHQ HIa PT bMTr sLsm tXGyOB 7p 2 Os4 3US bq5 ik 4Lin 769O TkUxmp I8 u GYn fBK bYI 9A QzCF w3h0 geJftZ ZK U 74r Yle ajm km ZJdi TGHO OaSt1N nl B 7Y7 h0y oWJ ry rVrT zHO8 2S7oub QA W x9d z2X YWB e5 Kf3A LsUF vqgtM2 O2 I dim rjZ 7RN 28 4KGY trVa WW4nTZ XV b RVo Q77 hVL X6 K2kq FWFm aZnsF9 Ch p 8Kx rsc SGP iS tVXB J3xZ cD5IP4 Fu 9 Lcd TR2 Vwb cL DlGK 1ro3 EEyqEA zw 6 sKe Eg2 sFf jz MtrZ 9kbd xNw66c xf t lzD GZh xQA WQ KkSX jqmm rEpNuG 6P y loq 8hH lSf Ma LXm5 RzEX W4Y1Bq ib 3 UOh Yw9 5h6 f6 o8kw 6frZ wg6fIy XP n ae1 TQJ Mt2 TT fWWf jJrX ilpYGr Ul Q 4uM 7Ds p0r Vg 3gIE mQOz TFh9LA KO 8 csQ u6m h25 r8 WqRI DZWg SYkWDu lL 8 Gpt ZW1 0Gd SY FUXL zyQZ hVZMn9 am P 9aE Wzk au0 6d ZghM ym3R jfdePG ln 8 s7x HYC IV9 Hw Ka6v EjH5 J8Ipr7 Nk C xWR 84T Wnq s0 fsiP qGgs Id1fs5 3A T 71q RIc zPX 77 Si23 GirL 9MQZ4F pi g dru NYt h1K 4M Zilv rRk6 B4W5B8 Id 3 Xq9 nhx EN4 P6 ipZl a2UQ Qx8mda g7 r VD3 zdD rhB vk LDJo tKyV 5IrmyJ R5 e txS 1cv EsY xG zj2T rfSR myZo4L m5 D mqN iZd acg GQ 0KRw QKGX g9o8v8 wm B fUu tCO cKc zz kx4U fhuA a8pYzW Vq 9 Sp6 CmA cZL Mx ceBX Dwug sjWuii Gl v JDb 08h BOV C1 pni6 4TTq Opzezq ZB J y5o KS8 BhH sd nKkH gnZl UCm7j0 Iv Y jQE 7JN 9fd ED ddys 3y1x 52pbiG Lc a 71j G3e uli Ce uzv2 R40Q 50JZUB uK d U3m May 0uo S7 ulWD h7qG 2FKw2T JX z BES 2Jk Q4U Dy 4aJ2 IXs4 RNH41s py T GNh hk0 w5Z C8 B3nU Bp9p 8eLKh8 UO 4 fMq Y6w lcA GM xCHt vlOx MqAJoQ QU 1 e8a 2aX 9Y6 2r lIS6 dejK Y3KCUm 25 7 oCl VeE e8p 1z UJSv bmLd Fy7ObQ FN l J6F RdF kEm qM N0Fd NZJ0 8DYuq2 pL X JNz 4rO ZkZ X2 IjTD 1fVt z4BmFI Pi 0 GKD R2W PhO zH zTLP lbAE OT9XW0 gb T Lb3 XRQ qGG 8o 4TPE 6WRc uMqMXh s6 x Ofv 8st jDi u8 rtJt TKSK jlGkGw t8 n FDx jA9 fCm iu FqMW jeox 5Akw3w Sd 8 1vK 8c4 C0O dj CHIs eHUO hyqGx3 Kw O lDq l1Y 4NY 4I vI7X DE4c FeXdFV bC F HaJ sb4 OC0 hu Mj65 J4fa vgGo7q Y5 X tLy izY DvH TR zd9x SRVg 0Pl6Z8 9X z fLh GlH IYB x9 OELo 5loZ x4wag4 cn F aCE KfA 0uz fw HMUV M9Qy eARFe3 Py 6 kQG GFx rPf 6T ZBQR la1a 6Aeker Xg k blz nSm mhY jc z3io EQ132}   \end{equation} \par An essential building block in our construction is the adoption of the convolution-type operator \begin{equation} P_{\leq n}f=\mathcal{F}^{-1}(\psi_n \hat{f})\comma n\in\mathbb{N}, \label{ERTWERTHWRTWERTSGDGHCFGSDFGQSERWDFGDSFGHSDRGTEHDFGHDSFGSDGHGYUHDFGSDFASDFASGTWRT07} \end{equation} where $f$ is a scalar function in $\mathcal{S}'(\RR^d)$, and $\psi_n(\xi)= \psi ({\xi}/{n})$ with $\psi(\xi)=e^{-|\xi|^2}$. We allow $f$ to be $l^2$-valued, in which case $P_{\le n}f$ is interpreted componentwise. By an explicit computation,   \begin{equation}   (\mathcal{F}^{-1}\psi)(x)    =\int_{\RR^d} e^{ 2\pi i \xi\cdot x}\psi(\xi)\,d\xi    =\pi^{d/2}e^{-\pi^2 |x|^2}    .    \llabel{kUxmp I8 u GYn fBK bYI 9A QzCF w3h0 geJftZ ZK U 74r Yle ajm km ZJdi TGHO OaSt1N nl B 7Y7 h0y oWJ ry rVrT zHO8 2S7oub QA W x9d z2X YWB e5 Kf3A LsUF vqgtM2 O2 I dim rjZ 7RN 28 4KGY trVa WW4nTZ XV b RVo Q77 hVL X6 K2kq FWFm aZnsF9 Ch p 8Kx rsc SGP iS tVXB J3xZ cD5IP4 Fu 9 Lcd TR2 Vwb cL DlGK 1ro3 EEyqEA zw 6 sKe Eg2 sFf jz MtrZ 9kbd xNw66c xf t lzD GZh xQA WQ KkSX jqmm rEpNuG 6P y loq 8hH lSf Ma LXm5 RzEX W4Y1Bq ib 3 UOh Yw9 5h6 f6 o8kw 6frZ wg6fIy XP n ae1 TQJ Mt2 TT fWWf jJrX ilpYGr Ul Q 4uM 7Ds p0r Vg 3gIE mQOz TFh9LA KO 8 csQ u6m h25 r8 WqRI DZWg SYkWDu lL 8 Gpt ZW1 0Gd SY FUXL zyQZ hVZMn9 am P 9aE Wzk au0 6d ZghM ym3R jfdePG ln 8 s7x HYC IV9 Hw Ka6v EjH5 J8Ipr7 Nk C xWR 84T Wnq s0 fsiP qGgs Id1fs5 3A T 71q RIc zPX 77 Si23 GirL 9MQZ4F pi g dru NYt h1K 4M Zilv rRk6 B4W5B8 Id 3 Xq9 nhx EN4 P6 ipZl a2UQ Qx8mda g7 r VD3 zdD rhB vk LDJo tKyV 5IrmyJ R5 e txS 1cv EsY xG zj2T rfSR myZo4L m5 D mqN iZd acg GQ 0KRw QKGX g9o8v8 wm B fUu tCO cKc zz kx4U fhuA a8pYzW Vq 9 Sp6 CmA cZL Mx ceBX Dwug sjWuii Gl v JDb 08h BOV C1 pni6 4TTq Opzezq ZB J y5o KS8 BhH sd nKkH gnZl UCm7j0 Iv Y jQE 7JN 9fd ED ddys 3y1x 52pbiG Lc a 71j G3e uli Ce uzv2 R40Q 50JZUB uK d U3m May 0uo S7 ulWD h7qG 2FKw2T JX z BES 2Jk Q4U Dy 4aJ2 IXs4 RNH41s py T GNh hk0 w5Z C8 B3nU Bp9p 8eLKh8 UO 4 fMq Y6w lcA GM xCHt vlOx MqAJoQ QU 1 e8a 2aX 9Y6 2r lIS6 dejK Y3KCUm 25 7 oCl VeE e8p 1z UJSv bmLd Fy7ObQ FN l J6F RdF kEm qM N0Fd NZJ0 8DYuq2 pL X JNz 4rO ZkZ X2 IjTD 1fVt z4BmFI Pi 0 GKD R2W PhO zH zTLP lbAE OT9XW0 gb T Lb3 XRQ qGG 8o 4TPE 6WRc uMqMXh s6 x Ofv 8st jDi u8 rtJt TKSK jlGkGw t8 n FDx jA9 fCm iu FqMW jeox 5Akw3w Sd 8 1vK 8c4 C0O dj CHIs eHUO hyqGx3 Kw O lDq l1Y 4NY 4I vI7X DE4c FeXdFV bC F HaJ sb4 OC0 hu Mj65 J4fa vgGo7q Y5 X tLy izY DvH TR zd9x SRVg 0Pl6Z8 9X z fLh GlH IYB x9 OELo 5loZ x4wag4 cn F aCE KfA 0uz fw HMUV M9Qy eARFe3 Py 6 kQG GFx rPf 6T ZBQR la1a 6Aeker Xg k blz nSm mhY jc z3io WYjz h33sxR JM k Dos EAA hUO Oz aQfK Z0cn 5kqYPn W7 1 vCT 69a EC9 LD EQ5S BK4J fVFLAo Qp N dzZ HAl JaL Mn vRqH 7pBB qOr7fv oa e BSA 8TE btx y3 jwK3 v244 dlfwRL Dc g X14 vTEQ154}   \end{equation} Since $\mathcal{F}^{-1}\psi_n(x)=n^d \mathcal{F}^{-1}\psi(nx)$, we have $\Vert \mathcal{F}^{-1}\psi_n\Vert_{L^1}=\Vert \mathcal{F}^{-1}\psi\Vert_{L^{1}}$, and then by Young's inequality there exists a positive constant $C$ such that \begin{equation} \Vert P_{\le n}f\Vert_q \leq C\Vert f\Vert_q    \comma 1\leq q\leq \infty    \commaone n\in\NNp    . \label{ERTWERTHWRTWERTSGDGHCFGSDFGQSERWDFGDSFGHSDRGTEHDFGHDSFGSDGHGYUHDFGSDFASDFASGTWRT08} \end{equation}  If $1 \leq r< q\leq \infty$, then \begin{equation} \Vert P_{\le n}f\Vert_q \leq C\Vert f\Vert_r , \label{ERTWERTHWRTWERTSGDGHCFGSDFGQSERWDFGDSFGHSDRGTEHDFGHDSFGSDGHGYUHDFGSDFASDFASGTWRT09} \end{equation} where the constant depends on $n$, $r$, and~$q$. \par The Leray projector $\mathcal{P}$ in \eqref{ERTWERTHWRTWERTSGDGHCFGSDFGQSERWDFGDSFGHSDRGTEHDFGHDSFGSDGHGYUHDFGSDFASDFASGTWRT01} is defined using the Riesz transforms as $ R_j=-\frac{\partial}{\partial x_j}( -\Delta)^{-\frac{1}{2}} $. That is \begin{equation} ( \mathcal{P} \uu)_j( x)=\sum_{k=1}^{d}( \delta_{jk}+R_j R_k) \uu_k( x)\comma j=1,2,\ldots, d,    \llabel{28 4KGY trVa WW4nTZ XV b RVo Q77 hVL X6 K2kq FWFm aZnsF9 Ch p 8Kx rsc SGP iS tVXB J3xZ cD5IP4 Fu 9 Lcd TR2 Vwb cL DlGK 1ro3 EEyqEA zw 6 sKe Eg2 sFf jz MtrZ 9kbd xNw66c xf t lzD GZh xQA WQ KkSX jqmm rEpNuG 6P y loq 8hH lSf Ma LXm5 RzEX W4Y1Bq ib 3 UOh Yw9 5h6 f6 o8kw 6frZ wg6fIy XP n ae1 TQJ Mt2 TT fWWf jJrX ilpYGr Ul Q 4uM 7Ds p0r Vg 3gIE mQOz TFh9LA KO 8 csQ u6m h25 r8 WqRI DZWg SYkWDu lL 8 Gpt ZW1 0Gd SY FUXL zyQZ hVZMn9 am P 9aE Wzk au0 6d ZghM ym3R jfdePG ln 8 s7x HYC IV9 Hw Ka6v EjH5 J8Ipr7 Nk C xWR 84T Wnq s0 fsiP qGgs Id1fs5 3A T 71q RIc zPX 77 Si23 GirL 9MQZ4F pi g dru NYt h1K 4M Zilv rRk6 B4W5B8 Id 3 Xq9 nhx EN4 P6 ipZl a2UQ Qx8mda g7 r VD3 zdD rhB vk LDJo tKyV 5IrmyJ R5 e txS 1cv EsY xG zj2T rfSR myZo4L m5 D mqN iZd acg GQ 0KRw QKGX g9o8v8 wm B fUu tCO cKc zz kx4U fhuA a8pYzW Vq 9 Sp6 CmA cZL Mx ceBX Dwug sjWuii Gl v JDb 08h BOV C1 pni6 4TTq Opzezq ZB J y5o KS8 BhH sd nKkH gnZl UCm7j0 Iv Y jQE 7JN 9fd ED ddys 3y1x 52pbiG Lc a 71j G3e uli Ce uzv2 R40Q 50JZUB uK d U3m May 0uo S7 ulWD h7qG 2FKw2T JX z BES 2Jk Q4U Dy 4aJ2 IXs4 RNH41s py T GNh hk0 w5Z C8 B3nU Bp9p 8eLKh8 UO 4 fMq Y6w lcA GM xCHt vlOx MqAJoQ QU 1 e8a 2aX 9Y6 2r lIS6 dejK Y3KCUm 25 7 oCl VeE e8p 1z UJSv bmLd Fy7ObQ FN l J6F RdF kEm qM N0Fd NZJ0 8DYuq2 pL X JNz 4rO ZkZ X2 IjTD 1fVt z4BmFI Pi 0 GKD R2W PhO zH zTLP lbAE OT9XW0 gb T Lb3 XRQ qGG 8o 4TPE 6WRc uMqMXh s6 x Ofv 8st jDi u8 rtJt TKSK jlGkGw t8 n FDx jA9 fCm iu FqMW jeox 5Akw3w Sd 8 1vK 8c4 C0O dj CHIs eHUO hyqGx3 Kw O lDq l1Y 4NY 4I vI7X DE4c FeXdFV bC F HaJ sb4 OC0 hu Mj65 J4fa vgGo7q Y5 X tLy izY DvH TR zd9x SRVg 0Pl6Z8 9X z fLh GlH IYB x9 OELo 5loZ x4wag4 cn F aCE KfA 0uz fw HMUV M9Qy eARFe3 Py 6 kQG GFx rPf 6T ZBQR la1a 6Aeker Xg k blz nSm mhY jc z3io WYjz h33sxR JM k Dos EAA hUO Oz aQfK Z0cn 5kqYPn W7 1 vCT 69a EC9 LD EQ5S BK4J fVFLAo Qp N dzZ HAl JaL Mn vRqH 7pBB qOr7fv oa e BSA 8TE btx y3 jwK3 v244 dlfwRL Dc g X14 vTp Wd8 zy YWjw eQmF yD5y5l DN l ZbA Jac cld kx Yn3V QYIV v6fwmH z1 9 w3y D4Y ezR M9 BduE L7D9 2wTHHc Do g ZxZ WRW Jxi pv fz48 ZVB7 FZtgK0 Y1 w oCo hLA i70 NO Ta06 u2sY GlmsEQ154} \end{equation} where $\delta_{jk}=1$ if $j=k$ and $\delta_{jk}=0$ otherwise. \colb \par \subsection{Preliminaries on stochastic analysis and the main result} \colb Let $(\Omega, \mathcal{F},(\mathcal{F}_t)_{t\geq 0},\mathbb{P})$ be a complete probability space with an augmented filtration $(\mathcal{F}_t)_{t\geq 0}$ and $\mathcal{H}$ be a real separable Hilbert space with a complete orthonormal basis $\{\mathbf{e}_k\}_{k\geq 1}$. Suppose that $\{W_k: k\in\NNp\}$ is a family of independent $\mathcal{F}_t$-adapted Brownian motions. Then, $\WW( t,\omega):=\sum_{k\geq 1} W_k( t,\omega) \mathbf{e}_k$ is an $\mathcal{F}_t$-adapted and $\mathcal{H}$-valued cylindrical Wiener process. \par For a real separable Hilbert space $\mathcal{Y}$, we define $l^2( \mathcal{H},\mathcal{Y})$ to be the set of Hilbert-Schmidt operators from $\mathcal{H}$ to $\mathcal{Y}$ with the norm  defined by \begin{equation} \Vert G\Vert_{l^2( \mathcal{H},\mathcal{Y})}^2:= \sum_{k=1}^{\dim \mathcal{H}} | G \mathbf{e}_k|_{\mathcal{Y}}^2<\infty \comma G\in l^2( \mathcal{H},\mathcal{Y}). \llabel{t lzD GZh xQA WQ KkSX jqmm rEpNuG 6P y loq 8hH lSf Ma LXm5 RzEX W4Y1Bq ib 3 UOh Yw9 5h6 f6 o8kw 6frZ wg6fIy XP n ae1 TQJ Mt2 TT fWWf jJrX ilpYGr Ul Q 4uM 7Ds p0r Vg 3gIE mQOz TFh9LA KO 8 csQ u6m h25 r8 WqRI DZWg SYkWDu lL 8 Gpt ZW1 0Gd SY FUXL zyQZ hVZMn9 am P 9aE Wzk au0 6d ZghM ym3R jfdePG ln 8 s7x HYC IV9 Hw Ka6v EjH5 J8Ipr7 Nk C xWR 84T Wnq s0 fsiP qGgs Id1fs5 3A T 71q RIc zPX 77 Si23 GirL 9MQZ4F pi g dru NYt h1K 4M Zilv rRk6 B4W5B8 Id 3 Xq9 nhx EN4 P6 ipZl a2UQ Qx8mda g7 r VD3 zdD rhB vk LDJo tKyV 5IrmyJ R5 e txS 1cv EsY xG zj2T rfSR myZo4L m5 D mqN iZd acg GQ 0KRw QKGX g9o8v8 wm B fUu tCO cKc zz kx4U fhuA a8pYzW Vq 9 Sp6 CmA cZL Mx ceBX Dwug sjWuii Gl v JDb 08h BOV C1 pni6 4TTq Opzezq ZB J y5o KS8 BhH sd nKkH gnZl UCm7j0 Iv Y jQE 7JN 9fd ED ddys 3y1x 52pbiG Lc a 71j G3e uli Ce uzv2 R40Q 50JZUB uK d U3m May 0uo S7 ulWD h7qG 2FKw2T JX z BES 2Jk Q4U Dy 4aJ2 IXs4 RNH41s py T GNh hk0 w5Z C8 B3nU Bp9p 8eLKh8 UO 4 fMq Y6w lcA GM xCHt vlOx MqAJoQ QU 1 e8a 2aX 9Y6 2r lIS6 dejK Y3KCUm 25 7 oCl VeE e8p 1z UJSv bmLd Fy7ObQ FN l J6F RdF kEm qM N0Fd NZJ0 8DYuq2 pL X JNz 4rO ZkZ X2 IjTD 1fVt z4BmFI Pi 0 GKD R2W PhO zH zTLP lbAE OT9XW0 gb T Lb3 XRQ qGG 8o 4TPE 6WRc uMqMXh s6 x Ofv 8st jDi u8 rtJt TKSK jlGkGw t8 n FDx jA9 fCm iu FqMW jeox 5Akw3w Sd 8 1vK 8c4 C0O dj CHIs eHUO hyqGx3 Kw O lDq l1Y 4NY 4I vI7X DE4c FeXdFV bC F HaJ sb4 OC0 hu Mj65 J4fa vgGo7q Y5 X tLy izY DvH TR zd9x SRVg 0Pl6Z8 9X z fLh GlH IYB x9 OELo 5loZ x4wag4 cn F aCE KfA 0uz fw HMUV M9Qy eARFe3 Py 6 kQG GFx rPf 6T ZBQR la1a 6Aeker Xg k blz nSm mhY jc z3io WYjz h33sxR JM k Dos EAA hUO Oz aQfK Z0cn 5kqYPn W7 1 vCT 69a EC9 LD EQ5S BK4J fVFLAo Qp N dzZ HAl JaL Mn vRqH 7pBB qOr7fv oa e BSA 8TE btx y3 jwK3 v244 dlfwRL Dc g X14 vTp Wd8 zy YWjw eQmF yD5y5l DN l ZbA Jac cld kx Yn3V QYIV v6fwmH z1 9 w3y D4Y ezR M9 BduE L7D9 2wTHHc Do g ZxZ WRW Jxi pv fz48 ZVB7 FZtgK0 Y1 w oCo hLA i70 NO Ta06 u2sY GlmspV l2 x y0X B37 x43 k5 kaoZ deyE sDglRF Xi 9 6b6 w9B dId Ko gSUM NLLb CRzeQL UZ m i9O 2qv VzD hz v1r6 spSl jwNhG6 s6 i SdX hob hbp 2u sEdl 95LP AtrBBi bP C wSh pFC CUa yz EQ15} \end{equation} In this paper, we either regard \eqref{ERTWERTHWRTWERTSGDGHCFGSDFGQSERWDFGDSFGHSDRGTEHDFGHDSFGSDGHGYUHDFGSDFASDFASGTWRT01} as a vector-valued equation or consider it componentwise. Correspondingly, $\mathcal{Y}={\mathbb R}$ or ${\mathbb R}^{d}$. Let $G=(G_1, \cdots, G_d)$ and $G\mathbf{e}_k:=(G_1\mathbf{e}_k, \cdots, G_d\mathbf{e}_k)$. Then $G\in l^2( \mathcal{H},{\mathbb R}^{d})$ if and only if $G_i\in l^2( \mathcal{H},{\mathbb R})$ for all $i\in\{1, \cdots, d\}$. The Burkholder-Davis-Gundy (BDG) inequality   \begin{equation}    \EE \biggl[ \sup_{s\in[0,t]}\biggl| \int_0^s G \,d\WW_r \biggr|_{\mathcal{Y}}^p\biggr]    \leq    C \EE\biggl[ \left(\int_0^t \Vert G\Vert^2_{ l^2( \mathcal{H},\mathcal{Y})}\, dr \right)^{p/2}\biggr]    \llabel{QOz TFh9LA KO 8 csQ u6m h25 r8 WqRI DZWg SYkWDu lL 8 Gpt ZW1 0Gd SY FUXL zyQZ hVZMn9 am P 9aE Wzk au0 6d ZghM ym3R jfdePG ln 8 s7x HYC IV9 Hw Ka6v EjH5 J8Ipr7 Nk C xWR 84T Wnq s0 fsiP qGgs Id1fs5 3A T 71q RIc zPX 77 Si23 GirL 9MQZ4F pi g dru NYt h1K 4M Zilv rRk6 B4W5B8 Id 3 Xq9 nhx EN4 P6 ipZl a2UQ Qx8mda g7 r VD3 zdD rhB vk LDJo tKyV 5IrmyJ R5 e txS 1cv EsY xG zj2T rfSR myZo4L m5 D mqN iZd acg GQ 0KRw QKGX g9o8v8 wm B fUu tCO cKc zz kx4U fhuA a8pYzW Vq 9 Sp6 CmA cZL Mx ceBX Dwug sjWuii Gl v JDb 08h BOV C1 pni6 4TTq Opzezq ZB J y5o KS8 BhH sd nKkH gnZl UCm7j0 Iv Y jQE 7JN 9fd ED ddys 3y1x 52pbiG Lc a 71j G3e uli Ce uzv2 R40Q 50JZUB uK d U3m May 0uo S7 ulWD h7qG 2FKw2T JX z BES 2Jk Q4U Dy 4aJ2 IXs4 RNH41s py T GNh hk0 w5Z C8 B3nU Bp9p 8eLKh8 UO 4 fMq Y6w lcA GM xCHt vlOx MqAJoQ QU 1 e8a 2aX 9Y6 2r lIS6 dejK Y3KCUm 25 7 oCl VeE e8p 1z UJSv bmLd Fy7ObQ FN l J6F RdF kEm qM N0Fd NZJ0 8DYuq2 pL X JNz 4rO ZkZ X2 IjTD 1fVt z4BmFI Pi 0 GKD R2W PhO zH zTLP lbAE OT9XW0 gb T Lb3 XRQ qGG 8o 4TPE 6WRc uMqMXh s6 x Ofv 8st jDi u8 rtJt TKSK jlGkGw t8 n FDx jA9 fCm iu FqMW jeox 5Akw3w Sd 8 1vK 8c4 C0O dj CHIs eHUO hyqGx3 Kw O lDq l1Y 4NY 4I vI7X DE4c FeXdFV bC F HaJ sb4 OC0 hu Mj65 J4fa vgGo7q Y5 X tLy izY DvH TR zd9x SRVg 0Pl6Z8 9X z fLh GlH IYB x9 OELo 5loZ x4wag4 cn F aCE KfA 0uz fw HMUV M9Qy eARFe3 Py 6 kQG GFx rPf 6T ZBQR la1a 6Aeker Xg k blz nSm mhY jc z3io WYjz h33sxR JM k Dos EAA hUO Oz aQfK Z0cn 5kqYPn W7 1 vCT 69a EC9 LD EQ5S BK4J fVFLAo Qp N dzZ HAl JaL Mn vRqH 7pBB qOr7fv oa e BSA 8TE btx y3 jwK3 v244 dlfwRL Dc g X14 vTp Wd8 zy YWjw eQmF yD5y5l DN l ZbA Jac cld kx Yn3V QYIV v6fwmH z1 9 w3y D4Y ezR M9 BduE L7D9 2wTHHc Do g ZxZ WRW Jxi pv fz48 ZVB7 FZtgK0 Y1 w oCo hLA i70 NO Ta06 u2sY GlmspV l2 x y0X B37 x43 k5 kaoZ deyE sDglRF Xi 9 6b6 w9B dId Ko gSUM NLLb CRzeQL UZ m i9O 2qv VzD hz v1r6 spSl jwNhG6 s6 i SdX hob hbp 2u sEdl 95LP AtrBBi bP C wSh pFC CUa yz xYS5 78ro f3UwDP sC I pES HB1 qFP SW 5tt0 I7oz jXun6c z4 c QLB J4M NmI 6F 08S2 Il8C 0JQYiU lI 1 YkK oiu bVt fG uOeg Sllv b4HGn3 bS Z LlX efa eN6 v1 B6m3 Ek3J SXUIjX 8P d NEQ34}   \end{equation} holds for all $p\in [1,\infty)$ and $G\in l^2( \mathcal{H},\mathcal{Y})$ such that the right hand side above is finite. For $s\geq0$ and $p\in[1,\infty]$, consider \begin{equation} \mathbb{W}^{s,p}:=\left\{f\colon\RR^d\to l^2( \mathcal{H},\mathcal{Y}) : f\mathbf{e}_k\in W^{s,p}(\RR^d) \mbox{ for each }k, \mbox{ and } \int_{\RR^d} \Vert J^s f\Vert_{l^2( \mathcal{H},\mathcal{Y})}^p \,dx<\infty \right\}, \llabel{ Wnq s0 fsiP qGgs Id1fs5 3A T 71q RIc zPX 77 Si23 GirL 9MQZ4F pi g dru NYt h1K 4M Zilv rRk6 B4W5B8 Id 3 Xq9 nhx EN4 P6 ipZl a2UQ Qx8mda g7 r VD3 zdD rhB vk LDJo tKyV 5IrmyJ R5 e txS 1cv EsY xG zj2T rfSR myZo4L m5 D mqN iZd acg GQ 0KRw QKGX g9o8v8 wm B fUu tCO cKc zz kx4U fhuA a8pYzW Vq 9 Sp6 CmA cZL Mx ceBX Dwug sjWuii Gl v JDb 08h BOV C1 pni6 4TTq Opzezq ZB J y5o KS8 BhH sd nKkH gnZl UCm7j0 Iv Y jQE 7JN 9fd ED ddys 3y1x 52pbiG Lc a 71j G3e uli Ce uzv2 R40Q 50JZUB uK d U3m May 0uo S7 ulWD h7qG 2FKw2T JX z BES 2Jk Q4U Dy 4aJ2 IXs4 RNH41s py T GNh hk0 w5Z C8 B3nU Bp9p 8eLKh8 UO 4 fMq Y6w lcA GM xCHt vlOx MqAJoQ QU 1 e8a 2aX 9Y6 2r lIS6 dejK Y3KCUm 25 7 oCl VeE e8p 1z UJSv bmLd Fy7ObQ FN l J6F RdF kEm qM N0Fd NZJ0 8DYuq2 pL X JNz 4rO ZkZ X2 IjTD 1fVt z4BmFI Pi 0 GKD R2W PhO zH zTLP lbAE OT9XW0 gb T Lb3 XRQ qGG 8o 4TPE 6WRc uMqMXh s6 x Ofv 8st jDi u8 rtJt TKSK jlGkGw t8 n FDx jA9 fCm iu FqMW jeox 5Akw3w Sd 8 1vK 8c4 C0O dj CHIs eHUO hyqGx3 Kw O lDq l1Y 4NY 4I vI7X DE4c FeXdFV bC F HaJ sb4 OC0 hu Mj65 J4fa vgGo7q Y5 X tLy izY DvH TR zd9x SRVg 0Pl6Z8 9X z fLh GlH IYB x9 OELo 5loZ x4wag4 cn F aCE KfA 0uz fw HMUV M9Qy eARFe3 Py 6 kQG GFx rPf 6T ZBQR la1a 6Aeker Xg k blz nSm mhY jc z3io WYjz h33sxR JM k Dos EAA hUO Oz aQfK Z0cn 5kqYPn W7 1 vCT 69a EC9 LD EQ5S BK4J fVFLAo Qp N dzZ HAl JaL Mn vRqH 7pBB qOr7fv oa e BSA 8TE btx y3 jwK3 v244 dlfwRL Dc g X14 vTp Wd8 zy YWjw eQmF yD5y5l DN l ZbA Jac cld kx Yn3V QYIV v6fwmH z1 9 w3y D4Y ezR M9 BduE L7D9 2wTHHc Do g ZxZ WRW Jxi pv fz48 ZVB7 FZtgK0 Y1 w oCo hLA i70 NO Ta06 u2sY GlmspV l2 x y0X B37 x43 k5 kaoZ deyE sDglRF Xi 9 6b6 w9B dId Ko gSUM NLLb CRzeQL UZ m i9O 2qv VzD hz v1r6 spSl jwNhG6 s6 i SdX hob hbp 2u sEdl 95LP AtrBBi bP C wSh pFC CUa yz xYS5 78ro f3UwDP sC I pES HB1 qFP SW 5tt0 I7oz jXun6c z4 c QLB J4M NmI 6F 08S2 Il8C 0JQYiU lI 1 YkK oiu bVt fG uOeg Sllv b4HGn3 bS Z LlX efa eN6 v1 B6m3 Ek3J SXUIjX 8P d NKI UFN JvP Ha Vr4T eARP dXEV7B xM 0 A7w 7je p8M 4Q ahOi hEVo Pxbi1V uG e tOt HbP tsO 5r 363R ez9n A5EJ55 pc L lQQ Hg6 X1J EW K8Cf 9kZm 14A5li rN 7 kKZ rY0 K10 It eJd3 kMGwEQ17} \end{equation} which is a Banach space with the norm \begin{equation} \Vert f\Vert_{\mathbb{W}^{s,p}}:=\left( \int_{\RR^d} \Vert J^s f\Vert_{l^2( \mathcal{H},\mathcal{Y})}^p \,dx\right)^{1/p}. \label{ERTWERTHWRTWERTSGDGHCFGSDFGQSERWDFGDSFGHSDRGTEHDFGHDSFGSDGHGYUHDFGSDFASDFASGTWRT18} \end{equation}
Above, we denoted $(J^s f) \mathbf{e}_k=J^s (f \mathbf{e}_k)$. Also, $\mathbb{W}^{0,p}$ is abbreviated as $\mathbb{L}^{p}$. If $f\in \mathbb{L}^{2}$, then $\int_0^t f\,d\WW_t$ is an $L^{2}(\RR^d)$-valued Wiener process. Letting $( \mathcal{P}f)  \mathbf{e}_k=\mathcal{P} (f \mathbf{e}_k)$, where $\mathcal{P}$ is the Leray projector, we have $\mathcal{P}f\in \mathbb{W}^{s,p}$ if $ f\in \mathbb{W}^{s,p}$. Write \begin{equation} \mathbb{W}_{\rm sol}^{s,p}=\{\mathcal{P}f: f\in \mathbb{W}^{s,p}\}. \llabel{J R5 e txS 1cv EsY xG zj2T rfSR myZo4L m5 D mqN iZd acg GQ 0KRw QKGX g9o8v8 wm B fUu tCO cKc zz kx4U fhuA a8pYzW Vq 9 Sp6 CmA cZL Mx ceBX Dwug sjWuii Gl v JDb 08h BOV C1 pni6 4TTq Opzezq ZB J y5o KS8 BhH sd nKkH gnZl UCm7j0 Iv Y jQE 7JN 9fd ED ddys 3y1x 52pbiG Lc a 71j G3e uli Ce uzv2 R40Q 50JZUB uK d U3m May 0uo S7 ulWD h7qG 2FKw2T JX z BES 2Jk Q4U Dy 4aJ2 IXs4 RNH41s py T GNh hk0 w5Z C8 B3nU Bp9p 8eLKh8 UO 4 fMq Y6w lcA GM xCHt vlOx MqAJoQ QU 1 e8a 2aX 9Y6 2r lIS6 dejK Y3KCUm 25 7 oCl VeE e8p 1z UJSv bmLd Fy7ObQ FN l J6F RdF kEm qM N0Fd NZJ0 8DYuq2 pL X JNz 4rO ZkZ X2 IjTD 1fVt z4BmFI Pi 0 GKD R2W PhO zH zTLP lbAE OT9XW0 gb T Lb3 XRQ qGG 8o 4TPE 6WRc uMqMXh s6 x Ofv 8st jDi u8 rtJt TKSK jlGkGw t8 n FDx jA9 fCm iu FqMW jeox 5Akw3w Sd 8 1vK 8c4 C0O dj CHIs eHUO hyqGx3 Kw O lDq l1Y 4NY 4I vI7X DE4c FeXdFV bC F HaJ sb4 OC0 hu Mj65 J4fa vgGo7q Y5 X tLy izY DvH TR zd9x SRVg 0Pl6Z8 9X z fLh GlH IYB x9 OELo 5loZ x4wag4 cn F aCE KfA 0uz fw HMUV M9Qy eARFe3 Py 6 kQG GFx rPf 6T ZBQR la1a 6Aeker Xg k blz nSm mhY jc z3io WYjz h33sxR JM k Dos EAA hUO Oz aQfK Z0cn 5kqYPn W7 1 vCT 69a EC9 LD EQ5S BK4J fVFLAo Qp N dzZ HAl JaL Mn vRqH 7pBB qOr7fv oa e BSA 8TE btx y3 jwK3 v244 dlfwRL Dc g X14 vTp Wd8 zy YWjw eQmF yD5y5l DN l ZbA Jac cld kx Yn3V QYIV v6fwmH z1 9 w3y D4Y ezR M9 BduE L7D9 2wTHHc Do g ZxZ WRW Jxi pv fz48 ZVB7 FZtgK0 Y1 w oCo hLA i70 NO Ta06 u2sY GlmspV l2 x y0X B37 x43 k5 kaoZ deyE sDglRF Xi 9 6b6 w9B dId Ko gSUM NLLb CRzeQL UZ m i9O 2qv VzD hz v1r6 spSl jwNhG6 s6 i SdX hob hbp 2u sEdl 95LP AtrBBi bP C wSh pFC CUa yz xYS5 78ro f3UwDP sC I pES HB1 qFP SW 5tt0 I7oz jXun6c z4 c QLB J4M NmI 6F 08S2 Il8C 0JQYiU lI 1 YkK oiu bVt fG uOeg Sllv b4HGn3 bS Z LlX efa eN6 v1 B6m3 Ek3J SXUIjX 8P d NKI UFN JvP Ha Vr4T eARP dXEV7B xM 0 A7w 7je p8M 4Q ahOi hEVo Pxbi1V uG e tOt HbP tsO 5r 363R ez9n A5EJ55 pc L lQQ Hg6 X1J EW K8Cf 9kZm 14A5li rN 7 kKZ rY0 K10 It eJd3 kMGw opVnfY EG 2 orG fj0 TTA Xt ecJK eTM0 x1N9f0 lR p QkP M37 3r0 iA 6EFs 1F6f 4mjOB5 zu 5 GGT Ncl Bmk b5 jOOK 4yny My04oz 6m 6 Akz NnP JXh Bn PHRu N5Ly qSguz5 Nn W 2lU Yx3 fXEQ19} \end{equation} \par Let $A$ be a differential operator, and let $\sigma, g$ be $l^2(H, \RR)$-valued operators. Suppose that $W_t$ is a cylindrical Wiener process relative to a prescribed stochastic basis $(\Omega, \cf, (\cf_t)_{t\geq 0}, \PP)$. For the $d$-dimensional stochastic evolution partial differential equation \begin{equation} \label{ERTWERTHWRTWERTSGDGHCFGSDFGQSERWDFGDSFGHSDRGTEHDFGHDSFGSDGHGYUHDFGSDFASDFASGTWRT23} u(t, x) = u_0(x) + \int_{0}^{t} (Au(s, x)+ f(s, x))\,ds + \int_{0}^{t} (\sigma(u) + g(s,x)) \,dW_s, \end{equation} a local strong solution is defined as follows. \par \begin{definition}[Local strong solution]   {\rm     A pair $(u,\tau)$ is called a local strong solution of \eqref{ERTWERTHWRTWERTSGDGHCFGSDFGQSERWDFGDSFGHSDRGTEHDFGHDSFGSDGHGYUHDFGSDFASDFASGTWRT23} on $(\Omega, \cf, (\cf_t)_{t\geq 0}, \PP)$     if $\tau$ is a stopping time with $\PP(\tau>0)=1$ and $u$ is a progressively measurable process in $L^p(\Omega; C([0, \tau\wedge T], L^p))$, satisfying     \begin{equation}     (u(t), \phi) = (u(0), \phi) + \int_{0}^{t} (Au(s) + f(s), \phi)\, ds + \int_{0}^{t} (\sigma(u(s)) + g(s), \phi)\,dW_t \Pas     ,     \llabel{ni6 4TTq Opzezq ZB J y5o KS8 BhH sd nKkH gnZl UCm7j0 Iv Y jQE 7JN 9fd ED ddys 3y1x 52pbiG Lc a 71j G3e uli Ce uzv2 R40Q 50JZUB uK d U3m May 0uo S7 ulWD h7qG 2FKw2T JX z BES 2Jk Q4U Dy 4aJ2 IXs4 RNH41s py T GNh hk0 w5Z C8 B3nU Bp9p 8eLKh8 UO 4 fMq Y6w lcA GM xCHt vlOx MqAJoQ QU 1 e8a 2aX 9Y6 2r lIS6 dejK Y3KCUm 25 7 oCl VeE e8p 1z UJSv bmLd Fy7ObQ FN l J6F RdF kEm qM N0Fd NZJ0 8DYuq2 pL X JNz 4rO ZkZ X2 IjTD 1fVt z4BmFI Pi 0 GKD R2W PhO zH zTLP lbAE OT9XW0 gb T Lb3 XRQ qGG 8o 4TPE 6WRc uMqMXh s6 x Ofv 8st jDi u8 rtJt TKSK jlGkGw t8 n FDx jA9 fCm iu FqMW jeox 5Akw3w Sd 8 1vK 8c4 C0O dj CHIs eHUO hyqGx3 Kw O lDq l1Y 4NY 4I vI7X DE4c FeXdFV bC F HaJ sb4 OC0 hu Mj65 J4fa vgGo7q Y5 X tLy izY DvH TR zd9x SRVg 0Pl6Z8 9X z fLh GlH IYB x9 OELo 5loZ x4wag4 cn F aCE KfA 0uz fw HMUV M9Qy eARFe3 Py 6 kQG GFx rPf 6T ZBQR la1a 6Aeker Xg k blz nSm mhY jc z3io WYjz h33sxR JM k Dos EAA hUO Oz aQfK Z0cn 5kqYPn W7 1 vCT 69a EC9 LD EQ5S BK4J fVFLAo Qp N dzZ HAl JaL Mn vRqH 7pBB qOr7fv oa e BSA 8TE btx y3 jwK3 v244 dlfwRL Dc g X14 vTp Wd8 zy YWjw eQmF yD5y5l DN l ZbA Jac cld kx Yn3V QYIV v6fwmH z1 9 w3y D4Y ezR M9 BduE L7D9 2wTHHc Do g ZxZ WRW Jxi pv fz48 ZVB7 FZtgK0 Y1 w oCo hLA i70 NO Ta06 u2sY GlmspV l2 x y0X B37 x43 k5 kaoZ deyE sDglRF Xi 9 6b6 w9B dId Ko gSUM NLLb CRzeQL UZ m i9O 2qv VzD hz v1r6 spSl jwNhG6 s6 i SdX hob hbp 2u sEdl 95LP AtrBBi bP C wSh pFC CUa yz xYS5 78ro f3UwDP sC I pES HB1 qFP SW 5tt0 I7oz jXun6c z4 c QLB J4M NmI 6F 08S2 Il8C 0JQYiU lI 1 YkK oiu bVt fG uOeg Sllv b4HGn3 bS Z LlX efa eN6 v1 B6m3 Ek3J SXUIjX 8P d NKI UFN JvP Ha Vr4T eARP dXEV7B xM 0 A7w 7je p8M 4Q ahOi hEVo Pxbi1V uG e tOt HbP tsO 5r 363R ez9n A5EJ55 pc L lQQ Hg6 X1J EW K8Cf 9kZm 14A5li rN 7 kKZ rY0 K10 It eJd3 kMGw opVnfY EG 2 orG fj0 TTA Xt ecJK eTM0 x1N9f0 lR p QkP M37 3r0 iA 6EFs 1F6f 4mjOB5 zu 5 GGT Ncl Bmk b5 jOOK 4yny My04oz 6m 6 Akz NnP JXh Bn PHRu N5Ly qSguz5 Nn W 2lU Yx3 fX4 hu LieH L30w g93Xwc gj 1 I9d O9b EPC R0 vc6A 005Q VFy1ly K7 o VRV pbJ zZn xY dcld XgQa DXY3gz x3 6 8OR JFK 9Uh XT e3xY bVHG oYqdHg Vy f 5kK Qzm mK4 9x xiAp jVkw gzJOdE 4EQ24}     \end{equation}     for all test functions $\phi\in C_c^\infty(\RR^d)$ and $t\in[0, \tau\wedge T]$.                 The expression $(A u(r),\phi)$ is interpreted using integration by parts.   } \end{definition} \par We fix a stochastic basis $(\Omega, \cf,(\cf_t)_{t\geq 0},\PP)$ and an adapted Wiener process $W_t$ on it, only considering local strong solutions relative to this stochastic framework $(\Omega, \cf, \cf_t, \PP, W_t)$.  \par \begin{definition}[Pathwise Uniqueness] {\rm     A local strong solution $(u,\eta)$ is pathwise unique if for any other strong solution $(v,\eta )$, we have     \begin{equation}     \PP(u(t)=v(t), \forall t\in [0, \tau\wedge\eta])= 1     .     \llabel{S 2Jk Q4U Dy 4aJ2 IXs4 RNH41s py T GNh hk0 w5Z C8 B3nU Bp9p 8eLKh8 UO 4 fMq Y6w lcA GM xCHt vlOx MqAJoQ QU 1 e8a 2aX 9Y6 2r lIS6 dejK Y3KCUm 25 7 oCl VeE e8p 1z UJSv bmLd Fy7ObQ FN l J6F RdF kEm qM N0Fd NZJ0 8DYuq2 pL X JNz 4rO ZkZ X2 IjTD 1fVt z4BmFI Pi 0 GKD R2W PhO zH zTLP lbAE OT9XW0 gb T Lb3 XRQ qGG 8o 4TPE 6WRc uMqMXh s6 x Ofv 8st jDi u8 rtJt TKSK jlGkGw t8 n FDx jA9 fCm iu FqMW jeox 5Akw3w Sd 8 1vK 8c4 C0O dj CHIs eHUO hyqGx3 Kw O lDq l1Y 4NY 4I vI7X DE4c FeXdFV bC F HaJ sb4 OC0 hu Mj65 J4fa vgGo7q Y5 X tLy izY DvH TR zd9x SRVg 0Pl6Z8 9X z fLh GlH IYB x9 OELo 5loZ x4wag4 cn F aCE KfA 0uz fw HMUV M9Qy eARFe3 Py 6 kQG GFx rPf 6T ZBQR la1a 6Aeker Xg k blz nSm mhY jc z3io WYjz h33sxR JM k Dos EAA hUO Oz aQfK Z0cn 5kqYPn W7 1 vCT 69a EC9 LD EQ5S BK4J fVFLAo Qp N dzZ HAl JaL Mn vRqH 7pBB qOr7fv oa e BSA 8TE btx y3 jwK3 v244 dlfwRL Dc g X14 vTp Wd8 zy YWjw eQmF yD5y5l DN l ZbA Jac cld kx Yn3V QYIV v6fwmH z1 9 w3y D4Y ezR M9 BduE L7D9 2wTHHc Do g ZxZ WRW Jxi pv fz48 ZVB7 FZtgK0 Y1 w oCo hLA i70 NO Ta06 u2sY GlmspV l2 x y0X B37 x43 k5 kaoZ deyE sDglRF Xi 9 6b6 w9B dId Ko gSUM NLLb CRzeQL UZ m i9O 2qv VzD hz v1r6 spSl jwNhG6 s6 i SdX hob hbp 2u sEdl 95LP AtrBBi bP C wSh pFC CUa yz xYS5 78ro f3UwDP sC I pES HB1 qFP SW 5tt0 I7oz jXun6c z4 c QLB J4M NmI 6F 08S2 Il8C 0JQYiU lI 1 YkK oiu bVt fG uOeg Sllv b4HGn3 bS Z LlX efa eN6 v1 B6m3 Ek3J SXUIjX 8P d NKI UFN JvP Ha Vr4T eARP dXEV7B xM 0 A7w 7je p8M 4Q ahOi hEVo Pxbi1V uG e tOt HbP tsO 5r 363R ez9n A5EJ55 pc L lQQ Hg6 X1J EW K8Cf 9kZm 14A5li rN 7 kKZ rY0 K10 It eJd3 kMGw opVnfY EG 2 orG fj0 TTA Xt ecJK eTM0 x1N9f0 lR p QkP M37 3r0 iA 6EFs 1F6f 4mjOB5 zu 5 GGT Ncl Bmk b5 jOOK 4yny My04oz 6m 6 Akz NnP JXh Bn PHRu N5Ly qSguz5 Nn W 2lU Yx3 fX4 hu LieH L30w g93Xwc gj 1 I9d O9b EPC R0 vc6A 005Q VFy1ly K7 o VRV pbJ zZn xY dcld XgQa DXY3gz x3 6 8OR JFK 9Uh XT e3xY bVHG oYqdHg Vy f 5kK Qzm mK4 9x xiAp jVkw gzJOdE 4v g hAv 9bV IHe wc Vqcb SUcF 1pHzol Nj T l1B urc Sam IP zkUS 8wwS a7wVWR 4D L VGf 1RF r59 9H tyGq hDT0 TDlooa mg j 9am png aWe nG XU2T zXLh IYOW5v 2d A rCG sLk s53 pW AuAyEQ25}     \end{equation} } \end{definition} \par Our main result, stated next, asserts the local existence and pathwise uniqueness of a strong $L^{p}$ solution and provides an energy estimate. \par \cole \begin{Theorem} \label{T01} (Strong solution up to a stopping time) Let $p> 3$ and $\uu_0\in L^p(\Omega; L^p(\RR^3))$. Suppose the assumptions~\eqref{ERTWERTHWRTWERTSGDGHCFGSDFGQSERWDFGDSFGHSDRGTEHDFGHDSFGSDGHGYUHDFGSDFASDFASGTWRT03}--\eqref{ERTWERTHWRTWERTSGDGHCFGSDFGQSERWDFGDSFGHSDRGTEHDFGHDSFGSDGHGYUHDFGSDFASDFASGTWRT05} hold. Then there exists a pathwise unique local strong solution $(u,\tau)$ to \eqref{ERTWERTHWRTWERTSGDGHCFGSDFGQSERWDFGDSFGHSDRGTEHDFGHDSFGSDGHGYUHDFGSDFASDFASGTWRT01}--\eqref{ERTWERTHWRTWERTSGDGHCFGSDFGQSERWDFGDSFGHSDRGTEHDFGHDSFGSDGHGYUHDFGSDFASDFASGTWRT02} such that   \begin{align}   \begin{split}    \EE\biggl[    \sup_{0\leq s\leq \tau}\Vert\uu(s,\cdot)\Vert_p^p    +\int_0^{\tau}    \sum_{j}    \int_{\RR^3} | \nabla (|\uu_j(s,x)|^{p/2})|^2 \,dx ds    \biggr]    \leq    C\EE\bigl[\Vert\uu_0\Vert_p^p    +1    \bigr]    ,    \end{split}    \llabel{Fy7ObQ FN l J6F RdF kEm qM N0Fd NZJ0 8DYuq2 pL X JNz 4rO ZkZ X2 IjTD 1fVt z4BmFI Pi 0 GKD R2W PhO zH zTLP lbAE OT9XW0 gb T Lb3 XRQ qGG 8o 4TPE 6WRc uMqMXh s6 x Ofv 8st jDi u8 rtJt TKSK jlGkGw t8 n FDx jA9 fCm iu FqMW jeox 5Akw3w Sd 8 1vK 8c4 C0O dj CHIs eHUO hyqGx3 Kw O lDq l1Y 4NY 4I vI7X DE4c FeXdFV bC F HaJ sb4 OC0 hu Mj65 J4fa vgGo7q Y5 X tLy izY DvH TR zd9x SRVg 0Pl6Z8 9X z fLh GlH IYB x9 OELo 5loZ x4wag4 cn F aCE KfA 0uz fw HMUV M9Qy eARFe3 Py 6 kQG GFx rPf 6T ZBQR la1a 6Aeker Xg k blz nSm mhY jc z3io WYjz h33sxR JM k Dos EAA hUO Oz aQfK Z0cn 5kqYPn W7 1 vCT 69a EC9 LD EQ5S BK4J fVFLAo Qp N dzZ HAl JaL Mn vRqH 7pBB qOr7fv oa e BSA 8TE btx y3 jwK3 v244 dlfwRL Dc g X14 vTp Wd8 zy YWjw eQmF yD5y5l DN l ZbA Jac cld kx Yn3V QYIV v6fwmH z1 9 w3y D4Y ezR M9 BduE L7D9 2wTHHc Do g ZxZ WRW Jxi pv fz48 ZVB7 FZtgK0 Y1 w oCo hLA i70 NO Ta06 u2sY GlmspV l2 x y0X B37 x43 k5 kaoZ deyE sDglRF Xi 9 6b6 w9B dId Ko gSUM NLLb CRzeQL UZ m i9O 2qv VzD hz v1r6 spSl jwNhG6 s6 i SdX hob hbp 2u sEdl 95LP AtrBBi bP C wSh pFC CUa yz xYS5 78ro f3UwDP sC I pES HB1 qFP SW 5tt0 I7oz jXun6c z4 c QLB J4M NmI 6F 08S2 Il8C 0JQYiU lI 1 YkK oiu bVt fG uOeg Sllv b4HGn3 bS Z LlX efa eN6 v1 B6m3 Ek3J SXUIjX 8P d NKI UFN JvP Ha Vr4T eARP dXEV7B xM 0 A7w 7je p8M 4Q ahOi hEVo Pxbi1V uG e tOt HbP tsO 5r 363R ez9n A5EJ55 pc L lQQ Hg6 X1J EW K8Cf 9kZm 14A5li rN 7 kKZ rY0 K10 It eJd3 kMGw opVnfY EG 2 orG fj0 TTA Xt ecJK eTM0 x1N9f0 lR p QkP M37 3r0 iA 6EFs 1F6f 4mjOB5 zu 5 GGT Ncl Bmk b5 jOOK 4yny My04oz 6m 6 Akz NnP JXh Bn PHRu N5Ly qSguz5 Nn W 2lU Yx3 fX4 hu LieH L30w g93Xwc gj 1 I9d O9b EPC R0 vc6A 005Q VFy1ly K7 o VRV pbJ zZn xY dcld XgQa DXY3gz x3 6 8OR JFK 9Uh XT e3xY bVHG oYqdHg Vy f 5kK Qzm mK4 9x xiAp jVkw gzJOdE 4v g hAv 9bV IHe wc Vqcb SUcF 1pHzol Nj T l1B urc Sam IP zkUS 8wwS a7wVWR 4D L VGf 1RF r59 9H tyGq hDT0 TDlooa mg j 9am png aWe nG XU2T zXLh IYOW5v 2d A rCG sLk s53 pW AuAy DQlF 6spKyd HT 9 Z1X n2s U1g 0D Llao YuLP PB6YKo D1 M 0fi qHU l4A Ia joiV Q6af VT6wvY Md 0 pCY BZp 7RX Hd xTb0 sjJ0 Beqpkc 8b N OgZ 0Tr 0wq h1 C2Hn YQXM 8nJ0Pf uG J Be2 vEQ26}   \end{align} where $C>0$ is a constant depending on~$p$. \end{Theorem}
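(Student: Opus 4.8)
\emph{Proof strategy.} The plan is to build the solution by a Picard-type iteration in which the convective nonlinearity is linearized with the help of the smoothing operator $P_{\le n}$ from~\eqref{ERTWERTHWRTWERTSGDGHCFGSDFGQSERWDFGDSFGHSDRGTEHDFGHDSFGSDGHGYUHDFGSDFASDFASGTWRT07}. First I would approximate $\uu_0$ in $L^p(\Omega;L^p(\RR^3))$ by divergence-free data $\uu_0^{(n)}$ that additionally belong to $L^2(\RR^3)$. Given $\un$, I then define $\unp$ as the strong solution of the linear problem
\begin{equation*}
\partial_t \unp - \Delta \unp + \mathcal{P}\bigl((P_{\le k(n)}\un\cdot\nabla)\unp\bigr) = \sigma(\un)\,\dot W, \qquad \unp|_{t=0}=\uu_0^{(n)}.
\end{equation*}
Here a smooth cut-off of a suitable norm of $\un$ may be inserted in the transport term (and in the noise) so that the iteration is self-contained, to be removed later by localization. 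Since $\div(P_{\le k(n)}\un)=0$ and, by \eqref{ERTWERTHWRTWERTSGDGHCFGSDFGQSERWDFGDSFGHSDRGTEHDFGHDSFGSDGHGYUHDFGSDFASDFASGTWRT08}--\eqref{ERTWERTHWRTWERTSGDGHCFGSDFGQSERWDFGDSFGHSDRGTEHDFGHDSFGSDGHGYUHDFGSDFASDFASGTWRT09}, the coefficient $P_{\le k(n)}\un$ is smooth and bounded in every $L^q$, each step is a stochastic heat equation with a smooth transport coefficient, whose solvability, regularity, and $L^p$-energy estimate are exactly the content of Section~\ref{sec4}.

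Next I would establish uniform-in-$n$ bounds. Running the $L^p$-energy identity of Section~\ref{sec4} for $\|\unp(s)\|_p^p$, the transport contribution is controlled by the divergence-free condition together with the $L^p$-boundedness of $\mathcal{P}$, while the It\^{o}-correction and martingale terms generated by $\sigma(\un)$ are estimated through H\"older, Gagliardo--Nirenberg, the BDG inequality, and the growth bounds \eqref{ERTWERTHWRTWERTSGDGHCFGSDFGQSERWDFGDSFGHSDRGTEHDFGHDSFGSDGHGYUHDFGSDFASDFASGTWRT03} and~\eqref{ERTWERTHWRTWERTSGDGHCFGSDFGQSERWDFGDSFGHSDRGTEHDFGHDSFGSDGHGYUHDFGSDFASDFASGTWRT05}; the strictness of the growth exponent $(3p/2)-$ is precisely what permits absorbing the top-order contributions into $\int_0^{t}\sum_j\int_{\RR^3}|\nabla(|\unp_j|^{p/2})|^2$. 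Because $\RR^3$ carries no Poincar\'e inequality, the auxiliary $L^2$-energy of $\unp$ can only be bounded with a step-dependent growing constant through \eqref{ERTWERTHWRTWERTSGDGHCFGSDFGQSERWDFGDSFGHSDRGTEHDFGHDSFGSDGHGYUHDFGSDFASDFASGTWRT003}; I would offset this growth when deriving the $L^p$-estimate by choosing the level $k(n)$ along a sufficiently sparse subsequence and invoking the Poincar\'e-type inequality valid under $P_{\le k(n)}$ established in Section~\ref{sec02}. With these bounds in hand, for each $n$ I define the stopping time $\tau_n$ as the first time $t$ at which the quantity
\begin{equation*}
\sup_{s\le t}\|\un(s)\|_p^p + \int_0^{t}\sum_j\int_{\RR^3}|\nabla(|\un_j(s,x)|^{p/2})|^2\,dx\,ds
\end{equation*}
reaches a fixed threshold --- the \emph{same} quantity that governs the topology of convergence --- and show there is a deterministic $T_0>0$ with $\PP(\tau_n\ge T_0)$ bounded below uniformly in $n$.

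The crux is the contraction step. Writing $w^{(n)}=\unp-\un$ and subtracting successive linearized equations, I would run the $L^p$-energy computation of Section~\ref{sec4} on $w^{(n)}$ up to $\tau_n\wedge\tau_{n-1}$. The noise difference $\sigma(\un)-\sigma(\unm)$ is handled by the Lipschitz-type hypothesis \eqref{ERTWERTHWRTWERTSGDGHCFGSDFGQSERWDFGDSFGHSDRGTEHDFGHDSFGSDGHGYUHDFGSDFASDFASGTWRT04}, and the difference of transport terms splits into $(P_{\le k(n)}\un\cdot\nabla)w^{(n)}$, absorbed as in the a priori estimate, and $((P_{\le k(n)}\un-P_{\le k(n-1)}\unm)\cdot\nabla)\un$, controlled using the smoothing gains \eqref{ERTWERTHWRTWERTSGDGHCFGSDFGQSERWDFGDSFGHSDRGTEHDFGHDSFGSDGHGYUHDFGSDFASDFASGTWRT08}--\eqref{ERTWERTHWRTWERTSGDGHCFGSDFGQSERWDFGDSFGHSDRGTEHDFGHDSFGSDGHGYUHDFGSDFASDFASGTWRT09} of the convolution kernel. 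After possibly shrinking $T_0$ this yields a factor $\tfrac12$ in front of the same norm of $w^{(n-1)}$, so that $\{\un\}$ converges in $L^p(\Omega;C([0,T_0\wedge\tau],L^p))$ to a limit $u$, where $\tau=\liminf_n\tau_n$. I expect the principal obstacle to be exactly the one flagged in the introduction: the quantity $\int\sum_j\int|\nabla(|\un_j|^{p/2})|^2$ is strongly nonlinear and does not satisfy the triangle inequality, which obstructs comparing $\tau_n$ with $\tau_{n-1}$ and transferring the Cauchy estimate to a single time interval shared by all iterates; the remedy is to bound this quantity, via the Sobolev inequality $\|h\|_{L^6(\RR^3)}\lesssim\|\nabla h\|_{L^2(\RR^3)}$ applied with $h=|\un_j|^{p/2}$, by an $L^{3p}$-type quantity that does obey the triangle inequality, thereby closing the estimate on the separation of consecutive stopping times and pinning down a common deterministic $T_0$.

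Finally, I would pass to the limit in the weak formulation of the truncated equation. Using the $L^p$-convergence of $\un$ together with the $L^p$-energy convergence lemma of Section~\ref{sec4}, one identifies $P_{\le k(n)}\un\cdot\nabla\unp\to(u\cdot\nabla)u$ in the sense of distributions and $\sigma(\un)\to\sigma(u)$ in $\mathbb{L}^p$, and one removes the cut-off on the random interval where it is inactive, obtaining a local strong solution $(u,\tau)$ of \eqref{ERTWERTHWRTWERTSGDGHCFGSDFGQSERWDFGDSFGHSDRGTEHDFGHDSFGSDGHGYUHDFGSDFASDFASGTWRT01}--\eqref{ERTWERTHWRTWERTSGDGHCFGSDFGQSERWDFGDSFGHSDRGTEHDFGHDSFGSDGHGYUHDFGSDFASDFASGTWRT02} with $\PP(\tau>0)=1$. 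The claimed energy estimate then follows from the uniform bounds of the second step and Fatou's lemma. Pathwise uniqueness is obtained by applying the same $L^p$-energy computation to the difference of two local strong solutions with the same datum, localized by a stopping time up to which both solutions have finite energy, and concluding with Gronwall's inequality.
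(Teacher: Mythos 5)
Your skeleton is the paper's: mollify with $P_{\le k}$ and cut off $\Vert u\Vert_p$, use the linear $L^p$ theory of Section~\ref{sec4}, prove uniform bounds on $\sup_s\Vert u\Vert_p^p$ plus the dissipation $\int\sum_j\Vert\nabla(|u_j|^{p/2})\Vert_2^2$, repair the non-subadditive dissipation functional through the Sobolev embedding into $L^{3p}$ when defining stopping times, exploit the $L^2$ energy and the Poincar\'e-type gain of projector differences in the Cauchy step, and finish with Gr\"onwall for uniqueness. But two steps as you state them would not go through. First, your linear step is not covered by Theorem~\ref{T02}: that theorem treats $\partial_t u=\Delta u+\nabla f+g\dot W$ with a \emph{given} forcing $f$ in divergence form, and says nothing about a transport term $\mathcal{P}\bigl((P_{\le k(n)}u^{(n)}\cdot\nabla)u^{(n+1)}\bigr)$ acting on the unknown; moreover, because of the Leray projector the test function $|u^{(n+1)}_j|^{p-2}u^{(n+1)}_j$ does not annihilate this term, so the $L^p$ identity does not close for free either. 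The paper avoids this by putting the \emph{entire} quadratic term at the previous iterate, $P_{\le k}\mathcal{P}\bigl((u^{(n-1)}\cdot\nabla)P_{\le k}u^{(n-1)}\bigr)$, so that each Picard step is literally of the form covered by Theorem~\ref{T02} (see \eqref{ERTWERTHWRTWERTSGDGHCFGSDFGQSERWDFGDSFGHSDRGTEHDFGHDSFGSDGHGYUHDFGSDFASDFASGTWRT58}), and it first solves the truncated system \eqref{ERTWERTHWRTWERTSGDGHCFGSDFGQSERWDFGDSFGHSDRGTEHDFGHDSFGSDGHGYUHDFGSDFASDFASGTWRT56} globally for each fixed $k$ (Theorem~\ref{T03}) before letting $k(n)\to\infty$, rather than changing $k$ with the Picard index as your diagonal scheme does. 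Your scheme may be salvageable, but it needs its own linear theory or the paper's form of linearization, not a citation of Section~\ref{sec4}.

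Second, the passage from pairwise Cauchy estimates to a single almost surely positive stopping time is the delicate heart of the argument and is glossed over. A uniform lower bound on $\PP(\tau_n\ge T_0)$ does not give almost sure convergence on a common interval, and no deterministic $T_0$ works: the common time in the paper is genuinely random. What is actually used is (i) stopping times built from $\sup_s\Vert u^{(n)}\Vert_p^p+\int\Vert u^{(n)}\Vert_{3p}^p\,ds$ as in \eqref{ERTWERTHWRTWERTSGDGHCFGSDFGQSERWDFGDSFGHSDRGTEHDFGHDSFGSDGHGYUHDFGSDFASDFASGTWRT96} (note you initially define $\tau_n$ through the gradient functional itself, which is exactly what the paper explains cannot work); (ii) the uniform small-time probability estimate \eqref{ERTWERTHWRTWERTSGDGHCFGSDFGQSERWDFGDSFGHSDRGTEHDFGHDSFGSDGHGYUHDFGSDFASDFASGTWRT98} of Theorem~\ref{T04}; and (iii) a summable Cauchy rate along a subsequence together with a Borel--Cantelli argument over the events where consecutive differences are not small, which makes the auxiliary stopping times monotone on a set of full measure and their limit $\tau$ almost surely positive (Lemma~\ref{L07}). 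Relatedly, your threshold stopping times implicitly require a deterministic bound $\Vert u_0\Vert_p\le K$; the paper assumes it throughout Section~\ref{sec03} and removes it only at the end of the proof of Theorem~\ref{T01} by decomposing over the events $\{N-1\le\Vert u_0\Vert_p<N\}$, a step missing from your outline. Also note that the choice of $k(n)$ cannot be an arbitrary ``sufficiently sparse'' sequence: it must dominate $n\,\EE[\Vert\varphi(\cdot/n)u_0\Vert_2^2]$ precisely so that the $1/(k(m)\wedge k(n))$ gain beats the growing $L^2$ energy of the truncated data (Lemmas~\ref{L0} and~\ref{L06}), which also requires the additional hypothesis \eqref{ERTWERTHWRTWERTSGDGHCFGSDFGQSERWDFGDSFGHSDRGTEHDFGHDSFGSDGHGYUHDFGSDFASDFASGTWRT003} that your outline invokes only in passing.
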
 \par \subsection{Auxiliary results} In this section, we introduce several properties of the convolution operator $\pn$ (cf.~\eqref{ERTWERTHWRTWERTSGDGHCFGSDFGQSERWDFGDSFGHSDRGTEHDFGHDSFGSDGHGYUHDFGSDFASDFASGTWRT07}--\eqref{ERTWERTHWRTWERTSGDGHCFGSDFGQSERWDFGDSFGHSDRGTEHDFGHDSFGSDGHGYUHDFGSDFASDFASGTWRT09}) and state the main results. \par \cole \begin{Lemma} \label{L01} Let $f$ be an $l^2$-valued function. Then there exists a universal constant $C$ such that \begin{equation} \Vert P_{\le n}f\Vert_{\mathbb{L}^q} \leq C\Vert f\Vert_{\mathbb{L}^q} \comma 1\leq q< \infty ,k \label{ERTWERTHWRTWERTSGDGHCFGSDFGQSERWDFGDSFGHSDRGTEHDFGHDSFGSDGHGYUHDFGSDFASDFASGTWRT10} \end{equation} for all~$n\in\NNp$. If $1 \leq r< q< \infty$, then \begin{equation} \Vert P_{\le n}f\Vert_{\mathbb{L}^q} \leq C\Vert f\Vert_{\mathbb{L}^r} , \label{ERTWERTHWRTWERTSGDGHCFGSDFGQSERWDFGDSFGHSDRGTEHDFGHDSFGSDGHGYUHDFGSDFASDFASGTWRT11} \end{equation} where the constant $C$ depends on $n$, $r$, and~$q$. \end{Lemma}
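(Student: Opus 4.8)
The plan is to reduce both estimates to the scalar convolution bounds \eqref{ERTWERTHWRTWERTSGDGHCFGSDFGQSERWDFGDSFGHSDRGTEHDFGHDSFGSDGHGYUHDFGSDFASDFASGTWRT08}--\eqref{ERTWERTHWRTWERTSGDGHCFGSDFGQSERWDFGDSFGHSDRGTEHDFGHDSFGSDGHGYUHDFGSDFASDFASGTWRT09} by a pointwise domination in the $l^2$ variable. Recall that $\pn$ acts componentwise as convolution with the kernel $K_n:=\mathcal{F}^{-1}\psi_n$, that is, $(\pn f)(x)\mathbf{e}_k=\int_{\RR^d}K_n(x-y)\,(f(y)\mathbf{e}_k)\,dy$ for every $k$, and that $K_n(x)=n^d\pi^{d/2}e^{-\pi^2 n^2|x|^2}\ge 0$. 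Introduce the nonnegative scalar function $g(x):=\Vert f(x)\Vert_{l^2}$, so that $\Vert g\Vert_q=\Vert f\Vert_{\mathbb{L}^q}$ and, likewise, $\Vert g\Vert_r=\Vert f\Vert_{\mathbb{L}^r}$.

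The key (and essentially only) step is the pointwise inequality
\begin{equation}
\Vert(\pn f)(x)\Vert_{l^2}=\Bigl(\sum_k\Bigl|\int_{\RR^d}K_n(x-y)\,(f(y)\mathbf{e}_k)\,dy\Bigr|^2\Bigr)^{1/2}\le\int_{\RR^d}K_n(x-y)\,\Vert f(y)\Vert_{l^2}\,dy=(\pn g)(x),
\end{equation}
which is Minkowski's integral inequality applied to the counting-measure $L^2$ norm in the index $k$, combined with $K_n\ge 0$ (so $|K_n|=K_n$). Taking $L^q(\RR^d)$ norms in $x$ and invoking the scalar bound \eqref{ERTWERTHWRTWERTSGDGHCFGSDFGQSERWDFGDSFGHSDRGTEHDFGHDSFGSDGHGYUHDFGSDFASDFASGTWRT08} for the scalar function $g$ gives $\Vert\pn f\Vert_{\mathbb{L}^q}\le\Vert\pn g\Vert_q\le C\Vert g\Vert_q=C\Vert f\Vert_{\mathbb{L}^q}$, with $C=\Vert\mathcal{F}^{-1}\psi\Vert_{L^1}$ independent of $n$; this is \eqref{ERTWERTHWRTWERTSGDGHCFGSDFGQSERWDFGDSFGHSDRGTEHDFGHDSFGSDGHGYUHDFGSDFASDFASGTWRT10}. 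For \eqref{ERTWERTHWRTWERTSGDGHCFGSDFGQSERWDFGDSFGHSDRGTEHDFGHDSFGSDGHGYUHDFGSDFASDFASGTWRT11}, I would instead invoke the scalar bound \eqref{ERTWERTHWRTWERTSGDGHCFGSDFGQSERWDFGDSFGHSDRGTEHDFGHDSFGSDGHGYUHDFGSDFASDFASGTWRT09} (Young's inequality with $1+1/q=1/s+1/r$, where $1/s=1-(1/r-1/q)\in(0,1)$ since $1\le r<q<\infty$, so that $K_n\in L^s$ with $\Vert K_n\Vert_{L^s}=n^{d(1-1/s)}\Vert\mathcal{F}^{-1}\psi\Vert_{L^s}<\infty$) to obtain $\Vert\pn f\Vert_{\mathbb{L}^q}\le\Vert\pn g\Vert_q\le C\Vert g\Vert_r=C\Vert f\Vert_{\mathbb{L}^r}$ with $C$ depending on $n$, $r$, and $q$.

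The one point that deserves care is the justification of the displayed Minkowski step: for a.e.\ $x$ one must know that $y\mapsto K_n(x-y)f(y)$ is integrable into $l^2$ and that the vector-valued triangle inequality may be applied coordinatewise. This follows from the boundedness and rapid decay of $K_n$ together with the assumption that each component $f\mathbf{e}_k$ belongs to $L^q(\RR^d)$; if one prefers, one may first prove the inequality for the finite truncations $\sum_{k\le N}$ and then pass to the limit by monotone convergence, thereby avoiding any direct appeal to Bochner-space theory. Once this is in place the lemma is immediate from \eqref{ERTWERTHWRTWERTSGDGHCFGSDFGQSERWDFGDSFGHSDRGTEHDFGHDSFGSDGHGYUHDFGSDFASDFASGTWRT08}--\eqref{ERTWERTHWRTWERTSGDGHCFGSDFGQSERWDFGDSFGHSDRGTEHDFGHDSFGSDGHGYUHDFGSDFASDFASGTWRT09}.
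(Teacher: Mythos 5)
Your proposal is correct and follows essentially the same route as the paper: the paper likewise uses Minkowski's inequality in the $l^2$ variable to dominate $\Vert (\pn f)(x)\Vert_{l^2}$ by the scalar convolution $\int |\mathcal{F}^{-1}\psi_n(y)|\,\Vert f(x-y)\Vert_{l^2}\,dy$, and then concludes \eqref{ERTWERTHWRTWERTSGDGHCFGSDFGQSERWDFGDSFGHSDRGTEHDFGHDSFGSDGHGYUHDFGSDFASDFASGTWRT10} by Minkowski's integral inequality and \eqref{ERTWERTHWRTWERTSGDGHCFGSDFGQSERWDFGDSFGHSDRGTEHDFGHDSFGSDGHGYUHDFGSDFASDFASGTWRT11} by Young's convolution inequality. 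The only cosmetic difference is that you quote the scalar bounds \eqref{ERTWERTHWRTWERTSGDGHCFGSDFGQSERWDFGDSFGHSDRGTEHDFGHDSFGSDGHGYUHDFGSDFASDFASGTWRT08}--\eqref{ERTWERTHWRTWERTSGDGHCFGSDFGQSERWDFGDSFGHSDRGTEHDFGHDSFGSDGHGYUHDFGSDFASDFASGTWRT09} applied to $g=\Vert f(\cdot)\Vert_{l^2}$, whereas the paper writes these $L^q$ steps out inline.
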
 \colb \par \begin{proof}[Proof of Lemma~\ref{L01}] To prove the first inequality, we write \begin{align} \begin{split} &\Vert P_{\leq n}f\Vert_{\mathbb{L}^q} = \left(\int_{\RR^\dd}\Vert P_{\leq n}f(x)\Vert^q_{l^2} \,dx\right)^{1/q} = \left(\int_{\RR^\dd}   \left\Vert            \int_{\RR^\dd} \mathcal{F}^{-1}\psi_n(y)f(x-y)\,dy            \right\Vert_{l^2}^{q} \,dx \right)^{1/q} \\&\qquad\leq \left(\int_{\RR^\dd}   \left(  \int_{\RR^\dd} \left\Vert\mathcal{F}^{-1}\psi_n(y)f(x-y)\right\Vert_{l^2}\,dy \right)^{q} \,dx \right)^{1/q} \\&\qquad \leq \int_{\RR^\dd} \left( \int_{\RR^\dd} \left\Vert\mathcal{F}^{-1}\psi_n(y)f(x-y)\right\Vert_{l^2}^q\,dx \right)^{1/q} \,dy = \Vert \mathcal{F}^{-1}\psi\Vert_1 \Vert f\Vert_{\mathbb{L}^q} , \end{split} \llabel{ u8 rtJt TKSK jlGkGw t8 n FDx jA9 fCm iu FqMW jeox 5Akw3w Sd 8 1vK 8c4 C0O dj CHIs eHUO hyqGx3 Kw O lDq l1Y 4NY 4I vI7X DE4c FeXdFV bC F HaJ sb4 OC0 hu Mj65 J4fa vgGo7q Y5 X tLy izY DvH TR zd9x SRVg 0Pl6Z8 9X z fLh GlH IYB x9 OELo 5loZ x4wag4 cn F aCE KfA 0uz fw HMUV M9Qy eARFe3 Py 6 kQG GFx rPf 6T ZBQR la1a 6Aeker Xg k blz nSm mhY jc z3io WYjz h33sxR JM k Dos EAA hUO Oz aQfK Z0cn 5kqYPn W7 1 vCT 69a EC9 LD EQ5S BK4J fVFLAo Qp N dzZ HAl JaL Mn vRqH 7pBB qOr7fv oa e BSA 8TE btx y3 jwK3 v244 dlfwRL Dc g X14 vTp Wd8 zy YWjw eQmF yD5y5l DN l ZbA Jac cld kx Yn3V QYIV v6fwmH z1 9 w3y D4Y ezR M9 BduE L7D9 2wTHHc Do g ZxZ WRW Jxi pv fz48 ZVB7 FZtgK0 Y1 w oCo hLA i70 NO Ta06 u2sY GlmspV l2 x y0X B37 x43 k5 kaoZ deyE sDglRF Xi 9 6b6 w9B dId Ko gSUM NLLb CRzeQL UZ m i9O 2qv VzD hz v1r6 spSl jwNhG6 s6 i SdX hob hbp 2u sEdl 95LP AtrBBi bP C wSh pFC CUa yz xYS5 78ro f3UwDP sC I pES HB1 qFP SW 5tt0 I7oz jXun6c z4 c QLB J4M NmI 6F 08S2 Il8C 0JQYiU lI 1 YkK oiu bVt fG uOeg Sllv b4HGn3 bS Z LlX efa eN6 v1 B6m3 Ek3J SXUIjX 8P d NKI UFN JvP Ha Vr4T eARP dXEV7B xM 0 A7w 7je p8M 4Q ahOi hEVo Pxbi1V uG e tOt HbP tsO 5r 363R ez9n A5EJ55 pc L lQQ Hg6 X1J EW K8Cf 9kZm 14A5li rN 7 kKZ rY0 K10 It eJd3 kMGw opVnfY EG 2 orG fj0 TTA Xt ecJK eTM0 x1N9f0 lR p QkP M37 3r0 iA 6EFs 1F6f 4mjOB5 zu 5 GGT Ncl Bmk b5 jOOK 4yny My04oz 6m 6 Akz NnP JXh Bn PHRu N5Ly qSguz5 Nn W 2lU Yx3 fX4 hu LieH L30w g93Xwc gj 1 I9d O9b EPC R0 vc6A 005Q VFy1ly K7 o VRV pbJ zZn xY dcld XgQa DXY3gz x3 6 8OR JFK 9Uh XT e3xY bVHG oYqdHg Vy f 5kK Qzm mK4 9x xiAp jVkw gzJOdE 4v g hAv 9bV IHe wc Vqcb SUcF 1pHzol Nj T l1B urc Sam IP zkUS 8wwS a7wVWR 4D L VGf 1RF r59 9H tyGq hDT0 TDlooa mg j 9am png aWe nG XU2T zXLh IYOW5v 2d A rCG sLk s53 pW AuAy DQlF 6spKyd HT 9 Z1X n2s U1g 0D Llao YuLP PB6YKo D1 M 0fi qHU l4A Ia joiV Q6af VT6wvY Md 0 pCY BZp 7RX Hd xTb0 sjJ0 Beqpkc 8b N OgZ 0Tr 0wq h1 C2Hn YQXM 8nJ0Pf uG J Be2 vuq Duk LV AJwv 2tYc JOM1uK h7 p cgo iiK t0b 3e URec DVM7 ivRMh1 T6 p AWl upj kEj UL R3xN VAu5 kEbnrV HE 1 OrJ 2bx dUP yD vyVi x6sC BpGDSx jB C n9P Fiu xkF vw 0QPo fRjy 2OFEQ12} \end{align} where we used the Minkowski's inequality in the third and fourth steps. Next, from the second line above we infer that \begin{align} \begin{split} &\Vert P_{\leq n}f\Vert_{\mathbb{L}^q} \leq \left(\int_{\RR^\dd} \left( \int_{\RR^\dd} |\mathcal{F}^{-1}\psi_n(y)| \left\Vert f(x-y)\right\Vert_{l^2}\,dy \right)^{q} \,dx \right)^{1/q} \leq  C_{n,r,q}\Vert f\Vert_{\mathbb{L}^r} , \end{split}    \llabel{ X tLy izY DvH TR zd9x SRVg 0Pl6Z8 9X z fLh GlH IYB x9 OELo 5loZ x4wag4 cn F aCE KfA 0uz fw HMUV M9Qy eARFe3 Py 6 kQG GFx rPf 6T ZBQR la1a 6Aeker Xg k blz nSm mhY jc z3io WYjz h33sxR JM k Dos EAA hUO Oz aQfK Z0cn 5kqYPn W7 1 vCT 69a EC9 LD EQ5S BK4J fVFLAo Qp N dzZ HAl JaL Mn vRqH 7pBB qOr7fv oa e BSA 8TE btx y3 jwK3 v244 dlfwRL Dc g X14 vTp Wd8 zy YWjw eQmF yD5y5l DN l ZbA Jac cld kx Yn3V QYIV v6fwmH z1 9 w3y D4Y ezR M9 BduE L7D9 2wTHHc Do g ZxZ WRW Jxi pv fz48 ZVB7 FZtgK0 Y1 w oCo hLA i70 NO Ta06 u2sY GlmspV l2 x y0X B37 x43 k5 kaoZ deyE sDglRF Xi 9 6b6 w9B dId Ko gSUM NLLb CRzeQL UZ m i9O 2qv VzD hz v1r6 spSl jwNhG6 s6 i SdX hob hbp 2u sEdl 95LP AtrBBi bP C wSh pFC CUa yz xYS5 78ro f3UwDP sC I pES HB1 qFP SW 5tt0 I7oz jXun6c z4 c QLB J4M NmI 6F 08S2 Il8C 0JQYiU lI 1 YkK oiu bVt fG uOeg Sllv b4HGn3 bS Z LlX efa eN6 v1 B6m3 Ek3J SXUIjX 8P d NKI UFN JvP Ha Vr4T eARP dXEV7B xM 0 A7w 7je p8M 4Q ahOi hEVo Pxbi1V uG e tOt HbP tsO 5r 363R ez9n A5EJ55 pc L lQQ Hg6 X1J EW K8Cf 9kZm 14A5li rN 7 kKZ rY0 K10 It eJd3 kMGw opVnfY EG 2 orG fj0 TTA Xt ecJK eTM0 x1N9f0 lR p QkP M37 3r0 iA 6EFs 1F6f 4mjOB5 zu 5 GGT Ncl Bmk b5 jOOK 4yny My04oz 6m 6 Akz NnP JXh Bn PHRu N5Ly qSguz5 Nn W 2lU Yx3 fX4 hu LieH L30w g93Xwc gj 1 I9d O9b EPC R0 vc6A 005Q VFy1ly K7 o VRV pbJ zZn xY dcld XgQa DXY3gz x3 6 8OR JFK 9Uh XT e3xY bVHG oYqdHg Vy f 5kK Qzm mK4 9x xiAp jVkw gzJOdE 4v g hAv 9bV IHe wc Vqcb SUcF 1pHzol Nj T l1B urc Sam IP zkUS 8wwS a7wVWR 4D L VGf 1RF r59 9H tyGq hDT0 TDlooa mg j 9am png aWe nG XU2T zXLh IYOW5v 2d A rCG sLk s53 pW AuAy DQlF 6spKyd HT 9 Z1X n2s U1g 0D Llao YuLP PB6YKo D1 M 0fi qHU l4A Ia joiV Q6af VT6wvY Md 0 pCY BZp 7RX Hd xTb0 sjJ0 Beqpkc 8b N OgZ 0Tr 0wq h1 C2Hn YQXM 8nJ0Pf uG J Be2 vuq Duk LV AJwv 2tYc JOM1uK h7 p cgo iiK t0b 3e URec DVM7 ivRMh1 T6 p AWl upj kEj UL R3xN VAu5 kEbnrV HE 1 OrJ 2bx dUP yD vyVi x6sC BpGDSx jB C n9P Fiu xkF vw 0QPo fRjy 2OFItV eD B tDz lc9 xVy A0 de9Y 5h8c 7dYCFk Fl v WPD SuN VI6 MZ 72u9 MBtK 9BGLNs Yp l X2y b5U HgH AD bW8X Rzkv UJZShW QH G oKX yVA rsH TQ 1Vbd dK2M IxmTf6 wE T 9cX Fbu uVx CbEQ13} \end{align} utilizing Young's convolution inequality. This proves~\eqref{ERTWERTHWRTWERTSGDGHCFGSDFGQSERWDFGDSFGHSDRGTEHDFGHDSFGSDGHGYUHDFGSDFASDFASGTWRT11}. \end{proof}
\par The main difference between \eqref{ERTWERTHWRTWERTSGDGHCFGSDFGQSERWDFGDSFGHSDRGTEHDFGHDSFGSDGHGYUHDFGSDFASDFASGTWRT10} and  \eqref{ERTWERTHWRTWERTSGDGHCFGSDFGQSERWDFGDSFGHSDRGTEHDFGHDSFGSDGHGYUHDFGSDFASDFASGTWRT11} is the relation of the generic constant with~$n$. Next we prove that $P_{\le n}$ converges to the identity operator as $n$ approaches infinity. \par \cole \begin{Lemma} \label{L02} If $q\in [1,\infty)$ and $f$ is a scalar-valued function in $L^q(\RR^d)$, then   \begin{equation}   \Vert P_{\le n}f-f\Vert_{q} \to 0    ,    \label{ERTWERTHWRTWERTSGDGHCFGSDFGQSERWDFGDSFGHSDRGTEHDFGHDSFGSDGHGYUHDFGSDFASDFASGTWRT14}   \end{equation} as~$n\to \infty$. If $q\in [2,\infty)$ and $f$ is an $l^2$-valued function in $\mathbb{L}^q(\RR^d)$, then   \begin{equation}   \Vert P_{\le n}f-f\Vert_{\mathbb{L}^q} \to 0       ,    \label{ERTWERTHWRTWERTSGDGHCFGSDFGQSERWDFGDSFGHSDRGTEHDFGHDSFGSDGHGYUHDFGSDFASDFASGTWRT15}   \end{equation} as~$n\to \infty$. \end{Lemma}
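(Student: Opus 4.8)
The plan is to establish the scalar convergence \eqref{ERTWERTHWRTWERTSGDGHCFGSDFGQSERWDFGDSFGHSDRGTEHDFGHDSFGSDGHGYUHDFGSDFASDFASGTWRT14} by the classical approximation-of-the-identity argument, and then to deduce the $l^2$-valued statement \eqref{ERTWERTHWRTWERTSGDGHCFGSDFGQSERWDFGDSFGHSDRGTEHDFGHDSFGSDGHGYUHDFGSDFASDFASGTWRT15} from it by working componentwise.

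For \eqref{ERTWERTHWRTWERTSGDGHCFGSDFGQSERWDFGDSFGHSDRGTEHDFGHDSFGSDGHGYUHDFGSDFASDFASGTWRT14}, I would start from $P_{\le n}f = K_n * f$ with $K_n := \mathcal{F}^{-1}\psi_n$, which satisfies $K_n(x) = n^d K(nx)$ for $K := \mathcal{F}^{-1}\psi = \pi^{d/2}e^{-\pi^2|x|^2}$. Three facts identify $\{K_n\}$ as a standard mollifier family: $K_n \ge 0$; $\int_{\RR^d} K_n\,dx = 1$ (either from the explicit Gaussian or from $\mathcal{F}(K_n)(0) = \psi(0) = 1$); and $\int_{|y|\ge\delta} K_n(y)\,dy \to 0$ for each fixed $\delta > 0$. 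First I would prove the convergence on the dense subclass $C_c^\infty(\RR^d) \subset L^q$ by writing $(K_n*g)(x) - g(x) = \int_{\RR^d} K_n(y)\bigl(g(x-y)-g(x)\bigr)\,dy$ and using the uniform continuity and compact support of $g$ to get $\Vert K_n*g - g\Vert_q \to 0$. Then, since \eqref{ERTWERTHWRTWERTSGDGHCFGSDFGQSERWDFGDSFGHSDRGTEHDFGHDSFGSDGHGYUHDFGSDFASDFASGTWRT08} gives $\Vert P_{\le n}f\Vert_q \le C\Vert f\Vert_q$ uniformly in $n$, a three-$\epsilon$ argument extends the convergence to every $f \in L^q$: choose $g\in C_c^\infty(\RR^d)$ with $\Vert f-g\Vert_q$ small and bound $\Vert P_{\le n}f - f\Vert_q \le \Vert P_{\le n}(f-g)\Vert_q + \Vert P_{\le n}g - g\Vert_q + \Vert g - f\Vert_q$.

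For \eqref{ERTWERTHWRTWERTSGDGHCFGSDFGQSERWDFGDSFGHSDRGTEHDFGHDSFGSDGHGYUHDFGSDFASDFASGTWRT15}, fix $f\in\mathbb{L}^q$ with $q\ge 2$ and first reduce to finitely many coordinates: putting $f^{(N)} := (f_1,\ldots,f_N,0,0,\ldots)$, one has $\Vert f(x)-f^{(N)}(x)\Vert_{l^2}^q \le \Vert f(x)\Vert_{l^2}^q$ pointwise and $\Vert f(x)-f^{(N)}(x)\Vert_{l^2} \to 0$ for a.e.\ $x$, so dominated convergence gives $\Vert f - f^{(N)}\Vert_{\mathbb{L}^q} \to 0$ as $N\to\infty$. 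Since $P_{\le n}$ acts componentwise and $f^{(N)}$ has only finitely many nonzero (scalar $L^q$) components, for fixed $N$ I would estimate
\[
\Vert P_{\le n}f^{(N)} - f^{(N)}\Vert_{\mathbb{L}^q}^q = \int_{\RR^d}\Bigl(\,\sum_{k=1}^N |P_{\le n}f_k - f_k|^2\Bigr)^{q/2}\,dx \le N^{q/2-1}\sum_{k=1}^N \Vert P_{\le n}f_k - f_k\Vert_q^q,
\]
where the inequality is H\"older on the index set $\{1,\ldots,N\}$ and is the one place the hypothesis $q\ge 2$ is genuinely used; since $|f_k(x)| \le \Vert f(x)\Vert_{l^2}$, each $f_k$ lies in $L^q$, so the right-hand side tends to $0$ as $n\to\infty$ by the scalar statement \eqref{ERTWERTHWRTWERTSGDGHCFGSDFGQSERWDFGDSFGHSDRGTEHDFGHDSFGSDGHGYUHDFGSDFASDFASGTWRT14}. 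Combining this with the truncation estimate and the uniform bound \eqref{ERTWERTHWRTWERTSGDGHCFGSDFGQSERWDFGDSFGHSDRGTEHDFGHDSFGSDGHGYUHDFGSDFASDFASGTWRT10} in the same three-$\epsilon$ fashion yields \eqref{ERTWERTHWRTWERTSGDGHCFGSDFGQSERWDFGDSFGHSDRGTEHDFGHDSFGSDGHGYUHDFGSDFASDFASGTWRT15}.

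The main obstacle is organizational rather than analytical: the mollifier estimate and the three-$\epsilon$ bookkeeping are routine once $\{K_n\}$ is recognized as an approximate identity, but one must be careful to truncate the $l^2$-valued function to finitely many components \emph{before} invoking the scalar result and to control the $l^2$-sum of the component errors by their $\ell^q$-sum --- the step that costs the factor $N^{q/2-1}$ and is valid precisely because $q\ge 2$. No uniform-in-$N$ control of the second term is needed, since $N$ is frozen before $n\to\infty$.
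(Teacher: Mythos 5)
Your proof is correct, and its overall skeleton --- approximate identity, truncation of the $l^2$-valued function to finitely many nonzero components, and a three-$\epsilon$ argument based on the uniform bound \eqref{ERTWERTHWRTWERTSGDGHCFGSDFGQSERWDFGDSFGHSDRGTEHDFGHDSFGSDGHGYUHDFGSDFASDFASGTWRT10} --- is the same as the paper's; the two individual steps are handled differently. For the scalar statement the paper does not pass through $C_c^\infty$: it bounds $\Vert P_{\le n}f-f\Vert_q$ by $\int_{\RR^d}|\mathcal{F}^{-1}\psi(y)|\,\Vert f(\cdot-y/n)-f(\cdot)\Vert_q\,dy$ via Minkowski's integral inequality and concludes from continuity of translation in $L^q$ plus dominated convergence; your density argument on $C_c^\infty$ together with \eqref{ERTWERTHWRTWERTSGDGHCFGSDFGQSERWDFGDSFGHSDRGTEHDFGHDSFGSDGHGYUHDFGSDFASDFASGTWRT08} is the standard equivalent. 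For the $l^2$-valued statement, the paper pushes Minkowski's inequality through the vector-valued convolution (this is precisely where its hypothesis $q\ge 2$ enters) to obtain a bound by $\bigl(\sum_j\bigl(\int|\mathcal{F}^{-1}\psi(y)|\,\Vert f_j(\cdot-y/n)-f_j\Vert_q\,dy\bigr)^2\bigr)^{1/2}$ and then interchanges $\limsup$ with the finite sum for the truncated function, whereas you reduce the truncated function directly to the scalar case by the finite-dimensional H\"older bound at the cost of $N^{q/2-1}$. Your route is more elementary (no vector-valued Minkowski), and it also reveals that $q\ge 2$ is not really essential to this scheme: for fixed $N$ one could just as well use $\Vert\cdot\Vert_{l^2}\le\Vert\cdot\Vert_{l^1}$ on $N$ terms, so the same argument would cover $1\le q<2$, while the paper's Minkowski step genuinely needs $q\ge 2$. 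Since $N$ is frozen before $n\to\infty$, the $N$-dependent constant is harmless, exactly as you observe.
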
 \colb \par \begin{proof}[Proof of Lemma~\ref{L02}] First note that $\int_{\RR^\dd} \mathcal{F}^{-1}\psi(y)\,dy=\psi(0)=1$. Then,   \begin{align}   \begin{split}    &\Vert P_{\le n}f-f\Vert_{q}    =    \left(\int_{\RR^\dd}    \left|    \int_{\RR^\dd}\Bigl(n^d\mathcal{F}^{-1}\psi(ny)f(x-y)-\mathcal{F}^{-1}\psi(y)f(x)\Bigr)\,dy    \right|^q    \,dx\right)^{1/q}    \\&\indeq \indeq=    \left(\int_{\RR^\dd}    \left|    \int_{\RR^\dd}\mathcal{F}^{-1}\psi(y)\left(f\left(x-\frac{y}{n}\right)-f(x)\right)\,dy    \right|^q    \,dx\right)^{1/q}    \\&\indeq \indeq\leq    \int_{\RR^\dd} \left|\mathcal{F}^{-1}\psi(y)\right|    \left      \Vert f\left(\cdot-\frac{y}{n}\right)-f(\cdot)    \right\Vert_{q}\,dy    .    \end{split}    \llabel{WYjz h33sxR JM k Dos EAA hUO Oz aQfK Z0cn 5kqYPn W7 1 vCT 69a EC9 LD EQ5S BK4J fVFLAo Qp N dzZ HAl JaL Mn vRqH 7pBB qOr7fv oa e BSA 8TE btx y3 jwK3 v244 dlfwRL Dc g X14 vTp Wd8 zy YWjw eQmF yD5y5l DN l ZbA Jac cld kx Yn3V QYIV v6fwmH z1 9 w3y D4Y ezR M9 BduE L7D9 2wTHHc Do g ZxZ WRW Jxi pv fz48 ZVB7 FZtgK0 Y1 w oCo hLA i70 NO Ta06 u2sY GlmspV l2 x y0X B37 x43 k5 kaoZ deyE sDglRF Xi 9 6b6 w9B dId Ko gSUM NLLb CRzeQL UZ m i9O 2qv VzD hz v1r6 spSl jwNhG6 s6 i SdX hob hbp 2u sEdl 95LP AtrBBi bP C wSh pFC CUa yz xYS5 78ro f3UwDP sC I pES HB1 qFP SW 5tt0 I7oz jXun6c z4 c QLB J4M NmI 6F 08S2 Il8C 0JQYiU lI 1 YkK oiu bVt fG uOeg Sllv b4HGn3 bS Z LlX efa eN6 v1 B6m3 Ek3J SXUIjX 8P d NKI UFN JvP Ha Vr4T eARP dXEV7B xM 0 A7w 7je p8M 4Q ahOi hEVo Pxbi1V uG e tOt HbP tsO 5r 363R ez9n A5EJ55 pc L lQQ Hg6 X1J EW K8Cf 9kZm 14A5li rN 7 kKZ rY0 K10 It eJd3 kMGw opVnfY EG 2 orG fj0 TTA Xt ecJK eTM0 x1N9f0 lR p QkP M37 3r0 iA 6EFs 1F6f 4mjOB5 zu 5 GGT Ncl Bmk b5 jOOK 4yny My04oz 6m 6 Akz NnP JXh Bn PHRu N5Ly qSguz5 Nn W 2lU Yx3 fX4 hu LieH L30w g93Xwc gj 1 I9d O9b EPC R0 vc6A 005Q VFy1ly K7 o VRV pbJ zZn xY dcld XgQa DXY3gz x3 6 8OR JFK 9Uh XT e3xY bVHG oYqdHg Vy f 5kK Qzm mK4 9x xiAp jVkw gzJOdE 4v g hAv 9bV IHe wc Vqcb SUcF 1pHzol Nj T l1B urc Sam IP zkUS 8wwS a7wVWR 4D L VGf 1RF r59 9H tyGq hDT0 TDlooa mg j 9am png aWe nG XU2T zXLh IYOW5v 2d A rCG sLk s53 pW AuAy DQlF 6spKyd HT 9 Z1X n2s U1g 0D Llao YuLP PB6YKo D1 M 0fi qHU l4A Ia joiV Q6af VT6wvY Md 0 pCY BZp 7RX Hd xTb0 sjJ0 Beqpkc 8b N OgZ 0Tr 0wq h1 C2Hn YQXM 8nJ0Pf uG J Be2 vuq Duk LV AJwv 2tYc JOM1uK h7 p cgo iiK t0b 3e URec DVM7 ivRMh1 T6 p AWl upj kEj UL R3xN VAu5 kEbnrV HE 1 OrJ 2bx dUP yD vyVi x6sC BpGDSx jB C n9P Fiu xkF vw 0QPo fRjy 2OFItV eD B tDz lc9 xVy A0 de9Y 5h8c 7dYCFk Fl v WPD SuN VI6 MZ 72u9 MBtK 9BGLNs Yp l X2y b5U HgH AD bW8X Rzkv UJZShW QH G oKX yVA rsH TQ 1Vbd dK2M IxmTf6 wE T 9cX Fbu uVx Cb SBBp 0v2J MQ5Z8z 3p M EGp TU6 KCc YN 2BlW dp2t mliPDH JQ W jIR Rgq i5l AP gikl c8ru HnvYFM AI r Ih7 Ths 9tE hA AYgS swZZ fws19P 5w e JvM imb sFH Th CnSZ HORm yt98w3 U3 z EQ16} \end{align} The convergence $\Vert P_{\leq n}f-f\Vert_{q} \to 0$ as $n\to\infty$ then follows by continuity of the translation operator and the dominated convergence theorem. (Note that rate of convergence may depend on the function~$f$.) \par Now, let $f=(f_j)_{j\in \NNp}\in\mathbb{L}^q(\RR^d)$, which implies $f_j\in L^q(\RR^d)$ for all~$j\in \NNp$. Applying Minkowski's inequality, we have   \begin{align}   \begin{split}    &\Vert P_{\leq n}f-f\Vert_{\mathbb{L}^q}    =    \biggl(\int_{\RR^\dd}    \biggl\Vert    \int_{\RR^\dd} \mathcal{F}^{-1}\psi(y)\biggl(f\left(x-\frac{y}{n}\right)-f(x)\biggr)\,dy    \biggr\Vert_{l^2}^{q} \,dx    \biggr)^{1/q}    \\&\indeq\indeq\leq    \biggl(    \sum_j\left(\int_{\RR^\dd}    \biggl(    \int_{\RR^\dd} \biggl|    \mathcal{F}^{-1}\psi(y)\left(f_j\left(x-\frac{y}{n}\right)-f_j(x)\right)    \biggr|\,dy    \biggr)^{q} \,dx    \right)^{2/q}    \biggr)^{1/2}    \\&\indeq\indeq\leq    \biggl(    \sum_j\biggl(\int_{\RR^\dd}    \biggl(    \int_{\RR^\dd} \biggl|    \mathcal{F}^{-1}\psi(y)\left(f_j\left(x-\frac{y}{n}\right)-f_j(x)\right)    \biggr|^q\,dx    \biggr)^{1/q} \,dy    \biggr)^{2}    \biggr)^{1/2}    \\&\indeq\indeq =    \biggl(    \sum_j\biggl( \int_{\RR^\dd} \left|\mathcal{F}^{-1}\psi(y)\right|    \biggl\Vert      f_j\left(\cdot-\frac{y}{n}\right)-f_j(\cdot)    \biggr\Vert_{q}\,dy    \biggr)^2    \biggr)^{1/2}    .    \end{split}   \llabel{p Wd8 zy YWjw eQmF yD5y5l DN l ZbA Jac cld kx Yn3V QYIV v6fwmH z1 9 w3y D4Y ezR M9 BduE L7D9 2wTHHc Do g ZxZ WRW Jxi pv fz48 ZVB7 FZtgK0 Y1 w oCo hLA i70 NO Ta06 u2sY GlmspV l2 x y0X B37 x43 k5 kaoZ deyE sDglRF Xi 9 6b6 w9B dId Ko gSUM NLLb CRzeQL UZ m i9O 2qv VzD hz v1r6 spSl jwNhG6 s6 i SdX hob hbp 2u sEdl 95LP AtrBBi bP C wSh pFC CUa yz xYS5 78ro f3UwDP sC I pES HB1 qFP SW 5tt0 I7oz jXun6c z4 c QLB J4M NmI 6F 08S2 Il8C 0JQYiU lI 1 YkK oiu bVt fG uOeg Sllv b4HGn3 bS Z LlX efa eN6 v1 B6m3 Ek3J SXUIjX 8P d NKI UFN JvP Ha Vr4T eARP dXEV7B xM 0 A7w 7je p8M 4Q ahOi hEVo Pxbi1V uG e tOt HbP tsO 5r 363R ez9n A5EJ55 pc L lQQ Hg6 X1J EW K8Cf 9kZm 14A5li rN 7 kKZ rY0 K10 It eJd3 kMGw opVnfY EG 2 orG fj0 TTA Xt ecJK eTM0 x1N9f0 lR p QkP M37 3r0 iA 6EFs 1F6f 4mjOB5 zu 5 GGT Ncl Bmk b5 jOOK 4yny My04oz 6m 6 Akz NnP JXh Bn PHRu N5Ly qSguz5 Nn W 2lU Yx3 fX4 hu LieH L30w g93Xwc gj 1 I9d O9b EPC R0 vc6A 005Q VFy1ly K7 o VRV pbJ zZn xY dcld XgQa DXY3gz x3 6 8OR JFK 9Uh XT e3xY bVHG oYqdHg Vy f 5kK Qzm mK4 9x xiAp jVkw gzJOdE 4v g hAv 9bV IHe wc Vqcb SUcF 1pHzol Nj T l1B urc Sam IP zkUS 8wwS a7wVWR 4D L VGf 1RF r59 9H tyGq hDT0 TDlooa mg j 9am png aWe nG XU2T zXLh IYOW5v 2d A rCG sLk s53 pW AuAy DQlF 6spKyd HT 9 Z1X n2s U1g 0D Llao YuLP PB6YKo D1 M 0fi qHU l4A Ia joiV Q6af VT6wvY Md 0 pCY BZp 7RX Hd xTb0 sjJ0 Beqpkc 8b N OgZ 0Tr 0wq h1 C2Hn YQXM 8nJ0Pf uG J Be2 vuq Duk LV AJwv 2tYc JOM1uK h7 p cgo iiK t0b 3e URec DVM7 ivRMh1 T6 p AWl upj kEj UL R3xN VAu5 kEbnrV HE 1 OrJ 2bx dUP yD vyVi x6sC BpGDSx jB C n9P Fiu xkF vw 0QPo fRjy 2OFItV eD B tDz lc9 xVy A0 de9Y 5h8c 7dYCFk Fl v WPD SuN VI6 MZ 72u9 MBtK 9BGLNs Yp l X2y b5U HgH AD bW8X Rzkv UJZShW QH G oKX yVA rsH TQ 1Vbd dK2M IxmTf6 wE T 9cX Fbu uVx Cb SBBp 0v2J MQ5Z8z 3p M EGp TU6 KCc YN 2BlW dp2t mliPDH JQ W jIR Rgq i5l AP gikl c8ru HnvYFM AI r Ih7 Ths 9tE hA AYgS swZZ fws19P 5w e JvM imb sFH Th CnSZ HORm yt98w3 U3 z ant zAy Twq 0C jgDI Etkb h98V4u o5 2 jjA Zz1 kLo C8 oHGv Z5Ru Gwv3kK 4W B 50T oMt q7Q WG 9mtb SIlc 87ruZf Kw Z Ph3 1ZA Osq 8l jVQJ LTXC gyQn0v KE S iSq Bpa wtH xc IJe4 SiEEQ17}   \end{align} Above, we required $q\geq 2$ to conclude the first inequality. Suppose that $f=(f_j)_{j\in \NNp}$ has only finitely many non-zero components, i.e., there exists $N\in \NNp$ such that $f_j$ is a zero function whenever~$j\geq N$. Then, \begin{align} \begin{split} & \limsup_n \sum_j\left( \int_{\RR^\dd} \left|\mathcal{F}^{-1}\psi(y)\right| \left\Vert    f_j\left(\cdot-\frac{y}{n}\right)-f_j(\cdot) \right\Vert_{q}\,dy \right)^2 \\&\indeq\indeq\leq \sum_j \left(\limsup_n \int_{\RR^\dd} \left|\mathcal{F}^{-1}\psi(y)\right| \left\Vert    f_j\left(\cdot-\frac{y}{n}\right)-f_j(\cdot) \right\Vert_{q}\,dy \right)^2 =0 , \end{split}    \llabel{pV l2 x y0X B37 x43 k5 kaoZ deyE sDglRF Xi 9 6b6 w9B dId Ko gSUM NLLb CRzeQL UZ m i9O 2qv VzD hz v1r6 spSl jwNhG6 s6 i SdX hob hbp 2u sEdl 95LP AtrBBi bP C wSh pFC CUa yz xYS5 78ro f3UwDP sC I pES HB1 qFP SW 5tt0 I7oz jXun6c z4 c QLB J4M NmI 6F 08S2 Il8C 0JQYiU lI 1 YkK oiu bVt fG uOeg Sllv b4HGn3 bS Z LlX efa eN6 v1 B6m3 Ek3J SXUIjX 8P d NKI UFN JvP Ha Vr4T eARP dXEV7B xM 0 A7w 7je p8M 4Q ahOi hEVo Pxbi1V uG e tOt HbP tsO 5r 363R ez9n A5EJ55 pc L lQQ Hg6 X1J EW K8Cf 9kZm 14A5li rN 7 kKZ rY0 K10 It eJd3 kMGw opVnfY EG 2 orG fj0 TTA Xt ecJK eTM0 x1N9f0 lR p QkP M37 3r0 iA 6EFs 1F6f 4mjOB5 zu 5 GGT Ncl Bmk b5 jOOK 4yny My04oz 6m 6 Akz NnP JXh Bn PHRu N5Ly qSguz5 Nn W 2lU Yx3 fX4 hu LieH L30w g93Xwc gj 1 I9d O9b EPC R0 vc6A 005Q VFy1ly K7 o VRV pbJ zZn xY dcld XgQa DXY3gz x3 6 8OR JFK 9Uh XT e3xY bVHG oYqdHg Vy f 5kK Qzm mK4 9x xiAp jVkw gzJOdE 4v g hAv 9bV IHe wc Vqcb SUcF 1pHzol Nj T l1B urc Sam IP zkUS 8wwS a7wVWR 4D L VGf 1RF r59 9H tyGq hDT0 TDlooa mg j 9am png aWe nG XU2T zXLh IYOW5v 2d A rCG sLk s53 pW AuAy DQlF 6spKyd HT 9 Z1X n2s U1g 0D Llao YuLP PB6YKo D1 M 0fi qHU l4A Ia joiV Q6af VT6wvY Md 0 pCY BZp 7RX Hd xTb0 sjJ0 Beqpkc 8b N OgZ 0Tr 0wq h1 C2Hn YQXM 8nJ0Pf uG J Be2 vuq Duk LV AJwv 2tYc JOM1uK h7 p cgo iiK t0b 3e URec DVM7 ivRMh1 T6 p AWl upj kEj UL R3xN VAu5 kEbnrV HE 1 OrJ 2bx dUP yD vyVi x6sC BpGDSx jB C n9P Fiu xkF vw 0QPo fRjy 2OFItV eD B tDz lc9 xVy A0 de9Y 5h8c 7dYCFk Fl v WPD SuN VI6 MZ 72u9 MBtK 9BGLNs Yp l X2y b5U HgH AD bW8X Rzkv UJZShW QH G oKX yVA rsH TQ 1Vbd dK2M IxmTf6 wE T 9cX Fbu uVx Cb SBBp 0v2J MQ5Z8z 3p M EGp TU6 KCc YN 2BlW dp2t mliPDH JQ W jIR Rgq i5l AP gikl c8ru HnvYFM AI r Ih7 Ths 9tE hA AYgS swZZ fws19P 5w e JvM imb sFH Th CnSZ HORm yt98w3 U3 z ant zAy Twq 0C jgDI Etkb h98V4u o5 2 jjA Zz1 kLo C8 oHGv Z5Ru Gwv3kK 4W B 50T oMt q7Q WG 9mtb SIlc 87ruZf Kw Z Ph3 1ZA Osq 8l jVQJ LTXC gyQn0v KE S iSq Bpa wtH xc IJe4 SiE1 izzxim ke P Y3s 7SX 5DA SG XHqC r38V YP3Hxv OI R ZtM fqN oLF oU 7vNd txzw UkX32t 94 n Fdq qTR QOv Yq Ebig jrSZ kTN7Xw tP F gNs O7M 1mb DA btVB 3LGC pgE9hV FK Y LcS GmF 8EQ18} \end{align} which implies that $\Vert P_{\leq n}f-f\Vert_{\mathbb{L}^q}\to 0$ as~$n\to\infty$. If $f=(f_j)_{j\in \NNp}$ has infinitely many non-zero components, then by the dominated convergence theorem, for every $\varepsilon>0$, we can find a truncated $l^2$-valued function $\tilde{f}:=(f_1, \cdots, f_N, 0, \cdots)$ satisfying $\Vert \tilde{f}-f\Vert_{\mathbb{L}^q}<\varepsilon$. Then, \begin{align} \begin{split} &\Vert P_{\leq n}f-f\Vert_{\mathbb{L}^q} \leq \Vert P_{\leq n}f-P_{\leq n}\tilde{f}\Vert_{\mathbb{L}^q}
+ \Vert P_{\leq n}\tilde{f}-\tilde{f}\Vert_{\mathbb{L}^q} + \Vert \tilde{f}-f\Vert_{\mathbb{L}^q} \leq \Vert P_{\leq n}\tilde{f}-\tilde{f}\Vert_{\mathbb{L}^q} + C\varepsilon . \end{split}    \llabel{xYS5 78ro f3UwDP sC I pES HB1 qFP SW 5tt0 I7oz jXun6c z4 c QLB J4M NmI 6F 08S2 Il8C 0JQYiU lI 1 YkK oiu bVt fG uOeg Sllv b4HGn3 bS Z LlX efa eN6 v1 B6m3 Ek3J SXUIjX 8P d NKI UFN JvP Ha Vr4T eARP dXEV7B xM 0 A7w 7je p8M 4Q ahOi hEVo Pxbi1V uG e tOt HbP tsO 5r 363R ez9n A5EJ55 pc L lQQ Hg6 X1J EW K8Cf 9kZm 14A5li rN 7 kKZ rY0 K10 It eJd3 kMGw opVnfY EG 2 orG fj0 TTA Xt ecJK eTM0 x1N9f0 lR p QkP M37 3r0 iA 6EFs 1F6f 4mjOB5 zu 5 GGT Ncl Bmk b5 jOOK 4yny My04oz 6m 6 Akz NnP JXh Bn PHRu N5Ly qSguz5 Nn W 2lU Yx3 fX4 hu LieH L30w g93Xwc gj 1 I9d O9b EPC R0 vc6A 005Q VFy1ly K7 o VRV pbJ zZn xY dcld XgQa DXY3gz x3 6 8OR JFK 9Uh XT e3xY bVHG oYqdHg Vy f 5kK Qzm mK4 9x xiAp jVkw gzJOdE 4v g hAv 9bV IHe wc Vqcb SUcF 1pHzol Nj T l1B urc Sam IP zkUS 8wwS a7wVWR 4D L VGf 1RF r59 9H tyGq hDT0 TDlooa mg j 9am png aWe nG XU2T zXLh IYOW5v 2d A rCG sLk s53 pW AuAy DQlF 6spKyd HT 9 Z1X n2s U1g 0D Llao YuLP PB6YKo D1 M 0fi qHU l4A Ia joiV Q6af VT6wvY Md 0 pCY BZp 7RX Hd xTb0 sjJ0 Beqpkc 8b N OgZ 0Tr 0wq h1 C2Hn YQXM 8nJ0Pf uG J Be2 vuq Duk LV AJwv 2tYc JOM1uK h7 p cgo iiK t0b 3e URec DVM7 ivRMh1 T6 p AWl upj kEj UL R3xN VAu5 kEbnrV HE 1 OrJ 2bx dUP yD vyVi x6sC BpGDSx jB C n9P Fiu xkF vw 0QPo fRjy 2OFItV eD B tDz lc9 xVy A0 de9Y 5h8c 7dYCFk Fl v WPD SuN VI6 MZ 72u9 MBtK 9BGLNs Yp l X2y b5U HgH AD bW8X Rzkv UJZShW QH G oKX yVA rsH TQ 1Vbd dK2M IxmTf6 wE T 9cX Fbu uVx Cb SBBp 0v2J MQ5Z8z 3p M EGp TU6 KCc YN 2BlW dp2t mliPDH JQ W jIR Rgq i5l AP gikl c8ru HnvYFM AI r Ih7 Ths 9tE hA AYgS swZZ fws19P 5w e JvM imb sFH Th CnSZ HORm yt98w3 U3 z ant zAy Twq 0C jgDI Etkb h98V4u o5 2 jjA Zz1 kLo C8 oHGv Z5Ru Gwv3kK 4W B 50T oMt q7Q WG 9mtb SIlc 87ruZf Kw Z Ph3 1ZA Osq 8l jVQJ LTXC gyQn0v KE S iSq Bpa wtH xc IJe4 SiE1 izzxim ke P Y3s 7SX 5DA SG XHqC r38V YP3Hxv OI R ZtM fqN oLF oU 7vNd txzw UkX32t 94 n Fdq qTR QOv Yq Ebig jrSZ kTN7Xw tP F gNs O7M 1mb DA btVB 3LGC pgE9hV FK Y LcS GmF 863 7a ZDiz 4CuJ bLnpE7 yl 8 5jg Many Thanks, POL OG EPOe Mru1 v25XLJ Fz h wgE lnu Ymq rX 1YKV Kvgm MK7gI4 6h 5 kZB OoJ tfC 5g VvA1 kNJr 2o7om1 XN p Uwt CWX fFT SW DjsI wuxEQ155} \end{align} To conclude the last inequality, we applied~\eqref{ERTWERTHWRTWERTSGDGHCFGSDFGQSERWDFGDSFGHSDRGTEHDFGHDSFGSDGHGYUHDFGSDFASDFASGTWRT11}. To bound $\Vert P_{\leq n}\tilde{f}-\tilde{f}\Vert_{\mathbb{L}^q}$ by $\varepsilon$, we resorted to the argument for the case with finitely many non-zero component functions and set $n$ sufficiently large. This ends the proof. Obviously, the rate of convergence depends on~$f$. \end{proof} \par If we furthermore assume that the first-order derivatives of $f$ are $p$-integrable, then we can obtain a uniform convergence rate for all such functions. \par \cole \begin{Lemma} \label{L03} Let $q\in [1,\infty)$. If $f$ is a scalar-valued function in $W^{1,q}(\RR^d)$, then   \begin{equation}   \Vert P_{\le n}f-P_{\le m}f\Vert_{q} \leq C\left|\frac{1}{n}-\frac{1}{m}\right|\Vert \nabla f\Vert_{q}.    \llabel{KI UFN JvP Ha Vr4T eARP dXEV7B xM 0 A7w 7je p8M 4Q ahOi hEVo Pxbi1V uG e tOt HbP tsO 5r 363R ez9n A5EJ55 pc L lQQ Hg6 X1J EW K8Cf 9kZm 14A5li rN 7 kKZ rY0 K10 It eJd3 kMGw opVnfY EG 2 orG fj0 TTA Xt ecJK eTM0 x1N9f0 lR p QkP M37 3r0 iA 6EFs 1F6f 4mjOB5 zu 5 GGT Ncl Bmk b5 jOOK 4yny My04oz 6m 6 Akz NnP JXh Bn PHRu N5Ly qSguz5 Nn W 2lU Yx3 fX4 hu LieH L30w g93Xwc gj 1 I9d O9b EPC R0 vc6A 005Q VFy1ly K7 o VRV pbJ zZn xY dcld XgQa DXY3gz x3 6 8OR JFK 9Uh XT e3xY bVHG oYqdHg Vy f 5kK Qzm mK4 9x xiAp jVkw gzJOdE 4v g hAv 9bV IHe wc Vqcb SUcF 1pHzol Nj T l1B urc Sam IP zkUS 8wwS a7wVWR 4D L VGf 1RF r59 9H tyGq hDT0 TDlooa mg j 9am png aWe nG XU2T zXLh IYOW5v 2d A rCG sLk s53 pW AuAy DQlF 6spKyd HT 9 Z1X n2s U1g 0D Llao YuLP PB6YKo D1 M 0fi qHU l4A Ia joiV Q6af VT6wvY Md 0 pCY BZp 7RX Hd xTb0 sjJ0 Beqpkc 8b N OgZ 0Tr 0wq h1 C2Hn YQXM 8nJ0Pf uG J Be2 vuq Duk LV AJwv 2tYc JOM1uK h7 p cgo iiK t0b 3e URec DVM7 ivRMh1 T6 p AWl upj kEj UL R3xN VAu5 kEbnrV HE 1 OrJ 2bx dUP yD vyVi x6sC BpGDSx jB C n9P Fiu xkF vw 0QPo fRjy 2OFItV eD B tDz lc9 xVy A0 de9Y 5h8c 7dYCFk Fl v WPD SuN VI6 MZ 72u9 MBtK 9BGLNs Yp l X2y b5U HgH AD bW8X Rzkv UJZShW QH G oKX yVA rsH TQ 1Vbd dK2M IxmTf6 wE T 9cX Fbu uVx Cb SBBp 0v2J MQ5Z8z 3p M EGp TU6 KCc YN 2BlW dp2t mliPDH JQ W jIR Rgq i5l AP gikl c8ru HnvYFM AI r Ih7 Ths 9tE hA AYgS swZZ fws19P 5w e JvM imb sFH Th CnSZ HORm yt98w3 U3 z ant zAy Twq 0C jgDI Etkb h98V4u o5 2 jjA Zz1 kLo C8 oHGv Z5Ru Gwv3kK 4W B 50T oMt q7Q WG 9mtb SIlc 87ruZf Kw Z Ph3 1ZA Osq 8l jVQJ LTXC gyQn0v KE S iSq Bpa wtH xc IJe4 SiE1 izzxim ke P Y3s 7SX 5DA SG XHqC r38V YP3Hxv OI R ZtM fqN oLF oU 7vNd txzw UkX32t 94 n Fdq qTR QOv Yq Ebig jrSZ kTN7Xw tP F gNs O7M 1mb DA btVB 3LGC pgE9hV FK Y LcS GmF 863 7a ZDiz 4CuJ bLnpE7 yl 8 5jg Many Thanks, POL OG EPOe Mru1 v25XLJ Fz h wgE lnu Ymq rX 1YKV Kvgm MK7gI4 6h 5 kZB OoJ tfC 5g VvA1 kNJr 2o7om1 XN p Uwt CWX fFT SW DjsI wuxO JxLU1S xA 5 ObG 3IO UdL qJ cCAr gzKM 08DvX2 mu i 13T t71 Iwq oF UI0E Ef5S V2vxcy SY I QGr qrB HID TJ v1OB 1CzD IDdW4E 4j J mv6 Ktx oBO s9 ADWB q218 BJJzRy UQ i 2Gp weE TEQ19}   \end{equation} If $f$ is an $l^2$-valued function in $\mathbb{W}^{1,q}(\RR^d)$, then   \begin{equation}   \Vert P_{\leq n}f-P_{\leq m}f\Vert_{\mathbb{L}^q} \leq C\left|\frac{1}{n}-\frac{1}{m}\right|\Vert \nabla f\Vert_{\mathbb{L}^q}.    \llabel{ opVnfY EG 2 orG fj0 TTA Xt ecJK eTM0 x1N9f0 lR p QkP M37 3r0 iA 6EFs 1F6f 4mjOB5 zu 5 GGT Ncl Bmk b5 jOOK 4yny My04oz 6m 6 Akz NnP JXh Bn PHRu N5Ly qSguz5 Nn W 2lU Yx3 fX4 hu LieH L30w g93Xwc gj 1 I9d O9b EPC R0 vc6A 005Q VFy1ly K7 o VRV pbJ zZn xY dcld XgQa DXY3gz x3 6 8OR JFK 9Uh XT e3xY bVHG oYqdHg Vy f 5kK Qzm mK4 9x xiAp jVkw gzJOdE 4v g hAv 9bV IHe wc Vqcb SUcF 1pHzol Nj T l1B urc Sam IP zkUS 8wwS a7wVWR 4D L VGf 1RF r59 9H tyGq hDT0 TDlooa mg j 9am png aWe nG XU2T zXLh IYOW5v 2d A rCG sLk s53 pW AuAy DQlF 6spKyd HT 9 Z1X n2s U1g 0D Llao YuLP PB6YKo D1 M 0fi qHU l4A Ia joiV Q6af VT6wvY Md 0 pCY BZp 7RX Hd xTb0 sjJ0 Beqpkc 8b N OgZ 0Tr 0wq h1 C2Hn YQXM 8nJ0Pf uG J Be2 vuq Duk LV AJwv 2tYc JOM1uK h7 p cgo iiK t0b 3e URec DVM7 ivRMh1 T6 p AWl upj kEj UL R3xN VAu5 kEbnrV HE 1 OrJ 2bx dUP yD vyVi x6sC BpGDSx jB C n9P Fiu xkF vw 0QPo fRjy 2OFItV eD B tDz lc9 xVy A0 de9Y 5h8c 7dYCFk Fl v WPD SuN VI6 MZ 72u9 MBtK 9BGLNs Yp l X2y b5U HgH AD bW8X Rzkv UJZShW QH G oKX yVA rsH TQ 1Vbd dK2M IxmTf6 wE T 9cX Fbu uVx Cb SBBp 0v2J MQ5Z8z 3p M EGp TU6 KCc YN 2BlW dp2t mliPDH JQ W jIR Rgq i5l AP gikl c8ru HnvYFM AI r Ih7 Ths 9tE hA AYgS swZZ fws19P 5w e JvM imb sFH Th CnSZ HORm yt98w3 U3 z ant zAy Twq 0C jgDI Etkb h98V4u o5 2 jjA Zz1 kLo C8 oHGv Z5Ru Gwv3kK 4W B 50T oMt q7Q WG 9mtb SIlc 87ruZf Kw Z Ph3 1ZA Osq 8l jVQJ LTXC gyQn0v KE S iSq Bpa wtH xc IJe4 SiE1 izzxim ke P Y3s 7SX 5DA SG XHqC r38V YP3Hxv OI R ZtM fqN oLF oU 7vNd txzw UkX32t 94 n Fdq qTR QOv Yq Ebig jrSZ kTN7Xw tP F gNs O7M 1mb DA btVB 3LGC pgE9hV FK Y LcS GmF 863 7a ZDiz 4CuJ bLnpE7 yl 8 5jg Many Thanks, POL OG EPOe Mru1 v25XLJ Fz h wgE lnu Ymq rX 1YKV Kvgm MK7gI4 6h 5 kZB OoJ tfC 5g VvA1 kNJr 2o7om1 XN p Uwt CWX fFT SW DjsI wuxO JxLU1S xA 5 ObG 3IO UdL qJ cCAr gzKM 08DvX2 mu i 13T t71 Iwq oF UI0E Ef5S V2vxcy SY I QGr qrB HID TJ v1OB 1CzD IDdW4E 4j J mv6 Ktx oBO s9 ADWB q218 BJJzRy UQ i 2Gp weE T8L aO 4ho9 5g4v WQmoiq jS w MA9 Cvn Gqx l1 LrYu MjGb oUpuvY Q2 C dBl AB9 7ew jc 5RJE SFGs ORedoM 0b B k25 VEK B8V A9 ytAE Oyof G8QIj2 7a I 3jy Rmz yET Kx pgUq 4Bvb cD1b1g EQ20}   \end{equation} \end{Lemma}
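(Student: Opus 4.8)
The plan is to realize $P_{\le n}$ as convolution against a rescaled kernel and to estimate the difference $P_{\le n}f-P_{\le m}f$ via the fundamental theorem of calculus along a segment in the rescaled variable. Recall from the proof of Lemma~\ref{L02} that, since $\mathcal{F}^{-1}\psi_n(x)=n^\dd\mathcal{F}^{-1}\psi(nx)$ and $\int_{\RR^\dd}\mathcal{F}^{-1}\psi(y)\,dy=1$, one has $P_{\le n}f(x)=\int_{\RR^\dd}\mathcal{F}^{-1}\psi(y)\,f(x-y/n)\,dy$, and similarly with $n$ replaced by $m$. Setting $\theta:=1/n-1/m$ and, for fixed $y$, differentiating $\lambda\mapsto f(x-y/m-\lambda\theta y)$ on $[0,1]$, I obtain the pointwise identity
\[
(P_{\le n}f-P_{\le m}f)(x)=-\theta\int_{\RR^\dd}\mathcal{F}^{-1}\psi(y)\int_0^1(\nabla f)(x-y/m-\lambda\theta y)\cdot y\,d\lambda\,dy .
\]

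Next I would take the $L^q$ norm in $x$, move it inside both integrals by Minkowski's integral inequality, use the Cauchy--Schwarz bound $|(\nabla f)(z)\cdot y|\le|y|\,|\nabla f(z)|$ and the translation invariance $\Vert(\nabla f)(\cdot-y/m-\lambda\theta y)\Vert_q=\Vert\nabla f\Vert_q$, and conclude
\[
\Vert P_{\le n}f-P_{\le m}f\Vert_q\le|\theta|\,\Vert\nabla f\Vert_q\int_{\RR^\dd}|\mathcal{F}^{-1}\psi(y)|\,|y|\,dy .
\]
Since $\mathcal{F}^{-1}\psi(y)=\pi^{\dd/2}e^{-\pi^2|y|^2}\ge 0$, the last integral is a finite Gaussian first moment, giving a constant $C$ depending only on the dimension; this yields the scalar estimate. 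The $l^2$-valued case is entirely parallel: the same pointwise identity holds with $(\nabla f)\cdot y$ now $l^2$-valued, one has $\Vert(\nabla f)(z)\cdot y\Vert_{l^2}\le|y|\,\Vert\nabla f(z)\Vert_{l^2}$, and Minkowski's integral inequality applied to the (scalar) $L^q$-norm of $x\mapsto\Vert\,\cdot\,\Vert_{l^2}$ gives the claim with the same constant. Note that, in contrast to Lemma~\ref{L02}, no restriction $q\ge 2$ is needed here, since the argument never exchanges the $\ell^2$-sum with the $L^q$-integral.

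The one point needing care --- and the main (mild) obstacle --- is the justification of the pointwise identity for a general $f\in W^{1,q}$ (resp.\ $\mathbb{W}^{1,q}$), where $\nabla f$ is only a weak gradient and the segment form of the fundamental theorem of calculus is not immediate. I would handle this by density: approximate $f$ in $W^{1,q}$ (resp.\ in $\mathbb{W}^{1,q}$, after first truncating to finitely many components as in the proof of Lemma~\ref{L02}) by functions that are smooth with compact support, for which the identity and the Fubini interchanges above are classical thanks to the rapid decay of $\mathcal{F}^{-1}\psi$; establish the displayed bound for each approximant; and pass to the limit, using that $P_{\le n}-P_{\le m}$ is bounded on $L^q$ (resp.\ $\mathbb{L}^q$) by Lemma~\ref{L01} together with the convergence of the gradients. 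Everything else is routine.
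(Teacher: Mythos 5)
Your proposal is correct and follows essentially the same route as the paper's proof: writing both operators as convolution with the rescaled kernel $\mathcal{F}^{-1}\psi$, applying the fundamental theorem of calculus along the segment between $x-y/n$ and $x-y/m$, and then using Minkowski's integral inequality, translation invariance, and the finite first moment of the Gaussian kernel, with the $l^2$-valued case handled by the pointwise Cauchy--Schwarz bound exactly as in the paper. The only difference is your explicit density argument justifying the segment identity for weak gradients, which the paper takes for granted; this is a harmless (indeed slightly more careful) addition rather than a different approach.
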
 \colb \par \begin{proof}[Proof of Lemma~\ref{L03}] We first consider the case when $f$ is scalar-valued. We have \begin{align} \begin{split} &\Vert P_{\leq n}f-P_{\leq m}f\Vert_{q} = \left(\int_{\RR^\dd} \left| \int_{\RR^\dd}\left(n^d\mathcal{F}^{-1}\psi(ny)f(x-y)-m^d\mathcal{F}^{-1}\psi(my)f(x)\right)\,dy \right|^q \,dx\right)^{1/q} \\&\indeq \indeq= \left(\int_{\RR^\dd} \left| \int_{\RR^\dd}\mathcal{F}^{-1}\psi(y)\left(f\left(x-\frac{y}{n}\right)-f\left(x-\frac{y}{m}\right)\right)\,dy \right|^q \,dx\right)^{1/q} \\&\indeq \indeq= \left(\int_{\RR^\dd} \left| \int_{\RR^\dd}\mathcal{F}^{-1}\psi(y)\left(\int_0^1 \nabla f\left(x-\frac{y}{m}+\theta\left(\frac{y}{n}-\frac{y}{m}\right)\right)\cdot \left(\frac{y}{n}-\frac{y}{m}\right)\,d\theta\right)\,dy \right|^q \,dx\right)^{1/q} \\&\indeq \indeq\leq  \left|\frac{1}{n}-\frac{1}{m}\right| \Vert \nabla f\Vert_{q} \int_{\RR^\dd} \left|\mathcal{F}^{-1}\psi(y)\right| |y| \,dy \leq C\left|\frac{1}{n}-\frac{1}{m}\right|  \Vert \nabla f\Vert_{q} , \end{split}    \llabel{4 hu LieH L30w g93Xwc gj 1 I9d O9b EPC R0 vc6A 005Q VFy1ly K7 o VRV pbJ zZn xY dcld XgQa DXY3gz x3 6 8OR JFK 9Uh XT e3xY bVHG oYqdHg Vy f 5kK Qzm mK4 9x xiAp jVkw gzJOdE 4v g hAv 9bV IHe wc Vqcb SUcF 1pHzol Nj T l1B urc Sam IP zkUS 8wwS a7wVWR 4D L VGf 1RF r59 9H tyGq hDT0 TDlooa mg j 9am png aWe nG XU2T zXLh IYOW5v 2d A rCG sLk s53 pW AuAy DQlF 6spKyd HT 9 Z1X n2s U1g 0D Llao YuLP PB6YKo D1 M 0fi qHU l4A Ia joiV Q6af VT6wvY Md 0 pCY BZp 7RX Hd xTb0 sjJ0 Beqpkc 8b N OgZ 0Tr 0wq h1 C2Hn YQXM 8nJ0Pf uG J Be2 vuq Duk LV AJwv 2tYc JOM1uK h7 p cgo iiK t0b 3e URec DVM7 ivRMh1 T6 p AWl upj kEj UL R3xN VAu5 kEbnrV HE 1 OrJ 2bx dUP yD vyVi x6sC BpGDSx jB C n9P Fiu xkF vw 0QPo fRjy 2OFItV eD B tDz lc9 xVy A0 de9Y 5h8c 7dYCFk Fl v WPD SuN VI6 MZ 72u9 MBtK 9BGLNs Yp l X2y b5U HgH AD bW8X Rzkv UJZShW QH G oKX yVA rsH TQ 1Vbd dK2M IxmTf6 wE T 9cX Fbu uVx Cb SBBp 0v2J MQ5Z8z 3p M EGp TU6 KCc YN 2BlW dp2t mliPDH JQ W jIR Rgq i5l AP gikl c8ru HnvYFM AI r Ih7 Ths 9tE hA AYgS swZZ fws19P 5w e JvM imb sFH Th CnSZ HORm yt98w3 U3 z ant zAy Twq 0C jgDI Etkb h98V4u o5 2 jjA Zz1 kLo C8 oHGv Z5Ru Gwv3kK 4W B 50T oMt q7Q WG 9mtb SIlc 87ruZf Kw Z Ph3 1ZA Osq 8l jVQJ LTXC gyQn0v KE S iSq Bpa wtH xc IJe4 SiE1 izzxim ke P Y3s 7SX 5DA SG XHqC r38V YP3Hxv OI R ZtM fqN oLF oU 7vNd txzw UkX32t 94 n Fdq qTR QOv Yq Ebig jrSZ kTN7Xw tP F gNs O7M 1mb DA btVB 3LGC pgE9hV FK Y LcS GmF 863 7a ZDiz 4CuJ bLnpE7 yl 8 5jg Many Thanks, POL OG EPOe Mru1 v25XLJ Fz h wgE lnu Ymq rX 1YKV Kvgm MK7gI4 6h 5 kZB OoJ tfC 5g VvA1 kNJr 2o7om1 XN p Uwt CWX fFT SW DjsI wuxO JxLU1S xA 5 ObG 3IO UdL qJ cCAr gzKM 08DvX2 mu i 13T t71 Iwq oF UI0E Ef5S V2vxcy SY I QGr qrB HID TJ v1OB 1CzD IDdW4E 4j J mv6 Ktx oBO s9 ADWB q218 BJJzRy UQ i 2Gp weE T8L aO 4ho9 5g4v WQmoiq jS w MA9 Cvn Gqx l1 LrYu MjGb oUpuvY Q2 C dBl AB9 7ew jc 5RJE SFGs ORedoM 0b B k25 VEK B8V A9 ytAE Oyof G8QIj2 7a I 3jy Rmz yET Kx pgUq 4Bvb cD1b1g KB y oE3 azg elV Nu 8iZ1 w1tq twKx8C LN 2 8yn jdo jUW vN H9qy HaXZ GhjUgm uL I 87i Y7Q 9MQ Wa iFFS Gzt8 4mSQq2 5O N ltT gbl 8YD QS AzXq pJEK 7bGL1U Jn 0 f59 vPr wdt d6 sDLEQ21} \end{align} where $C$ is independent of the function~$f$.  \par Now, assume that $f$ is $l^2$-valued. Then for all $q\in[1, \infty)$, \begin{align} \begin{split} &\Vert P_{\leq n}f-P_{\leq m}f\Vert_{\mathbb{L}^q} = \left(\int_{\RR^\dd} \left\Vert \int_{\RR^\dd}\mathcal{F}^{-1}\psi(y)\left(f\left(x-\frac{y}{n}\right)-f\left(x-\frac{y}{m}\right)\right)\,dy \right\Vert_{l^2}^q \,dx\right)^{1/q} \\&\indeq\indeq = \left(\int_{\RR^\dd}\left(\sum_j \left| \int_{\RR^\dd}\mathcal{F}^{-1}\psi(y)\left(\int_0^1 \nabla f_j\left(x-\frac{y}{m}+\theta\left(\frac{y}{n}-\frac{y}{m}\right)\right)\cdot \left(\frac{y}{n}-\frac{y}{m}\right)\,d\theta\right)\,dy \right|^2\right)^{q/2} \,dx\right)^{1/q} \\&\indeq\indeq \leq \left(\int_{\RR^\dd} \left(\int_{\RR^\dd}|\mathcal{F}^{-1}\psi(y)| \left|\frac{y}{n}-\frac{y}{m}\right| \int_0^1 \left(\sum_j \left| \nabla f_j\left(x-\frac{y}{m}+\theta\left(\frac{y}{n}-\frac{y}{m}\right)\right) \right|^2 \right)^{1/2} \,d\theta \,dy\right)^q \,dx\right)^{1/q} \\&\indeq\indeq \leq \int_{\RR^\dd}|\mathcal{F}^{-1}\psi(y)| \left|\frac{y}{n}-\frac{y}{m}\right| \int_0^1 \left(\int_{\RR^\dd} \left(\sum_j \left| \nabla f_j\left(x-\frac{y}{m}+\theta\left(\frac{y}{n}-\frac{y}{m}\right)\right) \right|^2 \right)^{q/2}\,dx\right)^{1/q} \,d\theta \,dy \\&\indeq \indeq\leq  \left|\frac{1}{n}-\frac{1}{m}\right|  \left(\int_{\RR^\dd} \Vert \nabla f\Vert_{l^2}^q \,dx\right)^{1/q} \int_{\RR^\dd} \left|\mathcal{F}^{-1}\psi(y)\right| |y| \,dy \leq C\left|\frac{1}{n}-\frac{1}{m}\right|  \Vert \nabla f\Vert_{\mathbb{L}^q} , \end{split} \llabel{v g hAv 9bV IHe wc Vqcb SUcF 1pHzol Nj T l1B urc Sam IP zkUS 8wwS a7wVWR 4D L VGf 1RF r59 9H tyGq hDT0 TDlooa mg j 9am png aWe nG XU2T zXLh IYOW5v 2d A rCG sLk s53 pW AuAy DQlF 6spKyd HT 9 Z1X n2s U1g 0D Llao YuLP PB6YKo D1 M 0fi qHU l4A Ia joiV Q6af VT6wvY Md 0 pCY BZp 7RX Hd xTb0 sjJ0 Beqpkc 8b N OgZ 0Tr 0wq h1 C2Hn YQXM 8nJ0Pf uG J Be2 vuq Duk LV AJwv 2tYc JOM1uK h7 p cgo iiK t0b 3e URec DVM7 ivRMh1 T6 p AWl upj kEj UL R3xN VAu5 kEbnrV HE 1 OrJ 2bx dUP yD vyVi x6sC BpGDSx jB C n9P Fiu xkF vw 0QPo fRjy 2OFItV eD B tDz lc9 xVy A0 de9Y 5h8c 7dYCFk Fl v WPD SuN VI6 MZ 72u9 MBtK 9BGLNs Yp l X2y b5U HgH AD bW8X Rzkv UJZShW QH G oKX yVA rsH TQ 1Vbd dK2M IxmTf6 wE T 9cX Fbu uVx Cb SBBp 0v2J MQ5Z8z 3p M EGp TU6 KCc YN 2BlW dp2t mliPDH JQ W jIR Rgq i5l AP gikl c8ru HnvYFM AI r Ih7 Ths 9tE hA AYgS swZZ fws19P 5w e JvM imb sFH Th CnSZ HORm yt98w3 U3 z ant zAy Twq 0C jgDI Etkb h98V4u o5 2 jjA Zz1 kLo C8 oHGv Z5Ru Gwv3kK 4W B 50T oMt q7Q WG 9mtb SIlc 87ruZf Kw Z Ph3 1ZA Osq 8l jVQJ LTXC gyQn0v KE S iSq Bpa wtH xc IJe4 SiE1 izzxim ke P Y3s 7SX 5DA SG XHqC r38V YP3Hxv OI R ZtM fqN oLF oU 7vNd txzw UkX32t 94 n Fdq qTR QOv Yq Ebig jrSZ kTN7Xw tP F gNs O7M 1mb DA btVB 3LGC pgE9hV FK Y LcS GmF 863 7a ZDiz 4CuJ bLnpE7 yl 8 5jg Many Thanks, POL OG EPOe Mru1 v25XLJ Fz h wgE lnu Ymq rX 1YKV Kvgm MK7gI4 6h 5 kZB OoJ tfC 5g VvA1 kNJr 2o7om1 XN p Uwt CWX fFT SW DjsI wuxO JxLU1S xA 5 ObG 3IO UdL qJ cCAr gzKM 08DvX2 mu i 13T t71 Iwq oF UI0E Ef5S V2vxcy SY I QGr qrB HID TJ v1OB 1CzD IDdW4E 4j J mv6 Ktx oBO s9 ADWB q218 BJJzRy UQ i 2Gp weE T8L aO 4ho9 5g4v WQmoiq jS w MA9 Cvn Gqx l1 LrYu MjGb oUpuvY Q2 C dBl AB9 7ew jc 5RJE SFGs ORedoM 0b B k25 VEK B8V A9 ytAE Oyof G8QIj2 7a I 3jy Rmz yET Kx pgUq 4Bvb cD1b1g KB y oE3 azg elV Nu 8iZ1 w1tq twKx8C LN 2 8yn jdo jUW vN H9qy HaXZ GhjUgm uL I 87i Y7Q 9MQ Wa iFFS Gzt8 4mSQq2 5O N ltT gbl 8YD QS AzXq pJEK 7bGL1U Jn 0 f59 vPr wdt d6 sDLj Loo1 8tQXf5 5u p mTa dJD sEL pH 2vqY uTAm YzDg95 1P K FP6 pEi zIJ Qd 8Ngn HTND 6z6ExR XV 0 ouU jWT kAK AB eAC9 Rfja c43Ajk Xn H dgS y3v 5cB et s3VX qfpP BqiGf9 0a w g4d EQ22} \end{align} where $C$ does not depend on $f$, $m$, or~$n$. \end{proof} \par \startnewsection{Stochastic heat equation in the whole space}{sec4} We approximate the stochastic Navier-Stokes equations via a system of stochastic heat equations,   \begin{align}   \begin{split}     \partial_t\uu( t,x)     &=\Delta \uu( t,x) + \nabla f( t,x) + g( t,x)\dot{\WW}(t),     \\     \uu( 0,x)&= \uu_0 ( x) \Pas     ,   \end{split}
   \label{ERTWERTHWRTWERTSGDGHCFGSDFGQSERWDFGDSFGHSDRGTEHDFGHDSFGSDGHGYUHDFGSDFASDFASGTWRT27}    \end{align} where $u=(u_1,\ldots,u_D)$ on $[0,T]\times{\mathbb R}^{\dd}$ and $d,D\in \NNp$.  By \cite[Chapter~4]{R}, the model \eqref{ERTWERTHWRTWERTSGDGHCFGSDFGQSERWDFGDSFGHSDRGTEHDFGHDSFGSDGHGYUHDFGSDFASDFASGTWRT27} has a strong solution $\uu$ in $L^{r}(\Omega\times[0,T]; W^{m,r})$ for $r\geq 2$ and $m\in\NNp$ provided $\uu_0\in L^{r}(\Omega; W^{m,r})$, $f\in L^{r}(\Omega\times[0,T], W^{m+1,r})$, and $g\in L^{r}(\Omega\times[0,T], \WW^{m,r})$. Moreover, if $(m-k)r>\dd$, then $\uu$ has a continuous modification in $C_b^{0,k}([0,T]\times\RR^\dd)$.  Here and in the sequel, when the domain of the Sobolev space is omitted, it is understood to be $\mathbb{R}^{d}$. \par The next theorem is adapted from \cite[Theorem~4.1]{KXZ} for the model \eqref{ERTWERTHWRTWERTSGDGHCFGSDFGQSERWDFGDSFGHSDRGTEHDFGHDSFGSDGHGYUHDFGSDFASDFASGTWRT27} in the whole space. It asserts the global existence and pathwise uniqueness of a strong solution to \eqref{ERTWERTHWRTWERTSGDGHCFGSDFGQSERWDFGDSFGHSDRGTEHDFGHDSFGSDGHGYUHDFGSDFASDFASGTWRT27} and provides an energy estimate of the solution. Compared with Theorems 4.1.2 and 4.1.4 in \cite{R}, this energy estimate holds at a much lower level of regularity. In particular, it loosens the constraint on the drift term $\nabla f$, which makes the application of the fixed-point argument to SNSE possible. \par \cole \begin{Theorem} \label{T02} Let $2<p<\infty$ and $0<T<~\infty$. Suppose that $u_0\in L^p(\Omega, L^p)$,  $f\in L^p(\Omega\times[0,T], L^{q})$,  and $g\in L^p(\Omega\times[0,T], \mathbb{L}^p)$, where \begin{equation} \frac{\dd p}{p+\dd-2} < q \leq p \label{ERTWERTHWRTWERTSGDGHCFGSDFGQSERWDFGDSFGHSDRGTEHDFGHDSFGSDGHGYUHDFGSDFASDFASGTWRT28} \end{equation} if $\dd\geq2$, and $1 <q \leq p$ if~$d=1$. Then there exists a unique global solution $\uu$ of \eqref{ERTWERTHWRTWERTSGDGHCFGSDFGQSERWDFGDSFGHSDRGTEHDFGHDSFGSDGHGYUHDFGSDFASDFASGTWRT27} in $L^p(\Omega; C([0,T], L^p))$ such that   \begin{align}   \begin{split}    &\EE\biggl[\sup_{0\leq t\leq T}\Vert\uu(t,\cdot)\Vert_p^p+\int_0^{T}\sum_{j=1}^{D} \int_{\RR^\dd} | \nabla (|\uu_j(t,x)|^{p/2})|^2 \,dx dt\biggr]    \\&\indeq    \leq C    \EE\biggl[    \Vert\uu_0\Vert_p^p    +\int_0^{T}\Vert f(s,\cdot)\Vert_{q}^p\,ds    +\int_0^{T}    \Vert g(s)\Vert_{\mathbb{L}^{p}}^{p}  \,ds    \biggr]    ,   \end{split}   \label{ERTWERTHWRTWERTSGDGHCFGSDFGQSERWDFGDSFGHSDRGTEHDFGHDSFGSDGHGYUHDFGSDFASDFASGTWRT29}   \end{align} where $C$ is a positive constant that depends only on $T$, $D$, $p$, and~$q$. \end{Theorem}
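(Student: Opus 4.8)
The plan is to establish \eqref{ERTWERTHWRTWERTSGDGHCFGSDFGQSERWDFGDSFGHSDRGTEHDFGHDSFGSDGHGYUHDFGSDFASDFASGTWRT29} first as an a~priori bound for data regular enough that \cite[Chapter~4]{R} already supplies a strong solution, and then obtain the general statement by regularizing $\uu_0,f,g$ and passing to the limit. Uniqueness is not an issue: \eqref{ERTWERTHWRTWERTSGDGHCFGSDFGQSERWDFGDSFGHSDRGTEHDFGHDSFGSDGHGYUHDFGSDFASDFASGTWRT27} is \emph{linear} in $\uu$ (the coefficients $f,g$ are prescribed and do not depend on $\uu$), so the difference $w$ of two strong solutions with identical data solves $\partial_t w=\Delta w$, $w(0)=0$ pathwise; equivalently, every strong solution must agree $\PP$-a.s.\ with the mild solution $e^{t\Delta}\uu_0+\int_0^t e^{(t-s)\Delta}\nabla f(s)\,ds+\int_0^t e^{(t-s)\Delta}g(s)\,d\WW_s$. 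Global solvability on $[0,T]$ for every $T$ is then automatic from the linearity once the estimate is in hand, so the entire content is the a~priori estimate.

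For the estimate we fix a regular solution $\uu$ (say $\uu_j\in L^p(\Omega\times[0,T],W^{m,p})$ with $m$ large, so $\uu$ is spatially smooth) and apply It\^o's formula to $t\mapsto\int_{\RR^\dd}|\uu_j(t,x)|^p\,dx$ for each component~$j$. Using $|\uu_j|^{p-2}|\nabla\uu_j|^2=\tfrac{4}{p^2}|\nabla(|\uu_j|^{p/2})|^2$, the Laplacian contributes the good term $-\tfrac{4(p-1)}{p}\int|\nabla(|\uu_j|^{p/2})|^2\,dx$. The drift term $\int p|\uu_j|^{p-2}\uu_j(\nabla f)_j\,dx$ is integrated by parts, moving the derivative onto $|\uu_j|^{p-2}\uu_j$; since $|\uu_j|^{p-2}|\nabla\uu_j|\lesssim|\uu_j|^{p/2-1}|\nabla(|\uu_j|^{p/2})|$, Young's inequality bounds it by $\varepsilon\int|\nabla(|\uu_j|^{p/2})|^2\,dx+C_\varepsilon\int|\uu_j|^{p-2}|f_j|^2\,dx$. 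For the last integral we apply H\"older with exponents $q/2$ and $(q/2)'=q/(q-2)$ (note $q>2$ for every admissible pair when $\dd\ge2$), obtaining $\|f_j\|_q^2\,\|\uu_j\|_s^{p-2}$ with $s=(p-2)q/(q-2)$; the point is that the constraint \eqref{ERTWERTHWRTWERTSGDGHCFGSDFGQSERWDFGDSFGHSDRGTEHDFGHDSFGSDGHGYUHDFGSDFASDFASGTWRT28} is \emph{exactly} the statement that $2s/p$ lies in $[2,2^*)$ with $2^*$ the Sobolev exponent, so that Gagliardo--Nirenberg gives $\|\uu_j\|_s^{p-2}\le C\|\nabla(|\uu_j|^{p/2})\|_2^{2\theta(p-2)/p}\|\uu_j\|_p^{(1-\theta)(p-2)}$ with $\theta=\dd(p-q)/((p-2)q)\in[0,1)$. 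Two more uses of Young's inequality — the first absorbing the $\|\nabla(|\uu_j|^{p/2})\|_2$ factor into the good term, which works precisely because $\theta<1$, the second collapsing the residual $\|f_j\|_q^{2/(1-a)}\|\uu_j\|_p^{\gamma}$ with $a=\theta(p-2)/p$ and $\gamma=(1-\theta)(p-2)/(1-a)$ — bound the whole drift term by a small multiple of $\int|\nabla(|\uu_j|^{p/2})|^2\,dx$ plus $C\|\uu_j\|_p^p+C\|f_j\|_q^p$, the final exponents matching because $(1-a)(p-\gamma)=2$, which is \eqref{ERTWERTHWRTWERTSGDGHCFGSDFGQSERWDFGDSFGHSDRGTEHDFGHDSFGSDGHGYUHDFGSDFASDFASGTWRT28} once more. (The case $\dd=1$ is handled similarly, using $W^{1,2}(\RR)\hookrightarrow L^\infty$.)

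The stochastic contributions are routine. After integrating, taking $\sup_{0\le t\le T}$, and taking expectations, the It\^o correction $\tfrac{p(p-1)}{2}\EE\int_0^T\!\int|\uu_j|^{p-2}\|g_j\|_{l^2}^2\,dx\,dt$ is controlled by H\"older and Young by $C\EE\int_0^T(\|\uu_j\|_p^p+\|g_j\|_{\mathbb{L}^p}^p)\,dt$, while the martingale term, whose quadratic variation is $\lesssim\int_0^T\|\uu_j\|_p^{2(p-1)}\|g_j\|_{\mathbb{L}^p}^2\,dt$ (by Minkowski's integral inequality and H\"older), is treated with the BDG inequality together with $\|\uu_j\|_p^{2(p-1)}=\|\uu_j\|_p^p\cdot\|\uu_j\|_p^{p-2}$ and Young, giving $\tfrac14\EE\sup_t\|\uu_j\|_p^p+C\EE\int_0^T\|\uu_j\|_p^{p-2}\|g_j\|_{\mathbb{L}^p}^2\,dt$ with the first term absorbed on the left. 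Summing over $j$ and choosing $\varepsilon$ small yields
\[
\EE\sup_{0\le t\le T}\|\uu(t)\|_p^p+c\,\EE\int_0^T\sum_{j}\|\nabla(|\uu_j|^{p/2})\|_2^2\,dt\le C\,\EE\|\uu_0\|_p^p+C\,\EE\int_0^T\|\uu\|_p^p\,dt+C\,\EE\int_0^T\bigl(\|f\|_q^p+\|g\|_{\mathbb{L}^p}^p\bigr)\,dt.
\]
Running this on $[0,t']$ for $t'\le T$ and applying Gronwall's lemma — legitimate because $\EE\int_0^T\|\uu\|_p^p\,dt<\infty$ for the regular solution — removes the term $\EE\int_0^T\|\uu\|_p^p\,dt$ and establishes \eqref{ERTWERTHWRTWERTSGDGHCFGSDFGQSERWDFGDSFGHSDRGTEHDFGHDSFGSDGHGYUHDFGSDFASDFASGTWRT29} for smooth data, with $C=C(T,D,p,q)$.

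For general data, mollify and spatially truncate to get smooth compactly supported $\uu_0^{(k)},f^{(k)},g^{(k)}$ with $\uu_0^{(k)}\to\uu_0$ in $L^p(\Omega;L^p)$, $f^{(k)}\to f$ in $L^p(\Omega\times[0,T];L^q)$, and $g^{(k)}\to g$ in $L^p(\Omega\times[0,T];\mathbb{L}^p)$. Each $\uu^{(k)}$ exists by \cite[Chapter~4]{R} and obeys \eqref{ERTWERTHWRTWERTSGDGHCFGSDFGQSERWDFGDSFGHSDRGTEHDFGHDSFGSDGHGYUHDFGSDFASDFASGTWRT29} uniformly; applying the same estimate to the differences $\uu^{(k)}-\uu^{(l)}$, which by linearity solve \eqref{ERTWERTHWRTWERTSGDGHCFGSDFGQSERWDFGDSFGHSDRGTEHDFGHDSFGSDGHGYUHDFGSDFASDFASGTWRT27} with the differenced data, shows that $(\uu^{(k)})$ is Cauchy in $L^p(\Omega;C([0,T],L^p))$. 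Its limit $\uu$ is a strong solution — one passes to the limit in the weak formulation using $(\nabla f^{(k)},\phi)=-(f^{(k)},\nabla\phi)\to-(f,\nabla\phi)$ and an It\^o-isometry/BDG bound for the stochastic integral — and \eqref{ERTWERTHWRTWERTSGDGHCFGSDFGQSERWDFGDSFGHSDRGTEHDFGHDSFGSDGHGYUHDFGSDFASDFASGTWRT29} survives in the limit by Fatou. I expect the crux to be the drift estimate: affording the loss of one derivative in $\nabla f$ when $f$ is merely in $L^q$ with $q$ possibly far below $p$, which is what forces the Gagliardo--Nirenberg interpolation and the chain of Young inequalities whose exponents are pinned down exactly by \eqref{ERTWERTHWRTWERTSGDGHCFGSDFGQSERWDFGDSFGHSDRGTEHDFGHDSFGSDGHGYUHDFGSDFASDFASGTWRT28}; a lesser point is the rigorous justification of the It\^o formula for $\|\uu_j(t)\|_p^p$, which is exactly why one runs the estimate at the regular-approximation level first.
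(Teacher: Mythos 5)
Your proposal is correct and takes essentially the same route as the paper: mollify the data so that the regularity theory of \cite{R} applies, run the componentwise It\^o formula for $\Vert u_j\Vert_p^p$, control the drift by integration by parts plus H\"older and Gagliardo--Nirenberg interpolation whose admissibility is exactly the stated range of $q$, handle the It\^o correction and the martingale term via H\"older/Minkowski and BDG, absorb, and pass to the limit using the Cauchy property of the linear equation together with uniqueness by linearity. The only cosmetic deviations are that the paper absorbs the $\EE\int_0^T\Vert u\Vert_p^p\,ds$ term by taking the small parameter $\delta$ (depending on $T$) rather than Gronwall, and that the persistence of the gradient term in the limit is not plain Fatou but the weak lower-semicontinuity argument the paper isolates as Lemma~\ref{L04}; also your remark that $(1-a)(p-\gamma)=2$ ``is the exponent condition again'' is an algebraic identity holding for any $\theta$, the condition on $q$ being needed only to ensure $\theta<1$ so the dissipation can absorb the gradient factor.
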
 \colb \par The following result is needed to pass to the limit in~\eqref{ERTWERTHWRTWERTSGDGHCFGSDFGQSERWDFGDSFGHSDRGTEHDFGHDSFGSDGHGYUHDFGSDFASDFASGTWRT29}. It was proven in \cite[Lemma~4.4]{KXZ} where the spatial domain is a torus.  \par \cole \begin{Lemma} \label{L04} Let $2\leq p<\infty$,  and let $\{u_n\}_{n\in\NNp}$ be a sequence of scalar-valued processes such that $\{\nabla (|\uu_{n}|^{p/2})\}_{n\in\NNp}$ is bounded in $L^2(\Omega\times[0,T], L^2)$ and $\uu_{n}$ converges to $u$ in $L^p(\Omega, L^{\infty}([0,T], L^p))$ as~$n\to\infty$. Then,   \begin{align}      \EE\biggl[\int_0^{T} \int_{\RR^\dd} |\nabla (|u|^{p/2})|^2 \,dx dt\biggr]       \leq       \liminf_{n\to \infty}         \EE\biggl[\int_0^{T} \int_{\RR^\dd} | \nabla (|\uu_{n}|^{p/2})|^2 \,dx dt\biggr] .    \label{ERTWERTHWRTWERTSGDGHCFGSDFGQSERWDFGDSFGHSDRGTEHDFGHDSFGSDGHGYUHDFGSDFASDFASGTWRT30}   \end{align} \end{Lemma}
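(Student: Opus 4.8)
The plan is to recognize the claimed bound as the weak lower semicontinuity of the norm of the Hilbert space $L^2(\Omega\times[0,T]\times\RR^\dd)$, once the weak limit of $\{\nabla(|u_n|^{p/2})\}_{n\in\NNp}$ has been identified with $\nabla(|u|^{p/2})$.

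First I would pass to a subsequence (not relabeled) along which the right-hand side of \eqref{ERTWERTHWRTWERTSGDGHCFGSDFGQSERWDFGDSFGHSDRGTEHDFGHDSFGSDGHGYUHDFGSDFASDFASGTWRT30} is a genuine limit. Since $u_n\to u$ in $L^p(\Omega;L^\infty([0,T];L^p))$, we have in particular $u_n\to u$ in $L^p(\Omega\times[0,T]\times\RR^\dd)$, so after a further extraction $u_n\to u$ almost everywhere on $\Omega\times[0,T]\times\RR^\dd$, and hence $|u_n|^{p/2}\to|u|^{p/2}$ almost everywhere there. Moreover, $\||u_n|^{p/2}\|_{L^2(\RR^\dd)}^2=\|u_n\|_{L^p(\RR^\dd)}^p$ together with the boundedness of $\{u_n\}$ in $L^p(\Omega;L^\infty([0,T];L^p))$ shows that $\{|u_n|^{p/2}\}$ is bounded in $L^2(\Omega\times[0,T]\times\RR^\dd)$, while $\{\nabla(|u_n|^{p/2})\}$ is bounded there by hypothesis. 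Using the standard fact that a sequence bounded in $L^2$ which converges almost everywhere converges weakly in $L^2$ to its pointwise limit, we get $|u_n|^{p/2}\rightharpoonup|u|^{p/2}$ weakly in $L^2(\Omega\times[0,T]\times\RR^\dd)$; by boundedness and reflexivity, along a further subsequence $\nabla(|u_n|^{p/2})\rightharpoonup\zeta$ weakly in $L^2(\Omega\times[0,T]\times\RR^\dd;\RR^\dd)$ for some $\zeta$.

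Next I would show $\zeta=\nabla(|u|^{p/2})$. Fixing $A\in\cf$ and $\phi\in C_c^\infty((0,T)\times\RR^\dd;\RR^\dd)$ and integrating by parts in the spatial variable inside the expectation,
\[
\EE\Bigl[\indic_A\int_0^T\!\!\int_{\RR^\dd}\zeta\cdot\phi\,dx\,dt\Bigr]
=\lim_{n\to\infty}\EE\Bigl[\indic_A\int_0^T\!\!\int_{\RR^\dd}\nabla(|u_n|^{p/2})\cdot\phi\,dx\,dt\Bigr]
=-\lim_{n\to\infty}\EE\Bigl[\indic_A\int_0^T\!\!\int_{\RR^\dd}|u_n|^{p/2}\,\div\phi\,dx\,dt\Bigr],
\]
and the last limit equals $-\EE\bigl[\indic_A\int_0^T\int_{\RR^\dd}|u|^{p/2}\,\div\phi\,dx\,dt\bigr]$ by the weak $L^2$-convergence of $|u_n|^{p/2}$. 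Since $A\in\cf$ is arbitrary, $\int_0^T\!\int_{\RR^\dd}\zeta\cdot\phi\,dx\,dt=-\int_0^T\!\int_{\RR^\dd}|u|^{p/2}\,\div\phi\,dx\,dt$ holds $\PP$-almost surely for every such $\phi$, which is precisely the statement that $\zeta=\nabla(|u|^{p/2})$ in the sense of distributions; in particular $|u|^{p/2}$ has a weak spatial gradient belonging to $L^2(\Omega\times[0,T]\times\RR^\dd)$. Weak lower semicontinuity of the $L^2$ norm then yields
\[
\EE\Bigl[\int_0^T\!\!\int_{\RR^\dd}|\nabla(|u|^{p/2})|^2\,dx\,dt\Bigr]=\|\zeta\|_{L^2}^2
\le\liminf_{n\to\infty}\|\nabla(|u_n|^{p/2})\|_{L^2}^2
=\liminf_{n\to\infty}\EE\Bigl[\int_0^T\!\!\int_{\RR^\dd}|\nabla(|u_n|^{p/2})|^2\,dx\,dt\Bigr],
\]
the last equality by the choice of the initial subsequence; this is \eqref{ERTWERTHWRTWERTSGDGHCFGSDFGQSERWDFGDSFGHSDRGTEHDFGHDSFGSDGHGYUHDFGSDFASDFASGTWRT30}.

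The step I expect to be the crux is the identification of $\zeta$ with $\nabla(|u|^{p/2})$: there is no mollification available in the probability variable, so testing against the indicators $\indic_A$ must play the role that distributional testing in $\omega$ cannot, and one also needs to know a priori that $|u_n|^{p/2}\in H^1(\RR^\dd)$ for almost every $(\omega,t)$ for the integration by parts to be valid---this is exactly what the boundedness hypothesis on $\{\nabla(|u_n|^{p/2})\}$ supplies. The remaining ingredients---extraction of an almost-everywhere convergent subsequence, weak $L^2$-compactness, and lower semicontinuity of the norm---are standard; this is essentially the scheme of \cite[Lemma~4.4]{KXZ}, the only adjustments being for the non-compactness of the domain.
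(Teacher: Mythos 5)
Your argument is correct and follows essentially the same scheme as the paper's proof: pass to a subsequence attaining the liminf, obtain almost-everywhere convergence of $|u_n|^{p/2}$, use weak $L^2$-compactness of the gradients, identify the weak limit with $\nabla(|u|^{p/2})$ by integration by parts against smooth test functions, and conclude by weak lower semicontinuity of the Hilbert norm. The only deviation is that the paper first upgrades $|u_n|^{p/2}\to|u|^{p/2}$ to strong convergence in $L^2(\Omega\times[0,T],L^2)$ via a Fatou-lemma trick and uses that in the identification, whereas you rely only on the weak convergence coming from boundedness plus almost-everywhere convergence and test against functions of the form $\indic_A\,\phi(t,x)$ with $A\in\cf$; this suffices, and your explicit testing in all three variables is, if anything, a slightly more careful way to pin down the weak limit in the product space than the paper's pairing against $\varphi(x)$ alone.
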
 \colb \par The proof from \cite{KXZ} applies without much change, but we include it here for the sake of clarity.  \par \begin{proof}[Proof of Lemma~\ref{L04}] Due to the boundedness of $\{\nabla (|\uu_{n}|^{p/2})\}_{n\in\NNp}$ in $L^2(\Omega\times[0,T], L^2)$, there is a subsequence along which the infimum in \eqref{ERTWERTHWRTWERTSGDGHCFGSDFGQSERWDFGDSFGHSDRGTEHDFGHDSFGSDGHGYUHDFGSDFASDFASGTWRT30} is attained. Utilizing the convergence of $u_n$ to $u$ in $L^p(\Omega, L^{\infty}([0,T], L^p))$, we may extract a further subsequence, which we still denote by $\{u_n\}_{n\in{\mathbb N}}$, so that  \begin{equation}\llabel{ DQlF 6spKyd HT 9 Z1X n2s U1g 0D Llao YuLP PB6YKo D1 M 0fi qHU l4A Ia joiV Q6af VT6wvY Md 0 pCY BZp 7RX Hd xTb0 sjJ0 Beqpkc 8b N OgZ 0Tr 0wq h1 C2Hn YQXM 8nJ0Pf uG J Be2 vuq Duk LV AJwv 2tYc JOM1uK h7 p cgo iiK t0b 3e URec DVM7 ivRMh1 T6 p AWl upj kEj UL R3xN VAu5 kEbnrV HE 1 OrJ 2bx dUP yD vyVi x6sC BpGDSx jB C n9P Fiu xkF vw 0QPo fRjy 2OFItV eD B tDz lc9 xVy A0 de9Y 5h8c 7dYCFk Fl v WPD SuN VI6 MZ 72u9 MBtK 9BGLNs Yp l X2y b5U HgH AD bW8X Rzkv UJZShW QH G oKX yVA rsH TQ 1Vbd dK2M IxmTf6 wE T 9cX Fbu uVx Cb SBBp 0v2J MQ5Z8z 3p M EGp TU6 KCc YN 2BlW dp2t mliPDH JQ W jIR Rgq i5l AP gikl c8ru HnvYFM AI r Ih7 Ths 9tE hA AYgS swZZ fws19P 5w e JvM imb sFH Th CnSZ HORm yt98w3 U3 z ant zAy Twq 0C jgDI Etkb h98V4u o5 2 jjA Zz1 kLo C8 oHGv Z5Ru Gwv3kK 4W B 50T oMt q7Q WG 9mtb SIlc 87ruZf Kw Z Ph3 1ZA Osq 8l jVQJ LTXC gyQn0v KE S iSq Bpa wtH xc IJe4 SiE1 izzxim ke P Y3s 7SX 5DA SG XHqC r38V YP3Hxv OI R ZtM fqN oLF oU 7vNd txzw UkX32t 94 n Fdq qTR QOv Yq Ebig jrSZ kTN7Xw tP F gNs O7M 1mb DA btVB 3LGC pgE9hV FK Y LcS GmF 863 7a ZDiz 4CuJ bLnpE7 yl 8 5jg Many Thanks, POL OG EPOe Mru1 v25XLJ Fz h wgE lnu Ymq rX 1YKV Kvgm MK7gI4 6h 5 kZB OoJ tfC 5g VvA1 kNJr 2o7om1 XN p Uwt CWX fFT SW DjsI wuxO JxLU1S xA 5 ObG 3IO UdL qJ cCAr gzKM 08DvX2 mu i 13T t71 Iwq oF UI0E Ef5S V2vxcy SY I QGr qrB HID TJ v1OB 1CzD IDdW4E 4j J mv6 Ktx oBO s9 ADWB q218 BJJzRy UQ i 2Gp weE T8L aO 4ho9 5g4v WQmoiq jS w MA9 Cvn Gqx l1 LrYu MjGb oUpuvY Q2 C dBl AB9 7ew jc 5RJE SFGs ORedoM 0b B k25 VEK B8V A9 ytAE Oyof G8QIj2 7a I 3jy Rmz yET Kx pgUq 4Bvb cD1b1g KB y oE3 azg elV Nu 8iZ1 w1tq twKx8C LN 2 8yn jdo jUW vN H9qy HaXZ GhjUgm uL I 87i Y7Q 9MQ Wa iFFS Gzt8 4mSQq2 5O N ltT gbl 8YD QS AzXq pJEK 7bGL1U Jn 0 f59 vPr wdt d6 sDLj Loo1 8tQXf5 5u p mTa dJD sEL pH 2vqY uTAm YzDg95 1P K FP6 pEi zIJ Qd 8Ngn HTND 6z6ExR XV 0 ouU jWT kAK AB eAC9 Rfja c43Ajk Xn H dgS y3v 5cB et s3VX qfpP BqiGf9 0a w g4d W9U kvR iJ y46G bH3U cJ86hW Va C Mje dsU cqD SZ 1DlP 2mfB hzu5dv u1 i 6eW 2YN LhM 3f WOdz KS6Q ov14wx YY d 8sa S38 hIl cP tS4l 9B7h FC3JXJ Gp s tll 7a7 WNr VM wunm nmDc 5dEQ31}  u_n(\omega, t, x)\xrightarrow{n\to\infty}  u(\omega, t, x)  \comma (\omega, t,x)\text{-a.e.} \end{equation} and subsequently, \begin{equation}\llabel{uq Duk LV AJwv 2tYc JOM1uK h7 p cgo iiK t0b 3e URec DVM7 ivRMh1 T6 p AWl upj kEj UL R3xN VAu5 kEbnrV HE 1 OrJ 2bx dUP yD vyVi x6sC BpGDSx jB C n9P Fiu xkF vw 0QPo fRjy 2OFItV eD B tDz lc9 xVy A0 de9Y 5h8c 7dYCFk Fl v WPD SuN VI6 MZ 72u9 MBtK 9BGLNs Yp l X2y b5U HgH AD bW8X Rzkv UJZShW QH G oKX yVA rsH TQ 1Vbd dK2M IxmTf6 wE T 9cX Fbu uVx Cb SBBp 0v2J MQ5Z8z 3p M EGp TU6 KCc YN 2BlW dp2t mliPDH JQ W jIR Rgq i5l AP gikl c8ru HnvYFM AI r Ih7 Ths 9tE hA AYgS swZZ fws19P 5w e JvM imb sFH Th CnSZ HORm yt98w3 U3 z ant zAy Twq 0C jgDI Etkb h98V4u o5 2 jjA Zz1 kLo C8 oHGv Z5Ru Gwv3kK 4W B 50T oMt q7Q WG 9mtb SIlc 87ruZf Kw Z Ph3 1ZA Osq 8l jVQJ LTXC gyQn0v KE S iSq Bpa wtH xc IJe4 SiE1 izzxim ke P Y3s 7SX 5DA SG XHqC r38V YP3Hxv OI R ZtM fqN oLF oU 7vNd txzw UkX32t 94 n Fdq qTR QOv Yq Ebig jrSZ kTN7Xw tP F gNs O7M 1mb DA btVB 3LGC pgE9hV FK Y LcS GmF 863 7a ZDiz 4CuJ bLnpE7 yl 8 5jg Many Thanks, POL OG EPOe Mru1 v25XLJ Fz h wgE lnu Ymq rX 1YKV Kvgm MK7gI4 6h 5 kZB OoJ tfC 5g VvA1 kNJr 2o7om1 XN p Uwt CWX fFT SW DjsI wuxO JxLU1S xA 5 ObG 3IO UdL qJ cCAr gzKM 08DvX2 mu i 13T t71 Iwq oF UI0E Ef5S V2vxcy SY I QGr qrB HID TJ v1OB 1CzD IDdW4E 4j J mv6 Ktx oBO s9 ADWB q218 BJJzRy UQ i 2Gp weE T8L aO 4ho9 5g4v WQmoiq jS w MA9 Cvn Gqx l1 LrYu MjGb oUpuvY Q2 C dBl AB9 7ew jc 5RJE SFGs ORedoM 0b B k25 VEK B8V A9 ytAE Oyof G8QIj2 7a I 3jy Rmz yET Kx pgUq 4Bvb cD1b1g KB y oE3 azg elV Nu 8iZ1 w1tq twKx8C LN 2 8yn jdo jUW vN H9qy HaXZ GhjUgm uL I 87i Y7Q 9MQ Wa iFFS Gzt8 4mSQq2 5O N ltT gbl 8YD QS AzXq pJEK 7bGL1U Jn 0 f59 vPr wdt d6 sDLj Loo1 8tQXf5 5u p mTa dJD sEL pH 2vqY uTAm YzDg95 1P K FP6 pEi zIJ Qd 8Ngn HTND 6z6ExR XV 0 ouU jWT kAK AB eAC9 Rfja c43Ajk Xn H dgS y3v 5cB et s3VX qfpP BqiGf9 0a w g4d W9U kvR iJ y46G bH3U cJ86hW Va C Mje dsU cqD SZ 1DlP 2mfB hzu5dv u1 i 6eW 2YN LhM 3f WOdz KS6Q ov14wx YY d 8sa S38 hIl cP tS4l 9B7h FC3JXJ Gp s tll 7a7 WNr VM wunm nmDc 5duVpZ xT C l8F I01 jhn 5B l4Jz aEV7 CKMThL ji 1 gyZ uXc Iv4 03 3NqZ LITG Ux3ClP CB K O3v RUi mJq l5 blI9 GrWy irWHof lH 7 3ZT eZX kop eq 8XL1 RQ3a Uj6Ess nj 2 0MA 3As rSV fEQ32}   | u_n(\omega, t, x)|^{p/2}\xrightarrow{n\to\infty}  | u(\omega, t, x)|^{p/2}   \comma (\omega, t,x)\text{-a.e.} \end{equation} It suffices to prove \eqref{ERTWERTHWRTWERTSGDGHCFGSDFGQSERWDFGDSFGHSDRGTEHDFGHDSFGSDGHGYUHDFGSDFASDFASGTWRT30} for this subsequence. Note that the convergence of $\uu_{n}$ to $u$ in $L^p(\Omega, L^{\infty}([0,T], L^p))$ also implies the convergence of the $L^2(\Omega\times[0,T], L^2)$-norm of $|\uu_{n}|^{p/2}$ to that of~$|\uu|^{p/2}$. Applying Fatou's lemma to  \begin{equation} f_n:=2(|\uu_{n}|^{p}+|\uu|^{p})-(|\uu_{n}|^{p/2}-|\uu|^{p/2})^2,    \llabel{ItV eD B tDz lc9 xVy A0 de9Y 5h8c 7dYCFk Fl v WPD SuN VI6 MZ 72u9 MBtK 9BGLNs Yp l X2y b5U HgH AD bW8X Rzkv UJZShW QH G oKX yVA rsH TQ 1Vbd dK2M IxmTf6 wE T 9cX Fbu uVx Cb SBBp 0v2J MQ5Z8z 3p M EGp TU6 KCc YN 2BlW dp2t mliPDH JQ W jIR Rgq i5l AP gikl c8ru HnvYFM AI r Ih7 Ths 9tE hA AYgS swZZ fws19P 5w e JvM imb sFH Th CnSZ HORm yt98w3 U3 z ant zAy Twq 0C jgDI Etkb h98V4u o5 2 jjA Zz1 kLo C8 oHGv Z5Ru Gwv3kK 4W B 50T oMt q7Q WG 9mtb SIlc 87ruZf Kw Z Ph3 1ZA Osq 8l jVQJ LTXC gyQn0v KE S iSq Bpa wtH xc IJe4 SiE1 izzxim ke P Y3s 7SX 5DA SG XHqC r38V YP3Hxv OI R ZtM fqN oLF oU 7vNd txzw UkX32t 94 n Fdq qTR QOv Yq Ebig jrSZ kTN7Xw tP F gNs O7M 1mb DA btVB 3LGC pgE9hV FK Y LcS GmF 863 7a ZDiz 4CuJ bLnpE7 yl 8 5jg Many Thanks, POL OG EPOe Mru1 v25XLJ Fz h wgE lnu Ymq rX 1YKV Kvgm MK7gI4 6h 5 kZB OoJ tfC 5g VvA1 kNJr 2o7om1 XN p Uwt CWX fFT SW DjsI wuxO JxLU1S xA 5 ObG 3IO UdL qJ cCAr gzKM 08DvX2 mu i 13T t71 Iwq oF UI0E Ef5S V2vxcy SY I QGr qrB HID TJ v1OB 1CzD IDdW4E 4j J mv6 Ktx oBO s9 ADWB q218 BJJzRy UQ i 2Gp weE T8L aO 4ho9 5g4v WQmoiq jS w MA9 Cvn Gqx l1 LrYu MjGb oUpuvY Q2 C dBl AB9 7ew jc 5RJE SFGs ORedoM 0b B k25 VEK B8V A9 ytAE Oyof G8QIj2 7a I 3jy Rmz yET Kx pgUq 4Bvb cD1b1g KB y oE3 azg elV Nu 8iZ1 w1tq twKx8C LN 2 8yn jdo jUW vN H9qy HaXZ GhjUgm uL I 87i Y7Q 9MQ Wa iFFS Gzt8 4mSQq2 5O N ltT gbl 8YD QS AzXq pJEK 7bGL1U Jn 0 f59 vPr wdt d6 sDLj Loo1 8tQXf5 5u p mTa dJD sEL pH 2vqY uTAm YzDg95 1P K FP6 pEi zIJ Qd 8Ngn HTND 6z6ExR XV 0 ouU jWT kAK AB eAC9 Rfja c43Ajk Xn H dgS y3v 5cB et s3VX qfpP BqiGf9 0a w g4d W9U kvR iJ y46G bH3U cJ86hW Va C Mje dsU cqD SZ 1DlP 2mfB hzu5dv u1 i 6eW 2YN LhM 3f WOdz KS6Q ov14wx YY d 8sa S38 hIl cP tS4l 9B7h FC3JXJ Gp s tll 7a7 WNr VM wunm nmDc 5duVpZ xT C l8F I01 jhn 5B l4Jz aEV7 CKMThL ji 1 gyZ uXc Iv4 03 3NqZ LITG Ux3ClP CB K O3v RUi mJq l5 blI9 GrWy irWHof lH 7 3ZT eZX kop eq 8XL1 RQ3a Uj6Ess nj 2 0MA 3As rSV ft 3F9w zB1q DQVOnH Cm m P3d WSb jst oj 3oGj advz qcMB6Y 6k D 9sZ 0bd Mjt UT hULG TWU9 Nmr3E4 CN b zUO vTh hqL 1p xAxT ezrH dVMgLY TT r Sfx LUX CMr WA bE69 K6XH i5re1f x4 GEQ156} \end{equation} we obtain that   \begin{equation}    |\uu_{n}|^{p/2}\to |\uu|^{p/2}     \text{~in~} L^2(\Omega\times[0,T], L^2)    .    \label{ERTWERTHWRTWERTSGDGHCFGSDFGQSERWDFGDSFGHSDRGTEHDFGHDSFGSDGHGYUHDFGSDFASDFASGTWRT33}   \end{equation} Since every bounded sequence in a Hilbert space has a weakly convergent subsequence, we may by passing to a subsequence assume that   \begin{equation}    \nabla (|\uu_{n}(\omega,t,x)|^{p/2})      \xrightarrow{n\to\infty}     g     \text{~weakly~in~} L^2(\Omega\times[0,T], L^2)      ,    \llabel{ SBBp 0v2J MQ5Z8z 3p M EGp TU6 KCc YN 2BlW dp2t mliPDH JQ W jIR Rgq i5l AP gikl c8ru HnvYFM AI r Ih7 Ths 9tE hA AYgS swZZ fws19P 5w e JvM imb sFH Th CnSZ HORm yt98w3 U3 z ant zAy Twq 0C jgDI Etkb h98V4u o5 2 jjA Zz1 kLo C8 oHGv Z5Ru Gwv3kK 4W B 50T oMt q7Q WG 9mtb SIlc 87ruZf Kw Z Ph3 1ZA Osq 8l jVQJ LTXC gyQn0v KE S iSq Bpa wtH xc IJe4 SiE1 izzxim ke P Y3s 7SX 5DA SG XHqC r38V YP3Hxv OI R ZtM fqN oLF oU 7vNd txzw UkX32t 94 n Fdq qTR QOv Yq Ebig jrSZ kTN7Xw tP F gNs O7M 1mb DA btVB 3LGC pgE9hV FK Y LcS GmF 863 7a ZDiz 4CuJ bLnpE7 yl 8 5jg Many Thanks, POL OG EPOe Mru1 v25XLJ Fz h wgE lnu Ymq rX 1YKV Kvgm MK7gI4 6h 5 kZB OoJ tfC 5g VvA1 kNJr 2o7om1 XN p Uwt CWX fFT SW DjsI wuxO JxLU1S xA 5 ObG 3IO UdL qJ cCAr gzKM 08DvX2 mu i 13T t71 Iwq oF UI0E Ef5S V2vxcy SY I QGr qrB HID TJ v1OB 1CzD IDdW4E 4j J mv6 Ktx oBO s9 ADWB q218 BJJzRy UQ i 2Gp weE T8L aO 4ho9 5g4v WQmoiq jS w MA9 Cvn Gqx l1 LrYu MjGb oUpuvY Q2 C dBl AB9 7ew jc 5RJE SFGs ORedoM 0b B k25 VEK B8V A9 ytAE Oyof G8QIj2 7a I 3jy Rmz yET Kx pgUq 4Bvb cD1b1g KB y oE3 azg elV Nu 8iZ1 w1tq twKx8C LN 2 8yn jdo jUW vN H9qy HaXZ GhjUgm uL I 87i Y7Q 9MQ Wa iFFS Gzt8 4mSQq2 5O N ltT gbl 8YD QS AzXq pJEK 7bGL1U Jn 0 f59 vPr wdt d6 sDLj Loo1 8tQXf5 5u p mTa dJD sEL pH 2vqY uTAm YzDg95 1P K FP6 pEi zIJ Qd 8Ngn HTND 6z6ExR XV 0 ouU jWT kAK AB eAC9 Rfja c43Ajk Xn H dgS y3v 5cB et s3VX qfpP BqiGf9 0a w g4d W9U kvR iJ y46G bH3U cJ86hW Va C Mje dsU cqD SZ 1DlP 2mfB hzu5dv u1 i 6eW 2YN LhM 3f WOdz KS6Q ov14wx YY d 8sa S38 hIl cP tS4l 9B7h FC3JXJ Gp s tll 7a7 WNr VM wunm nmDc 5duVpZ xT C l8F I01 jhn 5B l4Jz aEV7 CKMThL ji 1 gyZ uXc Iv4 03 3NqZ LITG Ux3ClP CB K O3v RUi mJq l5 blI9 GrWy irWHof lH 7 3ZT eZX kop eq 8XL1 RQ3a Uj6Ess nj 2 0MA 3As rSV ft 3F9w zB1q DQVOnH Cm m P3d WSb jst oj 3oGj advz qcMB6Y 6k D 9sZ 0bd Mjt UT hULG TWU9 Nmr3E4 CN b zUO vTh hqL 1p xAxT ezrH dVMgLY TT r Sfx LUX CMr WA bE69 K6XH i5re1f x4 G DKk iB7 f2D Xz Xez2 k2Yc Yc4QjU yM Y R1o DeY NWf 74 hByF dsWk 4cUbCR DX a q4e DWd 7qb Ot 7GOu oklg jJ00J9 Il O Jxn tzF VBC Ft pABp VLEE 2y5Qcg b3 5 DU4 igj 4dz zW soNF wvEQ34}   \end{equation} for some $g\in  L^2(\Omega\times[0,T], L^2)$. By the weak lower-semicontinuity of the Hilbert space norm,    \begin{align}       \liminf_{n\to \infty}         \EE\biggl[\int_0^{T} \int_{\RR^\dd} | \nabla (|\uu_{n}(\omega,t,x)|^{p/2})|^2 \,dx dt\biggr]         \geq           \EE\biggl[\int_0^{T} \int_{\RR^\dd} |g|^2 \,dx dt\biggr]      .    \llabel{ant zAy Twq 0C jgDI Etkb h98V4u o5 2 jjA Zz1 kLo C8 oHGv Z5Ru Gwv3kK 4W B 50T oMt q7Q WG 9mtb SIlc 87ruZf Kw Z Ph3 1ZA Osq 8l jVQJ LTXC gyQn0v KE S iSq Bpa wtH xc IJe4 SiE1 izzxim ke P Y3s 7SX 5DA SG XHqC r38V YP3Hxv OI R ZtM fqN oLF oU 7vNd txzw UkX32t 94 n Fdq qTR QOv Yq Ebig jrSZ kTN7Xw tP F gNs O7M 1mb DA btVB 3LGC pgE9hV FK Y LcS GmF 863 7a ZDiz 4CuJ bLnpE7 yl 8 5jg Many Thanks, POL OG EPOe Mru1 v25XLJ Fz h wgE lnu Ymq rX 1YKV Kvgm MK7gI4 6h 5 kZB OoJ tfC 5g VvA1 kNJr 2o7om1 XN p Uwt CWX fFT SW DjsI wuxO JxLU1S xA 5 ObG 3IO UdL qJ cCAr gzKM 08DvX2 mu i 13T t71 Iwq oF UI0E Ef5S V2vxcy SY I QGr qrB HID TJ v1OB 1CzD IDdW4E 4j J mv6 Ktx oBO s9 ADWB q218 BJJzRy UQ i 2Gp weE T8L aO 4ho9 5g4v WQmoiq jS w MA9 Cvn Gqx l1 LrYu MjGb oUpuvY Q2 C dBl AB9 7ew jc 5RJE SFGs ORedoM 0b B k25 VEK B8V A9 ytAE Oyof G8QIj2 7a I 3jy Rmz yET Kx pgUq 4Bvb cD1b1g KB y oE3 azg elV Nu 8iZ1 w1tq twKx8C LN 2 8yn jdo jUW vN H9qy HaXZ GhjUgm uL I 87i Y7Q 9MQ Wa iFFS Gzt8 4mSQq2 5O N ltT gbl 8YD QS AzXq pJEK 7bGL1U Jn 0 f59 vPr wdt d6 sDLj Loo1 8tQXf5 5u p mTa dJD sEL pH 2vqY uTAm YzDg95 1P K FP6 pEi zIJ Qd 8Ngn HTND 6z6ExR XV 0 ouU jWT kAK AB eAC9 Rfja c43Ajk Xn H dgS y3v 5cB et s3VX qfpP BqiGf9 0a w g4d W9U kvR iJ y46G bH3U cJ86hW Va C Mje dsU cqD SZ 1DlP 2mfB hzu5dv u1 i 6eW 2YN LhM 3f WOdz KS6Q ov14wx YY d 8sa S38 hIl cP tS4l 9B7h FC3JXJ Gp s tll 7a7 WNr VM wunm nmDc 5duVpZ xT C l8F I01 jhn 5B l4Jz aEV7 CKMThL ji 1 gyZ uXc Iv4 03 3NqZ LITG Ux3ClP CB K O3v RUi mJq l5 blI9 GrWy irWHof lH 7 3ZT eZX kop eq 8XL1 RQ3a Uj6Ess nj 2 0MA 3As rSV ft 3F9w zB1q DQVOnH Cm m P3d WSb jst oj 3oGj advz qcMB6Y 6k D 9sZ 0bd Mjt UT hULG TWU9 Nmr3E4 CN b zUO vTh hqL 1p xAxT ezrH dVMgLY TT r Sfx LUX CMr WA bE69 K6XH i5re1f x4 G DKk iB7 f2D Xz Xez2 k2Yc Yc4QjU yM Y R1o DeY NWf 74 hByF dsWk 4cUbCR DX a q4e DWd 7qb Ot 7GOu oklg jJ00J9 Il O Jxn tzF VBC Ft pABp VLEE 2y5Qcg b3 5 DU4 igj 4dz zW soNF wvqj bNFma0 am F Kiv Aap pzM zr VqYf OulM HafaBk 6J r eOQ BaT EsJ BB tHXj n2EU CNleWp cv W JIg gWX Ksn B3 wvmo WK49 Nl492o gR 6 fvc 8ff jJm sW Jr0j zI9p CBsIUV of D kKH Ub7 EQ35}   \end{align} To obtain that $g(\omega, t)$ and $\nabla (|u(\omega, t)|^{p/2})$ agree as functions in $L^2(\RR^d)$, we observe that we have,    \begin{align}    \bigl( g_j, \varphi \bigr)     = \lim_{n} \bigl(\partial_{j} (| \uu_{n}|^{p/2}), \varphi\bigr)     = -\lim_{n}\bigl( | \uu_{n}|^{p/2}, \partial_{j}\varphi\bigr)    =-\bigl( | \uu|^{p/2}, \partial_{j}\varphi\bigr)    =\bigl( \partial_{j}(| \uu|^{p/2}), \varphi\bigr)    ,    \label{ERTWERTHWRTWERTSGDGHCFGSDFGQSERWDFGDSFGHSDRGTEHDFGHDSFGSDGHGYUHDFGSDFASDFASGTWRT36}   \end{align} for an arbitrary function $\varphi\in C_c^{\infty}(\RR^\dd)$ and $j=1,\ldots,d$, where $(\cdot ,\cdot)$ represents the inner product on $L^2(\Omega\times[0,T], L^2)$; note that the first equality in \eqref{ERTWERTHWRTWERTSGDGHCFGSDFGQSERWDFGDSFGHSDRGTEHDFGHDSFGSDGHGYUHDFGSDFASDFASGTWRT36} is due to the weak convergence, and the third is due to~\eqref{ERTWERTHWRTWERTSGDGHCFGSDFGQSERWDFGDSFGHSDRGTEHDFGHDSFGSDGHGYUHDFGSDFASDFASGTWRT33}.  \end{proof} \par \begin{proof}[Proof of Theorem~\ref{T02}] In order to utilize the results from \cite{R}, we resort to  the convolution with a standard mollifier   \begin{equation}    \rho_{\epsilon}=\frac1{\epsilon^{d}}\rho\left(\frac{\cdot}{\epsilon}\right).    \llabel{1 izzxim ke P Y3s 7SX 5DA SG XHqC r38V YP3Hxv OI R ZtM fqN oLF oU 7vNd txzw UkX32t 94 n Fdq qTR QOv Yq Ebig jrSZ kTN7Xw tP F gNs O7M 1mb DA btVB 3LGC pgE9hV FK Y LcS GmF 863 7a ZDiz 4CuJ bLnpE7 yl 8 5jg Many Thanks, POL OG EPOe Mru1 v25XLJ Fz h wgE lnu Ymq rX 1YKV Kvgm MK7gI4 6h 5 kZB OoJ tfC 5g VvA1 kNJr 2o7om1 XN p Uwt CWX fFT SW DjsI wuxO JxLU1S xA 5 ObG 3IO UdL qJ cCAr gzKM 08DvX2 mu i 13T t71 Iwq oF UI0E Ef5S V2vxcy SY I QGr qrB HID TJ v1OB 1CzD IDdW4E 4j J mv6 Ktx oBO s9 ADWB q218 BJJzRy UQ i 2Gp weE T8L aO 4ho9 5g4v WQmoiq jS w MA9 Cvn Gqx l1 LrYu MjGb oUpuvY Q2 C dBl AB9 7ew jc 5RJE SFGs ORedoM 0b B k25 VEK B8V A9 ytAE Oyof G8QIj2 7a I 3jy Rmz yET Kx pgUq 4Bvb cD1b1g KB y oE3 azg elV Nu 8iZ1 w1tq twKx8C LN 2 8yn jdo jUW vN H9qy HaXZ GhjUgm uL I 87i Y7Q 9MQ Wa iFFS Gzt8 4mSQq2 5O N ltT gbl 8YD QS AzXq pJEK 7bGL1U Jn 0 f59 vPr wdt d6 sDLj Loo1 8tQXf5 5u p mTa dJD sEL pH 2vqY uTAm YzDg95 1P K FP6 pEi zIJ Qd 8Ngn HTND 6z6ExR XV 0 ouU jWT kAK AB eAC9 Rfja c43Ajk Xn H dgS y3v 5cB et s3VX qfpP BqiGf9 0a w g4d W9U kvR iJ y46G bH3U cJ86hW Va C Mje dsU cqD SZ 1DlP 2mfB hzu5dv u1 i 6eW 2YN LhM 3f WOdz KS6Q ov14wx YY d 8sa S38 hIl cP tS4l 9B7h FC3JXJ Gp s tll 7a7 WNr VM wunm nmDc 5duVpZ xT C l8F I01 jhn 5B l4Jz aEV7 CKMThL ji 1 gyZ uXc Iv4 03 3NqZ LITG Ux3ClP CB K O3v RUi mJq l5 blI9 GrWy irWHof lH 7 3ZT eZX kop eq 8XL1 RQ3a Uj6Ess nj 2 0MA 3As rSV ft 3F9w zB1q DQVOnH Cm m P3d WSb jst oj 3oGj advz qcMB6Y 6k D 9sZ 0bd Mjt UT hULG TWU9 Nmr3E4 CN b zUO vTh hqL 1p xAxT ezrH dVMgLY TT r Sfx LUX CMr WA bE69 K6XH i5re1f x4 G DKk iB7 f2D Xz Xez2 k2Yc Yc4QjU yM Y R1o DeY NWf 74 hByF dsWk 4cUbCR DX a q4e DWd 7qb Ot 7GOu oklg jJ00J9 Il O Jxn tzF VBC Ft pABp VLEE 2y5Qcg b3 5 DU4 igj 4dz zW soNF wvqj bNFma0 am F Kiv Aap pzM zr VqYf OulM HafaBk 6J r eOQ BaT EsJ BB tHXj n2EU CNleWp cv W JIg gWX Ksn B3 wvmo WK49 Nl492o gR 6 fvc 8ff jJm sW Jr0j zI9p CBsIUV of D kKH Ub7 vxp uQ UXA6 hMUr yvxEpc Tq l Tkz z0q HbX pO 8jFu h6nw zVPPzp A8 9 61V 78c O2W aw 0yGn CHVq BVjTUH lk p 6dG HOd voE E8 cw7Q DL1o 1qg5TX qo V 720 hhQ TyF tp TJDg 9E8D nsp1QiEQ37}   \end{equation}
Here, $\rho$ is a function in $C_c^{\infty}(\RR^\dd)$ satisfying $\supp \rho\subseteq\{x\in \RR^\dd\colon |x|\leq 1/2\}$ and $\int_{\RR^\dd} \rho(x)\,dx =1$. Denote   \begin{equation}     f_{\epsilon}=f\ast \rho_{\epsilon}     \commaone     g_{\epsilon}=g\ast \rho_{\epsilon},     \qquad{} \text{and}\qquad{}     \uu_0^{\epsilon}=\uu_0\ast \rho_{\epsilon},    \llabel{63 7a ZDiz 4CuJ bLnpE7 yl 8 5jg Many Thanks, POL OG EPOe Mru1 v25XLJ Fz h wgE lnu Ymq rX 1YKV Kvgm MK7gI4 6h 5 kZB OoJ tfC 5g VvA1 kNJr 2o7om1 XN p Uwt CWX fFT SW DjsI wuxO JxLU1S xA 5 ObG 3IO UdL qJ cCAr gzKM 08DvX2 mu i 13T t71 Iwq oF UI0E Ef5S V2vxcy SY I QGr qrB HID TJ v1OB 1CzD IDdW4E 4j J mv6 Ktx oBO s9 ADWB q218 BJJzRy UQ i 2Gp weE T8L aO 4ho9 5g4v WQmoiq jS w MA9 Cvn Gqx l1 LrYu MjGb oUpuvY Q2 C dBl AB9 7ew jc 5RJE SFGs ORedoM 0b B k25 VEK B8V A9 ytAE Oyof G8QIj2 7a I 3jy Rmz yET Kx pgUq 4Bvb cD1b1g KB y oE3 azg elV Nu 8iZ1 w1tq twKx8C LN 2 8yn jdo jUW vN H9qy HaXZ GhjUgm uL I 87i Y7Q 9MQ Wa iFFS Gzt8 4mSQq2 5O N ltT gbl 8YD QS AzXq pJEK 7bGL1U Jn 0 f59 vPr wdt d6 sDLj Loo1 8tQXf5 5u p mTa dJD sEL pH 2vqY uTAm YzDg95 1P K FP6 pEi zIJ Qd 8Ngn HTND 6z6ExR XV 0 ouU jWT kAK AB eAC9 Rfja c43Ajk Xn H dgS y3v 5cB et s3VX qfpP BqiGf9 0a w g4d W9U kvR iJ y46G bH3U cJ86hW Va C Mje dsU cqD SZ 1DlP 2mfB hzu5dv u1 i 6eW 2YN LhM 3f WOdz KS6Q ov14wx YY d 8sa S38 hIl cP tS4l 9B7h FC3JXJ Gp s tll 7a7 WNr VM wunm nmDc 5duVpZ xT C l8F I01 jhn 5B l4Jz aEV7 CKMThL ji 1 gyZ uXc Iv4 03 3NqZ LITG Ux3ClP CB K O3v RUi mJq l5 blI9 GrWy irWHof lH 7 3ZT eZX kop eq 8XL1 RQ3a Uj6Ess nj 2 0MA 3As rSV ft 3F9w zB1q DQVOnH Cm m P3d WSb jst oj 3oGj advz qcMB6Y 6k D 9sZ 0bd Mjt UT hULG TWU9 Nmr3E4 CN b zUO vTh hqL 1p xAxT ezrH dVMgLY TT r Sfx LUX CMr WA bE69 K6XH i5re1f x4 G DKk iB7 f2D Xz Xez2 k2Yc Yc4QjU yM Y R1o DeY NWf 74 hByF dsWk 4cUbCR DX a q4e DWd 7qb Ot 7GOu oklg jJ00J9 Il O Jxn tzF VBC Ft pABp VLEE 2y5Qcg b3 5 DU4 igj 4dz zW soNF wvqj bNFma0 am F Kiv Aap pzM zr VqYf OulM HafaBk 6J r eOQ BaT EsJ BB tHXj n2EU CNleWp cv W JIg gWX Ksn B3 wvmo WK49 Nl492o gR 6 fvc 8ff jJm sW Jr0j zI9p CBsIUV of D kKH Ub7 vxp uQ UXA6 hMUr yvxEpc Tq l Tkz z0q HbX pO 8jFu h6nw zVPPzp A8 9 61V 78c O2W aw 0yGn CHVq BVjTUH lk p 6dG HOd voE E8 cw7Q DL1o 1qg5TX qo V 720 hhQ TyF tp TJDg 9E8D nsp1Qi X9 8 ZVQ N3s duZ qc n9IX ozWh Fd16IB 0K 9 JeB Hvi 364 kQ lFMM JOn0 OUBrnv pY y jUB Ofs Pzx l4 zcMn JHdq OjSi6N Mn 8 bR6 kPe klT Fd VlwD SrhT 8Qr0sC hN h 88j 8ZA vvW VD 03EQ38}   \end{equation} where $\ast$ represents the convolution. By Young's inequality, $\ueps_0\in L^{p}(\Omega; W^{m,p'})$, $g^{\epsilon}\in L^{p}(\Omega\times[0,T], \WW^{m,p'})$, and $f^{\epsilon}\in L^{p}(\Omega\times[0,T], W^{m+1,q'})$, for all $m\in {\mathbb N}_0$, $p'\in[p,\infty]$, and~$q'\in[q,\infty]$. Given the range \eqref{ERTWERTHWRTWERTSGDGHCFGSDFGQSERWDFGDSFGHSDRGTEHDFGHDSFGSDGHGYUHDFGSDFASDFASGTWRT28} and \cite[Theorem~4.1.4]{R}, we conclude that the approximate model    \begin{align}   \begin{split}    \partial_t \ueps( t,x)    &=\Delta \ueps( t,x) + \nabla f^{\epsilon}( t,x)+g^{\epsilon}( t,x)\dot{\WW}(t),       \\    \ueps( 0,x) &= \ueps_0 ( x) \Pas   \end{split}    \label{ERTWERTHWRTWERTSGDGHCFGSDFGQSERWDFGDSFGHSDRGTEHDFGHDSFGSDGHGYUHDFGSDFASDFASGTWRT39}    \end{align} has a strong solution  with continuous trajectories in $L^{p}(\Omega\times[0,T], W^{m,p})$ for all $m\in {\mathbb N}_0$. A componentwise application of It\^{o}'s formula to \eqref{ERTWERTHWRTWERTSGDGHCFGSDFGQSERWDFGDSFGHSDRGTEHDFGHDSFGSDGHGYUHDFGSDFASDFASGTWRT39} leads to   \begin{align}    \begin{split}    \Vert\ueps_j(t)\Vert_{p}^{p}              &=\Vert\ueps_{0,j}\Vert_{p}^{p}+p\int_0^t\int_{\RR^\dd} |\ueps_j(r)|^{p-2}\ueps_j(r)\bigl( \Delta \ueps_j( r)+ \partial_j f^{\epsilon}(r)\bigr)\,dxdr               \\&\indeq           + p\int_0^t\int_{\RR^\dd} |\ueps_j(r)|^{p-2}\ueps_j(r) g_j^{\epsilon}(r) \,dxd\WW_r               \\&\indeq       +\frac{p(p-1)}{2}\int_0^t\int_{\RR^\dd} |\ueps_j(r)|^{p-2}\Vert g_j^{\epsilon}(r)\Vert_{l^2}^2\,dxdr      \comma j=1,\ldots,D    .    \end{split}    \label{ERTWERTHWRTWERTSGDGHCFGSDFGQSERWDFGDSFGHSDRGTEHDFGHDSFGSDGHGYUHDFGSDFASDFASGTWRT40}   \end{align} Note that the dissipative term of each component equation yields a nonlinear term that appears in the energy estimate \eqref{ERTWERTHWRTWERTSGDGHCFGSDFGQSERWDFGDSFGHSDRGTEHDFGHDSFGSDGHGYUHDFGSDFASDFASGTWRT29}; namely,   \begin{align}    \begin{split}    &p\int_{\RR^\dd} |\ueps_j|^{p-2}\ueps_j\Delta \ueps_j\,dx     =- p(p-1)\int_{\RR^\dd} | \ueps_j|^{p-2} |\nabla\ueps_j|^2\,dx    = -\frac{4(p-1)}{p}       \int_{{\mathbb R}^{\dd}}       |\nabla |u^{\epsilon}_j|^{p/2}|^2\,dx    .    \end{split}    \label{ERTWERTHWRTWERTSGDGHCFGSDFGQSERWDFGDSFGHSDRGTEHDFGHDSFGSDGHGYUHDFGSDFASDFASGTWRT41}   \end{align} Combining \eqref{ERTWERTHWRTWERTSGDGHCFGSDFGQSERWDFGDSFGHSDRGTEHDFGHDSFGSDGHGYUHDFGSDFASDFASGTWRT40} and \eqref{ERTWERTHWRTWERTSGDGHCFGSDFGQSERWDFGDSFGHSDRGTEHDFGHDSFGSDGHGYUHDFGSDFASDFASGTWRT41}, we arrive at   \begin{align}    \begin{split}    &\Vert\ueps_j(t)\Vert_{p}^{p}      +\frac{4(p-1)}{p}\int_0^t \int_{\RR^\dd} | \nabla (|\ueps_j(r)|^{p/2})|^2 \,dx dr    \\&\indeq    \leq        \Vert\ueps_{0,j}\Vert_{p}^{p}          +p\int_0^t\left|\int_{\RR^\dd} |\ueps_j(r)|^{p-2}\ueps_j(r)  \partial_j f^{\epsilon}(r)\,dx\right| \,dr       \\&\indeq\indeq     +\frac{p(p-1)}{2}\int_0^t\int_{\RR^\dd} |\ueps_j(r)|^{p-2}\Vert g_j^{\epsilon}(r)\Vert_{l^2}^2\,dx dr    + p\left|\int_0^t\int_{\RR^\dd} |\ueps_j(r)|^{p-2}\ueps_j(r) g_j^{\epsilon}(r) \,dxd\WW_r\right|    \\&\indeq    =:     \Vert\ueps_{0,j}\Vert_{p}^{p}     + I_1 + I_2 + I_3     .        \end{split}    \label{ERTWERTHWRTWERTSGDGHCFGSDFGQSERWDFGDSFGHSDRGTEHDFGHDSFGSDGHGYUHDFGSDFASDFASGTWRT42}   \end{align} Assume that $q$ satisfies \eqref{ERTWERTHWRTWERTSGDGHCFGSDFGQSERWDFGDSFGHSDRGTEHDFGHDSFGSDGHGYUHDFGSDFASDFASGTWRT28}, and denote~$q'=q/(q-1)$. Clearly,   \begin{align}   \begin{split}     I_1      \leq C\int_{0}^{t}\Vert  f^{\epsilon}\Vert_{q}             \Vert\partial_j( |\ueps_j|^{p-2}\ueps_j)\Vert_{q'}      \,dr    ,   \end{split}    \label{ERTWERTHWRTWERTSGDGHCFGSDFGQSERWDFGDSFGHSDRGTEHDFGHDSFGSDGHGYUHDFGSDFASDFASGTWRT43}   \end{align} where $C$ denotes a generic constant with dependence as in the statement of the theorem. Following the analysis in \cite[Theorem~4.1]{KXZ}, we obtain under the restriction \eqref{ERTWERTHWRTWERTSGDGHCFGSDFGQSERWDFGDSFGHSDRGTEHDFGHDSFGSDGHGYUHDFGSDFASDFASGTWRT28} that   \begin{equation}    \Vert \partial_j(|\ueps_j|^{p-2}\ueps_j)\Vert_{q'}    \leq    C           \Vert              |\ueps_j|^{p/2}           \Vert_{2}^{ (1-\alpha)\colb(p-2)/p}           \Vert             \nabla (|\ueps_j|^{p/2})           \Vert_2^{1+\alpha(p-2)/p}    \label{ERTWERTHWRTWERTSGDGHCFGSDFGQSERWDFGDSFGHSDRGTEHDFGHDSFGSDGHGYUHDFGSDFASDFASGTWRT44}   \end{equation} with $\alpha=d(p-q)/(pq-2q)$. Hence,   \begin{align}    \begin{split}    I_1     &      \leq         \delta          \int_{0}^{t}           \Vert             \nabla (|\ueps_j|^{p/2})           \Vert_2^2          \,dr       + \delta t \sup_{0\leq r\leq t}\Vert\ueps_j(r,\cdot)\Vert_p^p       + C_{\delta}\int_{0}^{t} \Vert  f^{\epsilon}\Vert_{ q}^p \,dr     ,    \end{split}    \label{ERTWERTHWRTWERTSGDGHCFGSDFGQSERWDFGDSFGHSDRGTEHDFGHDSFGSDGHGYUHDFGSDFASDFASGTWRT45}   \end{align} and $\delta$ is a positive constant that can be arbitrarily small. We also acquire, as in \cite{KXZ},    \begin{align}    I_2    =    \frac{p(p-1)}{2}      \int_{0}^{t}      \int_{\RR^\dd} |\ueps_j(r)|^{p-2}\Vert g_j^{\epsilon}(r)\Vert_{l^2}^2\,dx dr        &\leq          \delta t \sup_{0\leq r\leq t}\Vert\ueps_j(r,\cdot)\Vert_p^p       +C_{\delta}\int_{0}^{t}\Vert g_j^{\epsilon}(r)\Vert_{\mathbb{L}^{p}}^{p}\,dr    \label{ERTWERTHWRTWERTSGDGHCFGSDFGQSERWDFGDSFGHSDRGTEHDFGHDSFGSDGHGYUHDFGSDFASDFASGTWRT46}   \end{align} and   \begin{align}    \begin{split}    \EE\biggl[\sup_{t\in[0,T]}|I_3|\biggr]    \leq    \frac{1}{4}\EE\biggl[\sup_{r\in[0,T]}\Vert\ueps_j(r)\Vert_{p}^{p}\biggr]    +    C\EE\biggl[\int_0^T \Vert g_j^{\epsilon}(r)\Vert_{\mathbb{L}^{p}}^{p} \,dr \biggr].    \end{split}    \llabel{O JxLU1S xA 5 ObG 3IO UdL qJ cCAr gzKM 08DvX2 mu i 13T t71 Iwq oF UI0E Ef5S V2vxcy SY I QGr qrB HID TJ v1OB 1CzD IDdW4E 4j J mv6 Ktx oBO s9 ADWB q218 BJJzRy UQ i 2Gp weE T8L aO 4ho9 5g4v WQmoiq jS w MA9 Cvn Gqx l1 LrYu MjGb oUpuvY Q2 C dBl AB9 7ew jc 5RJE SFGs ORedoM 0b B k25 VEK B8V A9 ytAE Oyof G8QIj2 7a I 3jy Rmz yET Kx pgUq 4Bvb cD1b1g KB y oE3 azg elV Nu 8iZ1 w1tq twKx8C LN 2 8yn jdo jUW vN H9qy HaXZ GhjUgm uL I 87i Y7Q 9MQ Wa iFFS Gzt8 4mSQq2 5O N ltT gbl 8YD QS AzXq pJEK 7bGL1U Jn 0 f59 vPr wdt d6 sDLj Loo1 8tQXf5 5u p mTa dJD sEL pH 2vqY uTAm YzDg95 1P K FP6 pEi zIJ Qd 8Ngn HTND 6z6ExR XV 0 ouU jWT kAK AB eAC9 Rfja c43Ajk Xn H dgS y3v 5cB et s3VX qfpP BqiGf9 0a w g4d W9U kvR iJ y46G bH3U cJ86hW Va C Mje dsU cqD SZ 1DlP 2mfB hzu5dv u1 i 6eW 2YN LhM 3f WOdz KS6Q ov14wx YY d 8sa S38 hIl cP tS4l 9B7h FC3JXJ Gp s tll 7a7 WNr VM wunm nmDc 5duVpZ xT C l8F I01 jhn 5B l4Jz aEV7 CKMThL ji 1 gyZ uXc Iv4 03 3NqZ LITG Ux3ClP CB K O3v RUi mJq l5 blI9 GrWy irWHof lH 7 3ZT eZX kop eq 8XL1 RQ3a Uj6Ess nj 2 0MA 3As rSV ft 3F9w zB1q DQVOnH Cm m P3d WSb jst oj 3oGj advz qcMB6Y 6k D 9sZ 0bd Mjt UT hULG TWU9 Nmr3E4 CN b zUO vTh hqL 1p xAxT ezrH dVMgLY TT r Sfx LUX CMr WA bE69 K6XH i5re1f x4 G DKk iB7 f2D Xz Xez2 k2Yc Yc4QjU yM Y R1o DeY NWf 74 hByF dsWk 4cUbCR DX a q4e DWd 7qb Ot 7GOu oklg jJ00J9 Il O Jxn tzF VBC Ft pABp VLEE 2y5Qcg b3 5 DU4 igj 4dz zW soNF wvqj bNFma0 am F Kiv Aap pzM zr VqYf OulM HafaBk 6J r eOQ BaT EsJ BB tHXj n2EU CNleWp cv W JIg gWX Ksn B3 wvmo WK49 Nl492o gR 6 fvc 8ff jJm sW Jr0j zI9p CBsIUV of D kKH Ub7 vxp uQ UXA6 hMUr yvxEpc Tq l Tkz z0q HbX pO 8jFu h6nw zVPPzp A8 9 61V 78c O2W aw 0yGn CHVq BVjTUH lk p 6dG HOd voE E8 cw7Q DL1o 1qg5TX qo V 720 hhQ TyF tp TJDg 9E8D nsp1Qi X9 8 ZVQ N3s duZ qc n9IX ozWh Fd16IB 0K 9 JeB Hvi 364 kQ lFMM JOn0 OUBrnv pY y jUB Ofs Pzx l4 zcMn JHdq OjSi6N Mn 8 bR6 kPe klT Fd VlwD SrhT 8Qr0sC hN h 88j 8ZA vvW VD 03wt ETKK NUdr7W EK 1 jKS IHF Kh2 sr 1RRV Ra8J mBtkWI 1u k uZT F2B 4p8 E7 Y3p0 DX20 JM3XzQ tZ 3 bMC vM4 DEA wB Fp8q YKpL So1a5s dR P fTg 5R6 7v1 T4 eCJ1 qg14 CTK7u7 ag j Q0AEQ47}   \end{align}
If the positive coefficient $\delta$ in \eqref{ERTWERTHWRTWERTSGDGHCFGSDFGQSERWDFGDSFGHSDRGTEHDFGHDSFGSDGHGYUHDFGSDFASDFASGTWRT45} and \eqref{ERTWERTHWRTWERTSGDGHCFGSDFGQSERWDFGDSFGHSDRGTEHDFGHDSFGSDGHGYUHDFGSDFASDFASGTWRT46} is sufficiently small, we obtain, by taking the supremum over $t$ for both sides of \eqref{ERTWERTHWRTWERTSGDGHCFGSDFGQSERWDFGDSFGHSDRGTEHDFGHDSFGSDGHGYUHDFGSDFASDFASGTWRT42} and computing the expectation,    \begin{align}    \begin{split}    &\EE\biggl[\sup_{t\in[0,T]}\left(\Vert\ueps_j(t)\Vert_{p}^{p}       +\int_0^t \int_{\RR^\dd} | \nabla (|\ueps_j(r)|^{p/2})|^2 \,dx dr\right)\biggr]         \\&\indeq\indeq    \leq \frac{1}{2}\EE\biggl[\sup_{r\in[0,T]}\Vert\ueps_j(r)\Vert_{p}^{p}\biggr]      +\EE[\Vert\ueps_{0,j}\Vert_{p}^{p}]      +C\EE\biggl[\int_0^T (\Vert  f^{\epsilon}(r)\Vert_{q}^{p}+\Vert g_j^{\epsilon}(r)\Vert_{\mathbb{L}^{p}}^{p}  )\,dr\biggr],    \end{split}    \llabel{8L aO 4ho9 5g4v WQmoiq jS w MA9 Cvn Gqx l1 LrYu MjGb oUpuvY Q2 C dBl AB9 7ew jc 5RJE SFGs ORedoM 0b B k25 VEK B8V A9 ytAE Oyof G8QIj2 7a I 3jy Rmz yET Kx pgUq 4Bvb cD1b1g KB y oE3 azg elV Nu 8iZ1 w1tq twKx8C LN 2 8yn jdo jUW vN H9qy HaXZ GhjUgm uL I 87i Y7Q 9MQ Wa iFFS Gzt8 4mSQq2 5O N ltT gbl 8YD QS AzXq pJEK 7bGL1U Jn 0 f59 vPr wdt d6 sDLj Loo1 8tQXf5 5u p mTa dJD sEL pH 2vqY uTAm YzDg95 1P K FP6 pEi zIJ Qd 8Ngn HTND 6z6ExR XV 0 ouU jWT kAK AB eAC9 Rfja c43Ajk Xn H dgS y3v 5cB et s3VX qfpP BqiGf9 0a w g4d W9U kvR iJ y46G bH3U cJ86hW Va C Mje dsU cqD SZ 1DlP 2mfB hzu5dv u1 i 6eW 2YN LhM 3f WOdz KS6Q ov14wx YY d 8sa S38 hIl cP tS4l 9B7h FC3JXJ Gp s tll 7a7 WNr VM wunm nmDc 5duVpZ xT C l8F I01 jhn 5B l4Jz aEV7 CKMThL ji 1 gyZ uXc Iv4 03 3NqZ LITG Ux3ClP CB K O3v RUi mJq l5 blI9 GrWy irWHof lH 7 3ZT eZX kop eq 8XL1 RQ3a Uj6Ess nj 2 0MA 3As rSV ft 3F9w zB1q DQVOnH Cm m P3d WSb jst oj 3oGj advz qcMB6Y 6k D 9sZ 0bd Mjt UT hULG TWU9 Nmr3E4 CN b zUO vTh hqL 1p xAxT ezrH dVMgLY TT r Sfx LUX CMr WA bE69 K6XH i5re1f x4 G DKk iB7 f2D Xz Xez2 k2Yc Yc4QjU yM Y R1o DeY NWf 74 hByF dsWk 4cUbCR DX a q4e DWd 7qb Ot 7GOu oklg jJ00J9 Il O Jxn tzF VBC Ft pABp VLEE 2y5Qcg b3 5 DU4 igj 4dz zW soNF wvqj bNFma0 am F Kiv Aap pzM zr VqYf OulM HafaBk 6J r eOQ BaT EsJ BB tHXj n2EU CNleWp cv W JIg gWX Ksn B3 wvmo WK49 Nl492o gR 6 fvc 8ff jJm sW Jr0j zI9p CBsIUV of D kKH Ub7 vxp uQ UXA6 hMUr yvxEpc Tq l Tkz z0q HbX pO 8jFu h6nw zVPPzp A8 9 61V 78c O2W aw 0yGn CHVq BVjTUH lk p 6dG HOd voE E8 cw7Q DL1o 1qg5TX qo V 720 hhQ TyF tp TJDg 9E8D nsp1Qi X9 8 ZVQ N3s duZ qc n9IX ozWh Fd16IB 0K 9 JeB Hvi 364 kQ lFMM JOn0 OUBrnv pY y jUB Ofs Pzx l4 zcMn JHdq OjSi6N Mn 8 bR6 kPe klT Fd VlwD SrhT 8Qr0sC hN h 88j 8ZA vvW VD 03wt ETKK NUdr7W EK 1 jKS IHF Kh2 sr 1RRV Ra8J mBtkWI 1u k uZT F2B 4p8 E7 Y3p0 DX20 JM3XzQ tZ 3 bMC vM4 DEA wB Fp8q YKpL So1a5s dR P fTg 5R6 7v1 T4 eCJ1 qg14 CTK7u7 ag j Q0A tZ1 Nh6 hk Sys5 CWon IOqgCL 3u 7 feR BHz odS Jp 7JH8 u6Rw sYE0mc P4 r LaW Atl yRw kH F3ei UyhI iA19ZB u8 m ywf 42n uyX 0e ljCt 3Lkd 1eUQEZ oO Z rA2 Oqf oQ5 Ca hrBy KzFg DEQ48}   \end{align} which implies    \begin{equation}    \label{ERTWERTHWRTWERTSGDGHCFGSDFGQSERWDFGDSFGHSDRGTEHDFGHDSFGSDGHGYUHDFGSDFASDFASGTWRT49}    \EE\biggl[\sup_{t\in[0,T]}\Vert\ueps_j(t)\Vert_{p}^{p}\biggr]    \leq 2\EE[\Vert\ueps_{0,j}\Vert_{p}^{p}]         + C\EE\biggl[\int_0^T (\Vert  f^{\epsilon}(r)\Vert_{q}^{p}+\Vert g_j^{\epsilon}(r)\Vert_{\mathbb{L}^{p}}^{p}  )\,dr\biggr]    \llabel{KB y oE3 azg elV Nu 8iZ1 w1tq twKx8C LN 2 8yn jdo jUW vN H9qy HaXZ GhjUgm uL I 87i Y7Q 9MQ Wa iFFS Gzt8 4mSQq2 5O N ltT gbl 8YD QS AzXq pJEK 7bGL1U Jn 0 f59 vPr wdt d6 sDLj Loo1 8tQXf5 5u p mTa dJD sEL pH 2vqY uTAm YzDg95 1P K FP6 pEi zIJ Qd 8Ngn HTND 6z6ExR XV 0 ouU jWT kAK AB eAC9 Rfja c43Ajk Xn H dgS y3v 5cB et s3VX qfpP BqiGf9 0a w g4d W9U kvR iJ y46G bH3U cJ86hW Va C Mje dsU cqD SZ 1DlP 2mfB hzu5dv u1 i 6eW 2YN LhM 3f WOdz KS6Q ov14wx YY d 8sa S38 hIl cP tS4l 9B7h FC3JXJ Gp s tll 7a7 WNr VM wunm nmDc 5duVpZ xT C l8F I01 jhn 5B l4Jz aEV7 CKMThL ji 1 gyZ uXc Iv4 03 3NqZ LITG Ux3ClP CB K O3v RUi mJq l5 blI9 GrWy irWHof lH 7 3ZT eZX kop eq 8XL1 RQ3a Uj6Ess nj 2 0MA 3As rSV ft 3F9w zB1q DQVOnH Cm m P3d WSb jst oj 3oGj advz qcMB6Y 6k D 9sZ 0bd Mjt UT hULG TWU9 Nmr3E4 CN b zUO vTh hqL 1p xAxT ezrH dVMgLY TT r Sfx LUX CMr WA bE69 K6XH i5re1f x4 G DKk iB7 f2D Xz Xez2 k2Yc Yc4QjU yM Y R1o DeY NWf 74 hByF dsWk 4cUbCR DX a q4e DWd 7qb Ot 7GOu oklg jJ00J9 Il O Jxn tzF VBC Ft pABp VLEE 2y5Qcg b3 5 DU4 igj 4dz zW soNF wvqj bNFma0 am F Kiv Aap pzM zr VqYf OulM HafaBk 6J r eOQ BaT EsJ BB tHXj n2EU CNleWp cv W JIg gWX Ksn B3 wvmo WK49 Nl492o gR 6 fvc 8ff jJm sW Jr0j zI9p CBsIUV of D kKH Ub7 vxp uQ UXA6 hMUr yvxEpc Tq l Tkz z0q HbX pO 8jFu h6nw zVPPzp A8 9 61V 78c O2W aw 0yGn CHVq BVjTUH lk p 6dG HOd voE E8 cw7Q DL1o 1qg5TX qo V 720 hhQ TyF tp TJDg 9E8D nsp1Qi X9 8 ZVQ N3s duZ qc n9IX ozWh Fd16IB 0K 9 JeB Hvi 364 kQ lFMM JOn0 OUBrnv pY y jUB Ofs Pzx l4 zcMn JHdq OjSi6N Mn 8 bR6 kPe klT Fd VlwD SrhT 8Qr0sC hN h 88j 8ZA vvW VD 03wt ETKK NUdr7W EK 1 jKS IHF Kh2 sr 1RRV Ra8J mBtkWI 1u k uZT F2B 4p8 E7 Y3p0 DX20 JM3XzQ tZ 3 bMC vM4 DEA wB Fp8q YKpL So1a5s dR P fTg 5R6 7v1 T4 eCJ1 qg14 CTK7u7 ag j Q0A tZ1 Nh6 hk Sys5 CWon IOqgCL 3u 7 feR BHz odS Jp 7JH8 u6Rw sYE0mc P4 r LaW Atl yRw kH F3ei UyhI iA19ZB u8 m ywf 42n uyX 0e ljCt 3Lkd 1eUQEZ oO Z rA2 Oqf oQ5 Ca hrBy KzFg DOseim 0j Y BmX csL Ayc cC JBTZ PEjy zPb5hZ KW O xT6 dyt u82 Ia htpD m75Y DktQvd Nj W jIQ H1B Ace SZ KVVP 136v L8XhMm 1O H Kn2 gUy kFU wN 8JML Bqmn vGuwGR oW U oNZ Y2P nmS EQ50}   \end{equation} and    \begin{align}    \begin{split}    &\EE\biggl[\int_0^T \int_{\RR^\dd} | \nabla (|\ueps_j(r)|^{p/2})|^2 \,dx dr\biggr]     \\&\indeq\indeq    \leq \frac{1}{2}\EE\biggl[\sup_{r\in[0,T]}\Vert\ueps_j(r)\Vert_{p}^{p}\biggr]+\EE[\Vert\ueps_{0,j}\Vert_{p}^{p}]     +C\EE\biggl[\int_0^T (\Vert  f^{\epsilon}(r)\Vert_{q}^{p}+\Vert g_j^{\epsilon}(r)\Vert_{\mathbb{L}^{p}}^{p}  )\,dr\biggr].    \end{split}    \llabel{j Loo1 8tQXf5 5u p mTa dJD sEL pH 2vqY uTAm YzDg95 1P K FP6 pEi zIJ Qd 8Ngn HTND 6z6ExR XV 0 ouU jWT kAK AB eAC9 Rfja c43Ajk Xn H dgS y3v 5cB et s3VX qfpP BqiGf9 0a w g4d W9U kvR iJ y46G bH3U cJ86hW Va C Mje dsU cqD SZ 1DlP 2mfB hzu5dv u1 i 6eW 2YN LhM 3f WOdz KS6Q ov14wx YY d 8sa S38 hIl cP tS4l 9B7h FC3JXJ Gp s tll 7a7 WNr VM wunm nmDc 5duVpZ xT C l8F I01 jhn 5B l4Jz aEV7 CKMThL ji 1 gyZ uXc Iv4 03 3NqZ LITG Ux3ClP CB K O3v RUi mJq l5 blI9 GrWy irWHof lH 7 3ZT eZX kop eq 8XL1 RQ3a Uj6Ess nj 2 0MA 3As rSV ft 3F9w zB1q DQVOnH Cm m P3d WSb jst oj 3oGj advz qcMB6Y 6k D 9sZ 0bd Mjt UT hULG TWU9 Nmr3E4 CN b zUO vTh hqL 1p xAxT ezrH dVMgLY TT r Sfx LUX CMr WA bE69 K6XH i5re1f x4 G DKk iB7 f2D Xz Xez2 k2Yc Yc4QjU yM Y R1o DeY NWf 74 hByF dsWk 4cUbCR DX a q4e DWd 7qb Ot 7GOu oklg jJ00J9 Il O Jxn tzF VBC Ft pABp VLEE 2y5Qcg b3 5 DU4 igj 4dz zW soNF wvqj bNFma0 am F Kiv Aap pzM zr VqYf OulM HafaBk 6J r eOQ BaT EsJ BB tHXj n2EU CNleWp cv W JIg gWX Ksn B3 wvmo WK49 Nl492o gR 6 fvc 8ff jJm sW Jr0j zI9p CBsIUV of D kKH Ub7 vxp uQ UXA6 hMUr yvxEpc Tq l Tkz z0q HbX pO 8jFu h6nw zVPPzp A8 9 61V 78c O2W aw 0yGn CHVq BVjTUH lk p 6dG HOd voE E8 cw7Q DL1o 1qg5TX qo V 720 hhQ TyF tp TJDg 9E8D nsp1Qi X9 8 ZVQ N3s duZ qc n9IX ozWh Fd16IB 0K 9 JeB Hvi 364 kQ lFMM JOn0 OUBrnv pY y jUB Ofs Pzx l4 zcMn JHdq OjSi6N Mn 8 bR6 kPe klT Fd VlwD SrhT 8Qr0sC hN h 88j 8ZA vvW VD 03wt ETKK NUdr7W EK 1 jKS IHF Kh2 sr 1RRV Ra8J mBtkWI 1u k uZT F2B 4p8 E7 Y3p0 DX20 JM3XzQ tZ 3 bMC vM4 DEA wB Fp8q YKpL So1a5s dR P fTg 5R6 7v1 T4 eCJ1 qg14 CTK7u7 ag j Q0A tZ1 Nh6 hk Sys5 CWon IOqgCL 3u 7 feR BHz odS Jp 7JH8 u6Rw sYE0mc P4 r LaW Atl yRw kH F3ei UyhI iA19ZB u8 m ywf 42n uyX 0e ljCt 3Lkd 1eUQEZ oO Z rA2 Oqf oQ5 Ca hrBy KzFg DOseim 0j Y BmX csL Ayc cC JBTZ PEjy zPb5hZ KW O xT6 dyt u82 Ia htpD m75Y DktQvd Nj W jIQ H1B Ace SZ KVVP 136v L8XhMm 1O H Kn2 gUy kFU wN 8JML Bqmn vGuwGR oW U oNZ Y2P nmS 5g QMcR YHxL yHuDo8 ba w aqM NYt onW u2 YIOz eB6R wHuGcn fi o 47U PM5 tOj sz QBNq 7mco fCNjou 83 e mcY 81s vsI 2Y DS3S yloB Nx5FBV Bc 9 6HZ EOX UO3 W1 fIF5 jtEM W6KW7D 63 EQ51}   \end{align} Summing over the indices $j$, we arrive at   \begin{align}    \begin{split}    &\EE\biggl[\sup_{t\in[0,T]}\Vert\ueps(t)\Vert_{p}^{p}+\int_0^T \sum_{j=0}^{D}\int_{\RR^\dd} | \nabla (|\ueps(r)|^{p/2})|^2 \,dx dr\biggr]     \\&\indeq\indeq    \leq C\EE\biggl[\Vert\ueps_0\Vert_{p}^{p}+\int_0^T (\Vert  f^{\epsilon}(r)\Vert_{q}^{p}+\Vert g^{\epsilon}(r)\Vert_{\mathbb{L}^{p}}^{p}  )\,dr\biggr].    \end{split}    \label{ERTWERTHWRTWERTSGDGHCFGSDFGQSERWDFGDSFGHSDRGTEHDFGHDSFGSDGHGYUHDFGSDFASDFASGTWRT52}   \end{align} Since the derivation leading to \eqref{ERTWERTHWRTWERTSGDGHCFGSDFGQSERWDFGDSFGHSDRGTEHDFGHDSFGSDGHGYUHDFGSDFASDFASGTWRT52} also implies   \begin{align}    \begin{split}    &\EE\biggl[\sup_{t\in[0,T]}\Vert\ueps(t)-\uu^{\epsilon'}(t)\Vert_{p}^{p}\biggr]        \leq  C\EE\biggl[\Vert\ueps_0-\uu^{\epsilon'}_0\Vert_{p}^{p}+\int_0^T (\Vert  f^{\epsilon}(r)- f^{\epsilon'}(r)\Vert_{q}^{p}+\Vert g^{\epsilon}(r)-g^{\epsilon'}(r)\Vert_{\mathbb{L}^{p}}^{p}  )\,dr\biggr],    \end{split}    \llabel{W9U kvR iJ y46G bH3U cJ86hW Va C Mje dsU cqD SZ 1DlP 2mfB hzu5dv u1 i 6eW 2YN LhM 3f WOdz KS6Q ov14wx YY d 8sa S38 hIl cP tS4l 9B7h FC3JXJ Gp s tll 7a7 WNr VM wunm nmDc 5duVpZ xT C l8F I01 jhn 5B l4Jz aEV7 CKMThL ji 1 gyZ uXc Iv4 03 3NqZ LITG Ux3ClP CB K O3v RUi mJq l5 blI9 GrWy irWHof lH 7 3ZT eZX kop eq 8XL1 RQ3a Uj6Ess nj 2 0MA 3As rSV ft 3F9w zB1q DQVOnH Cm m P3d WSb jst oj 3oGj advz qcMB6Y 6k D 9sZ 0bd Mjt UT hULG TWU9 Nmr3E4 CN b zUO vTh hqL 1p xAxT ezrH dVMgLY TT r Sfx LUX CMr WA bE69 K6XH i5re1f x4 G DKk iB7 f2D Xz Xez2 k2Yc Yc4QjU yM Y R1o DeY NWf 74 hByF dsWk 4cUbCR DX a q4e DWd 7qb Ot 7GOu oklg jJ00J9 Il O Jxn tzF VBC Ft pABp VLEE 2y5Qcg b3 5 DU4 igj 4dz zW soNF wvqj bNFma0 am F Kiv Aap pzM zr VqYf OulM HafaBk 6J r eOQ BaT EsJ BB tHXj n2EU CNleWp cv W JIg gWX Ksn B3 wvmo WK49 Nl492o gR 6 fvc 8ff jJm sW Jr0j zI9p CBsIUV of D kKH Ub7 vxp uQ UXA6 hMUr yvxEpc Tq l Tkz z0q HbX pO 8jFu h6nw zVPPzp A8 9 61V 78c O2W aw 0yGn CHVq BVjTUH lk p 6dG HOd voE E8 cw7Q DL1o 1qg5TX qo V 720 hhQ TyF tp TJDg 9E8D nsp1Qi X9 8 ZVQ N3s duZ qc n9IX ozWh Fd16IB 0K 9 JeB Hvi 364 kQ lFMM JOn0 OUBrnv pY y jUB Ofs Pzx l4 zcMn JHdq OjSi6N Mn 8 bR6 kPe klT Fd VlwD SrhT 8Qr0sC hN h 88j 8ZA vvW VD 03wt ETKK NUdr7W EK 1 jKS IHF Kh2 sr 1RRV Ra8J mBtkWI 1u k uZT F2B 4p8 E7 Y3p0 DX20 JM3XzQ tZ 3 bMC vM4 DEA wB Fp8q YKpL So1a5s dR P fTg 5R6 7v1 T4 eCJ1 qg14 CTK7u7 ag j Q0A tZ1 Nh6 hk Sys5 CWon IOqgCL 3u 7 feR BHz odS Jp 7JH8 u6Rw sYE0mc P4 r LaW Atl yRw kH F3ei UyhI iA19ZB u8 m ywf 42n uyX 0e ljCt 3Lkd 1eUQEZ oO Z rA2 Oqf oQ5 Ca hrBy KzFg DOseim 0j Y BmX csL Ayc cC JBTZ PEjy zPb5hZ KW O xT6 dyt u82 Ia htpD m75Y DktQvd Nj W jIQ H1B Ace SZ KVVP 136v L8XhMm 1O H Kn2 gUy kFU wN 8JML Bqmn vGuwGR oW U oNZ Y2P nmS 5g QMcR YHxL yHuDo8 ba w aqM NYt onW u2 YIOz eB6R wHuGcn fi o 47U PM5 tOj sz QBNq 7mco fCNjou 83 e mcY 81s vsI 2Y DS3S yloB Nx5FBV Bc 9 6HZ EOX UO3 W1 fIF5 jtEM W6KW7D 63 t H0F CVT Zup Pl A9aI oN2s f1Bw31 gg L FoD O0M x18 oo heEd KgZB Cqdqpa sa H Fhx BrE aRg Au I5dq mWWB MuHfv9 0y S PtG hFF dYJ JL f3Ap k5Ck Szr0Kb Vd i sQk uSA JEn DT YkjP AEQ53}   \end{align} there exists a sequence $\uu^{\epsilon_n}$ converging in $L^{\infty}([0,T], L^{p})$ almost surely, and the limit $u$ is independent of the choice of~$\uu^{\epsilon_n}$. By passing to the limit in the identity   \begin{align}    \begin{split}    (\uu^{\epsilon_n}( t),\phi)    &= (\uu^{\epsilon_n}_0,\phi)+\int_0^t \bigl((\uu^{\epsilon_n}(r), \Delta \phi) + (f^{\epsilon_n}(r),\phi)\bigr)\,dr+\int_0^t (g^{\epsilon_n}(r),\phi)\,d\WW_r    \comma (t,\omega)\text{-a.e.}    ,    \end{split}    \llabel{uVpZ xT C l8F I01 jhn 5B l4Jz aEV7 CKMThL ji 1 gyZ uXc Iv4 03 3NqZ LITG Ux3ClP CB K O3v RUi mJq l5 blI9 GrWy irWHof lH 7 3ZT eZX kop eq 8XL1 RQ3a Uj6Ess nj 2 0MA 3As rSV ft 3F9w zB1q DQVOnH Cm m P3d WSb jst oj 3oGj advz qcMB6Y 6k D 9sZ 0bd Mjt UT hULG TWU9 Nmr3E4 CN b zUO vTh hqL 1p xAxT ezrH dVMgLY TT r Sfx LUX CMr WA bE69 K6XH i5re1f x4 G DKk iB7 f2D Xz Xez2 k2Yc Yc4QjU yM Y R1o DeY NWf 74 hByF dsWk 4cUbCR DX a q4e DWd 7qb Ot 7GOu oklg jJ00J9 Il O Jxn tzF VBC Ft pABp VLEE 2y5Qcg b3 5 DU4 igj 4dz zW soNF wvqj bNFma0 am F Kiv Aap pzM zr VqYf OulM HafaBk 6J r eOQ BaT EsJ BB tHXj n2EU CNleWp cv W JIg gWX Ksn B3 wvmo WK49 Nl492o gR 6 fvc 8ff jJm sW Jr0j zI9p CBsIUV of D kKH Ub7 vxp uQ UXA6 hMUr yvxEpc Tq l Tkz z0q HbX pO 8jFu h6nw zVPPzp A8 9 61V 78c O2W aw 0yGn CHVq BVjTUH lk p 6dG HOd voE E8 cw7Q DL1o 1qg5TX qo V 720 hhQ TyF tp TJDg 9E8D nsp1Qi X9 8 ZVQ N3s duZ qc n9IX ozWh Fd16IB 0K 9 JeB Hvi 364 kQ lFMM JOn0 OUBrnv pY y jUB Ofs Pzx l4 zcMn JHdq OjSi6N Mn 8 bR6 kPe klT Fd VlwD SrhT 8Qr0sC hN h 88j 8ZA vvW VD 03wt ETKK NUdr7W EK 1 jKS IHF Kh2 sr 1RRV Ra8J mBtkWI 1u k uZT F2B 4p8 E7 Y3p0 DX20 JM3XzQ tZ 3 bMC vM4 DEA wB Fp8q YKpL So1a5s dR P fTg 5R6 7v1 T4 eCJ1 qg14 CTK7u7 ag j Q0A tZ1 Nh6 hk Sys5 CWon IOqgCL 3u 7 feR BHz odS Jp 7JH8 u6Rw sYE0mc P4 r LaW Atl yRw kH F3ei UyhI iA19ZB u8 m ywf 42n uyX 0e ljCt 3Lkd 1eUQEZ oO Z rA2 Oqf oQ5 Ca hrBy KzFg DOseim 0j Y BmX csL Ayc cC JBTZ PEjy zPb5hZ KW O xT6 dyt u82 Ia htpD m75Y DktQvd Nj W jIQ H1B Ace SZ KVVP 136v L8XhMm 1O H Kn2 gUy kFU wN 8JML Bqmn vGuwGR oW U oNZ Y2P nmS 5g QMcR YHxL yHuDo8 ba w aqM NYt onW u2 YIOz eB6R wHuGcn fi o 47U PM5 tOj sz QBNq 7mco fCNjou 83 e mcY 81s vsI 2Y DS3S yloB Nx5FBV Bc 9 6HZ EOX UO3 W1 fIF5 jtEM W6KW7D 63 t H0F CVT Zup Pl A9aI oN2s f1Bw31 gg L FoD O0M x18 oo heEd KgZB Cqdqpa sa H Fhx BrE aRg Au I5dq mWWB MuHfv9 0y S PtG hFF dYJ JL f3Ap k5Ck Szr0Kb Vd i sQk uSA JEn DT YkjP AEMu a0VCtC Ff z 9R6 Vht 8Ua cB e7op AnGa 7AbLWj Hc s nAR GMb n7a 9n paMf lftM 7jvb20 0T W xUC 4lt e92 9j oZrA IuIa o1Zqdr oC L 55L T4Q 8kN yv sIzP x4i5 9lKTq2 JB B sZb QCEEQ54}   \end{align} where $\phi$ is an arbitrary function in $C_c^{\infty}(\RR^\dd)$, we verify that $u$ is a strong $L^p$ solution to~\eqref{ERTWERTHWRTWERTSGDGHCFGSDFGQSERWDFGDSFGHSDRGTEHDFGHDSFGSDGHGYUHDFGSDFASDFASGTWRT27}. The proof of pathwise uniqueness is identical to that in \cite{KXZ}, and \eqref{ERTWERTHWRTWERTSGDGHCFGSDFGQSERWDFGDSFGHSDRGTEHDFGHDSFGSDGHGYUHDFGSDFASDFASGTWRT29} is obtained by combining \eqref{ERTWERTHWRTWERTSGDGHCFGSDFGQSERWDFGDSFGHSDRGTEHDFGHDSFGSDGHGYUHDFGSDFASDFASGTWRT52} and Lemma~\ref{L04}. \par \end{proof} \par \startnewsection{Stochastic Truncated Navier-Stokes Equation}{sec5} In this section, we establish the global existence of a unique strong solution for a truncated SNSE model.  With $N\geq1$ fixed in this section, let $\varphi$ be a smooth function from $[0,\infty)$ to $[0,1]$ such that $\varphi\equiv 1$ on $[0,2N]$ and  $\varphi\equiv 0$ on~$[4N,\infty)$, with additionally   \begin{equation}    |\varphi( t_1)-\varphi( t_2)|    \leq    C |t_1-t_2|    \comma t_1,t_2\geq 0.    \label{ERTWERTHWRTWERTSGDGHCFGSDFGQSERWDFGDSFGHSDRGTEHDFGHDSFGSDGHGYUHDFGSDFASDFASGTWRT146}   \end{equation} We consider the SNSE truncated by this function and the convolution-type projector $P_{\le k}$ from \eqref{ERTWERTHWRTWERTSGDGHCFGSDFGQSERWDFGDSFGHSDRGTEHDFGHDSFGSDGHGYUHDFGSDFASDFASGTWRT07},   \begin{align}     \begin{split}       &\partial_t\uu( t,x)       =\Delta \uu( t,x)          - \varphi(\Vert\uu(t)\Vert_{p})^2P_{\le k}\mathcal{P}\bigl(( \uu( t,x)\cdot \nabla)P_{\le k}\uu( t,x)\bigr)       \\&\indeq\indeq\indeq\indeq\indeq          +\varphi(\Vert\uu(t)\Vert_{p})^2P_{\le k}\sigma(P_{\le k}\uu( t,x))\dot{\WW}(t),      \\&   \nabla\cdot \uu( t,x) = 0          ,           \\&       \uu( 0,x)=P_{\le k} \uu_0 (x)  \Pas    \commaone x\in\RR^3  .   \end{split}   \label{ERTWERTHWRTWERTSGDGHCFGSDFGQSERWDFGDSFGHSDRGTEHDFGHDSFGSDGHGYUHDFGSDFASDFASGTWRT56}   \end{align} \par \cole \begin{Theorem} \label{T03} Let $p> 2$, $\uu_0\in L^p(\Omega; L^p)$, and $\nabla\cdot u_0=0$. For every $T>0$, there exists a unique strong solution $\uu\in L^p(\Omega; C([0,T], L^p))$ to \eqref{ERTWERTHWRTWERTSGDGHCFGSDFGQSERWDFGDSFGHSDRGTEHDFGHDSFGSDGHGYUHDFGSDFASDFASGTWRT56} such that   \begin{align}   \begin{split}          \EE\biggl[\sup_{0\leq s\leq T}\Vert\uu(s,\cdot)\Vert_p^p              +\sum_{j}\int_0^{T} \int_{\RR^3} | \nabla (|\uu_j(s,x)|^{p/2})|^2 \,dx ds             \biggr]          \leq C_k(\EE\bigl[\Vert\uu_0\Vert_p^p\bigr]+C_{T}).        \end{split}       \label{ERTWERTHWRTWERTSGDGHCFGSDFGQSERWDFGDSFGHSDRGTEHDFGHDSFGSDGHGYUHDFGSDFASDFASGTWRT57}   \end{align} \end{Theorem}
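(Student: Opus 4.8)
The plan is to obtain the solution of the truncated system by a Banach fixed‑point argument built on Theorem~\ref{T02}, exploiting two structural features of the model: the convolution projector $P_{\le k}$ is infinitely smoothing, with operator bounds depending on $k$ (Lemma~\ref{L01}), and the cut‑off factor $\varphi(\Vert u(t)\Vert_p)^2$ renders the drift and the noise coefficient globally Lipschitz from $L^p(\RR^3)$ into the data spaces of Theorem~\ref{T02}, with a Lipschitz constant depending only on $k$ and $N$. For an $\mathcal{F}_t$‑adapted process $w$ with values in $L^p(\RR^3)$ put
\[
 f[w]:=-\varphi(\Vert w\Vert_p)^2\,P_{\le k}\mathcal{P}\bigl((w\cdot\nabla)P_{\le k}w\bigr),\qquad
 g[w]:=\varphi(\Vert w\Vert_p)^2\,P_{\le k}\sigma(P_{\le k}w),
\]
and let $\Phi(w)$ be the unique solution, furnished by Theorem~\ref{T02}, of the stochastic heat equation $\partial_t u=\Delta u+f[w]+g[w]\dot{\WW}$ with $u(0)=P_{\le k}u_0$. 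A fixed point of $\Phi$ on $[0,T]$ is precisely a strong solution of the truncated system, and the energy bound will be read off from the estimate in Theorem~\ref{T02}.

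First I would record the mapping and Lipschitz properties of $f[\cdot]$ and $g[\cdot]$. Because $\div w=0$, the convective term has the divergence form $(w\cdot\nabla)P_{\le k}w=\nabla\cdot(w\otimes P_{\le k}w)$; since $P_{\le k}$ and $\mathcal{P}$ are Fourier multipliers that commute and are bounded on every $L^{r}$, $1<r<\infty$, one can write $f[w]=\nabla\cdot F[w]$ for a matrix field $F[w]$, and Theorem~\ref{T02} — whose proof, and in particular its energy estimate, is unchanged when the gradient drift $\nabla f$ is replaced by a divergence $\nabla\cdot F$ in the same integrability class — applies to this drift with the exponent $q=p$, which lies in the admissible range precisely because $p>2$. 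By Young's inequality with the Schwartz kernel $\mathcal{F}^{-1}\psi_k$ one has $\Vert P_{\le k}w\Vert_\infty\le C_k\Vert w\Vert_p$, whence $w\otimes P_{\le k}w\in\mathbb{L}^p$, $\Vert F[w]\Vert_p\le C_k\Vert w\Vert_p^2$, and, using the noise growth hypothesis and Lemma~\ref{L01}, $\Vert g[w]\Vert_{\mathbb{L}^p}\le C_k(\Vert w\Vert_p^2+1)$; since $f[w]$ and $g[w]$ vanish once $\Vert w\Vert_p\ge 4N$, on the support of $\varphi$ these bounds read $\Vert F[w]\Vert_p+\Vert g[w]\Vert_{\mathbb{L}^p}\le C_{k,N}$, uniformly in $(\omega,t)$. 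For the Lipschitz bound I would combine the bilinear estimate $\Vert F[w_1]-F[w_2]\Vert_p\le C_k(\Vert w_1\Vert_p+\Vert w_2\Vert_p)\Vert w_1-w_2\Vert_p$ and the corresponding difference estimate for $g$ (from the hypothesis on $\sigma(u_1)-\sigma(u_2)$ and $P_{\le k}w_i\in L^\infty$) with the Lipschitz continuity of $\varphi$: writing $\varphi(\Vert w_1\Vert_p)^2A_1-\varphi(\Vert w_2\Vert_p)^2A_2=\varphi(\Vert w_1\Vert_p)^2(A_1-A_2)+(\varphi(\Vert w_1\Vert_p)^2-\varphi(\Vert w_2\Vert_p)^2)A_2$, the cut‑off confines $w_1$ to $\{\Vert w_1\Vert_p\le 4N\}$ in the first term, and in the regime $\Vert w_2\Vert_p\ge 4N$ one has $f[w_2]=g[w_2]=0$ while $\varphi(\Vert w_1\Vert_p)=\varphi(\Vert w_1\Vert_p)-\varphi(\Vert w_2\Vert_p)\le C\Vert w_1-w_2\Vert_p$; the net conclusion is $\Vert F[w_1]-F[w_2]\Vert_p+\Vert g[w_1]-g[w_2]\Vert_{\mathbb{L}^p}\le C_{k,N}\Vert w_1-w_2\Vert_p$ pointwise in $(\omega,t)$.

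With these estimates, $f[w]$ and $g[w]$ are adapted and bounded, hence admissible inputs for Theorem~\ref{T02}, so $\Phi$ maps $L^p(\Omega;C([0,T_1],L^p))$ into itself for each $T_1\le T$, and the stability estimate in the proof of Theorem~\ref{T02} (two solutions with the common datum $P_{\le k}u_0$) gives
\[
 \EE\Bigl[\sup_{t\in[0,T_1]}\Vert\Phi(w_1)(t)-\Phi(w_2)(t)\Vert_p^p\Bigr]
 \le C\,\EE\int_0^{T_1}\bigl(\Vert F[w_1]-F[w_2]\Vert_p^p+\Vert g[w_1]-g[w_2]\Vert_{\mathbb{L}^p}^p\bigr)\,dr
 \le C\,C_{k,N}^p\,T_1\,\EE\Bigl[\sup_{t\in[0,T_1]}\Vert w_1(t)-w_2(t)\Vert_p^p\Bigr].
\]
Choosing $T_1=T_1(k,N,T)>0$ with $C\,C_{k,N}^p\,T_1\le\tfrac12$ — note that $T_1$ does not depend on the size of $u_0$ — the contraction principle yields a unique solution on $[0,T_1]$. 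Since the Lipschitz constant, and the constant $C$ above, are unaffected by a time shift or by replacing $u_0$ with an $\mathcal{F}_{T_1}$‑measurable, divergence‑free datum in $L^p(\Omega;L^p)$, I would iterate on $[T_1,2T_1],[2T_1,3T_1],\dots$ and concatenate after $\lceil T/T_1\rceil$ steps to reach a strong solution on $[0,T]$; pathwise uniqueness on $[0,T]$ follows piece by piece. (The solution stays divergence free because $P_{\le k}$, $\mathcal{P}$, and $\Delta$ preserve this condition and $\div(P_{\le k}u_0)=P_{\le k}\div u_0=0$.) Finally, applying the energy estimate of Theorem~\ref{T02} to the fixed point $u$ on $[0,T]$ and using $\Vert F[u]\Vert_p+\Vert g[u]\Vert_{\mathbb{L}^p}\le C_{k,N}$ together with $\Vert P_{\le k}u_0\Vert_p\le C\Vert u_0\Vert_p$ yields the claimed bound with suitable $C_k$ and $C_T$.

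The main obstacle is the second step: verifying that the product of the cut‑off with the superlinear convective term (and with $P_{\le k}\sigma(P_{\le k}\cdot)$) is \emph{globally} Lipschitz from $L^p$ into the data class of Theorem~\ref{T02} with a constant depending only on $k$ and $N$ — the subtle case being when one argument lies inside and the other outside the support of $\varphi$ — and confirming that the divergence structure of the truncated nonlinearity brings it within the scope of Theorem~\ref{T02}'s energy estimate at an admissible exponent. Once this is settled, the fixed‑point iteration, the globalization over $[0,T]$, and the derivation of the energy bound are routine.
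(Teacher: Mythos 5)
Your proposal is correct, but it takes a genuinely different route from the paper. You run a single Banach fixed point in $L^p(\Omega;C([0,T_1],L^p))$, freezing the entire right-hand side (including the cut-off) at the input $w$, and you make this work by exploiting the $k$-dependent smoothing of $P_{\le k}$ (Lemma~\ref{L01} and the bound $\Vert P_{\le k}w\Vert_\infty\le C_k\Vert w\Vert_p$), which together with the Lipschitz property of $\varphi$ and the hypothesis on $\sigma(u_1)-\sigma(u_2)$ renders the truncated drift (in matrix-divergence form, with $q=p$ admissible since $p>2$) and the truncated noise globally Lipschitz from $L^p$ into the data classes of Theorem~\ref{T02} with constant $C_{k,N}$; your case analysis for arguments outside the support of $\varphi$ is exactly the right one, and your remark that Theorem~\ref{T02} covers divergence drifts $\sum_i\partial_i f_{ij}$ is consistent with how the paper itself invokes it componentwise. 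The paper proceeds differently: it sets up a two-layer scheme, an outer iteration that linearizes the convective term and the noise argument at the previous iterate while keeping the mixed truncation $\phin\phinm$, and an inner fixed point (Lemma~\ref{L05}) that absorbs the dependence of the truncation on the unknown; the outer contraction is then closed in the stronger space $L^p_\omega(L^\infty_tL^p_x\cap L^p_tL^{3p}_x)$, with the $\sigma$-difference interpolated between $L^p$ and $L^{3p}$ and the $L^{3p}$ part absorbed into the dissipation via the Sobolev inequality, followed by a separate BDG-based limit passage to identify the limit as a solution of the truncated system. What each approach buys: yours is shorter and delivers existence, uniqueness and the energy bound in one stroke, at the price of constants that degenerate in $k$, which is harmless for Theorem~\ref{T03} since $C_k$ is allowed there; the paper's heavier argument deliberately avoids leaning on the $k$-smoothing in the difference estimates and thereby rehearses exactly the interpolation/absorption machinery reused in Theorem~\ref{T04} and Lemmas~\ref{L06}--\ref{L07}, where constants must be uniform in $k$ and only the difference projector $\pnm$ gains negative powers of $k$; so your simplification proves Theorem~\ref{T03} but would not substitute for those later sections. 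One small point to make explicit: run the contraction in the closed subspace of divergence-free adapted processes (or note $\mathcal{P}\Phi(w)=\Phi(w)$ by uniqueness in Theorem~\ref{T02}), since $\sigma(P_{\le k}w)$ is guaranteed divergence-free only when $w$ is, so preservation of the constraint is not purely a statement about $\Delta$, $\mathcal{P}$, and $P_{\le k}$.
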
 \colb \par In this section, we allow all constants to depend on~$N$. (We finally vary $N$ after $\eqref{ERTWERTHWRTWERTSGDGHCFGSDFGQSERWDFGDSFGHSDRGTEHDFGHDSFGSDGHGYUHDFGSDFASDFASGTWRT149}$ below). \par In order to solve \eqref{ERTWERTHWRTWERTSGDGHCFGSDFGQSERWDFGDSFGHSDRGTEHDFGHDSFGSDGHGYUHDFGSDFASDFASGTWRT56}, we  use the iteration  \begin{align} \begin{split} &\partial_t\un  -\Delta \un  = -\phin\phinm P_{\le k}\mathcal{P}\bigl(( \unm \cdot \nabla)P_{\le k}\unm   \bigr) \\&\indeq\indeq\indeq\indeq\indeq\indeq\indeq\indeq +\phin\phinm P_{\le k}\sigma(P_{\le k}\unm )\dot{\WW}(t),  \\& \nabla\cdot \un  = 0, \\&\un ( 0)= P_{\le k}\uu_0  \Pas \commaone x\in\RR^3 \comma t\in (0,T] , \end{split} \label{ERTWERTHWRTWERTSGDGHCFGSDFGQSERWDFGDSFGHSDRGTEHDFGHDSFGSDGHGYUHDFGSDFASDFASGTWRT58} \end{align} where $\uu^{(0)}$ is the strong solution to  \begin{align} \begin{split} &\partial_t\uu^{(0)} -\Delta \uu^{(0)} =0, \\& \nabla\cdot \uu^{(0)} = 0, \\& \uu^{(0)}( 0,x)= P_{\le k}\uu_0(x)    \Pas \commaone x\in\RR^3 \commaone t\in (0,T] \end{split} \label{ERTWERTHWRTWERTSGDGHCFGSDFGQSERWDFGDSFGHSDRGTEHDFGHDSFGSDGHGYUHDFGSDFASDFASGTWRT59} \end{align} For simplicity, we abbreviate $\varphi(\Vert\un\Vert_{p})$, $\varphi(\Vert\unm\Vert_{p})$, and $\varphi(\Vert v\Vert_{p})$ as $\phin$, $\phinm$, and $\varphi_v$, respectively.  Utilizing Theorem~\ref{T02}, we conclude that $\uu^{(0)}\in L^p(\Omega; C([0,T], L^p))$ and that $\uu^{(0)}$ satisfies \begin{equation} \EE\biggl[\sup_{0\leq t\leq T}\Vert\uu^{(0)}(t,\cdot)\Vert_p^p +\sum_{j}\int_0^{T} \int_{\RR^3} | \nabla (|\uu_j^{(0)}(t,x)|^{p/2})|^2 \,dx dt \biggr] \leq C\EE[\Vert\uu_0\Vert_p^p], \label{ERTWERTHWRTWERTSGDGHCFGSDFGQSERWDFGDSFGHSDRGTEHDFGHDSFGSDGHGYUHDFGSDFASDFASGTWRT60} \end{equation} where $C$ depends on $p$, $d$, and~$T$. We next show that there exists a unique solution $\un\in L^p(\Omega; C([0,T], L^p))$ to \eqref{ERTWERTHWRTWERTSGDGHCFGSDFGQSERWDFGDSFGHSDRGTEHDFGHDSFGSDGHGYUHDFGSDFASDFASGTWRT58} for every $n$, and the sequence $\{\un\}_{n\in\NNp}$ is uniformly bounded in a manner consistent with~\eqref{ERTWERTHWRTWERTSGDGHCFGSDFGQSERWDFGDSFGHSDRGTEHDFGHDSFGSDGHGYUHDFGSDFASDFASGTWRT60}.  \par \cole \begin{Lemma} \label{L05} Let $p>2$, $\uu_0\in L^p(\Omega; L^p)$, and $\nabla\cdot u_0=0$. Then  for every $n\in\mathbb{N}$ and every $T>0$, the initial value problem \eqref{ERTWERTHWRTWERTSGDGHCFGSDFGQSERWDFGDSFGHSDRGTEHDFGHDSFGSDGHGYUHDFGSDFASDFASGTWRT58} has a unique strong solution $\un\in L^p(\Omega; C([0, T], L^p))$, and $\un$ satisfies~\eqref{ERTWERTHWRTWERTSGDGHCFGSDFGQSERWDFGDSFGHSDRGTEHDFGHDSFGSDGHGYUHDFGSDFASDFASGTWRT57}. \end{Lemma}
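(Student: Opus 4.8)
The plan is to run \eqref{ERTWERTHWRTWERTSGDGHCFGSDFGQSERWDFGDSFGHSDRGTEHDFGHDSFGSDGHGYUHDFGSDFASDFASGTWRT58} as a Picard iteration in $n$, solving for $\un$ at each step by a contraction-mapping argument built on Theorem~\ref{T02}, and then to read off \eqref{ERTWERTHWRTWERTSGDGHCFGSDFGQSERWDFGDSFGHSDRGTEHDFGHDSFGSDGHGYUHDFGSDFASDFASGTWRT57} directly from the energy estimate \eqref{ERTWERTHWRTWERTSGDGHCFGSDFGQSERWDFGDSFGHSDRGTEHDFGHDSFGSDGHGYUHDFGSDFASDFASGTWRT29}. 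The base case $n=0$ is \eqref{ERTWERTHWRTWERTSGDGHCFGSDFGQSERWDFGDSFGHSDRGTEHDFGHDSFGSDGHGYUHDFGSDFASDFASGTWRT59}--\eqref{ERTWERTHWRTWERTSGDGHCFGSDFGQSERWDFGDSFGHSDRGTEHDFGHDSFGSDGHGYUHDFGSDFASDFASGTWRT60}. So I fix $n\ge1$, assume $\unm\in L^p(\Omega;C([0,T],L^p))$ is progressively measurable with $\nabla\cdot\unm=0$, and note the decisive structural point: in \eqref{ERTWERTHWRTWERTSGDGHCFGSDFGQSERWDFGDSFGHSDRGTEHDFGHDSFGSDGHGYUHDFGSDFASDFASGTWRT58} the unknown $\un$ enters only through the scalar factor $\phin=\varphi(\Vert\un(t)\Vert_p)\in[0,1]$, so freezing that factor turns the equation into a linear stochastic heat equation of the form \eqref{ERTWERTHWRTWERTSGDGHCFGSDFGQSERWDFGDSFGHSDRGTEHDFGHDSFGSDGHGYUHDFGSDFASDFASGTWRT27}. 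It is also worth recording that the inductive energy bound on $\unm$ will play no role: since $\phinm\neq0$ forces $\Vert\unm(t)\Vert_p\le4N$, the cut-off alone controls the nonlinearity.

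First I would check that the data of \eqref{ERTWERTHWRTWERTSGDGHCFGSDFGQSERWDFGDSFGHSDRGTEHDFGHDSFGSDGHGYUHDFGSDFASDFASGTWRT58} meet the hypotheses of Theorem~\ref{T02} with bounds that are \emph{deterministic} and depend only on $k$ and $N$; this is where the mapping properties of the convolution projector (Lemma~\ref{L01}, \eqref{ERTWERTHWRTWERTSGDGHCFGSDFGQSERWDFGDSFGHSDRGTEHDFGHDSFGSDGHGYUHDFGSDFASDFASGTWRT08}--\eqref{ERTWERTHWRTWERTSGDGHCFGSDFGQSERWDFGDSFGHSDRGTEHDFGHDSFGSDGHGYUHDFGSDFASDFASGTWRT09}) do the work. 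Because $\mathcal{F}^{-1}\psi_k$ is a Gaussian, it and $\nabla\mathcal{F}^{-1}\psi_k$ are Schwartz functions, in particular lie in $L^{p/(p-1)}$, so Hölder's inequality gives $\Vert P_{\le k}\unm\Vert_\infty+\Vert\nabla P_{\le k}\unm\Vert_\infty\le C_k\Vert\unm\Vert_p$. Using $\nabla\cdot\unm=0$ I rewrite the convective term as a tensor divergence, $(\unm\cdot\nabla)P_{\le k}\unm=\div(\unm\otimes P_{\le k}\unm)$ with $\Vert\unm\otimes P_{\le k}\unm\Vert_p\le C_k\Vert\unm\Vert_p^2$; since $\mathcal{P}$ and $P_{\le k}$ are bounded on $L^p$ and commute with $\div$, the convective drift in \eqref{ERTWERTHWRTWERTSGDGHCFGSDFGQSERWDFGDSFGHSDRGTEHDFGHDSFGSDGHGYUHDFGSDFASDFASGTWRT58} has the form $\phin\,\div\mathbb{F}^{(n)}$ with a tensor field satisfying $\Vert\mathbb{F}^{(n)}(t)\Vert_p\le C_{k,N}$ a.s. Likewise $\Vert P_{\le k}\unm\Vert_{(3p/2)-}\le C_k\Vert\unm\Vert_p$, so the growth bound \eqref{ERTWERTHWRTWERTSGDGHCFGSDFGQSERWDFGDSFGHSDRGTEHDFGHDSFGSDGHGYUHDFGSDFASDFASGTWRT03} gives $\Vert\sigma(P_{\le k}\unm(t))\Vert_{\mathbb{L}^p}\le C(C_k\Vert\unm\Vert_p)^2+C\le C_{k,N}$ on $\{\phinm\neq0\}$, and one more application of $P_{\le k}$ yields $\Vert\phinm P_{\le k}\sigma(P_{\le k}\unm)\Vert_{\mathbb{L}^p}\le C_{k,N}$ a.s. for a.e.\ $t$. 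All these quantities are progressively measurable and divergence-free, and $q=p$ is admissible in \eqref{ERTWERTHWRTWERTSGDGHCFGSDFGQSERWDFGDSFGHSDRGTEHDFGHDSFGSDGHGYUHDFGSDFASDFASGTWRT28} since $p>2$, so \eqref{ERTWERTHWRTWERTSGDGHCFGSDFGQSERWDFGDSFGHSDRGTEHDFGHDSFGSDGHGYUHDFGSDFASDFASGTWRT58} is an instance of \eqref{ERTWERTHWRTWERTSGDGHCFGSDFGQSERWDFGDSFGHSDRGTEHDFGHDSFGSDGHGYUHDFGSDFASDFASGTWRT27} with the scalar forcing of Theorem~\ref{T02} replaced by the tensor $\mathbb{F}^{(n)}$, a replacement under which the proof of that theorem, in particular \eqref{ERTWERTHWRTWERTSGDGHCFGSDFGQSERWDFGDSFGHSDRGTEHDFGHDSFGSDGHGYUHDFGSDFASDFASGTWRT43}--\eqref{ERTWERTHWRTWERTSGDGHCFGSDFGQSERWDFGDSFGHSDRGTEHDFGHDSFGSDGHGYUHDFGSDFASDFASGTWRT45}, goes through unchanged.

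Next I set up the fixed point in $X=L^p(\Omega;C([0,T],L^p))$. For progressively measurable $v\in X$, let $\Phi(v)$ be the unique solution furnished by Theorem~\ref{T02} of the stochastic heat equation with initial datum $P_{\le k}\uu_0$, drift $\div(\varphi(\Vert v\Vert_p)\,\mathbb{F}^{(n)})$, and noise coefficient $\varphi(\Vert v\Vert_p)\phinm P_{\le k}\sigma(P_{\le k}\unm)$; a fixed point of $\Phi$ is exactly a strong solution of \eqref{ERTWERTHWRTWERTSGDGHCFGSDFGQSERWDFGDSFGHSDRGTEHDFGHDSFGSDGHGYUHDFGSDFASDFASGTWRT58}. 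For $v_1,v_2\in X$ the difference $\Phi(v_1)-\Phi(v_2)$ solves the heat equation with zero initial datum and forcing proportional to $\varphi(\Vert v_1\Vert_p)-\varphi(\Vert v_2\Vert_p)$; by the Lipschitz bound \eqref{ERTWERTHWRTWERTSGDGHCFGSDFGQSERWDFGDSFGHSDRGTEHDFGHDSFGSDGHGYUHDFGSDFASDFASGTWRT146} and the reverse triangle inequality $|\varphi(\Vert v_1(t)\Vert_p)-\varphi(\Vert v_2(t)\Vert_p)|\le C\Vert v_1(t)-v_2(t)\Vert_p$, so \eqref{ERTWERTHWRTWERTSGDGHCFGSDFGQSERWDFGDSFGHSDRGTEHDFGHDSFGSDGHGYUHDFGSDFASDFASGTWRT29} applied on a subinterval $[0,S]$ gives $\EE\sup_{t\le S}\Vert\Phi(v_1)-\Phi(v_2)\Vert_p^p\le C_{k,N}S\,\EE\sup_{t\le S}\Vert v_1-v_2\Vert_p^p$. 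Choosing $S_0$ with $C_{k,N}S_0\le1/2$ makes $\Phi$ a contraction on $[0,S_0]$; as $S_0$ is independent of $T$, I patch the finitely many intervals $[jS_0,(j+1)S_0]$, each time restarting from $\un(jS_0)\in L^p(\Omega;L^p)$, to obtain a unique $\un\in X$ on $[0,T]$. The constraint $\nabla\cdot\un=0$ holds because the drift and noise in \eqref{ERTWERTHWRTWERTSGDGHCFGSDFGQSERWDFGDSFGHSDRGTEHDFGHDSFGSDGHGYUHDFGSDFASDFASGTWRT58} are divergence-free ($\mathcal{P}$ removes the longitudinal part, and $P_{\le k}$ and $\sigma$ preserve the constraint), so $\nabla\cdot\un$ satisfies a homogeneous heat equation with zero data. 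Finally, applying \eqref{ERTWERTHWRTWERTSGDGHCFGSDFGQSERWDFGDSFGHSDRGTEHDFGHDSFGSDGHGYUHDFGSDFASDFASGTWRT29} to $\un$ itself, with $\Vert P_{\le k}\uu_0\Vert_p\le C\Vert\uu_0\Vert_p$ and the $C_{k,N}$ bounds on $\mathbb{F}^{(n)}$ and on the noise coefficient, yields \eqref{ERTWERTHWRTWERTSGDGHCFGSDFGQSERWDFGDSFGHSDRGTEHDFGHDSFGSDGHGYUHDFGSDFASDFASGTWRT57}; since these bounds involve $\unm$ only through the $4N$ cut-off, the induction closes, and since Theorem~\ref{T02} returns its solution in exactly the class $X$ that the next iterate requires, no regularity is lost along the scheme.

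The routine parts are the Banach fixed point and the patching in $T$; the step I expect to demand the most care is the second paragraph -- showing that the superlinear convective term and the quadratically growing noise $\sigma$ produce forcings in $L^q$ and $\mathbb{L}^p$ with constants independent of $n$. That is exactly where the convolution projector is essential: $\nabla P_{\le k}$ turns a derivative into an $L^p\to L^\infty$ bounded operator, $P_{\le k}$ lifts integrability so that the growth assumption \eqref{ERTWERTHWRTWERTSGDGHCFGSDFGQSERWDFGDSFGHSDRGTEHDFGHDSFGSDGHGYUHDFGSDFASDFASGTWRT03} becomes applicable, and the truncation $\varphi$ caps $\Vert\unm\Vert_p$, so that no genuinely nonlinear estimate is ever needed at this stage.
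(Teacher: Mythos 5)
Your proposal is correct and is essentially the paper's own argument: you observe that the unknown enters \eqref{ERTWERTHWRTWERTSGDGHCFGSDFGQSERWDFGDSFGHSDRGTEHDFGHDSFGSDGHGYUHDFGSDFASDFASGTWRT58} only through the scalar factor $\varphi(\Vert\un\Vert_p)$, freeze that factor, apply Theorem~\ref{T02} to the resulting linear stochastic heat equation (with forcing bounds depending only on $k$ and $N$ via the smoothing property \eqref{ERTWERTHWRTWERTSGDGHCFGSDFGQSERWDFGDSFGHSDRGTEHDFGHDSFGSDGHGYUHDFGSDFASDFASGTWRT09} of $P_{\le k}$ and the cut-off on $\unm$), and contract on a time interval whose length is independent of the data using the Lipschitz bound \eqref{ERTWERTHWRTWERTSGDGHCFGSDFGQSERWDFGDSFGHSDRGTEHDFGHDSFGSDGHGYUHDFGSDFASDFASGTWRT146}, then patch finitely many intervals to reach $T$ and read off \eqref{ERTWERTHWRTWERTSGDGHCFGSDFGQSERWDFGDSFGHSDRGTEHDFGHDSFGSDGHGYUHDFGSDFASDFASGTWRT57} from \eqref{ERTWERTHWRTWERTSGDGHCFGSDFGQSERWDFGDSFGHSDRGTEHDFGHDSFGSDGHGYUHDFGSDFASDFASGTWRT29}. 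The only presentational differences are that the paper realizes the contraction through an explicit inner Picard iteration \eqref{ERTWERTHWRTWERTSGDGHCFGSDFGQSERWDFGDSFGHSDRGTEHDFGHDSFGSDGHGYUHDFGSDFASDFASGTWRT61}, passing to the limit with the help of \cite[Lemma~5.2]{KXZ}, rather than invoking the Banach fixed point theorem directly, and it works with exponents $r<q\le p$ as in \eqref{ERTWERTHWRTWERTSGDGHCFGSDFGQSERWDFGDSFGHSDRGTEHDFGHDSFGSDGHGYUHDFGSDFASDFASGTWRT63}--\eqref{ERTWERTHWRTWERTSGDGHCFGSDFGQSERWDFGDSFGHSDRGTEHDFGHDSFGSDGHGYUHDFGSDFASDFASGTWRT64} where you take $q=p$ together with an $L^\infty$ bound on $P_{\le k}\unm$.
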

\colb \par \begin{proof}[Proof of Lemma~\ref{L05}]  Denote $\unm$, which is assumed to be already constructed in \eqref{ERTWERTHWRTWERTSGDGHCFGSDFGQSERWDFGDSFGHSDRGTEHDFGHDSFGSDGHGYUHDFGSDFASDFASGTWRT58}, by $v$ and consider the system   \begin{align}     \begin{split}       &\partial_t\uu( t,x)       -\Delta \uu( t,x)       =          - \varphi(\Vert\uu(t)\Vert_{p})  \varphi(\Vert\vv(t)\Vert_{p})           P_{\le k}\mathcal{P}\bigl(( \vv( t,x)\cdot \nabla)P_{\le k}\vv( t,x)\bigr)       \\&\indeq\indeq\indeq\indeq\indeq          +\varphi(\Vert\uu(t)\Vert_{p})\varphi(\Vert\vv(t)\Vert_{p})          P_{\le k}\sigma(P_{\le k}\vv( t,x))\dot{\WW}(t),      \\&   \nabla\cdot \uu( t,x) = 0          ,           \\&       \uu( 0,x)=P_{\le k} \uu_0 (x)  \Pas    \commaone x\in\RR^3  ,   \end{split}    \label{ERTWERTHWRTWERTSGDGHCFGSDFGQSERWDFGDSFGHSDRGTEHDFGHDSFGSDGHGYUHDFGSDFASDFASGTWRT163}   \end{align} which we solve by the iteration   \begin{align}   \begin{split}   &\partial_t \un   -\Delta \un     = -\phinm\varphi_v P_{\le k}\mathcal{P}\bigl((v\cdot \nabla) (P_{\le k}v)\bigr)   +\phinm\varphi_v P_{\le k}(\sigma(P_{\le k}v))\dot{\WW}(t),   \\&   \nabla\cdot \un = 0,   \\&\un( 0)= P_{\le k}\uu_0    \Pas   \commaone x\in\RR^3    .   \end{split}   \label{ERTWERTHWRTWERTSGDGHCFGSDFGQSERWDFGDSFGHSDRGTEHDFGHDSFGSDGHGYUHDFGSDFASDFASGTWRT61}   \end{align} Thus we assume that $v$ is divergence-free and satisfies \eqref{ERTWERTHWRTWERTSGDGHCFGSDFGQSERWDFGDSFGHSDRGTEHDFGHDSFGSDGHGYUHDFGSDFASDFASGTWRT57}; note that $\un$ in \eqref{ERTWERTHWRTWERTSGDGHCFGSDFGQSERWDFGDSFGHSDRGTEHDFGHDSFGSDGHGYUHDFGSDFASDFASGTWRT61} is not the same as in~\eqref{ERTWERTHWRTWERTSGDGHCFGSDFGQSERWDFGDSFGHSDRGTEHDFGHDSFGSDGHGYUHDFGSDFASDFASGTWRT58}.  Now we would like to apply Theorem~\ref{T02} to \eqref{ERTWERTHWRTWERTSGDGHCFGSDFGQSERWDFGDSFGHSDRGTEHDFGHDSFGSDGHGYUHDFGSDFASDFASGTWRT61}. Note that projectors $P_{\le k}$ and $\mathcal{P}$ commute with differentiation. Hence,    \begin{equation}   -\phinm\varphi_v P_{\le k}\mathcal{P}\bigl((v\cdot \nabla) (P_{\le k}v)\bigr)=-\phinm \phiv \partial_i P_{\le k} \mathcal{P}\bigl(v_i P_{\le k} v\bigr).    \llabel{t 3F9w zB1q DQVOnH Cm m P3d WSb jst oj 3oGj advz qcMB6Y 6k D 9sZ 0bd Mjt UT hULG TWU9 Nmr3E4 CN b zUO vTh hqL 1p xAxT ezrH dVMgLY TT r Sfx LUX CMr WA bE69 K6XH i5re1f x4 G DKk iB7 f2D Xz Xez2 k2Yc Yc4QjU yM Y R1o DeY NWf 74 hByF dsWk 4cUbCR DX a q4e DWd 7qb Ot 7GOu oklg jJ00J9 Il O Jxn tzF VBC Ft pABp VLEE 2y5Qcg b3 5 DU4 igj 4dz zW soNF wvqj bNFma0 am F Kiv Aap pzM zr VqYf OulM HafaBk 6J r eOQ BaT EsJ BB tHXj n2EU CNleWp cv W JIg gWX Ksn B3 wvmo WK49 Nl492o gR 6 fvc 8ff jJm sW Jr0j zI9p CBsIUV of D kKH Ub7 vxp uQ UXA6 hMUr yvxEpc Tq l Tkz z0q HbX pO 8jFu h6nw zVPPzp A8 9 61V 78c O2W aw 0yGn CHVq BVjTUH lk p 6dG HOd voE E8 cw7Q DL1o 1qg5TX qo V 720 hhQ TyF tp TJDg 9E8D nsp1Qi X9 8 ZVQ N3s duZ qc n9IX ozWh Fd16IB 0K 9 JeB Hvi 364 kQ lFMM JOn0 OUBrnv pY y jUB Ofs Pzx l4 zcMn JHdq OjSi6N Mn 8 bR6 kPe klT Fd VlwD SrhT 8Qr0sC hN h 88j 8ZA vvW VD 03wt ETKK NUdr7W EK 1 jKS IHF Kh2 sr 1RRV Ra8J mBtkWI 1u k uZT F2B 4p8 E7 Y3p0 DX20 JM3XzQ tZ 3 bMC vM4 DEA wB Fp8q YKpL So1a5s dR P fTg 5R6 7v1 T4 eCJ1 qg14 CTK7u7 ag j Q0A tZ1 Nh6 hk Sys5 CWon IOqgCL 3u 7 feR BHz odS Jp 7JH8 u6Rw sYE0mc P4 r LaW Atl yRw kH F3ei UyhI iA19ZB u8 m ywf 42n uyX 0e ljCt 3Lkd 1eUQEZ oO Z rA2 Oqf oQ5 Ca hrBy KzFg DOseim 0j Y BmX csL Ayc cC JBTZ PEjy zPb5hZ KW O xT6 dyt u82 Ia htpD m75Y DktQvd Nj W jIQ H1B Ace SZ KVVP 136v L8XhMm 1O H Kn2 gUy kFU wN 8JML Bqmn vGuwGR oW U oNZ Y2P nmS 5g QMcR YHxL yHuDo8 ba w aqM NYt onW u2 YIOz eB6R wHuGcn fi o 47U PM5 tOj sz QBNq 7mco fCNjou 83 e mcY 81s vsI 2Y DS3S yloB Nx5FBV Bc 9 6HZ EOX UO3 W1 fIF5 jtEM W6KW7D 63 t H0F CVT Zup Pl A9aI oN2s f1Bw31 gg L FoD O0M x18 oo heEd KgZB Cqdqpa sa H Fhx BrE aRg Au I5dq mWWB MuHfv9 0y S PtG hFF dYJ JL f3Ap k5Ck Szr0Kb Vd i sQk uSA JEn DT YkjP AEMu a0VCtC Ff z 9R6 Vht 8Ua cB e7op AnGa 7AbLWj Hc s nAR GMb n7a 9n paMf lftM 7jvb20 0T W xUC 4lt e92 9j oZrA IuIa o1Zqdr oC L 55L T4Q 8kN yv sIzP x4i5 9lKTq2 JB B sZb QCE Ctw ar VBMT H1QR 6v5srW hR r D4r wf8 ik7 KH Egee rFVT ErONml Q5 L R8v XNZ LB3 9U DzRH ZbH9 fTBhRw kA 2 n3p g4I grH xd fEFu z6RE tDqPdw N7 H TVt cE1 8hW 6y n4Gn nCE3 MEQ51EQ62}   \end{equation} By the properties \eqref{ERTWERTHWRTWERTSGDGHCFGSDFGQSERWDFGDSFGHSDRGTEHDFGHDSFGSDGHGYUHDFGSDFASDFASGTWRT08} and \eqref{ERTWERTHWRTWERTSGDGHCFGSDFGQSERWDFGDSFGHSDRGTEHDFGHDSFGSDGHGYUHDFGSDFASDFASGTWRT09} of $P_{\leq k}$, we have   \begin{align}   \begin{split}   &   \EE\biggl[\int_0^{T}\Vert    \phinm\varphi_v P_{\leq k}\mathcal{P}(v_i P_{\leq k}v)   \Vert_{q}^p \,ds   \biggr]   \leq   C_k \EE\biggl[\int_0^{T}   \phinm \phiv   \Vert    v_i P_{\leq k} v   \Vert_{r}^p    \,ds   \biggr]   \\&\indeq   \leq   C_k \EE\biggl[\int_0^{T}   \phinm \phiv   \Vert    v_i    \Vert_{p}^p    \Vert    P_{\leq k} v   \Vert_{l}^p    \,ds   \biggr]   \leq   C_k \EE\biggl[\int_0^{T}   \phinm \phiv   \Vert    P_{\leq k} v   \Vert_{l}^p    \,ds   \biggr]   \\&\indeq   \leq   C_k \EE\biggl[\int_0^{T}   \phinm \phiv   \Vert    v   \Vert_{p}^p    \,ds   \biggr]   \leq   C_{k,T}   \comma i=1,2,3,   \end{split}   \label{ERTWERTHWRTWERTSGDGHCFGSDFGQSERWDFGDSFGHSDRGTEHDFGHDSFGSDGHGYUHDFGSDFASDFASGTWRT63}   \end{align} where we used $\varphi_v \Vert v\Vert_{p}\leq C$ in the third inequality. Above, we required    \begin{equation}   r< q   \commaone   \frac{1}{p}+ \frac{1}{l}=\frac{1}{r},   \qquad{} \text{and}\qquad{}   l\geq p,   \label{ERTWERTHWRTWERTSGDGHCFGSDFGQSERWDFGDSFGHSDRGTEHDFGHDSFGSDGHGYUHDFGSDFASDFASGTWRT64}   \end{equation} which leads to $p/2\leq r<q\leq p$ under the condition~\eqref{ERTWERTHWRTWERTSGDGHCFGSDFGQSERWDFGDSFGHSDRGTEHDFGHDSFGSDGHGYUHDFGSDFASDFASGTWRT28}. Using \eqref{ERTWERTHWRTWERTSGDGHCFGSDFGQSERWDFGDSFGHSDRGTEHDFGHDSFGSDGHGYUHDFGSDFASDFASGTWRT09}, \eqref{ERTWERTHWRTWERTSGDGHCFGSDFGQSERWDFGDSFGHSDRGTEHDFGHDSFGSDGHGYUHDFGSDFASDFASGTWRT10}, and the growth assumption~\eqref{ERTWERTHWRTWERTSGDGHCFGSDFGQSERWDFGDSFGHSDRGTEHDFGHDSFGSDGHGYUHDFGSDFASDFASGTWRT03} on the noise, we conclude for the last term in \eqref{ERTWERTHWRTWERTSGDGHCFGSDFGQSERWDFGDSFGHSDRGTEHDFGHDSFGSDGHGYUHDFGSDFASDFASGTWRT61},    \begin{align}   \begin{split}   &\EE\left[   \int_0^{T}\int_{\RR^\dd}   \Vert    \phinm   \phiv P_{\le k}(\sigma(P_{\le k} v))   \Vert_{l^2( \mathcal{H},\mathcal{\RR}^\dd)}^p\,dx ds   \right]   \leq    C \EE\left[   \int_0^{T}   \phiv   (\Vert    P_{\le k} v   \Vert_{(3p/2)-}^{2p}+1)   \,ds   \right]   \\&\indeq   \leq    C_k \EE\left[   \int_0^{T}   \phiv   \bigl(\Vert    v   \Vert_{p}^{2p}+1   \bigr)   \,ds   \right]   \leq   C_{k,T}   .   \end{split}   \label{ERTWERTHWRTWERTSGDGHCFGSDFGQSERWDFGDSFGHSDRGTEHDFGHDSFGSDGHGYUHDFGSDFASDFASGTWRT65}   \end{align} Applying Theorem~\ref{T02} using \eqref{ERTWERTHWRTWERTSGDGHCFGSDFGQSERWDFGDSFGHSDRGTEHDFGHDSFGSDGHGYUHDFGSDFASDFASGTWRT63} and \eqref{ERTWERTHWRTWERTSGDGHCFGSDFGQSERWDFGDSFGHSDRGTEHDFGHDSFGSDGHGYUHDFGSDFASDFASGTWRT65} proves that \eqref{ERTWERTHWRTWERTSGDGHCFGSDFGQSERWDFGDSFGHSDRGTEHDFGHDSFGSDGHGYUHDFGSDFASDFASGTWRT163} has a unique strong solution in $L^p(\Omega; C([0, T], L^p))$ for every~$n\in\NNp$.  Thus, the iteration \eqref{ERTWERTHWRTWERTSGDGHCFGSDFGQSERWDFGDSFGHSDRGTEHDFGHDSFGSDGHGYUHDFGSDFASDFASGTWRT61} is well-defined.  \par Next, we employ the fixed-point argument on~\eqref{ERTWERTHWRTWERTSGDGHCFGSDFGQSERWDFGDSFGHSDRGTEHDFGHDSFGSDGHGYUHDFGSDFASDFASGTWRT61}. Denote  ${\zz}^{(n)}=\unp-\un$. Then   \begin{align}    \begin{split}    &\partial_t {\zz}_j^{(n)}    -\Delta {\zz}_j^{(n)}     =     \sum_{i}\partial_i f_{ij}      + g_j \dot{\WW}(t)    \comma j=1,2,3    ,   \end{split}    \label{ERTWERTHWRTWERTSGDGHCFGSDFGQSERWDFGDSFGHSDRGTEHDFGHDSFGSDGHGYUHDFGSDFASDFASGTWRT66}   \end{align} where $f_{ij} = - (\phin - \phinm) \phiv P_{\le k} ({\mathcal P}(v_i  P_{\le k}v))_j$ and $   g_j = \bigl(\phin-\phinm\bigr) \phiv P_{\le k} \sigma_j(P_{\le k} v)$. In addition, we have $\nabla\cdot {\zz}^{(n)}  = 0$ and ${\zz}^{(n)}( 0)= 0\textPas$  With the same choice of $r$ and $l$ as in \eqref{ERTWERTHWRTWERTSGDGHCFGSDFGQSERWDFGDSFGHSDRGTEHDFGHDSFGSDGHGYUHDFGSDFASDFASGTWRT63}--\eqref{ERTWERTHWRTWERTSGDGHCFGSDFGQSERWDFGDSFGHSDRGTEHDFGHDSFGSDGHGYUHDFGSDFASDFASGTWRT64}, we obtain   \begin{align}   \begin{split}   &\EE\left[\int_0^{t}\Vert                 f                \Vert_{q}^p \,ds       \right]   \leq    C        \EE\left[\int_0^{t}                \phiv^{p}                \Bigl|\Vert\un\Vert_{p}-\Vert\unm\Vert_{p}\Bigr|^{p}                \Vert                 v                \Vert_{p}^p                 \Vert                 P_{\le k}v                \Vert_{l}^p              \,ds        \right]    \\& \indeq     \leq    C    \EE\left[\int_0^{t}    \phiv^{p}    \Vert \znm\Vert_{p}^{p}    \Vert     v    \Vert_{p}^p     \Vert     P_{\le k}v    \Vert_{l}^p    \,ds    \right]      \leq      C_k        \EE\left[\int_0^{t}                \Vert \znm\Vert_{p}^{p}              \,ds        \right].   \end{split}    \llabel{ DKk iB7 f2D Xz Xez2 k2Yc Yc4QjU yM Y R1o DeY NWf 74 hByF dsWk 4cUbCR DX a q4e DWd 7qb Ot 7GOu oklg jJ00J9 Il O Jxn tzF VBC Ft pABp VLEE 2y5Qcg b3 5 DU4 igj 4dz zW soNF wvqj bNFma0 am F Kiv Aap pzM zr VqYf OulM HafaBk 6J r eOQ BaT EsJ BB tHXj n2EU CNleWp cv W JIg gWX Ksn B3 wvmo WK49 Nl492o gR 6 fvc 8ff jJm sW Jr0j zI9p CBsIUV of D kKH Ub7 vxp uQ UXA6 hMUr yvxEpc Tq l Tkz z0q HbX pO 8jFu h6nw zVPPzp A8 9 61V 78c O2W aw 0yGn CHVq BVjTUH lk p 6dG HOd voE E8 cw7Q DL1o 1qg5TX qo V 720 hhQ TyF tp TJDg 9E8D nsp1Qi X9 8 ZVQ N3s duZ qc n9IX ozWh Fd16IB 0K 9 JeB Hvi 364 kQ lFMM JOn0 OUBrnv pY y jUB Ofs Pzx l4 zcMn JHdq OjSi6N Mn 8 bR6 kPe klT Fd VlwD SrhT 8Qr0sC hN h 88j 8ZA vvW VD 03wt ETKK NUdr7W EK 1 jKS IHF Kh2 sr 1RRV Ra8J mBtkWI 1u k uZT F2B 4p8 E7 Y3p0 DX20 JM3XzQ tZ 3 bMC vM4 DEA wB Fp8q YKpL So1a5s dR P fTg 5R6 7v1 T4 eCJ1 qg14 CTK7u7 ag j Q0A tZ1 Nh6 hk Sys5 CWon IOqgCL 3u 7 feR BHz odS Jp 7JH8 u6Rw sYE0mc P4 r LaW Atl yRw kH F3ei UyhI iA19ZB u8 m ywf 42n uyX 0e ljCt 3Lkd 1eUQEZ oO Z rA2 Oqf oQ5 Ca hrBy KzFg DOseim 0j Y BmX csL Ayc cC JBTZ PEjy zPb5hZ KW O xT6 dyt u82 Ia htpD m75Y DktQvd Nj W jIQ H1B Ace SZ KVVP 136v L8XhMm 1O H Kn2 gUy kFU wN 8JML Bqmn vGuwGR oW U oNZ Y2P nmS 5g QMcR YHxL yHuDo8 ba w aqM NYt onW u2 YIOz eB6R wHuGcn fi o 47U PM5 tOj sz QBNq 7mco fCNjou 83 e mcY 81s vsI 2Y DS3S yloB Nx5FBV Bc 9 6HZ EOX UO3 W1 fIF5 jtEM W6KW7D 63 t H0F CVT Zup Pl A9aI oN2s f1Bw31 gg L FoD O0M x18 oo heEd KgZB Cqdqpa sa H Fhx BrE aRg Au I5dq mWWB MuHfv9 0y S PtG hFF dYJ JL f3Ap k5Ck Szr0Kb Vd i sQk uSA JEn DT YkjP AEMu a0VCtC Ff z 9R6 Vht 8Ua cB e7op AnGa 7AbLWj Hc s nAR GMb n7a 9n paMf lftM 7jvb20 0T W xUC 4lt e92 9j oZrA IuIa o1Zqdr oC L 55L T4Q 8kN yv sIzP x4i5 9lKTq2 JB B sZb QCE Ctw ar VBMT H1QR 6v5srW hR r D4r wf8 ik7 KH Egee rFVT ErONml Q5 L R8v XNZ LB3 9U DzRH ZbH9 fTBhRw kA 2 n3p g4I grH xd fEFu z6RE tDqPdw N7 H TVt cE1 8hW 6y n4Gn nCE3 MEQ51i Ps G Z2G Lbt CSt hu zvPF eE28 MM23ug TC d j7z 7Av TLa 1A GLiJ 5JwW CiDPyM qa 8 tAK QZ9 cfP 42 kuUz V3h6 GsGFoW m9 h cfj 51d GtW yZ zC5D aVt2 Wi5IIs gD B 0cX LM1 FtE xE REQ67}   \end{align} Similarly, for the last term in \eqref{ERTWERTHWRTWERTSGDGHCFGSDFGQSERWDFGDSFGHSDRGTEHDFGHDSFGSDGHGYUHDFGSDFASDFASGTWRT66}, we have   \begin{align}    \begin{split}       &\EE\left[         \int_0^{t}\int_{\RR^\dd}\Vert g(s,x)\Vert_{l^2(             \mathcal{H},\mathcal{\RR}^\dd)}^p\,dx ds             \right]       \leq         C_{k}         \EE\left[         \int_0^{t}\Vert \znm\Vert_{p}^p\,ds             \right]     .    \end{split}    \llabel{qj bNFma0 am F Kiv Aap pzM zr VqYf OulM HafaBk 6J r eOQ BaT EsJ BB tHXj n2EU CNleWp cv W JIg gWX Ksn B3 wvmo WK49 Nl492o gR 6 fvc 8ff jJm sW Jr0j zI9p CBsIUV of D kKH Ub7 vxp uQ UXA6 hMUr yvxEpc Tq l Tkz z0q HbX pO 8jFu h6nw zVPPzp A8 9 61V 78c O2W aw 0yGn CHVq BVjTUH lk p 6dG HOd voE E8 cw7Q DL1o 1qg5TX qo V 720 hhQ TyF tp TJDg 9E8D nsp1Qi X9 8 ZVQ N3s duZ qc n9IX ozWh Fd16IB 0K 9 JeB Hvi 364 kQ lFMM JOn0 OUBrnv pY y jUB Ofs Pzx l4 zcMn JHdq OjSi6N Mn 8 bR6 kPe klT Fd VlwD SrhT 8Qr0sC hN h 88j 8ZA vvW VD 03wt ETKK NUdr7W EK 1 jKS IHF Kh2 sr 1RRV Ra8J mBtkWI 1u k uZT F2B 4p8 E7 Y3p0 DX20 JM3XzQ tZ 3 bMC vM4 DEA wB Fp8q YKpL So1a5s dR P fTg 5R6 7v1 T4 eCJ1 qg14 CTK7u7 ag j Q0A tZ1 Nh6 hk Sys5 CWon IOqgCL 3u 7 feR BHz odS Jp 7JH8 u6Rw sYE0mc P4 r LaW Atl yRw kH F3ei UyhI iA19ZB u8 m ywf 42n uyX 0e ljCt 3Lkd 1eUQEZ oO Z rA2 Oqf oQ5 Ca hrBy KzFg DOseim 0j Y BmX csL Ayc cC JBTZ PEjy zPb5hZ KW O xT6 dyt u82 Ia htpD m75Y DktQvd Nj W jIQ H1B Ace SZ KVVP 136v L8XhMm 1O H Kn2 gUy kFU wN 8JML Bqmn vGuwGR oW U oNZ Y2P nmS 5g QMcR YHxL yHuDo8 ba w aqM NYt onW u2 YIOz eB6R wHuGcn fi o 47U PM5 tOj sz QBNq 7mco fCNjou 83 e mcY 81s vsI 2Y DS3S yloB Nx5FBV Bc 9 6HZ EOX UO3 W1 fIF5 jtEM W6KW7D 63 t H0F CVT Zup Pl A9aI oN2s f1Bw31 gg L FoD O0M x18 oo heEd KgZB Cqdqpa sa H Fhx BrE aRg Au I5dq mWWB MuHfv9 0y S PtG hFF dYJ JL f3Ap k5Ck Szr0Kb Vd i sQk uSA JEn DT YkjP AEMu a0VCtC Ff z 9R6 Vht 8Ua cB e7op AnGa 7AbLWj Hc s nAR GMb n7a 9n paMf lftM 7jvb20 0T W xUC 4lt e92 9j oZrA IuIa o1Zqdr oC L 55L T4Q 8kN yv sIzP x4i5 9lKTq2 JB B sZb QCE Ctw ar VBMT H1QR 6v5srW hR r D4r wf8 ik7 KH Egee rFVT ErONml Q5 L R8v XNZ LB3 9U DzRH ZbH9 fTBhRw kA 2 n3p g4I grH xd fEFu z6RE tDqPdw N7 H TVt cE1 8hW 6y n4Gn nCE3 MEQ51i Ps G Z2G Lbt CSt hu zvPF eE28 MM23ug TC d j7z 7Av TLa 1A GLiJ 5JwW CiDPyM qa 8 tAK QZ9 cfP 42 kuUz V3h6 GsGFoW m9 h cfj 51d GtW yZ zC5D aVt2 Wi5IIs gD B 0cX LM1 FtE xE RIZI Z0Rt QUtWcU Cm F mSj xvW pZc gl dopk 0D7a EouRku Id O ZdW FOR uqb PY 6HkW OVi7 FuVMLW nx p SaN omk rC5 uI ZK9C jpJy UIeO6k gb 7 tr2 SCY x5F 11 S6Xq OImr s7vv0u vA g rbEQ68}   \end{align} By Theorem~\ref{T02}, we obtain   \begin{equation}   \EE\biggl[\sup_{s\in[0,t]}\Vert{\zz}^{(n)}\Vert_p^p\biggr] \leq   C_k \,t\,   \EE\biggl[\sup_{s\in[0,t]} \Vert {\zz}^{(n-1)}\Vert_{p}^{p} \biggr],    \llabel{vxp uQ UXA6 hMUr yvxEpc Tq l Tkz z0q HbX pO 8jFu h6nw zVPPzp A8 9 61V 78c O2W aw 0yGn CHVq BVjTUH lk p 6dG HOd voE E8 cw7Q DL1o 1qg5TX qo V 720 hhQ TyF tp TJDg 9E8D nsp1Qi X9 8 ZVQ N3s duZ qc n9IX ozWh Fd16IB 0K 9 JeB Hvi 364 kQ lFMM JOn0 OUBrnv pY y jUB Ofs Pzx l4 zcMn JHdq OjSi6N Mn 8 bR6 kPe klT Fd VlwD SrhT 8Qr0sC hN h 88j 8ZA vvW VD 03wt ETKK NUdr7W EK 1 jKS IHF Kh2 sr 1RRV Ra8J mBtkWI 1u k uZT F2B 4p8 E7 Y3p0 DX20 JM3XzQ tZ 3 bMC vM4 DEA wB Fp8q YKpL So1a5s dR P fTg 5R6 7v1 T4 eCJ1 qg14 CTK7u7 ag j Q0A tZ1 Nh6 hk Sys5 CWon IOqgCL 3u 7 feR BHz odS Jp 7JH8 u6Rw sYE0mc P4 r LaW Atl yRw kH F3ei UyhI iA19ZB u8 m ywf 42n uyX 0e ljCt 3Lkd 1eUQEZ oO Z rA2 Oqf oQ5 Ca hrBy KzFg DOseim 0j Y BmX csL Ayc cC JBTZ PEjy zPb5hZ KW O xT6 dyt u82 Ia htpD m75Y DktQvd Nj W jIQ H1B Ace SZ KVVP 136v L8XhMm 1O H Kn2 gUy kFU wN 8JML Bqmn vGuwGR oW U oNZ Y2P nmS 5g QMcR YHxL yHuDo8 ba w aqM NYt onW u2 YIOz eB6R wHuGcn fi o 47U PM5 tOj sz QBNq 7mco fCNjou 83 e mcY 81s vsI 2Y DS3S yloB Nx5FBV Bc 9 6HZ EOX UO3 W1 fIF5 jtEM W6KW7D 63 t H0F CVT Zup Pl A9aI oN2s f1Bw31 gg L FoD O0M x18 oo heEd KgZB Cqdqpa sa H Fhx BrE aRg Au I5dq mWWB MuHfv9 0y S PtG hFF dYJ JL f3Ap k5Ck Szr0Kb Vd i sQk uSA JEn DT YkjP AEMu a0VCtC Ff z 9R6 Vht 8Ua cB e7op AnGa 7AbLWj Hc s nAR GMb n7a 9n paMf lftM 7jvb20 0T W xUC 4lt e92 9j oZrA IuIa o1Zqdr oC L 55L T4Q 8kN yv sIzP x4i5 9lKTq2 JB B sZb QCE Ctw ar VBMT H1QR 6v5srW hR r D4r wf8 ik7 KH Egee rFVT ErONml Q5 L R8v XNZ LB3 9U DzRH ZbH9 fTBhRw kA 2 n3p g4I grH xd fEFu z6RE tDqPdw N7 H TVt cE1 8hW 6y n4Gn nCE3 MEQ51i Ps G Z2G Lbt CSt hu zvPF eE28 MM23ug TC d j7z 7Av TLa 1A GLiJ 5JwW CiDPyM qa 8 tAK QZ9 cfP 42 kuUz V3h6 GsGFoW m9 h cfj 51d GtW yZ zC5D aVt2 Wi5IIs gD B 0cX LM1 FtE xE RIZI Z0Rt QUtWcU Cm F mSj xvW pZc gl dopk 0D7a EouRku Id O ZdW FOR uqb PY 6HkW OVi7 FuVMLW nx p SaN omk rC5 uI ZK9C jpJy UIeO6k gb 7 tr2 SCY x5F 11 S6Xq OImr s7vv0u vA g rb9 hGP Fnk RM j92H gczJ 660kHb BB l QSI OY7 FcX 0c uyDl LjbU 3F6vZk Gb a KaM ufj uxp n4 Mi45 7MoL NW3eIm cj 6 OOS e59 afA hg lt9S BOiF cYQipj 5u N 19N KZ5 Czc 23 1wxG x1ut EQ71}   \end{equation} which concludes the existence of a fixed-point of \eqref{ERTWERTHWRTWERTSGDGHCFGSDFGQSERWDFGDSFGHSDRGTEHDFGHDSFGSDGHGYUHDFGSDFASDFASGTWRT61} in $L^p_{\omega}L^{\infty}_t L_x^p$ provided~$t\leq 1/2C_k$. Note that the convergence to the fixed-point is exponentially rapid. With this rate of convergence, we conclude that $\varphi(\Vert\uu^{(n)}(t)\Vert_{p})\rightarrow \varphi(\Vert\uu(t)\Vert_{p})$ for a.e.-$(\omega, t)$ (see~\cite[Lemma~5.2]{KXZ}). Then following the same procedure in Theorem~\ref{T02}, we can show that $\uu$ is  indeed a solution to \eqref{ERTWERTHWRTWERTSGDGHCFGSDFGQSERWDFGDSFGHSDRGTEHDFGHDSFGSDGHGYUHDFGSDFASDFASGTWRT61}, and \eqref{ERTWERTHWRTWERTSGDGHCFGSDFGQSERWDFGDSFGHSDRGTEHDFGHDSFGSDGHGYUHDFGSDFASDFASGTWRT57} holds up to~$t$.  \par The pathwise uniqueness on a small time interval can be proven similarly. Now, let $[0,t]$ be an interval in which both existence and uniqueness hold. It is important to note that $t$ does not depend on the size of the initial data. Hence, the existence and uniqueness of a strong solution can be extended from $[0,t]$ to $[0,T]$ in finitely many steps. Clearly, \eqref{ERTWERTHWRTWERTSGDGHCFGSDFGQSERWDFGDSFGHSDRGTEHDFGHDSFGSDGHGYUHDFGSDFASDFASGTWRT57} holds up to~$T$. \end{proof} \par \begin{proof}[Proof of Theorem~\ref{T03}] Lemma~\ref{L05} shows that for every $n\in~\NNp$ there exists an iterate $\un$ solving \eqref{ERTWERTHWRTWERTSGDGHCFGSDFGQSERWDFGDSFGHSDRGTEHDFGHDSFGSDGHGYUHDFGSDFASDFASGTWRT58} in $[0,T]\times {\mathbb R}^{3}$, which in addition obeys~\eqref{ERTWERTHWRTWERTSGDGHCFGSDFGQSERWDFGDSFGHSDRGTEHDFGHDSFGSDGHGYUHDFGSDFASDFASGTWRT57}.  In order to obtain the convergence of the iterates, we consider the difference   \begin{equation}    \vn=\unp-\un    ,    \llabel{ X9 8 ZVQ N3s duZ qc n9IX ozWh Fd16IB 0K 9 JeB Hvi 364 kQ lFMM JOn0 OUBrnv pY y jUB Ofs Pzx l4 zcMn JHdq OjSi6N Mn 8 bR6 kPe klT Fd VlwD SrhT 8Qr0sC hN h 88j 8ZA vvW VD 03wt ETKK NUdr7W EK 1 jKS IHF Kh2 sr 1RRV Ra8J mBtkWI 1u k uZT F2B 4p8 E7 Y3p0 DX20 JM3XzQ tZ 3 bMC vM4 DEA wB Fp8q YKpL So1a5s dR P fTg 5R6 7v1 T4 eCJ1 qg14 CTK7u7 ag j Q0A tZ1 Nh6 hk Sys5 CWon IOqgCL 3u 7 feR BHz odS Jp 7JH8 u6Rw sYE0mc P4 r LaW Atl yRw kH F3ei UyhI iA19ZB u8 m ywf 42n uyX 0e ljCt 3Lkd 1eUQEZ oO Z rA2 Oqf oQ5 Ca hrBy KzFg DOseim 0j Y BmX csL Ayc cC JBTZ PEjy zPb5hZ KW O xT6 dyt u82 Ia htpD m75Y DktQvd Nj W jIQ H1B Ace SZ KVVP 136v L8XhMm 1O H Kn2 gUy kFU wN 8JML Bqmn vGuwGR oW U oNZ Y2P nmS 5g QMcR YHxL yHuDo8 ba w aqM NYt onW u2 YIOz eB6R wHuGcn fi o 47U PM5 tOj sz QBNq 7mco fCNjou 83 e mcY 81s vsI 2Y DS3S yloB Nx5FBV Bc 9 6HZ EOX UO3 W1 fIF5 jtEM W6KW7D 63 t H0F CVT Zup Pl A9aI oN2s f1Bw31 gg L FoD O0M x18 oo heEd KgZB Cqdqpa sa H Fhx BrE aRg Au I5dq mWWB MuHfv9 0y S PtG hFF dYJ JL f3Ap k5Ck Szr0Kb Vd i sQk uSA JEn DT YkjP AEMu a0VCtC Ff z 9R6 Vht 8Ua cB e7op AnGa 7AbLWj Hc s nAR GMb n7a 9n paMf lftM 7jvb20 0T W xUC 4lt e92 9j oZrA IuIa o1Zqdr oC L 55L T4Q 8kN yv sIzP x4i5 9lKTq2 JB B sZb QCE Ctw ar VBMT H1QR 6v5srW hR r D4r wf8 ik7 KH Egee rFVT ErONml Q5 L R8v XNZ LB3 9U DzRH ZbH9 fTBhRw kA 2 n3p g4I grH xd fEFu z6RE tDqPdw N7 H TVt cE1 8hW 6y n4Gn nCE3 MEQ51i Ps G Z2G Lbt CSt hu zvPF eE28 MM23ug TC d j7z 7Av TLa 1A GLiJ 5JwW CiDPyM qa 8 tAK QZ9 cfP 42 kuUz V3h6 GsGFoW m9 h cfj 51d GtW yZ zC5D aVt2 Wi5IIs gD B 0cX LM1 FtE xE RIZI Z0Rt QUtWcU Cm F mSj xvW pZc gl dopk 0D7a EouRku Id O ZdW FOR uqb PY 6HkW OVi7 FuVMLW nx p SaN omk rC5 uI ZK9C jpJy UIeO6k gb 7 tr2 SCY x5F 11 S6Xq OImr s7vv0u vA g rb9 hGP Fnk RM j92H gczJ 660kHb BB l QSI OY7 FcX 0c uyDl LjbU 3F6vZk Gb a KaM ufj uxp n4 Mi45 7MoL NW3eIm cj 6 OOS e59 afA hg lt9S BOiF cYQipj 5u N 19N KZ5 Czc 23 1wxG x1ut gJB4ue Mx x 5lr s8g VbZ s1 NEfI 02Rb pkfEOZ E4 e seo 9te NRU Ai nujf eJYa Ehns0Y 6X R UF1 PCf 5eE AL 9DL6 a2vm BAU5Au DD t yQN 5YL LWw PW GjMt 4hu4 FIoLCZ Lx e BVY 5lZ DCDEQ71}   \end{equation} which satisfies   \begin{align}    \begin{split}    &\partial_t\vn    -\Delta \vn +    \varphi^{(n+1)}\phin P_{\le k}\mathcal{P}\bigl(( \un \cdot \nabla)P_{\le k}\un \bigr)-\phin \phinm P_{\le k}\mathcal{P}           \bigl(          ( \unm \cdot \nabla)P_{\le k}\unm            \bigr)    \\&\indeq    =\Bigl(            \varphi^{(n+1)}\phin P_{\le k}\sigma(P_{\le k}\un )-\phin \phinm P_{\le k}\sigma(P_{\le k}\unm )     \Bigr)\dot{\WW}(t), \\    & \nabla\cdot \vn = 0,    \\&    \vn( 0)= {0}      \Pas \comma t\in (0,T].    \end{split}    \llabel{wt ETKK NUdr7W EK 1 jKS IHF Kh2 sr 1RRV Ra8J mBtkWI 1u k uZT F2B 4p8 E7 Y3p0 DX20 JM3XzQ tZ 3 bMC vM4 DEA wB Fp8q YKpL So1a5s dR P fTg 5R6 7v1 T4 eCJ1 qg14 CTK7u7 ag j Q0A tZ1 Nh6 hk Sys5 CWon IOqgCL 3u 7 feR BHz odS Jp 7JH8 u6Rw sYE0mc P4 r LaW Atl yRw kH F3ei UyhI iA19ZB u8 m ywf 42n uyX 0e ljCt 3Lkd 1eUQEZ oO Z rA2 Oqf oQ5 Ca hrBy KzFg DOseim 0j Y BmX csL Ayc cC JBTZ PEjy zPb5hZ KW O xT6 dyt u82 Ia htpD m75Y DktQvd Nj W jIQ H1B Ace SZ KVVP 136v L8XhMm 1O H Kn2 gUy kFU wN 8JML Bqmn vGuwGR oW U oNZ Y2P nmS 5g QMcR YHxL yHuDo8 ba w aqM NYt onW u2 YIOz eB6R wHuGcn fi o 47U PM5 tOj sz QBNq 7mco fCNjou 83 e mcY 81s vsI 2Y DS3S yloB Nx5FBV Bc 9 6HZ EOX UO3 W1 fIF5 jtEM W6KW7D 63 t H0F CVT Zup Pl A9aI oN2s f1Bw31 gg L FoD O0M x18 oo heEd KgZB Cqdqpa sa H Fhx BrE aRg Au I5dq mWWB MuHfv9 0y S PtG hFF dYJ JL f3Ap k5Ck Szr0Kb Vd i sQk uSA JEn DT YkjP AEMu a0VCtC Ff z 9R6 Vht 8Ua cB e7op AnGa 7AbLWj Hc s nAR GMb n7a 9n paMf lftM 7jvb20 0T W xUC 4lt e92 9j oZrA IuIa o1Zqdr oC L 55L T4Q 8kN yv sIzP x4i5 9lKTq2 JB B sZb QCE Ctw ar VBMT H1QR 6v5srW hR r D4r wf8 ik7 KH Egee rFVT ErONml Q5 L R8v XNZ LB3 9U DzRH ZbH9 fTBhRw kA 2 n3p g4I grH xd fEFu z6RE tDqPdw N7 H TVt cE1 8hW 6y n4Gn nCE3 MEQ51i Ps G Z2G Lbt CSt hu zvPF eE28 MM23ug TC d j7z 7Av TLa 1A GLiJ 5JwW CiDPyM qa 8 tAK QZ9 cfP 42 kuUz V3h6 GsGFoW m9 h cfj 51d GtW yZ zC5D aVt2 Wi5IIs gD B 0cX LM1 FtE xE RIZI Z0Rt QUtWcU Cm F mSj xvW pZc gl dopk 0D7a EouRku Id O ZdW FOR uqb PY 6HkW OVi7 FuVMLW nx p SaN omk rC5 uI ZK9C jpJy UIeO6k gb 7 tr2 SCY x5F 11 S6Xq OImr s7vv0u vA g rb9 hGP Fnk RM j92H gczJ 660kHb BB l QSI OY7 FcX 0c uyDl LjbU 3F6vZk Gb a KaM ufj uxp n4 Mi45 7MoL NW3eIm cj 6 OOS e59 afA hg lt9S BOiF cYQipj 5u N 19N KZ5 Czc 23 1wxG x1ut gJB4ue Mx x 5lr s8g VbZ s1 NEfI 02Rb pkfEOZ E4 e seo 9te NRU Ai nujf eJYa Ehns0Y 6X R UF1 PCf 5eE AL 9DL6 a2vm BAU5Au DD t yQN 5YL LWw PW GjMt 4hu4 FIoLCZ Lx e BVY 5lZ DCD 5Y yBwO IJeH VQsKob Yd q fCX 1to mCb Ej 5m1p Nx9p nLn5A3 g7 U v77 7YU gBR lN rTyj shaq BZXeAF tj y FlW jfc 57t 2f abx5 Ns4d clCMJc Tl q kfq uFD iSd DP eX6m YLQz JzUmH0 43EQ70}
  \end{align} Componentwise, we may rewrite the first equation as   \begin{align}    \begin{split}    &\partial_t{\vv}_j^{(n)}    -\Delta {\vv}_j^{(n)}    =     \sum_{i}\partial_{i} f_{ij}      + g_j \dot{\WW}(t)    \comma j=1,2,3    ,   \end{split}    \llabel{ tZ1 Nh6 hk Sys5 CWon IOqgCL 3u 7 feR BHz odS Jp 7JH8 u6Rw sYE0mc P4 r LaW Atl yRw kH F3ei UyhI iA19ZB u8 m ywf 42n uyX 0e ljCt 3Lkd 1eUQEZ oO Z rA2 Oqf oQ5 Ca hrBy KzFg DOseim 0j Y BmX csL Ayc cC JBTZ PEjy zPb5hZ KW O xT6 dyt u82 Ia htpD m75Y DktQvd Nj W jIQ H1B Ace SZ KVVP 136v L8XhMm 1O H Kn2 gUy kFU wN 8JML Bqmn vGuwGR oW U oNZ Y2P nmS 5g QMcR YHxL yHuDo8 ba w aqM NYt onW u2 YIOz eB6R wHuGcn fi o 47U PM5 tOj sz QBNq 7mco fCNjou 83 e mcY 81s vsI 2Y DS3S yloB Nx5FBV Bc 9 6HZ EOX UO3 W1 fIF5 jtEM W6KW7D 63 t H0F CVT Zup Pl A9aI oN2s f1Bw31 gg L FoD O0M x18 oo heEd KgZB Cqdqpa sa H Fhx BrE aRg Au I5dq mWWB MuHfv9 0y S PtG hFF dYJ JL f3Ap k5Ck Szr0Kb Vd i sQk uSA JEn DT YkjP AEMu a0VCtC Ff z 9R6 Vht 8Ua cB e7op AnGa 7AbLWj Hc s nAR GMb n7a 9n paMf lftM 7jvb20 0T W xUC 4lt e92 9j oZrA IuIa o1Zqdr oC L 55L T4Q 8kN yv sIzP x4i5 9lKTq2 JB B sZb QCE Ctw ar VBMT H1QR 6v5srW hR r D4r wf8 ik7 KH Egee rFVT ErONml Q5 L R8v XNZ LB3 9U DzRH ZbH9 fTBhRw kA 2 n3p g4I grH xd fEFu z6RE tDqPdw N7 H TVt cE1 8hW 6y n4Gn nCE3 MEQ51i Ps G Z2G Lbt CSt hu zvPF eE28 MM23ug TC d j7z 7Av TLa 1A GLiJ 5JwW CiDPyM qa 8 tAK QZ9 cfP 42 kuUz V3h6 GsGFoW m9 h cfj 51d GtW yZ zC5D aVt2 Wi5IIs gD B 0cX LM1 FtE xE RIZI Z0Rt QUtWcU Cm F mSj xvW pZc gl dopk 0D7a EouRku Id O ZdW FOR uqb PY 6HkW OVi7 FuVMLW nx p SaN omk rC5 uI ZK9C jpJy UIeO6k gb 7 tr2 SCY x5F 11 S6Xq OImr s7vv0u vA g rb9 hGP Fnk RM j92H gczJ 660kHb BB l QSI OY7 FcX 0c uyDl LjbU 3F6vZk Gb a KaM ufj uxp n4 Mi45 7MoL NW3eIm cj 6 OOS e59 afA hg lt9S BOiF cYQipj 5u N 19N KZ5 Czc 23 1wxG x1ut gJB4ue Mx x 5lr s8g VbZ s1 NEfI 02Rb pkfEOZ E4 e seo 9te NRU Ai nujf eJYa Ehns0Y 6X R UF1 PCf 5eE AL 9DL6 a2vm BAU5Au DD t yQN 5YL LWw PW GjMt 4hu4 FIoLCZ Lx e BVY 5lZ DCD 5Y yBwO IJeH VQsKob Yd q fCX 1to mCb Ej 5m1p Nx9p nLn5A3 g7 U v77 7YU gBR lN rTyj shaq BZXeAF tj y FlW jfc 57t 2f abx5 Ns4d clCMJc Tl q kfq uFD iSd DP eX6m YLQz JzUmH0 43 M lgF edN mXQ Pj Aoba 07MY wBaC4C nj I 4dw KCZ PO9 wx 3en8 AoqX 7JjN8K lq j Q5c bMS dhR Fs tQ8Q r2ve 2HT0uO 5W j TAi iIW n1C Wr U1BH BMvJ 3ywmAd qN D LY8 lbx XMx 0D Dvco EQ69}   \end{align} where   \begin{align}   \begin{split}    f_{ij}     &=       -      \varphi^{(n+1)}\phin P_{\le k}\bigl({\mathcal P}(\un_i P_{\le k}\un) \bigr)_j      +\phin \phinm        P_{\le k}   \bigl(              {\mathcal P}(  \unm_i P_{\le k}\unm)           \bigr)_j    \\&    =     -    \phin (\phinp-\phin) P_{\le k}({\mathcal P}(\un_i P_{\le k}\un))_j    -\phin (\phin-\phinm) P_{\le k}({\mathcal P}(\un_i P_{\le k}\un))_j    \\&\indeq    -\phin \phinm P_{\le k}({\mathcal P}(\vnm_i P_{\le k}\un))_j    -\phin \phinm P_{\le k} ({\mathcal P}(\unm_i P_{\le k}\vnm))_j    \\&    =    f_{ij}^{(1)}+   f_{ij}^{(2)}+   f_{ij}^{(3)}+   f_{ij}^{(4)}   \end{split}   \label{ERTWERTHWRTWERTSGDGHCFGSDFGQSERWDFGDSFGHSDRGTEHDFGHDSFGSDGHGYUHDFGSDFASDFASGTWRT72}   \end{align} and    \begin{align}   \begin{split}    g_j     &=      \varphi^{(n+1)}\phin P_{\le k}\sigma_j(P_{\le k}\un )-\phin \phinm P_{\le k}\sigma_j(P_{\le k}\unm )     \\&    =    \phin (\phinp-\phin) P_{\le k}\sigma_j(P_{\le k}\un )    + \phin (\phin-\phinm) P_{\le k}\sigma_j(P_{\le k}\un )    \\&\indeq    + \phin \phinm P_{\le k}\bigl(\sigma_j(P_{\le k}\un )-\sigma_j(P_{\le k}\unm )\bigr)    \\&    =    g_{j}^{(1)}+   g_{j}^{(2)}+   g_{j}^{(3)}    .   \end{split}   \label{ERTWERTHWRTWERTSGDGHCFGSDFGQSERWDFGDSFGHSDRGTEHDFGHDSFGSDGHGYUHDFGSDFASDFASGTWRT73}   \end{align} We now apply \eqref{ERTWERTHWRTWERTSGDGHCFGSDFGQSERWDFGDSFGHSDRGTEHDFGHDSFGSDGHGYUHDFGSDFASDFASGTWRT29} and estimate the forcing terms in order. For the first term in \eqref{ERTWERTHWRTWERTSGDGHCFGSDFGQSERWDFGDSFGHSDRGTEHDFGHDSFGSDGHGYUHDFGSDFASDFASGTWRT72}, we have   \begin{align}    \begin{split}       &      \EE\biggl[\int_0^{T}\Vert                 f^{(1)} \Vert_{q}^p \,ds       \biggr]        \leq        C_k \sum_{i}\EE\biggl[\int_0^{T}               (\phinp-\phin)^{p} (\phin)^{p}                \Vert \un_i P_{\le k}\un\Vert_{r}^p \,ds              \biggr]      \\&\indeq        \leq      C_k \sum_{i}\EE\biggl[\int_0^{T}      \Bigl|\Vert\unp\Vert_{p}-\Vert\un\Vert_{p}\Bigr|^{p} (\phin)^{p}      \Vert \un_i P_{\le k}\un\Vert_{r}^p \,ds      \biggr]      \\&\indeq      \leq      C_k \EE\biggl[\int_0^{T}                \Vert \vn\Vert_{p}^{p}                (\phin)^{p}                \Vert \un\Vert_{p}^{p}                \Vert P_{\le k}\un\Vert_{l}^{p}               \,ds       \biggr]      \leq      C_k \EE\biggl[\int_0^{T}                \Vert \vn\Vert_{p}^{p}               \,ds       \biggr],    \end{split}    \llabel{Oseim 0j Y BmX csL Ayc cC JBTZ PEjy zPb5hZ KW O xT6 dyt u82 Ia htpD m75Y DktQvd Nj W jIQ H1B Ace SZ KVVP 136v L8XhMm 1O H Kn2 gUy kFU wN 8JML Bqmn vGuwGR oW U oNZ Y2P nmS 5g QMcR YHxL yHuDo8 ba w aqM NYt onW u2 YIOz eB6R wHuGcn fi o 47U PM5 tOj sz QBNq 7mco fCNjou 83 e mcY 81s vsI 2Y DS3S yloB Nx5FBV Bc 9 6HZ EOX UO3 W1 fIF5 jtEM W6KW7D 63 t H0F CVT Zup Pl A9aI oN2s f1Bw31 gg L FoD O0M x18 oo heEd KgZB Cqdqpa sa H Fhx BrE aRg Au I5dq mWWB MuHfv9 0y S PtG hFF dYJ JL f3Ap k5Ck Szr0Kb Vd i sQk uSA JEn DT YkjP AEMu a0VCtC Ff z 9R6 Vht 8Ua cB e7op AnGa 7AbLWj Hc s nAR GMb n7a 9n paMf lftM 7jvb20 0T W xUC 4lt e92 9j oZrA IuIa o1Zqdr oC L 55L T4Q 8kN yv sIzP x4i5 9lKTq2 JB B sZb QCE Ctw ar VBMT H1QR 6v5srW hR r D4r wf8 ik7 KH Egee rFVT ErONml Q5 L R8v XNZ LB3 9U DzRH ZbH9 fTBhRw kA 2 n3p g4I grH xd fEFu z6RE tDqPdw N7 H TVt cE1 8hW 6y n4Gn nCE3 MEQ51i Ps G Z2G Lbt CSt hu zvPF eE28 MM23ug TC d j7z 7Av TLa 1A GLiJ 5JwW CiDPyM qa 8 tAK QZ9 cfP 42 kuUz V3h6 GsGFoW m9 h cfj 51d GtW yZ zC5D aVt2 Wi5IIs gD B 0cX LM1 FtE xE RIZI Z0Rt QUtWcU Cm F mSj xvW pZc gl dopk 0D7a EouRku Id O ZdW FOR uqb PY 6HkW OVi7 FuVMLW nx p SaN omk rC5 uI ZK9C jpJy UIeO6k gb 7 tr2 SCY x5F 11 S6Xq OImr s7vv0u vA g rb9 hGP Fnk RM j92H gczJ 660kHb BB l QSI OY7 FcX 0c uyDl LjbU 3F6vZk Gb a KaM ufj uxp n4 Mi45 7MoL NW3eIm cj 6 OOS e59 afA hg lt9S BOiF cYQipj 5u N 19N KZ5 Czc 23 1wxG x1ut gJB4ue Mx x 5lr s8g VbZ s1 NEfI 02Rb pkfEOZ E4 e seo 9te NRU Ai nujf eJYa Ehns0Y 6X R UF1 PCf 5eE AL 9DL6 a2vm BAU5Au DD t yQN 5YL LWw PW GjMt 4hu4 FIoLCZ Lx e BVY 5lZ DCD 5Y yBwO IJeH VQsKob Yd q fCX 1to mCb Ej 5m1p Nx9p nLn5A3 g7 U v77 7YU gBR lN rTyj shaq BZXeAF tj y FlW jfc 57t 2f abx5 Ns4d clCMJc Tl q kfq uFD iSd DP eX6m YLQz JzUmH0 43 M lgF edN mXQ Pj Aoba 07MY wBaC4C nj I 4dw KCZ PO9 wx 3en8 AoqX 7JjN8K lq j Q5c bMS dhR Fs tQ8Q r2ve 2HT0uO 5W j TAi iIW n1C Wr U1BH BMvJ 3ywmAd qN D LY8 lbx XMx 0D Dvco 3RL9 Qz5eqy wV Y qEN nO8 MH0 PY zeVN i3yb 2msNYY Wz G 2DC PoG 1Vb Bx e9oZ GcTU 3AZuEK bk p 6rN eTX 0DS Mc zd91 nbSV DKEkVa zI q NKU Qap NBP 5B 32Ey prwP FLvuPi wR P l1G TdEQ74}   \end{align}\colb with the same choice of the exponents $r$ and~$l$ as in~\eqref{ERTWERTHWRTWERTSGDGHCFGSDFGQSERWDFGDSFGHSDRGTEHDFGHDSFGSDGHGYUHDFGSDFASDFASGTWRT63}--\eqref{ERTWERTHWRTWERTSGDGHCFGSDFGQSERWDFGDSFGHSDRGTEHDFGHDSFGSDGHGYUHDFGSDFASDFASGTWRT64}. Similarly,   \begin{align}    \begin{split}       &    \EE\biggl[\int_0^{T}\Vert     f^{(2)} \Vert_{q}^p \,ds    \biggr]    +      \EE\biggl[\int_0^{T}\Vert                 f^{(3)} \Vert_{q}^p \,ds       \biggr]      +      \EE\biggl[\int_0^{T}\Vert                 f^{(4)} \Vert_{q}^p \,ds       \biggr]     \leq      C_k       \EE\biggl[\int_0^{T}                \Vert \vnm\Vert_{p}^{p}               \,ds          \biggr]   ,    \end{split}    \llabel{5g QMcR YHxL yHuDo8 ba w aqM NYt onW u2 YIOz eB6R wHuGcn fi o 47U PM5 tOj sz QBNq 7mco fCNjou 83 e mcY 81s vsI 2Y DS3S yloB Nx5FBV Bc 9 6HZ EOX UO3 W1 fIF5 jtEM W6KW7D 63 t H0F CVT Zup Pl A9aI oN2s f1Bw31 gg L FoD O0M x18 oo heEd KgZB Cqdqpa sa H Fhx BrE aRg Au I5dq mWWB MuHfv9 0y S PtG hFF dYJ JL f3Ap k5Ck Szr0Kb Vd i sQk uSA JEn DT YkjP AEMu a0VCtC Ff z 9R6 Vht 8Ua cB e7op AnGa 7AbLWj Hc s nAR GMb n7a 9n paMf lftM 7jvb20 0T W xUC 4lt e92 9j oZrA IuIa o1Zqdr oC L 55L T4Q 8kN yv sIzP x4i5 9lKTq2 JB B sZb QCE Ctw ar VBMT H1QR 6v5srW hR r D4r wf8 ik7 KH Egee rFVT ErONml Q5 L R8v XNZ LB3 9U DzRH ZbH9 fTBhRw kA 2 n3p g4I grH xd fEFu z6RE tDqPdw N7 H TVt cE1 8hW 6y n4Gn nCE3 MEQ51i Ps G Z2G Lbt CSt hu zvPF eE28 MM23ug TC d j7z 7Av TLa 1A GLiJ 5JwW CiDPyM qa 8 tAK QZ9 cfP 42 kuUz V3h6 GsGFoW m9 h cfj 51d GtW yZ zC5D aVt2 Wi5IIs gD B 0cX LM1 FtE xE RIZI Z0Rt QUtWcU Cm F mSj xvW pZc gl dopk 0D7a EouRku Id O ZdW FOR uqb PY 6HkW OVi7 FuVMLW nx p SaN omk rC5 uI ZK9C jpJy UIeO6k gb 7 tr2 SCY x5F 11 S6Xq OImr s7vv0u vA g rb9 hGP Fnk RM j92H gczJ 660kHb BB l QSI OY7 FcX 0c uyDl LjbU 3F6vZk Gb a KaM ufj uxp n4 Mi45 7MoL NW3eIm cj 6 OOS e59 afA hg lt9S BOiF cYQipj 5u N 19N KZ5 Czc 23 1wxG x1ut gJB4ue Mx x 5lr s8g VbZ s1 NEfI 02Rb pkfEOZ E4 e seo 9te NRU Ai nujf eJYa Ehns0Y 6X R UF1 PCf 5eE AL 9DL6 a2vm BAU5Au DD t yQN 5YL LWw PW GjMt 4hu4 FIoLCZ Lx e BVY 5lZ DCD 5Y yBwO IJeH VQsKob Yd q fCX 1to mCb Ej 5m1p Nx9p nLn5A3 g7 U v77 7YU gBR lN rTyj shaq BZXeAF tj y FlW jfc 57t 2f abx5 Ns4d clCMJc Tl q kfq uFD iSd DP eX6m YLQz JzUmH0 43 M lgF edN mXQ Pj Aoba 07MY wBaC4C nj I 4dw KCZ PO9 wx 3en8 AoqX 7JjN8K lq j Q5c bMS dhR Fs tQ8Q r2ve 2HT0uO 5W j TAi iIW n1C Wr U1BH BMvJ 3ywmAd qN D LY8 lbx XMx 0D Dvco 3RL9 Qz5eqy wV Y qEN nO8 MH0 PY zeVN i3yb 2msNYY Wz G 2DC PoG 1Vb Bx e9oZ GcTU 3AZuEK bk p 6rN eTX 0DS Mc zd91 nbSV DKEkVa zI q NKU Qap NBP 5B 32Ey prwP FLvuPi wR P l1G TdQ BZE Aw 3d90 v8P5 CPAnX4 Yo 2 q7s yr5 BW8 Hc T7tM ioha BW9U4q rb u mEQ 6Xz MKR 2B REFX k3ZO MVMYSw 9S F 5ek q0m yNK Gn H0qi vlRA 18CbEz id O iuy ZZ6 kRo oJ kLQ0 Ewmz sKllEQ75}   \end{align} which leads to   \begin{align}    \begin{split}      \EE\biggl[\int_0^{T}\Vert                 f \Vert_{q}^p \,ds       \biggr]     &     \leq      C_{k} \,T\,             \EE\biggl[\sup_{s\in[0,T]} \Vert \vnm\Vert_{p}^{p}  \biggr]       +       C_{k} \,T\,             \EE\biggl[\sup_{s\in[0,T]} \Vert \vn\Vert_{p}^{p}  \biggr]     .    \end{split}    \label{ERTWERTHWRTWERTSGDGHCFGSDFGQSERWDFGDSFGHSDRGTEHDFGHDSFGSDGHGYUHDFGSDFASDFASGTWRT76}   \end{align} For the three terms in \eqref{ERTWERTHWRTWERTSGDGHCFGSDFGQSERWDFGDSFGHSDRGTEHDFGHDSFGSDGHGYUHDFGSDFASDFASGTWRT73}, we have   \begin{align}    \begin{split}    &    \EE\biggl[         \int_0^{T}\Vert g^{(1)}\Vert_{\mathbb{L}^p}^p \,ds       \biggr]     \leq     C\EE\biggl[         \int_0^{T}  (\phin)^{p} (\phinp-\phin)^{p}         \int_{\RR^\dd}                   \Vert                P_{\le k} \sigma(P_{\le k}\un )            \Vert_{l^2(             \mathcal{H},\mathcal{\RR}^\dd)}^p\,dx ds             \biggr]     \\&\indeq        \leq      C      \EE\biggl[         \int_0^{T}                (\phin)^{p}            \Vert \vn\Vert_{p}^{p}            (            \Vert               P_{\le k}\un            \Vert_{(3p/2)-}^{2p}+1)\,ds         \biggr]    \\&\indeq      \leq      C_{k}      \EE\biggl[         \int_0^{T}           \Vert \vn\Vert_{p}^{p}           \,ds         \biggr]             \leq             C_{k} \,T\,             \EE\biggl[\sup_{s\in[0,T]} \Vert \vn\Vert_{p}^{p}  \biggr]       .    \end{split}    \label{ERTWERTHWRTWERTSGDGHCFGSDFGQSERWDFGDSFGHSDRGTEHDFGHDSFGSDGHGYUHDFGSDFASDFASGTWRT77}   \end{align} Similarly,   \begin{equation}   \EE\biggl[   \int_0^{T}\Vert g^{(2)}\Vert_{\mathbb{L}^p}^p \,ds   \biggr]   \leq   C_{k} \,T\,   \EE\biggl[\sup_{s\in[0,T]} \Vert \vnm\Vert_{p}^{p}  \biggr]    \label{ERTWERTHWRTWERTSGDGHCFGSDFGQSERWDFGDSFGHSDRGTEHDFGHDSFGSDGHGYUHDFGSDFASDFASGTWRT78}   \end{equation} and   \begin{align}    \begin{split}    &    \EE\biggl[         \int_0^{T}\Vert g^{(3)}\Vert_{\mathbb{L}^p}^p \,ds       \biggr]       \leq      C    \EE\biggl[       \int_0^{T} \phin \phinm       \Vert ( |P_{\le k}\un| + |P_{\le k}\unm|)^{1/2} |P_{\le k}\vnm|\Vert_{p}^p\,ds       \biggr]     \\&\indeq          \leq          C        \EE\biggl[\int_0^{T} \phin \phinm        (\Vert P_{\le k}\un\Vert_p^{p/2} + \Vert P_{\le k}\unm\Vert_p^{p/2}) \Vert P_{\le k}\vnm \Vert_{2p}^p\,ds        \biggr]        \\&\indeq      \leq      \delta       \EE\biggl[         \int_0^{T}           \Vert \vnm\Vert_{3p}^{p}          \,ds             \biggr]+C_{\delta} T             \EE\biggl[\sup_{s\in[0,T]} \Vert \vnm\Vert_{p}^{p}  \biggr]       ,    \end{split}\label{ERTWERTHWRTWERTSGDGHCFGSDFGQSERWDFGDSFGHSDRGTEHDFGHDSFGSDGHGYUHDFGSDFASDFASGTWRT79}    \end{align} for an arbitrarily small positive number~$\delta$. By Sobolev's embedding inequality, there exists a uniform positive constant $C$ such that   \begin{equation}   \Vert{\vn_j}\Vert^{p}_{3p}= \Vert |\vn_j|^{p/2}\Vert^{2}_{6} \le C\Vert{\nabla (|\vn_j|^{p/2})}\Vert^{2}_{2}   \comma j=1,2,3.   \label{ERTWERTHWRTWERTSGDGHCFGSDFGQSERWDFGDSFGHSDRGTEHDFGHDSFGSDGHGYUHDFGSDFASDFASGTWRT80}   \end{equation} Combining Theorem~\ref{T02} with \eqref{ERTWERTHWRTWERTSGDGHCFGSDFGQSERWDFGDSFGHSDRGTEHDFGHDSFGSDGHGYUHDFGSDFASDFASGTWRT76}--\eqref{ERTWERTHWRTWERTSGDGHCFGSDFGQSERWDFGDSFGHSDRGTEHDFGHDSFGSDGHGYUHDFGSDFASDFASGTWRT80}, we obtain   \begin{align}    \begin{split}    &    \EE\biggl[    \sup_{s\in[0,T]} \Vert \vn\Vert_{p}^{p}      +\int_0^{T}\Vert \vn\Vert_{3p}^{p} \,ds    \biggr]     \\&\indeq      \leq       \delta       \EE\biggl[      \int_0^{T}      \Vert \vnm\Vert_{3p}^{p}      \,ds      \biggr]      +       C_{k,\delta} \,T\,       \EE\biggl[\sup_{s\in[0,T]} \Vert \vnm\Vert_{p}^{p}  \biggr]        +       C_{k} \,T\,       \EE\biggl[\sup_{s\in[0,T]} \Vert \vn\Vert_{p}^{p}  \biggr]        .    \end{split}    \label{ERTWERTHWRTWERTSGDGHCFGSDFGQSERWDFGDSFGHSDRGTEHDFGHDSFGSDGHGYUHDFGSDFASDFASGTWRT81}   \end{align} We set $\delta\leq1/2$ and then $T$ sufficiently small so that $C_{k,\delta} \,T+C_{k} \,T\leq1/4$. This concludes the existence of a fixed-point $\uu$ of \eqref{ERTWERTHWRTWERTSGDGHCFGSDFGQSERWDFGDSFGHSDRGTEHDFGHDSFGSDGHGYUHDFGSDFASDFASGTWRT58} in $L^p_{\omega}L^{\infty}_t L_x^p\cap L^p_{\omega}L^{p}_t L_x^{3p}$ up to the time~$T$, and the rate of convergence is exponential. As a result, \begin{equation} \sup_{0\leq t\leq T}\Vert\un(t,\omega) -\uu(t,\omega)\Vert_{p}\rightarrow 0 \quad \mbox{  as  }n\to \infty    \llabel{t H0F CVT Zup Pl A9aI oN2s f1Bw31 gg L FoD O0M x18 oo heEd KgZB Cqdqpa sa H Fhx BrE aRg Au I5dq mWWB MuHfv9 0y S PtG hFF dYJ JL f3Ap k5Ck Szr0Kb Vd i sQk uSA JEn DT YkjP AEMu a0VCtC Ff z 9R6 Vht 8Ua cB e7op AnGa 7AbLWj Hc s nAR GMb n7a 9n paMf lftM 7jvb20 0T W xUC 4lt e92 9j oZrA IuIa o1Zqdr oC L 55L T4Q 8kN yv sIzP x4i5 9lKTq2 JB B sZb QCE Ctw ar VBMT H1QR 6v5srW hR r D4r wf8 ik7 KH Egee rFVT ErONml Q5 L R8v XNZ LB3 9U DzRH ZbH9 fTBhRw kA 2 n3p g4I grH xd fEFu z6RE tDqPdw N7 H TVt cE1 8hW 6y n4Gn nCE3 MEQ51i Ps G Z2G Lbt CSt hu zvPF eE28 MM23ug TC d j7z 7Av TLa 1A GLiJ 5JwW CiDPyM qa 8 tAK QZ9 cfP 42 kuUz V3h6 GsGFoW m9 h cfj 51d GtW yZ zC5D aVt2 Wi5IIs gD B 0cX LM1 FtE xE RIZI Z0Rt QUtWcU Cm F mSj xvW pZc gl dopk 0D7a EouRku Id O ZdW FOR uqb PY 6HkW OVi7 FuVMLW nx p SaN omk rC5 uI ZK9C jpJy UIeO6k gb 7 tr2 SCY x5F 11 S6Xq OImr s7vv0u vA g rb9 hGP Fnk RM j92H gczJ 660kHb BB l QSI OY7 FcX 0c uyDl LjbU 3F6vZk Gb a KaM ufj uxp n4 Mi45 7MoL NW3eIm cj 6 OOS e59 afA hg lt9S BOiF cYQipj 5u N 19N KZ5 Czc 23 1wxG x1ut gJB4ue Mx x 5lr s8g VbZ s1 NEfI 02Rb pkfEOZ E4 e seo 9te NRU Ai nujf eJYa Ehns0Y 6X R UF1 PCf 5eE AL 9DL6 a2vm BAU5Au DD t yQN 5YL LWw PW GjMt 4hu4 FIoLCZ Lx e BVY 5lZ DCD 5Y yBwO IJeH VQsKob Yd q fCX 1to mCb Ej 5m1p Nx9p nLn5A3 g7 U v77 7YU gBR lN rTyj shaq BZXeAF tj y FlW jfc 57t 2f abx5 Ns4d clCMJc Tl q kfq uFD iSd DP eX6m YLQz JzUmH0 43 M lgF edN mXQ Pj Aoba 07MY wBaC4C nj I 4dw KCZ PO9 wx 3en8 AoqX 7JjN8K lq j Q5c bMS dhR Fs tQ8Q r2ve 2HT0uO 5W j TAi iIW n1C Wr U1BH BMvJ 3ywmAd qN D LY8 lbx XMx 0D Dvco 3RL9 Qz5eqy wV Y qEN nO8 MH0 PY zeVN i3yb 2msNYY Wz G 2DC PoG 1Vb Bx e9oZ GcTU 3AZuEK bk p 6rN eTX 0DS Mc zd91 nbSV DKEkVa zI q NKU Qap NBP 5B 32Ey prwP FLvuPi wR P l1G TdQ BZE Aw 3d90 v8P5 CPAnX4 Yo 2 q7s yr5 BW8 Hc T7tM ioha BW9U4q rb u mEQ 6Xz MKR 2B REFX k3ZO MVMYSw 9S F 5ek q0m yNK Gn H0qi vlRA 18CbEz id O iuy ZZ6 kRo oJ kLQ0 Ewmz sKlld6 Kr K JmR xls 12K G2 bv8v LxfJ wrIcU6 Hx p q6p Fy7 Oim mo dXYt Kt0V VH22OC Aj f deT BAP vPl oK QzLE OQlq dpzxJ6 JI z Ujn TqY sQ4 BD QPW6 784x NUfsk0 aM 7 8qz MuL 9Mr Ac EQ82} \end{equation} $\PP$-almost surely (see~\cite[Lemma~5.2]{KXZ}).  \par Now we show that $u$ is a strong solution to~\eqref{ERTWERTHWRTWERTSGDGHCFGSDFGQSERWDFGDSFGHSDRGTEHDFGHDSFGSDGHGYUHDFGSDFASDFASGTWRT56}. By Lemma~\ref{L05},  \begin{align}   \begin{split}    (\un ( s),\phi)    &= (P_{\le k}\uu_0,\phi)+\int_0^s (\un (r),\Delta\phi)\,dr
   \\&\indeq     +\sum_{j}\int_0^s \bigl(\phin \phinm P_{\le k}\mathcal{P}\bigl( \unm_j P_{\le k}\unm \bigr),\partial_{j}\phi\bigr)\,dr    \\&\indeq     +\int_0^s (\phin \phinm P_{\le k}\sigma(P_{\le k}\unm ),\phi)\,d\WW_r    \comma  (s,\omega)\text{-a.e.},   \end{split}   \label{ERTWERTHWRTWERTSGDGHCFGSDFGQSERWDFGDSFGHSDRGTEHDFGHDSFGSDGHGYUHDFGSDFASDFASGTWRT83}   \end{align} for all $\phi\in C_c^{\infty}(\RR^3)$. Using the Dominated Convergence Theorem, we conclude   \begin{align}   \begin{split}    &\int_0^s (\un ,\Delta\phi)\,dr    +    \int_0^s (\phin \phinm P_{\le k}\mathcal{P}( \unm_j P_{\le k}\unm ),\partial_{j}\phi)\,dr    \\&\indeq     \rightarrow\int_0^s \bigl((\uu,\Delta\phi)+((\varphi^{(u)})^2P_{\le k}\mathcal{P}(\uu_j P_{\le k}\uu),\partial_{j}\phi)\bigr)\,dr     \comma  (s,\omega)\text{-a.e.}   \end{split}    \llabel{EMu a0VCtC Ff z 9R6 Vht 8Ua cB e7op AnGa 7AbLWj Hc s nAR GMb n7a 9n paMf lftM 7jvb20 0T W xUC 4lt e92 9j oZrA IuIa o1Zqdr oC L 55L T4Q 8kN yv sIzP x4i5 9lKTq2 JB B sZb QCE Ctw ar VBMT H1QR 6v5srW hR r D4r wf8 ik7 KH Egee rFVT ErONml Q5 L R8v XNZ LB3 9U DzRH ZbH9 fTBhRw kA 2 n3p g4I grH xd fEFu z6RE tDqPdw N7 H TVt cE1 8hW 6y n4Gn nCE3 MEQ51i Ps G Z2G Lbt CSt hu zvPF eE28 MM23ug TC d j7z 7Av TLa 1A GLiJ 5JwW CiDPyM qa 8 tAK QZ9 cfP 42 kuUz V3h6 GsGFoW m9 h cfj 51d GtW yZ zC5D aVt2 Wi5IIs gD B 0cX LM1 FtE xE RIZI Z0Rt QUtWcU Cm F mSj xvW pZc gl dopk 0D7a EouRku Id O ZdW FOR uqb PY 6HkW OVi7 FuVMLW nx p SaN omk rC5 uI ZK9C jpJy UIeO6k gb 7 tr2 SCY x5F 11 S6Xq OImr s7vv0u vA g rb9 hGP Fnk RM j92H gczJ 660kHb BB l QSI OY7 FcX 0c uyDl LjbU 3F6vZk Gb a KaM ufj uxp n4 Mi45 7MoL NW3eIm cj 6 OOS e59 afA hg lt9S BOiF cYQipj 5u N 19N KZ5 Czc 23 1wxG x1ut gJB4ue Mx x 5lr s8g VbZ s1 NEfI 02Rb pkfEOZ E4 e seo 9te NRU Ai nujf eJYa Ehns0Y 6X R UF1 PCf 5eE AL 9DL6 a2vm BAU5Au DD t yQN 5YL LWw PW GjMt 4hu4 FIoLCZ Lx e BVY 5lZ DCD 5Y yBwO IJeH VQsKob Yd q fCX 1to mCb Ej 5m1p Nx9p nLn5A3 g7 U v77 7YU gBR lN rTyj shaq BZXeAF tj y FlW jfc 57t 2f abx5 Ns4d clCMJc Tl q kfq uFD iSd DP eX6m YLQz JzUmH0 43 M lgF edN mXQ Pj Aoba 07MY wBaC4C nj I 4dw KCZ PO9 wx 3en8 AoqX 7JjN8K lq j Q5c bMS dhR Fs tQ8Q r2ve 2HT0uO 5W j TAi iIW n1C Wr U1BH BMvJ 3ywmAd qN D LY8 lbx XMx 0D Dvco 3RL9 Qz5eqy wV Y qEN nO8 MH0 PY zeVN i3yb 2msNYY Wz G 2DC PoG 1Vb Bx e9oZ GcTU 3AZuEK bk p 6rN eTX 0DS Mc zd91 nbSV DKEkVa zI q NKU Qap NBP 5B 32Ey prwP FLvuPi wR P l1G TdQ BZE Aw 3d90 v8P5 CPAnX4 Yo 2 q7s yr5 BW8 Hc T7tM ioha BW9U4q rb u mEQ 6Xz MKR 2B REFX k3ZO MVMYSw 9S F 5ek q0m yNK Gn H0qi vlRA 18CbEz id O iuy ZZ6 kRo oJ kLQ0 Ewmz sKlld6 Kr K JmR xls 12K G2 bv8v LxfJ wrIcU6 Hx p q6p Fy7 Oim mo dXYt Kt0V VH22OC Aj f deT BAP vPl oK QzLE OQlq dpzxJ6 JI z Ujn TqY sQ4 BD QPW6 784x NUfsk0 aM 7 8qz MuL 9Mr Ac uVVK Y55n M7WqnB 2R C pGZ vHh WUN g9 3F2e RT8U umC62V H3 Z dJX LMS cca 1m xoOO 6oOL OVzfpO BO X 5Ev KuL z5s EW 8a9y otqk cKbDJN Us l pYM JpJ jOW Uy 2U4Y VKH6 kVC1Vx 1u v yEQ84}   \end{align} as~$n\rightarrow\infty$. Also, by the BDG inequality,   \begin{align}   \begin{split}    &\EE\biggl[\sup_{s\in[0,T]}\biggl|\int_0^s     \Bigl(\phin \phinm P_{\le k}\sigma(P_{\le k}\unm )-(\varphi^{(u)})^2 P_{\le k}\sigma(P_{\le k}\uu),\phi    \Bigr)    \,d\WW_r\biggr|\biggr]    \\&\indeq    \leq     C\EE\biggl[\biggl(\int_0^{T}     (\phin-\varphi^{(u)})^2(\phinm)^2    \bigl\Vert    \bigl(    P_{\le k}\sigma(P_{\le k}\unm ) ,\phi    \bigr)    \bigr\Vert_{ l^2}^2\, dr\biggr)^{1/2}\biggr]    \\&\indeq\indeq    +     C\EE\biggl[\biggl(\int_0^{T} (\varphi^{(u)})^2(\phinm)^2    \bigl\Vert    \bigl(    P_{\le k}\sigma(P_{\le k}\unm )    -    P_{\le k}\sigma(P_{\le k}\uu)    ,\phi    \bigr)    \bigr\Vert_{ l^2}^2\, dr\biggr)^{1/2}\biggr]    \\&\indeq\indeq    +    C\EE\biggl[\biggl(\int_0^{T} (\phinm-\varphi^{(u)})^2(\varphi^{(u)})^2    \bigl\Vert    \bigl(    P_{\le k}\sigma(P_{\le k}\uu) ,\phi    \bigr)    \bigr\Vert_{ l^2}^2\, dr\biggr)^{1/2}\biggr]    \\&\indeq    =     I_1+I_2+I_3,   \end{split}    \label{ERTWERTHWRTWERTSGDGHCFGSDFGQSERWDFGDSFGHSDRGTEHDFGHDSFGSDGHGYUHDFGSDFASDFASGTWRT85}   \end{align}   where $\varphi^{(u)}$ is an abbreviation for $\varphi(\Vert u\Vert_p)$. We estimate the terms in this splitting using Minkowski's inequality and assumptions on~$\sigma$. First,   \begin{align}   \begin{split}   &I_1\leq C_p  \EE\biggl[  \biggl(  \int_0^{T} (\phin-\varphi)^2(\phinm)^2  (\Vert \unm \Vert_{(3p/2)-}^4+1)\, dr  \biggr)^{1/2}   \biggr]  \\&\indeq  \leq C_p  \EE\biggl[ \sup_{r\in[0,T]}\Vert\un -\uu\Vert_{p}  \biggl(  \int_0^{T}   (\Vert \unm \Vert_{3p}^{2-}+1)  \, dr  \biggr)^{1/2}\biggr]  \\&\indeq  \leq C_{p}T^{((p-2)/2p)+}  \biggl(\EE\Big[  \sup_{r\in[0,T]}\Vert\un -\uu\Vert_{p}^p  \Big]  \biggr)^{1/p}   \biggl(\EE\biggl[ \int_0^{T}   (\Vert \unm \Vert_{3p}^p+1)\, dr\biggr]\biggr)^{(1/p)-}.   \end{split}   \label{ERTWERTHWRTWERTSGDGHCFGSDFGQSERWDFGDSFGHSDRGTEHDFGHDSFGSDGHGYUHDFGSDFASDFASGTWRT86}   \end{align} Here, $((p-2)/2p)+$ refers to a power that is greater than and sufficiently close to $(p-2)/2p$, and $(1/p)-$ a power that is less than and sufficiently close to~$1/p$. The closeness is not arbitrary, but rather determined by the preceding steps.  By~\eqref{ERTWERTHWRTWERTSGDGHCFGSDFGQSERWDFGDSFGHSDRGTEHDFGHDSFGSDGHGYUHDFGSDFASDFASGTWRT04},   \begin{align}   \begin{split}   &I_2   \leq C_p   \EE\biggl[        \biggl(          \int_0^{T}                      (\varphi\phinm)^2                  \Bigl\Vert                    (|P_{\le k}\unm|+|P_{\le k}u|                   )^{1/2}           |P_{\le k}\unm-P_{\le k}u|                  \Bigr\Vert_{ p}^2\, dr        \biggr)^{1/2}      \biggr]   \\&\indeq   \leq C_p   \EE\biggl[      \biggl(\int_0^{T} (\varphi\phinm)^2       \Bigl(\Vert P_{\le k}\unm \Vert_p       +       \Vert P_{\le k}u\Vert_p       \Bigr)       \Vert P_{\le k}\unm-P_{\le k}u\Vert_{2p}^2       \, dr      \biggr)^{1/2}      \biggr]   \\&\indeq    \leq    C_{p}T^{(2p-3)/4p}   \biggl(\EE\biggl[   \sup_{r\in[0,T]}\Vert\unm -\uu\Vert_{p}^p            \biggr]   \biggr)^{1/4p}   \biggl(\EE\biggl[     \int_0^{T}      (\Vert \unm \Vert_{3p}^p+\Vert u \Vert_{3p}^p)\, dr             \biggr]   \biggr)^{3/4p}.   \end{split}   \label{ERTWERTHWRTWERTSGDGHCFGSDFGQSERWDFGDSFGHSDRGTEHDFGHDSFGSDGHGYUHDFGSDFASDFASGTWRT87}   \end{align} Similarly to $I_1$,    \begin{align}   \begin{split}   &I_3   \leq C_p   \EE\biggl[\biggl(\int_0^{T} (\varphi)^2(\phinm-\varphi)^2   \Vert   (   P_{\le k}\sigma(P_{\le k}\uu )   ,\phi   )   \Vert_{ l^2}^2\, dr\biggr)^{1/2}\biggr]   \\&\indeq   \leq    C_{p}T^{((p-2)/2p)+}   \biggl(\EE\Big[   \sup_{r\in[0,T]}\Vert\unm -\uu\Vert_{p}^p   \Big]   \biggr)^{1/p}   \biggl(\EE\biggl[   \int_0^{T}    (\Vert u \Vert_{3p}^p+1)\, dr\biggr]\biggr)^{(1/p)-}.   \end{split}   \label{ERTWERTHWRTWERTSGDGHCFGSDFGQSERWDFGDSFGHSDRGTEHDFGHDSFGSDGHGYUHDFGSDFASDFASGTWRT88}   \end{align} Since $\{\un\}_{n\in \NNp}$ is uniformly bounded in $L^p_{\omega}L^{p}_t L_x^{3p}$, then based on the estimates above, the right side of \eqref{ERTWERTHWRTWERTSGDGHCFGSDFGQSERWDFGDSFGHSDRGTEHDFGHDSFGSDGHGYUHDFGSDFASDFASGTWRT85} goes to zero exponentially fast as $n\rightarrow \infty$, and then,   \begin{equation}    \int_0^s (\phin \phinm P_{\le k}\sigma(P_{\le k}\unm ),\phi)\,d\WW_r         \xrightarrow{n \to \infty}        \int_0^s (\varphi^2 P_{\le k}\sigma(P_{\le k} \uu),\phi)\,d\WW_r    \comma (s,\omega)\text{-a.e.}    \llabel{ Ctw ar VBMT H1QR 6v5srW hR r D4r wf8 ik7 KH Egee rFVT ErONml Q5 L R8v XNZ LB3 9U DzRH ZbH9 fTBhRw kA 2 n3p g4I grH xd fEFu z6RE tDqPdw N7 H TVt cE1 8hW 6y n4Gn nCE3 MEQ51i Ps G Z2G Lbt CSt hu zvPF eE28 MM23ug TC d j7z 7Av TLa 1A GLiJ 5JwW CiDPyM qa 8 tAK QZ9 cfP 42 kuUz V3h6 GsGFoW m9 h cfj 51d GtW yZ zC5D aVt2 Wi5IIs gD B 0cX LM1 FtE xE RIZI Z0Rt QUtWcU Cm F mSj xvW pZc gl dopk 0D7a EouRku Id O ZdW FOR uqb PY 6HkW OVi7 FuVMLW nx p SaN omk rC5 uI ZK9C jpJy UIeO6k gb 7 tr2 SCY x5F 11 S6Xq OImr s7vv0u vA g rb9 hGP Fnk RM j92H gczJ 660kHb BB l QSI OY7 FcX 0c uyDl LjbU 3F6vZk Gb a KaM ufj uxp n4 Mi45 7MoL NW3eIm cj 6 OOS e59 afA hg lt9S BOiF cYQipj 5u N 19N KZ5 Czc 23 1wxG x1ut gJB4ue Mx x 5lr s8g VbZ s1 NEfI 02Rb pkfEOZ E4 e seo 9te NRU Ai nujf eJYa Ehns0Y 6X R UF1 PCf 5eE AL 9DL6 a2vm BAU5Au DD t yQN 5YL LWw PW GjMt 4hu4 FIoLCZ Lx e BVY 5lZ DCD 5Y yBwO IJeH VQsKob Yd q fCX 1to mCb Ej 5m1p Nx9p nLn5A3 g7 U v77 7YU gBR lN rTyj shaq BZXeAF tj y FlW jfc 57t 2f abx5 Ns4d clCMJc Tl q kfq uFD iSd DP eX6m YLQz JzUmH0 43 M lgF edN mXQ Pj Aoba 07MY wBaC4C nj I 4dw KCZ PO9 wx 3en8 AoqX 7JjN8K lq j Q5c bMS dhR Fs tQ8Q r2ve 2HT0uO 5W j TAi iIW n1C Wr U1BH BMvJ 3ywmAd qN D LY8 lbx XMx 0D Dvco 3RL9 Qz5eqy wV Y qEN nO8 MH0 PY zeVN i3yb 2msNYY Wz G 2DC PoG 1Vb Bx e9oZ GcTU 3AZuEK bk p 6rN eTX 0DS Mc zd91 nbSV DKEkVa zI q NKU Qap NBP 5B 32Ey prwP FLvuPi wR P l1G TdQ BZE Aw 3d90 v8P5 CPAnX4 Yo 2 q7s yr5 BW8 Hc T7tM ioha BW9U4q rb u mEQ 6Xz MKR 2B REFX k3ZO MVMYSw 9S F 5ek q0m yNK Gn H0qi vlRA 18CbEz id O iuy ZZ6 kRo oJ kLQ0 Ewmz sKlld6 Kr K JmR xls 12K G2 bv8v LxfJ wrIcU6 Hx p q6p Fy7 Oim mo dXYt Kt0V VH22OC Aj f deT BAP vPl oK QzLE OQlq dpzxJ6 JI z Ujn TqY sQ4 BD QPW6 784x NUfsk0 aM 7 8qz MuL 9Mr Ac uVVK Y55n M7WqnB 2R C pGZ vHh WUN g9 3F2e RT8U umC62V H3 Z dJX LMS cca 1m xoOO 6oOL OVzfpO BO X 5Ev KuL z5s EW 8a9y otqk cKbDJN Us l pYM JpJ jOW Uy 2U4Y VKH6 kVC1Vx 1u v ykO yDs zo5 bz d36q WH1k J7Jtkg V1 J xqr Fnq mcU yZ JTp9 oFIc FAk0IT A9 3 SrL axO 9oU Z3 jG6f BRL1 iZ7ZE6 zj 8 G3M Hu8 6Ay jt 3flY cmTk jiTSYv CF t JLq cJP tN7 E3 POqG OKe0EQ89}   \end{equation} Letting $n\rightarrow \infty$ in \eqref{ERTWERTHWRTWERTSGDGHCFGSDFGQSERWDFGDSFGHSDRGTEHDFGHDSFGSDGHGYUHDFGSDFASDFASGTWRT83}, we obtain that $\uu$ solves~\eqref{ERTWERTHWRTWERTSGDGHCFGSDFGQSERWDFGDSFGHSDRGTEHDFGHDSFGSDGHGYUHDFGSDFASDFASGTWRT56}.  Also, the inequality \eqref{ERTWERTHWRTWERTSGDGHCFGSDFGQSERWDFGDSFGHSDRGTEHDFGHDSFGSDGHGYUHDFGSDFASDFASGTWRT57} follows by using Lemmas~\ref{L04} and~\ref{L05}. Thus the existence of a strong solution is established. \par Next, we proceed to prove the pathwise uniqueness of solutions.  Suppose that \eqref{ERTWERTHWRTWERTSGDGHCFGSDFGQSERWDFGDSFGHSDRGTEHDFGHDSFGSDGHGYUHDFGSDFASDFASGTWRT56} has two strong solutions $u,v \in L^p(\Omega; C([0,T], L^p))$. Then $w=u-v$ satisfies    \begin{align}   \begin{split}    &\partial_t w     -\Delta w     = -     \varphi_u^2 P_{\le k}\mathcal{P}\bigl(( u \cdot \nabla)P_{\le k}u   \bigr)     +     \varphi_v^2P_{\le k}\mathcal{P}\bigl(( v \cdot \nabla)P_{\le k}v   \bigr)     \\&\indeq\indeq\indeq\indeq\indeq\indeq    +\bigl(    \varphi_u^2P_{\le k}\sigma(P_{\le k}u)-\varphi_v^2P_{\le k}\sigma(P_{\le k}v)    \bigr)\dot{\WW}(t),     \\&    \nabla\cdot w  = 0,    \\&    w(0) = 0     \Pas   \end{split} \llabel{i Ps G Z2G Lbt CSt hu zvPF eE28 MM23ug TC d j7z 7Av TLa 1A GLiJ 5JwW CiDPyM qa 8 tAK QZ9 cfP 42 kuUz V3h6 GsGFoW m9 h cfj 51d GtW yZ zC5D aVt2 Wi5IIs gD B 0cX LM1 FtE xE RIZI Z0Rt QUtWcU Cm F mSj xvW pZc gl dopk 0D7a EouRku Id O ZdW FOR uqb PY 6HkW OVi7 FuVMLW nx p SaN omk rC5 uI ZK9C jpJy UIeO6k gb 7 tr2 SCY x5F 11 S6Xq OImr s7vv0u vA g rb9 hGP Fnk RM j92H gczJ 660kHb BB l QSI OY7 FcX 0c uyDl LjbU 3F6vZk Gb a KaM ufj uxp n4 Mi45 7MoL NW3eIm cj 6 OOS e59 afA hg lt9S BOiF cYQipj 5u N 19N KZ5 Czc 23 1wxG x1ut gJB4ue Mx x 5lr s8g VbZ s1 NEfI 02Rb pkfEOZ E4 e seo 9te NRU Ai nujf eJYa Ehns0Y 6X R UF1 PCf 5eE AL 9DL6 a2vm BAU5Au DD t yQN 5YL LWw PW GjMt 4hu4 FIoLCZ Lx e BVY 5lZ DCD 5Y yBwO IJeH VQsKob Yd q fCX 1to mCb Ej 5m1p Nx9p nLn5A3 g7 U v77 7YU gBR lN rTyj shaq BZXeAF tj y FlW jfc 57t 2f abx5 Ns4d clCMJc Tl q kfq uFD iSd DP eX6m YLQz JzUmH0 43 M lgF edN mXQ Pj Aoba 07MY wBaC4C nj I 4dw KCZ PO9 wx 3en8 AoqX 7JjN8K lq j Q5c bMS dhR Fs tQ8Q r2ve 2HT0uO 5W j TAi iIW n1C Wr U1BH BMvJ 3ywmAd qN D LY8 lbx XMx 0D Dvco 3RL9 Qz5eqy wV Y qEN nO8 MH0 PY zeVN i3yb 2msNYY Wz G 2DC PoG 1Vb Bx e9oZ GcTU 3AZuEK bk p 6rN eTX 0DS Mc zd91 nbSV DKEkVa zI q NKU Qap NBP 5B 32Ey prwP FLvuPi wR P l1G TdQ BZE Aw 3d90 v8P5 CPAnX4 Yo 2 q7s yr5 BW8 Hc T7tM ioha BW9U4q rb u mEQ 6Xz MKR 2B REFX k3ZO MVMYSw 9S F 5ek q0m yNK Gn H0qi vlRA 18CbEz id O iuy ZZ6 kRo oJ kLQ0 Ewmz sKlld6 Kr K JmR xls 12K G2 bv8v LxfJ wrIcU6 Hx p q6p Fy7 Oim mo dXYt Kt0V VH22OC Aj f deT BAP vPl oK QzLE OQlq dpzxJ6 JI z Ujn TqY sQ4 BD QPW6 784x NUfsk0 aM 7 8qz MuL 9Mr Ac uVVK Y55n M7WqnB 2R C pGZ vHh WUN g9 3F2e RT8U umC62V H3 Z dJX LMS cca 1m xoOO 6oOL OVzfpO BO X 5Ev KuL z5s EW 8a9y otqk cKbDJN Us l pYM JpJ jOW Uy 2U4Y VKH6 kVC1Vx 1u v ykO yDs zo5 bz d36q WH1k J7Jtkg V1 J xqr Fnq mcU yZ JTp9 oFIc FAk0IT A9 3 SrL axO 9oU Z3 jG6f BRL1 iZ7ZE6 zj 8 G3M Hu8 6Ay jt 3flY cmTk jiTSYv CF t JLq cJP tN7 E3 POqG OKe0 3K3WV0 ep W XDQ C97 YSb AD ZUNp 81GF fCPbj3 iq E t0E NXy pLv fo Iz6z oFoF 9lkIun Xj Y yYL 52U bRB jx kQUS U9mm XtzIHO Cz 1 KH4 9ez 6Pz qW F223 C0Iz 3CsvuT R9 s VtQ CcM 1eEQ90} \end{align} on $(0,T]\times {\mathbb R}^{3}$. As above, we write the first equation componentwise as   \begin{align}   \begin{split}   &\partial_t {w}_j
  -\Delta {w}_j   =   \sum_{i}\partial_{i} {f}_{ij}    + {g}_j \dot{\WW}(t)   \comma j=1,2,3   ,   \end{split}   \llabel{IZI Z0Rt QUtWcU Cm F mSj xvW pZc gl dopk 0D7a EouRku Id O ZdW FOR uqb PY 6HkW OVi7 FuVMLW nx p SaN omk rC5 uI ZK9C jpJy UIeO6k gb 7 tr2 SCY x5F 11 S6Xq OImr s7vv0u vA g rb9 hGP Fnk RM j92H gczJ 660kHb BB l QSI OY7 FcX 0c uyDl LjbU 3F6vZk Gb a KaM ufj uxp n4 Mi45 7MoL NW3eIm cj 6 OOS e59 afA hg lt9S BOiF cYQipj 5u N 19N KZ5 Czc 23 1wxG x1ut gJB4ue Mx x 5lr s8g VbZ s1 NEfI 02Rb pkfEOZ E4 e seo 9te NRU Ai nujf eJYa Ehns0Y 6X R UF1 PCf 5eE AL 9DL6 a2vm BAU5Au DD t yQN 5YL LWw PW GjMt 4hu4 FIoLCZ Lx e BVY 5lZ DCD 5Y yBwO IJeH VQsKob Yd q fCX 1to mCb Ej 5m1p Nx9p nLn5A3 g7 U v77 7YU gBR lN rTyj shaq BZXeAF tj y FlW jfc 57t 2f abx5 Ns4d clCMJc Tl q kfq uFD iSd DP eX6m YLQz JzUmH0 43 M lgF edN mXQ Pj Aoba 07MY wBaC4C nj I 4dw KCZ PO9 wx 3en8 AoqX 7JjN8K lq j Q5c bMS dhR Fs tQ8Q r2ve 2HT0uO 5W j TAi iIW n1C Wr U1BH BMvJ 3ywmAd qN D LY8 lbx XMx 0D Dvco 3RL9 Qz5eqy wV Y qEN nO8 MH0 PY zeVN i3yb 2msNYY Wz G 2DC PoG 1Vb Bx e9oZ GcTU 3AZuEK bk p 6rN eTX 0DS Mc zd91 nbSV DKEkVa zI q NKU Qap NBP 5B 32Ey prwP FLvuPi wR P l1G TdQ BZE Aw 3d90 v8P5 CPAnX4 Yo 2 q7s yr5 BW8 Hc T7tM ioha BW9U4q rb u mEQ 6Xz MKR 2B REFX k3ZO MVMYSw 9S F 5ek q0m yNK Gn H0qi vlRA 18CbEz id O iuy ZZ6 kRo oJ kLQ0 Ewmz sKlld6 Kr K JmR xls 12K G2 bv8v LxfJ wrIcU6 Hx p q6p Fy7 Oim mo dXYt Kt0V VH22OC Aj f deT BAP vPl oK QzLE OQlq dpzxJ6 JI z Ujn TqY sQ4 BD QPW6 784x NUfsk0 aM 7 8qz MuL 9Mr Ac uVVK Y55n M7WqnB 2R C pGZ vHh WUN g9 3F2e RT8U umC62V H3 Z dJX LMS cca 1m xoOO 6oOL OVzfpO BO X 5Ev KuL z5s EW 8a9y otqk cKbDJN Us l pYM JpJ jOW Uy 2U4Y VKH6 kVC1Vx 1u v ykO yDs zo5 bz d36q WH1k J7Jtkg V1 J xqr Fnq mcU yZ JTp9 oFIc FAk0IT A9 3 SrL axO 9oU Z3 jG6f BRL1 iZ7ZE6 zj 8 G3M Hu8 6Ay jt 3flY cmTk jiTSYv CF t JLq cJP tN7 E3 POqG OKe0 3K3WV0 ep W XDQ C97 YSb AD ZUNp 81GF fCPbj3 iq E t0E NXy pLv fo Iz6z oFoF 9lkIun Xj Y yYL 52U bRB jx kQUS U9mm XtzIHO Cz 1 KH4 9ez 6Pz qW F223 C0Iz 3CsvuT R9 s VtQ CcM 1eo pD Py2l EEzL U0USJt Jb 9 zgy Gyf iQ4 fo Cx26 k4jL E0ula6 aS I rZQ HER 5HV CE BL55 WCtB 2LCmve TD z Vcp 7UR gI7 Qu FbFw 9VTx JwGrzs VW M 9sM JeJ Nd2 VG GFsi WuqC 3YxXoJ GEQ91}   \end{align}           where    \begin{align}   \begin{split}   {f}_{ij}   &=   -   \varphi_u^2 P_{\le k}\bigl({\mathcal P}(u_i P_{\le k}u) \bigr)_j   +\varphi_v^2   P_{\le k}\bigl(   {\mathcal P}(  v_i P_{\le k}v)   \bigr)_j   \\&   =   -    \varphi_u (\varphi_u-\varphi_v) P_{\le k}({\mathcal P}(u_i P_{\le k}u))_j   -\varphi_u \varphi_v P_{\le k}({\mathcal P}(w_i P_{\le k}u))_j   \\&\indeq -\varphi_u \varphi_v P_{\le k}({\mathcal P}(v_i P_{\le k}w))_j - \varphi_v (\varphi_u-\varphi_v)P_{\le k} ({\mathcal P}(v_i P_{\le k} v))_j            \end{split} \llabel{9 hGP Fnk RM j92H gczJ 660kHb BB l QSI OY7 FcX 0c uyDl LjbU 3F6vZk Gb a KaM ufj uxp n4 Mi45 7MoL NW3eIm cj 6 OOS e59 afA hg lt9S BOiF cYQipj 5u N 19N KZ5 Czc 23 1wxG x1ut gJB4ue Mx x 5lr s8g VbZ s1 NEfI 02Rb pkfEOZ E4 e seo 9te NRU Ai nujf eJYa Ehns0Y 6X R UF1 PCf 5eE AL 9DL6 a2vm BAU5Au DD t yQN 5YL LWw PW GjMt 4hu4 FIoLCZ Lx e BVY 5lZ DCD 5Y yBwO IJeH VQsKob Yd q fCX 1to mCb Ej 5m1p Nx9p nLn5A3 g7 U v77 7YU gBR lN rTyj shaq BZXeAF tj y FlW jfc 57t 2f abx5 Ns4d clCMJc Tl q kfq uFD iSd DP eX6m YLQz JzUmH0 43 M lgF edN mXQ Pj Aoba 07MY wBaC4C nj I 4dw KCZ PO9 wx 3en8 AoqX 7JjN8K lq j Q5c bMS dhR Fs tQ8Q r2ve 2HT0uO 5W j TAi iIW n1C Wr U1BH BMvJ 3ywmAd qN D LY8 lbx XMx 0D Dvco 3RL9 Qz5eqy wV Y qEN nO8 MH0 PY zeVN i3yb 2msNYY Wz G 2DC PoG 1Vb Bx e9oZ GcTU 3AZuEK bk p 6rN eTX 0DS Mc zd91 nbSV DKEkVa zI q NKU Qap NBP 5B 32Ey prwP FLvuPi wR P l1G TdQ BZE Aw 3d90 v8P5 CPAnX4 Yo 2 q7s yr5 BW8 Hc T7tM ioha BW9U4q rb u mEQ 6Xz MKR 2B REFX k3ZO MVMYSw 9S F 5ek q0m yNK Gn H0qi vlRA 18CbEz id O iuy ZZ6 kRo oJ kLQ0 Ewmz sKlld6 Kr K JmR xls 12K G2 bv8v LxfJ wrIcU6 Hx p q6p Fy7 Oim mo dXYt Kt0V VH22OC Aj f deT BAP vPl oK QzLE OQlq dpzxJ6 JI z Ujn TqY sQ4 BD QPW6 784x NUfsk0 aM 7 8qz MuL 9Mr Ac uVVK Y55n M7WqnB 2R C pGZ vHh WUN g9 3F2e RT8U umC62V H3 Z dJX LMS cca 1m xoOO 6oOL OVzfpO BO X 5Ev KuL z5s EW 8a9y otqk cKbDJN Us l pYM JpJ jOW Uy 2U4Y VKH6 kVC1Vx 1u v ykO yDs zo5 bz d36q WH1k J7Jtkg V1 J xqr Fnq mcU yZ JTp9 oFIc FAk0IT A9 3 SrL axO 9oU Z3 jG6f BRL1 iZ7ZE6 zj 8 G3M Hu8 6Ay jt 3flY cmTk jiTSYv CF t JLq cJP tN7 E3 POqG OKe0 3K3WV0 ep W XDQ C97 YSb AD ZUNp 81GF fCPbj3 iq E t0E NXy pLv fo Iz6z oFoF 9lkIun Xj Y yYL 52U bRB jx kQUS U9mm XtzIHO Cz 1 KH4 9ez 6Pz qW F223 C0Iz 3CsvuT R9 s VtQ CcM 1eo pD Py2l EEzL U0USJt Jb 9 zgy Gyf iQ4 fo Cx26 k4jL E0ula6 aS I rZQ HER 5HV CE BL55 WCtB 2LCmve TD z Vcp 7UR gI7 Qu FbFw 9VTx JwGrzs VW M 9sM JeJ Nd2 VG GFsi WuqC 3YxXoJ GK w Io7 1fg sGm 0P YFBz X8eX 7pf9GJ b1 o XUs 1q0 6KP Ls MucN ytQb L0Z0Qq m1 l SPj 9MT etk L6 KfsC 6Zob Yhc2qu Xy 9 GPm ZYj 1Go ei feJ3 pRAf n6Ypy6 jN s 4Y5 nSE pqN 4m RmamEQ92} \end{align} and \begin{align} \begin{split} {g}_j &= \varphi_u^2P_{\le k}\sigma_j(P_{\le k}u )-P_{\le k}\varphi_v^2\sigma_j(P_{\le k}v) \\& =  \varphi_u (\varphi_u-\varphi_v) P_{\le k}\sigma_j(P_{\le k}u ) +\varphi_v (\varphi_u-\varphi_v) P_{\le k}\sigma_j(P_{\le k}v ) \\&\indeq +\varphi_u \varphi_v P_{\le k}\bigl(\sigma_j(P_{\le k}u )-\sigma_j(P_{\le k}v )\bigr) . \end{split} \llabel{gJB4ue Mx x 5lr s8g VbZ s1 NEfI 02Rb pkfEOZ E4 e seo 9te NRU Ai nujf eJYa Ehns0Y 6X R UF1 PCf 5eE AL 9DL6 a2vm BAU5Au DD t yQN 5YL LWw PW GjMt 4hu4 FIoLCZ Lx e BVY 5lZ DCD 5Y yBwO IJeH VQsKob Yd q fCX 1to mCb Ej 5m1p Nx9p nLn5A3 g7 U v77 7YU gBR lN rTyj shaq BZXeAF tj y FlW jfc 57t 2f abx5 Ns4d clCMJc Tl q kfq uFD iSd DP eX6m YLQz JzUmH0 43 M lgF edN mXQ Pj Aoba 07MY wBaC4C nj I 4dw KCZ PO9 wx 3en8 AoqX 7JjN8K lq j Q5c bMS dhR Fs tQ8Q r2ve 2HT0uO 5W j TAi iIW n1C Wr U1BH BMvJ 3ywmAd qN D LY8 lbx XMx 0D Dvco 3RL9 Qz5eqy wV Y qEN nO8 MH0 PY zeVN i3yb 2msNYY Wz G 2DC PoG 1Vb Bx e9oZ GcTU 3AZuEK bk p 6rN eTX 0DS Mc zd91 nbSV DKEkVa zI q NKU Qap NBP 5B 32Ey prwP FLvuPi wR P l1G TdQ BZE Aw 3d90 v8P5 CPAnX4 Yo 2 q7s yr5 BW8 Hc T7tM ioha BW9U4q rb u mEQ 6Xz MKR 2B REFX k3ZO MVMYSw 9S F 5ek q0m yNK Gn H0qi vlRA 18CbEz id O iuy ZZ6 kRo oJ kLQ0 Ewmz sKlld6 Kr K JmR xls 12K G2 bv8v LxfJ wrIcU6 Hx p q6p Fy7 Oim mo dXYt Kt0V VH22OC Aj f deT BAP vPl oK QzLE OQlq dpzxJ6 JI z Ujn TqY sQ4 BD QPW6 784x NUfsk0 aM 7 8qz MuL 9Mr Ac uVVK Y55n M7WqnB 2R C pGZ vHh WUN g9 3F2e RT8U umC62V H3 Z dJX LMS cca 1m xoOO 6oOL OVzfpO BO X 5Ev KuL z5s EW 8a9y otqk cKbDJN Us l pYM JpJ jOW Uy 2U4Y VKH6 kVC1Vx 1u v ykO yDs zo5 bz d36q WH1k J7Jtkg V1 J xqr Fnq mcU yZ JTp9 oFIc FAk0IT A9 3 SrL axO 9oU Z3 jG6f BRL1 iZ7ZE6 zj 8 G3M Hu8 6Ay jt 3flY cmTk jiTSYv CF t JLq cJP tN7 E3 POqG OKe0 3K3WV0 ep W XDQ C97 YSb AD ZUNp 81GF fCPbj3 iq E t0E NXy pLv fo Iz6z oFoF 9lkIun Xj Y yYL 52U bRB jx kQUS U9mm XtzIHO Cz 1 KH4 9ez 6Pz qW F223 C0Iz 3CsvuT R9 s VtQ CcM 1eo pD Py2l EEzL U0USJt Jb 9 zgy Gyf iQ4 fo Cx26 k4jL E0ula6 aS I rZQ HER 5HV CE BL55 WCtB 2LCmve TD z Vcp 7UR gI7 Qu FbFw 9VTx JwGrzs VW M 9sM JeJ Nd2 VG GFsi WuqC 3YxXoJ GK w Io7 1fg sGm 0P YFBz X8eX 7pf9GJ b1 o XUs 1q0 6KP Ls MucN ytQb L0Z0Qq m1 l SPj 9MT etk L6 KfsC 6Zob Yhc2qu Xy 9 GPm ZYj 1Go ei feJ3 pRAf n6Ypy6 jN s 4Y5 nSE pqN 4m Rmam AGfY HhSaBr Ls D THC SEl UyR Mh 66XU 7hNz pZVC5V nV 7 VjL 7kv WKf 7P 5hj6 t1vu gkLGdN X8 b gOX HWm 6W4 YE mxFG 4WaN EbGKsv 0p 4 OG0 Nrd uTe Za xNXq V4Bp mOdXIq 9a b PeD PEQ93} \end{align} Analogously to \eqref{ERTWERTHWRTWERTSGDGHCFGSDFGQSERWDFGDSFGHSDRGTEHDFGHDSFGSDGHGYUHDFGSDFASDFASGTWRT81}, we can show that  \begin{align} \begin{split}  \EE\biggl[\sup_{s\in[0,T]} \Vert w\Vert_{p}^{p}  \biggr]  &\leq C \EE\biggl[\int_0^{T}\Vert  {f} \Vert_{q}^p \,ds \biggr] +C\EE \biggl[ \int_0^{T}\Vert {g}(s,x)\Vert_{\mathbb{L}^p}^p\,ds \biggr] \leq C_{K} \,T\, \EE\biggl[\sup_{s\in[0,T]} \Vert w\Vert_{p}^{p}  \biggr]  \end{split} \llabel{ 5Y yBwO IJeH VQsKob Yd q fCX 1to mCb Ej 5m1p Nx9p nLn5A3 g7 U v77 7YU gBR lN rTyj shaq BZXeAF tj y FlW jfc 57t 2f abx5 Ns4d clCMJc Tl q kfq uFD iSd DP eX6m YLQz JzUmH0 43 M lgF edN mXQ Pj Aoba 07MY wBaC4C nj I 4dw KCZ PO9 wx 3en8 AoqX 7JjN8K lq j Q5c bMS dhR Fs tQ8Q r2ve 2HT0uO 5W j TAi iIW n1C Wr U1BH BMvJ 3ywmAd qN D LY8 lbx XMx 0D Dvco 3RL9 Qz5eqy wV Y qEN nO8 MH0 PY zeVN i3yb 2msNYY Wz G 2DC PoG 1Vb Bx e9oZ GcTU 3AZuEK bk p 6rN eTX 0DS Mc zd91 nbSV DKEkVa zI q NKU Qap NBP 5B 32Ey prwP FLvuPi wR P l1G TdQ BZE Aw 3d90 v8P5 CPAnX4 Yo 2 q7s yr5 BW8 Hc T7tM ioha BW9U4q rb u mEQ 6Xz MKR 2B REFX k3ZO MVMYSw 9S F 5ek q0m yNK Gn H0qi vlRA 18CbEz id O iuy ZZ6 kRo oJ kLQ0 Ewmz sKlld6 Kr K JmR xls 12K G2 bv8v LxfJ wrIcU6 Hx p q6p Fy7 Oim mo dXYt Kt0V VH22OC Aj f deT BAP vPl oK QzLE OQlq dpzxJ6 JI z Ujn TqY sQ4 BD QPW6 784x NUfsk0 aM 7 8qz MuL 9Mr Ac uVVK Y55n M7WqnB 2R C pGZ vHh WUN g9 3F2e RT8U umC62V H3 Z dJX LMS cca 1m xoOO 6oOL OVzfpO BO X 5Ev KuL z5s EW 8a9y otqk cKbDJN Us l pYM JpJ jOW Uy 2U4Y VKH6 kVC1Vx 1u v ykO yDs zo5 bz d36q WH1k J7Jtkg V1 J xqr Fnq mcU yZ JTp9 oFIc FAk0IT A9 3 SrL axO 9oU Z3 jG6f BRL1 iZ7ZE6 zj 8 G3M Hu8 6Ay jt 3flY cmTk jiTSYv CF t JLq cJP tN7 E3 POqG OKe0 3K3WV0 ep W XDQ C97 YSb AD ZUNp 81GF fCPbj3 iq E t0E NXy pLv fo Iz6z oFoF 9lkIun Xj Y yYL 52U bRB jx kQUS U9mm XtzIHO Cz 1 KH4 9ez 6Pz qW F223 C0Iz 3CsvuT R9 s VtQ CcM 1eo pD Py2l EEzL U0USJt Jb 9 zgy Gyf iQ4 fo Cx26 k4jL E0ula6 aS I rZQ HER 5HV CE BL55 WCtB 2LCmve TD z Vcp 7UR gI7 Qu FbFw 9VTx JwGrzs VW M 9sM JeJ Nd2 VG GFsi WuqC 3YxXoJ GK w Io7 1fg sGm 0P YFBz X8eX 7pf9GJ b1 o XUs 1q0 6KP Ls MucN ytQb L0Z0Qq m1 l SPj 9MT etk L6 KfsC 6Zob Yhc2qu Xy 9 GPm ZYj 1Go ei feJ3 pRAf n6Ypy6 jN s 4Y5 nSE pqN 4m Rmam AGfY HhSaBr Ls D THC SEl UyR Mh 66XU 7hNz pZVC5V nV 7 VjL 7kv WKf 7P 5hj6 t1vu gkLGdN X8 b gOX HWm 6W4 YE mxFG 4WaN EbGKsv 0p 4 OG0 Nrd uTe Za xNXq V4Bp mOdXIq 9a b PeD PbU Z4N Xt ohbY egCf xBNttE wc D YSD 637 jJ2 ms 6Ta1 J2xZ PtKnPw AX A tJA Rc8 n5d 93 TZi7 q6Wo nEDLwW Sz e Sue YFX 8cM hm Y6is 15pX aOYBbV fS C haL kBR Ks6 UO qG4j DVab fbdEQ94} \end{align} and prove the pathwise uniqueness by assuming that $T$ is sufficiently small.  Thus, we have obtained a unique strong solution of \eqref{ERTWERTHWRTWERTSGDGHCFGSDFGQSERWDFGDSFGHSDRGTEHDFGHDSFGSDGHGYUHDFGSDFASDFASGTWRT56} in $L^p(\Omega; C([0,T], L^p))$.  \par Now, we relax the smallness assumption of~$T$. Denote by $[0,t^{*}]$ the small interval in which both existence and uniqueness hold, and note that $t^{*}$ does not depend on the initial data. Let $n^{\ast}$ be a positive integer such that $T/n^{\ast}\leq t^{\ast}$. Set $t_i= i T/n^{\ast}$ for $i\in\{ 0,1,\ldots, n^{\ast}-1\}$.  Applying the existence and pathwise uniqueness results and the estimate \eqref{ERTWERTHWRTWERTSGDGHCFGSDFGQSERWDFGDSFGHSDRGTEHDFGHDSFGSDGHGYUHDFGSDFASDFASGTWRT57} consecutively on $[t_i, t_{i+1}]$,  we obtain a unique strong solution to \eqref{ERTWERTHWRTWERTSGDGHCFGSDFGQSERWDFGDSFGHSDRGTEHDFGHDSFGSDGHGYUHDFGSDFASDFASGTWRT56} in $[0,T]$ and \eqref{ERTWERTHWRTWERTSGDGHCFGSDFGQSERWDFGDSFGHSDRGTEHDFGHDSFGSDGHGYUHDFGSDFASDFASGTWRT57} holds on the whole time interval.    \end{proof} \par We also derive an $L^2$-energy estimate for the next section. \cole \begin{Lemma}   \label{L0}   Let $p> 2$, $\uu_0\in L^p(\Omega; L^p)\cap L^2(\Omega; L^2)$, and $\nabla\cdot u_0=0$. For every $T>0$, there   exists a unique strong solution $\uu\in L^p(\Omega; C([0,T], L^p))\cap L^2(\Omega; C([0,T], L^2))$ to \eqref{ERTWERTHWRTWERTSGDGHCFGSDFGQSERWDFGDSFGHSDRGTEHDFGHDSFGSDGHGYUHDFGSDFASDFASGTWRT56}   such that   \begin{align}   \begin{split}   \EE\biggl[\sup_{0\leq s\leq T}\Vert\uu(s,\cdot)\Vert_2^2   +\int_0^{T} \Vert \nabla u\Vert_2^2 \,ds   \biggr]   \leq C\bigl(\EE\bigl[\Vert\uu_0\Vert_2^2\bigr]+1\bigr),    \end{split}   \label{ERTWERTHWRTWERTSGDGHCFGSDFGQSERWDFGDSFGHSDRGTEHDFGHDSFGSDGHGYUHDFGSDFASDFASGTWRT97}   \end{align} where $C$ does not depend on $k$ (see~\eqref{ERTWERTHWRTWERTSGDGHCFGSDFGQSERWDFGDSFGHSDRGTEHDFGHDSFGSDGHGYUHDFGSDFASDFASGTWRT56}). \end{Lemma}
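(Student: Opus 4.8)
The plan is to first promote the solution furnished by Theorem~\ref{T03} to one living in $L^2(\Omega;C([0,T],L^2))$, and then to run an It\^o energy computation on $t\mapsto\Vert u(t)\Vert_2^2$ in which the nonlinear term cancels identically, leaving only terms controllable with constants \emph{independent of} $k$.

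\textbf{Step 1 ($L^2$-regularity).} I would revisit the iteration \eqref{ERTWERTHWRTWERTSGDGHCFGSDFGQSERWDFGDSFGHSDRGTEHDFGHDSFGSDGHGYUHDFGSDFASDFASGTWRT58} and the fixed-point argument of Theorem~\ref{T03}, now carried out in $L^2(\Omega;C([0,T],L^2))\cap L^p(\Omega;C([0,T],L^p))$. The $L^p$ part is exactly Theorem~\ref{T03}; for the $L^2$ part one uses the $L^2$-theory for the stochastic heat equation \eqref{ERTWERTHWRTWERTSGDGHCFGSDFGQSERWDFGDSFGHSDRGTEHDFGHDSFGSDGHGYUHDFGSDFASDFASGTWRT27} with drift $\nabla f$ and noise $g\,\dot{\WW}$ (e.g. \cite[Chapter~4]{R}), for which it suffices that $f\in L^2(\Omega\times[0,T],L^2)$ and $g\in L^2(\Omega\times[0,T],\mathbb{L}^2)$. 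For the iterate driven by $v=\unm$ these hold: the cut-off forces $\varphi(\Vert v\Vert_p)\neq0$ only on $\{\Vert v\Vert_p\le 4N\}$, and by \eqref{ERTWERTHWRTWERTSGDGHCFGSDFGQSERWDFGDSFGHSDRGTEHDFGHDSFGSDGHGYUHDFGSDFASDFASGTWRT09} one has $\Vert P_{\le k}v\Vert_\infty\le C_k\Vert v\Vert_p$, so $\varphi(\Vert v\Vert_p)^2\Vert P_{\le k}\mathcal P(v_iP_{\le k}v)\Vert_2\le C_{k,N}\Vert v\Vert_2$, which is square-integrable over $\Omega\times[0,T]$ once $v\in L^2(\Omega;C([0,T],L^2))$; similarly $\Vert P_{\le k}\sigma(P_{\le k}v)\Vert_{\mathbb{L}^2}\le C(\Vert v\Vert_2+1)$ by \eqref{ERTWERTHWRTWERTSGDGHCFGSDFGQSERWDFGDSFGHSDRGTEHDFGHDSFGSDGHGYUHDFGSDFASDFASGTWRT08} and \eqref{ERTWERTHWRTWERTSGDGHCFGSDFGQSERWDFGDSFGHSDRGTEHDFGHDSFGSDGHGYUHDFGSDFASDFASGTWRT003}. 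Starting from $\uu^{(0)}\in L^2(\Omega;C([0,T],L^2))$ (heat flow of $P_{\le k}\uu_0\in L^2$) and tracking the $L^2$-norm in the contraction estimate, one gets that the same limit $u$ as in Theorem~\ref{T03} lies in $L^2(\Omega;C([0,T],L^2))$; the short-time interval is extended in finitely many steps as before. (Alternatively, membership could be obtained by the mollification scheme already used for Theorem~\ref{T02}.)

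\textbf{Step 2 (energy identity) and Step 3 (the crux).} Applying It\^o's formula to $\Vert u(t)\Vert_2^2$ for the solution of \eqref{ERTWERTHWRTWERTSGDGHCFGSDFGQSERWDFGDSFGHSDRGTEHDFGHDSFGSDGHGYUHDFGSDFASDFASGTWRT56} (justified at this regularity by a mollification as in Theorem~\ref{T02}) gives
\begin{align*}
\Vert u(t)\Vert_2^2 + 2\int_0^t\Vert\nabla u\Vert_2^2\,ds
&= \Vert u_0\Vert_2^2
- 2\int_0^t \varphi(\Vert u\Vert_p)^2\langle u, P_{\le k}\mathcal P((u\cdot\nabla)P_{\le k}u)\rangle\,ds \\
&\quad + 2\int_0^t \varphi(\Vert u\Vert_p)^2\langle u, P_{\le k}\sigma(P_{\le k}u)\,d\WW_s\rangle
+ \int_0^t \varphi(\Vert u\Vert_p)^4\Vert P_{\le k}\sigma(P_{\le k}u)\Vert_{\mathbb{L}^2}^2\,ds .
\end{align*}
The point is that the convective term vanishes: $P_{\le k}$ is a Fourier multiplier with the real radial symbol $\psi(\xi/k)$, hence self-adjoint on $L^2$, and it commutes with the (self-adjoint) Leray projector $\mathcal P$; since $\nabla\cdot u=0$ we have $\mathcal P P_{\le k}u=P_{\le k}u$, so
\begin{align*}
\langle u, P_{\le k}\mathcal P((u\cdot\nabla)P_{\le k}u)\rangle
&= \langle P_{\le k}u, (u\cdot\nabla)P_{\le k}u\rangle
= \frac12\int_{\RR^3} u\cdot\nabla|P_{\le k}u|^2\,dx \\
&= -\frac12\int_{\RR^3}(\nabla\cdot u)\,|P_{\le k}u|^2\,dx = 0 .
\end{align*}
This cancellation is exactly what makes the estimate $k$-independent: no residue of the (badly $k$-dependent) nonlinearity survives.

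\textbf{Step 4 (closing).} By \eqref{ERTWERTHWRTWERTSGDGHCFGSDFGQSERWDFGDSFGHSDRGTEHDFGHDSFGSDGHGYUHDFGSDFASDFASGTWRT08} (whose constant is $k$-free), the growth bound \eqref{ERTWERTHWRTWERTSGDGHCFGSDFGQSERWDFGDSFGHSDRGTEHDFGHDSFGSDGHGYUHDFGSDFASDFASGTWRT003}, and $\varphi\le1$,
\[
\varphi(\Vert u\Vert_p)^4\Vert P_{\le k}\sigma(P_{\le k}u)\Vert_{\mathbb{L}^2}^2 \le C\Vert\sigma(P_{\le k}u)\Vert_{\mathbb{L}^2}^2 \le C(\Vert u\Vert_2^2+1),
\]
with $C$ independent of $k$. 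Taking expectations kills the martingale term and gives $\EE\Vert u(t)\Vert_2^2 \le \EE\Vert u_0\Vert_2^2 + C\int_0^t(\EE\Vert u(s)\Vert_2^2+1)\,ds$, so Gronwall yields $\sup_{0\le t\le T}\EE\Vert u(t)\Vert_2^2 \le C(\EE\Vert u_0\Vert_2^2+1)$; feeding this back bounds $\EE\int_0^T\Vert\nabla u\Vert_2^2\,ds$. For the supremum inside the expectation I would use the BDG inequality on the stochastic term together with the Cauchy--Schwarz bound $\Vert(u,P_{\le k}\sigma(P_{\le k}u)\,\cdot)\Vert_{l^2(\mathcal H)}\le \Vert u\Vert_2\Vert P_{\le k}\sigma(P_{\le k}u)\Vert_{\mathbb{L}^2}$, then Young's inequality to absorb a $\tfrac12\EE\sup_{s\le t}\Vert u\Vert_2^2$ against the already-established $\EE\int_0^T(\Vert u\Vert_2^2+1)\,ds$. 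Combining the three bounds gives \eqref{ERTWERTHWRTWERTSGDGHCFGSDFGQSERWDFGDSFGHSDRGTEHDFGHDSFGSDGHGYUHDFGSDFASDFASGTWRT97}.

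\textbf{Main obstacle.} The conceptual heart is the $k$-independence, which rests entirely on the exact cancellation of Step~3; any surviving piece of the convective term would carry constants from \eqref{ERTWERTHWRTWERTSGDGHCFGSDFGQSERWDFGDSFGHSDRGTEHDFGHDSFGSDGHGYUHDFGSDFASDFASGTWRT09} that blow up with $k$. Technically the fussiest point is Step~1 --- justifying $u\in L^2(\Omega;C([0,T],L^2))$ and the applicability of It\^o's formula at this low regularity --- but both are handled by the mollification/iteration machinery already in place for Theorems~\ref{T02} and~\ref{T03}, so no genuinely new difficulty arises there.
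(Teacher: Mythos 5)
Your proposal is correct and follows essentially the same route as the paper: It\^o's formula for $\Vert u\Vert_2^2$, the exact cancellation $\langle u, P_{\le k}\mathcal{P}((u\cdot\nabla)P_{\le k}u)\rangle=0$ (self-adjointness of $P_{\le k}$ and $\mathcal{P}$, divergence-free $u$) which is what makes the bound $k$-independent, the growth assumption \eqref{ERTWERTHWRTWERTSGDGHCFGSDFGQSERWDFGDSFGHSDRGTEHDFGHDSFGSDGHGYUHDFGSDFASDFASGTWRT003} for the quadratic variation, and Gr\"onwall plus BDG for the supremum. The one detail to add is the localization the paper uses: it introduces the stopping time $\eta_M=\inf\{t>0:\Vert u\Vert_2\ge M\}$ before taking expectations (so that the stochastic integral is a genuine martingale and the right-hand side is finite) and then sends $M\to\infty$, since membership in $L^2(\Omega;C([0,T],L^2))$ alone does not provide the fourth moments of $\Vert u\Vert_2$ needed to discard the martingale term directly.
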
 \colb \begin{proof}[Proof of Lemma~\ref{L0}] The existence of a unique strong solution $u$ in $L^p(\Omega; C([0,T], L^p))$ has been established in Theorem~\ref{T03}. Here, we focus on the $L^2$-estimate~\eqref{ERTWERTHWRTWERTSGDGHCFGSDFGQSERWDFGDSFGHSDRGTEHDFGHDSFGSDGHGYUHDFGSDFASDFASGTWRT97}. By It\^{o}'s formula, for $t\in (0,T]$, \begin{align} \begin{split} &\sum_j\Vert u_j(t)\Vert_{2}^{2} +2\int_0^t \sum_j\Vert \nabla u_j\Vert_2^2 \,dr \leq  \sum_j\Vert u_{0,j}\Vert_{2}^{2} +2\int_0^t\varphi_u^2\left|\int_{\RR^\dd} u_j(r)   \partial_i P_{\le k} \mathcal{P}\bigl(u_i P_{\le k} u\bigr)_j \,dx\right| \,dr \\&\indeq +\int_0^t  \varphi_u^4\int_{\RR^\dd} \sum_j\Vert P_{\le k} \sigma_j(P_{\le k} u)(r)\Vert_{l^2}^2\,dx dr + 2\left|\int_0^t \varphi_u^2\int_{\RR^\dd}\sum_j u_j(r)P_{\le k} \sigma_j(P_{\le k} u)(r) \,dxd\WW_r\right|. \end{split}    \llabel{ M lgF edN mXQ Pj Aoba 07MY wBaC4C nj I 4dw KCZ PO9 wx 3en8 AoqX 7JjN8K lq j Q5c bMS dhR Fs tQ8Q r2ve 2HT0uO 5W j TAi iIW n1C Wr U1BH BMvJ 3ywmAd qN D LY8 lbx XMx 0D Dvco 3RL9 Qz5eqy wV Y qEN nO8 MH0 PY zeVN i3yb 2msNYY Wz G 2DC PoG 1Vb Bx e9oZ GcTU 3AZuEK bk p 6rN eTX 0DS Mc zd91 nbSV DKEkVa zI q NKU Qap NBP 5B 32Ey prwP FLvuPi wR P l1G TdQ BZE Aw 3d90 v8P5 CPAnX4 Yo 2 q7s yr5 BW8 Hc T7tM ioha BW9U4q rb u mEQ 6Xz MKR 2B REFX k3ZO MVMYSw 9S F 5ek q0m yNK Gn H0qi vlRA 18CbEz id O iuy ZZ6 kRo oJ kLQ0 Ewmz sKlld6 Kr K JmR xls 12K G2 bv8v LxfJ wrIcU6 Hx p q6p Fy7 Oim mo dXYt Kt0V VH22OC Aj f deT BAP vPl oK QzLE OQlq dpzxJ6 JI z Ujn TqY sQ4 BD QPW6 784x NUfsk0 aM 7 8qz MuL 9Mr Ac uVVK Y55n M7WqnB 2R C pGZ vHh WUN g9 3F2e RT8U umC62V H3 Z dJX LMS cca 1m xoOO 6oOL OVzfpO BO X 5Ev KuL z5s EW 8a9y otqk cKbDJN Us l pYM JpJ jOW Uy 2U4Y VKH6 kVC1Vx 1u v ykO yDs zo5 bz d36q WH1k J7Jtkg V1 J xqr Fnq mcU yZ JTp9 oFIc FAk0IT A9 3 SrL axO 9oU Z3 jG6f BRL1 iZ7ZE6 zj 8 G3M Hu8 6Ay jt 3flY cmTk jiTSYv CF t JLq cJP tN7 E3 POqG OKe0 3K3WV0 ep W XDQ C97 YSb AD ZUNp 81GF fCPbj3 iq E t0E NXy pLv fo Iz6z oFoF 9lkIun Xj Y yYL 52U bRB jx kQUS U9mm XtzIHO Cz 1 KH4 9ez 6Pz qW F223 C0Iz 3CsvuT R9 s VtQ CcM 1eo pD Py2l EEzL U0USJt Jb 9 zgy Gyf iQ4 fo Cx26 k4jL E0ula6 aS I rZQ HER 5HV CE BL55 WCtB 2LCmve TD z Vcp 7UR gI7 Qu FbFw 9VTx JwGrzs VW M 9sM JeJ Nd2 VG GFsi WuqC 3YxXoJ GK w Io7 1fg sGm 0P YFBz X8eX 7pf9GJ b1 o XUs 1q0 6KP Ls MucN ytQb L0Z0Qq m1 l SPj 9MT etk L6 KfsC 6Zob Yhc2qu Xy 9 GPm ZYj 1Go ei feJ3 pRAf n6Ypy6 jN s 4Y5 nSE pqN 4m Rmam AGfY HhSaBr Ls D THC SEl UyR Mh 66XU 7hNz pZVC5V nV 7 VjL 7kv WKf 7P 5hj6 t1vu gkLGdN X8 b gOX HWm 6W4 YE mxFG 4WaN EbGKsv 0p 4 OG0 Nrd uTe Za xNXq V4Bp mOdXIq 9a b PeD PbU Z4N Xt ohbY egCf xBNttE wc D YSD 637 jJ2 ms 6Ta1 J2xZ PtKnPw AX A tJA Rc8 n5d 93 TZi7 q6Wo nEDLwW Sz e Sue YFX 8cM hm Y6is 15pX aOYBbV fS C haL kBR Ks6 UO qG4j DVab fbdtny fi D BFI 7uh B39 FJ 6mYr CUUT f2X38J 43 K yZg 87i gFR 5R z1t3 jH9x lOg1h7 P7 W w8w jMJ qH3 l5 J5wU 8eH0 OogRCv L7 f JJg 1ug RfM XI GSuE Efbh 3hdNY3 x1 9 7jR qeP cdu sbEQ157} \end{align} Utilizing the cancellation law for the nonlinear convective term and the assumptions on the noise coefficients, we arrive at  \begin{align} \begin{split} &\EE\biggl[\sup_{0\leq s\leq t\wedge \eta_M}\Vert\uu(s,\cdot)\Vert_2^2 +\int_0^{t\wedge \eta_M} \Vert \nabla u\Vert_2^2 \,ds \biggr] \leq  C\EE\bigl[\Vert\uu_0\Vert_2^2\bigr] +C\EE\biggl[ \int_0^{t\wedge \eta_M} (\Vert\sigma(P_{\le k} u)\Vert_{\mathbb{L}^2}^2 +\Vert  u\Vert_{2}^2)\,ds \biggr] \\&\indeq \indeq \leq C(\EE\bigl[\Vert\uu_0\Vert_2^2\bigr]+1) +C\int_0^{t}\EE\biggl[  \sup_{0\leq r\leq s\wedge \eta_M}\Vert\uu(r,\cdot)\Vert_2^2  +\int_0^{s\wedge \eta_M} \Vert \nabla u\Vert_2^2 \,dr \biggr] \,ds \comma t\in (0,T], \end{split}    \llabel{3RL9 Qz5eqy wV Y qEN nO8 MH0 PY zeVN i3yb 2msNYY Wz G 2DC PoG 1Vb Bx e9oZ GcTU 3AZuEK bk p 6rN eTX 0DS Mc zd91 nbSV DKEkVa zI q NKU Qap NBP 5B 32Ey prwP FLvuPi wR P l1G TdQ BZE Aw 3d90 v8P5 CPAnX4 Yo 2 q7s yr5 BW8 Hc T7tM ioha BW9U4q rb u mEQ 6Xz MKR 2B REFX k3ZO MVMYSw 9S F 5ek q0m yNK Gn H0qi vlRA 18CbEz id O iuy ZZ6 kRo oJ kLQ0 Ewmz sKlld6 Kr K JmR xls 12K G2 bv8v LxfJ wrIcU6 Hx p q6p Fy7 Oim mo dXYt Kt0V VH22OC Aj f deT BAP vPl oK QzLE OQlq dpzxJ6 JI z Ujn TqY sQ4 BD QPW6 784x NUfsk0 aM 7 8qz MuL 9Mr Ac uVVK Y55n M7WqnB 2R C pGZ vHh WUN g9 3F2e RT8U umC62V H3 Z dJX LMS cca 1m xoOO 6oOL OVzfpO BO X 5Ev KuL z5s EW 8a9y otqk cKbDJN Us l pYM JpJ jOW Uy 2U4Y VKH6 kVC1Vx 1u v ykO yDs zo5 bz d36q WH1k J7Jtkg V1 J xqr Fnq mcU yZ JTp9 oFIc FAk0IT A9 3 SrL axO 9oU Z3 jG6f BRL1 iZ7ZE6 zj 8 G3M Hu8 6Ay jt 3flY cmTk jiTSYv CF t JLq cJP tN7 E3 POqG OKe0 3K3WV0 ep W XDQ C97 YSb AD ZUNp 81GF fCPbj3 iq E t0E NXy pLv fo Iz6z oFoF 9lkIun Xj Y yYL 52U bRB jx kQUS U9mm XtzIHO Cz 1 KH4 9ez 6Pz qW F223 C0Iz 3CsvuT R9 s VtQ CcM 1eo pD Py2l EEzL U0USJt Jb 9 zgy Gyf iQ4 fo Cx26 k4jL E0ula6 aS I rZQ HER 5HV CE BL55 WCtB 2LCmve TD z Vcp 7UR gI7 Qu FbFw 9VTx JwGrzs VW M 9sM JeJ Nd2 VG GFsi WuqC 3YxXoJ GK w Io7 1fg sGm 0P YFBz X8eX 7pf9GJ b1 o XUs 1q0 6KP Ls MucN ytQb L0Z0Qq m1 l SPj 9MT etk L6 KfsC 6Zob Yhc2qu Xy 9 GPm ZYj 1Go ei feJ3 pRAf n6Ypy6 jN s 4Y5 nSE pqN 4m Rmam AGfY HhSaBr Ls D THC SEl UyR Mh 66XU 7hNz pZVC5V nV 7 VjL 7kv WKf 7P 5hj6 t1vu gkLGdN X8 b gOX HWm 6W4 YE mxFG 4WaN EbGKsv 0p 4 OG0 Nrd uTe Za xNXq V4Bp mOdXIq 9a b PeD PbU Z4N Xt ohbY egCf xBNttE wc D YSD 637 jJ2 ms 6Ta1 J2xZ PtKnPw AX A tJA Rc8 n5d 93 TZi7 q6Wo nEDLwW Sz e Sue YFX 8cM hm Y6is 15pX aOYBbV fS C haL kBR Ks6 UO qG4j DVab fbdtny fi D BFI 7uh B39 FJ 6mYr CUUT f2X38J 43 K yZg 87i gFR 5R z1t3 jH9x lOg1h7 P7 W w8w jMJ qH3 l5 J5wU 8eH0 OogRCv L7 f JJg 1ug RfM XI GSuE Efbh 3hdNY3 x1 9 7jR qeP cdu sb fkuJ hEpw MvNBZV zL u qxJ 9b1 BTf Yk RJLj Oo1a EPIXvZ Aj v Xne fhK GsJ Ga wqjt U7r6 MPoydE H2 6 203 mGi JhF nT NCDB YlnP oKO6Pu XU 3 uu9 mSg 41v ma kk0E WUpS UtGBtD e6 d EQ158} \end{align} where $\eta_M:=\inf\{t>0: \Vert u\Vert _{2}\geq M \}$ was introduced to ensure the finiteness of the right side of the inequality. Then, using Gr\"{o}nwall's lemma, \begin{align*} \begin{split} \EE\biggl[\sup_{0\leq s\leq T\wedge \eta_M}\Vert\uu(s,\cdot)\Vert_2^2 +\int_0^{T\wedge \eta_M} \Vert \nabla u\Vert_2^2 \,ds \biggr] \leq C(\EE\bigl[\Vert\uu_0\Vert_2^2\bigr]+1).  \end{split} \end{align*} Note that $C$ does not depend on $M$. Thus, we may remove $\eta_M$ and obtain \eqref{ERTWERTHWRTWERTSGDGHCFGSDFGQSERWDFGDSFGHSDRGTEHDFGHDSFGSDGHGYUHDFGSDFASDFASGTWRT97} by sending $M\to\infty$. \end{proof} \par \startnewsection{Local existence of a strong solution}{sec03} To construct a local solution for the stochastic Navier-Stokes equations, we use the truncated system \begin{align} \begin{split} \label{ERTWERTHWRTWERTSGDGHCFGSDFGQSERWDFGDSFGHSDRGTEHDFGHDSFGSDGHGYUHDFGSDFASDFASGTWRT95} &\partial_t u^{(n)}  -  \Delta u^{(n)} + (\phin)^2 \pkn\mathcal{P}((u^{(n)}\cdot\nabla) \pkn u^{(n)}) = (\phin)^2 \pkn\sigma(\pkn u^{(n)})\dot{\WW}(t) \\ & \nabla\cdot \un = 0, \\& \un( 0)= \pkn \mathcal{P}\left(\varphi\left(\frac{\cdot}{n}\right) u_0 \right)  \Pas , \end{split} \end{align} where $\phin$ is as in after \eqref{ERTWERTHWRTWERTSGDGHCFGSDFGQSERWDFGDSFGHSDRGTEHDFGHDSFGSDGHGYUHDFGSDFASDFASGTWRT59}, $n\in\mathbb{N}$,  the index $k(n)$ is an increasing and unbounded integer-valued function of $n$, the initial datum $u_0$ is divergence-free,  and $\varphi\in C_c^{\infty}(\RR^d)$ is a cut-off function as in the beginning of Section~\ref{sec5}. In this section, we fix $d=3$, although other space dimensions can also be addressed using the same method. We shall specify the dependence of $k(n)$ on $n$ further below in this section. Clearly, $\varphi\big(\fractext{\cdot}{n}\big) u_0\in L^2(\RR^d)\cap L^p(\RR^d)$ if $u_0\in L^p(\RR^d)$, where $p>2$, and also $\Vert \varphi(\fractext{\cdot}{n}) u_0 \Vert_p\leq \Vert u_0 \Vert_p$. Moreover,    \begin{equation}    \left\Vert \mathcal{P}\left(\varphi\left(\frac{\cdot}{n}\right) u_0 \right)  - u_0 \right\Vert_p 
     =\left\Vert \mathcal{P}\left(\varphi\left(\frac{\cdot}{n}\right) u_0 \right)  - \mathcal{P}u_0 \right\Vert_p      \leq C      \left\Vert \left(\varphi\left(\frac{\cdot}{n}\right) -1 \right)   u_0 \right\Vert_p    \to 0 \text{ as }n\to \infty.    \llabel{Q BZE Aw 3d90 v8P5 CPAnX4 Yo 2 q7s yr5 BW8 Hc T7tM ioha BW9U4q rb u mEQ 6Xz MKR 2B REFX k3ZO MVMYSw 9S F 5ek q0m yNK Gn H0qi vlRA 18CbEz id O iuy ZZ6 kRo oJ kLQ0 Ewmz sKlld6 Kr K JmR xls 12K G2 bv8v LxfJ wrIcU6 Hx p q6p Fy7 Oim mo dXYt Kt0V VH22OC Aj f deT BAP vPl oK QzLE OQlq dpzxJ6 JI z Ujn TqY sQ4 BD QPW6 784x NUfsk0 aM 7 8qz MuL 9Mr Ac uVVK Y55n M7WqnB 2R C pGZ vHh WUN g9 3F2e RT8U umC62V H3 Z dJX LMS cca 1m xoOO 6oOL OVzfpO BO X 5Ev KuL z5s EW 8a9y otqk cKbDJN Us l pYM JpJ jOW Uy 2U4Y VKH6 kVC1Vx 1u v ykO yDs zo5 bz d36q WH1k J7Jtkg V1 J xqr Fnq mcU yZ JTp9 oFIc FAk0IT A9 3 SrL axO 9oU Z3 jG6f BRL1 iZ7ZE6 zj 8 G3M Hu8 6Ay jt 3flY cmTk jiTSYv CF t JLq cJP tN7 E3 POqG OKe0 3K3WV0 ep W XDQ C97 YSb AD ZUNp 81GF fCPbj3 iq E t0E NXy pLv fo Iz6z oFoF 9lkIun Xj Y yYL 52U bRB jx kQUS U9mm XtzIHO Cz 1 KH4 9ez 6Pz qW F223 C0Iz 3CsvuT R9 s VtQ CcM 1eo pD Py2l EEzL U0USJt Jb 9 zgy Gyf iQ4 fo Cx26 k4jL E0ula6 aS I rZQ HER 5HV CE BL55 WCtB 2LCmve TD z Vcp 7UR gI7 Qu FbFw 9VTx JwGrzs VW M 9sM JeJ Nd2 VG GFsi WuqC 3YxXoJ GK w Io7 1fg sGm 0P YFBz X8eX 7pf9GJ b1 o XUs 1q0 6KP Ls MucN ytQb L0Z0Qq m1 l SPj 9MT etk L6 KfsC 6Zob Yhc2qu Xy 9 GPm ZYj 1Go ei feJ3 pRAf n6Ypy6 jN s 4Y5 nSE pqN 4m Rmam AGfY HhSaBr Ls D THC SEl UyR Mh 66XU 7hNz pZVC5V nV 7 VjL 7kv WKf 7P 5hj6 t1vu gkLGdN X8 b gOX HWm 6W4 YE mxFG 4WaN EbGKsv 0p 4 OG0 Nrd uTe Za xNXq V4Bp mOdXIq 9a b PeD PbU Z4N Xt ohbY egCf xBNttE wc D YSD 637 jJ2 ms 6Ta1 J2xZ PtKnPw AX A tJA Rc8 n5d 93 TZi7 q6Wo nEDLwW Sz e Sue YFX 8cM hm Y6is 15pX aOYBbV fS C haL kBR Ks6 UO qG4j DVab fbdtny fi D BFI 7uh B39 FJ 6mYr CUUT f2X38J 43 K yZg 87i gFR 5R z1t3 jH9x lOg1h7 P7 W w8w jMJ qH3 l5 J5wU 8eH0 OogRCv L7 f JJg 1ug RfM XI GSuE Efbh 3hdNY3 x1 9 7jR qeP cdu sb fkuJ hEpw MvNBZV zL u qxJ 9b1 BTf Yk RJLj Oo1a EPIXvZ Aj v Xne fhK GsJ Ga wqjt U7r6 MPoydE H2 6 203 mGi JhF nT NCDB YlnP oKO6Pu XU 3 uu9 mSg 41v ma kk0E WUpS UtGBtD e6 d Kdx ZNT FuT i1 fMcM hq7P Ovf0hg Hl 8 fqv I3R K39 fn 9MaC Zgow 6e1iXj KC 5 lHO lpG pkK Xd Dxtz 0HxE fSMjXY L8 F vh7 dmJ kE8 QA KDo1 FqML HOZ2iL 9i I m3L Kva YiN K9 sb48 NxwEQ116}   \end{equation} By Theorem~\ref{T03}, the system \eqref{ERTWERTHWRTWERTSGDGHCFGSDFGQSERWDFGDSFGHSDRGTEHDFGHDSFGSDGHGYUHDFGSDFASDFASGTWRT95} has a unique strong solution $\un$ in $L^p(\Omega; C([0,T], L^p))$ for every~$n\in\mathbb{N}$ and every $T>0$. However, the energy estimate \eqref{ERTWERTHWRTWERTSGDGHCFGSDFGQSERWDFGDSFGHSDRGTEHDFGHDSFGSDGHGYUHDFGSDFASDFASGTWRT57} resulting from the proof depends on~$n$, which is the main difficulty when one passes with $n$ to infinity. Below, we show that the estimate is uniform up to a properly defined stopping time if $u_0$ has a deterministic upper bound. \par Let $K\geq1$ and assume that $\Vert u_0\Vert_{p}\leq K$. For the next theorem, we introduce a constant $M_0\geq 1$ such that    \begin{equation}    \sum_j         \left\Vert \pkn             \left(\varphi               \left(\frac{\cdot}{n}               \right) u_{0,j}             \right)         \right\Vert_{p}^p         +         \left\Vert            \pkn \left(\varphi\left(\frac{\cdot}{n}\right) u_0 \right)         \right\Vert_{p}^p     \leq     \frac14    M_0K^p    ,    \llabel{d6 Kr K JmR xls 12K G2 bv8v LxfJ wrIcU6 Hx p q6p Fy7 Oim mo dXYt Kt0V VH22OC Aj f deT BAP vPl oK QzLE OQlq dpzxJ6 JI z Ujn TqY sQ4 BD QPW6 784x NUfsk0 aM 7 8qz MuL 9Mr Ac uVVK Y55n M7WqnB 2R C pGZ vHh WUN g9 3F2e RT8U umC62V H3 Z dJX LMS cca 1m xoOO 6oOL OVzfpO BO X 5Ev KuL z5s EW 8a9y otqk cKbDJN Us l pYM JpJ jOW Uy 2U4Y VKH6 kVC1Vx 1u v ykO yDs zo5 bz d36q WH1k J7Jtkg V1 J xqr Fnq mcU yZ JTp9 oFIc FAk0IT A9 3 SrL axO 9oU Z3 jG6f BRL1 iZ7ZE6 zj 8 G3M Hu8 6Ay jt 3flY cmTk jiTSYv CF t JLq cJP tN7 E3 POqG OKe0 3K3WV0 ep W XDQ C97 YSb AD ZUNp 81GF fCPbj3 iq E t0E NXy pLv fo Iz6z oFoF 9lkIun Xj Y yYL 52U bRB jx kQUS U9mm XtzIHO Cz 1 KH4 9ez 6Pz qW F223 C0Iz 3CsvuT R9 s VtQ CcM 1eo pD Py2l EEzL U0USJt Jb 9 zgy Gyf iQ4 fo Cx26 k4jL E0ula6 aS I rZQ HER 5HV CE BL55 WCtB 2LCmve TD z Vcp 7UR gI7 Qu FbFw 9VTx JwGrzs VW M 9sM JeJ Nd2 VG GFsi WuqC 3YxXoJ GK w Io7 1fg sGm 0P YFBz X8eX 7pf9GJ b1 o XUs 1q0 6KP Ls MucN ytQb L0Z0Qq m1 l SPj 9MT etk L6 KfsC 6Zob Yhc2qu Xy 9 GPm ZYj 1Go ei feJ3 pRAf n6Ypy6 jN s 4Y5 nSE pqN 4m Rmam AGfY HhSaBr Ls D THC SEl UyR Mh 66XU 7hNz pZVC5V nV 7 VjL 7kv WKf 7P 5hj6 t1vu gkLGdN X8 b gOX HWm 6W4 YE mxFG 4WaN EbGKsv 0p 4 OG0 Nrd uTe Za xNXq V4Bp mOdXIq 9a b PeD PbU Z4N Xt ohbY egCf xBNttE wc D YSD 637 jJ2 ms 6Ta1 J2xZ PtKnPw AX A tJA Rc8 n5d 93 TZi7 q6Wo nEDLwW Sz e Sue YFX 8cM hm Y6is 15pX aOYBbV fS C haL kBR Ks6 UO qG4j DVab fbdtny fi D BFI 7uh B39 FJ 6mYr CUUT f2X38J 43 K yZg 87i gFR 5R z1t3 jH9x lOg1h7 P7 W w8w jMJ qH3 l5 J5wU 8eH0 OogRCv L7 f JJg 1ug RfM XI GSuE Efbh 3hdNY3 x1 9 7jR qeP cdu sb fkuJ hEpw MvNBZV zL u qxJ 9b1 BTf Yk RJLj Oo1a EPIXvZ Aj v Xne fhK GsJ Ga wqjt U7r6 MPoydE H2 6 203 mGi JhF nT NCDB YlnP oKO6Pu XU 3 uu9 mSg 41v ma kk0E WUpS UtGBtD e6 d Kdx ZNT FuT i1 fMcM hq7P Ovf0hg Hl 8 fqv I3R K39 fn 9MaC Zgow 6e1iXj KC 5 lHO lpG pkK Xd Dxtz 0HxE fSMjXY L8 F vh7 dmJ kE8 QA KDo1 FqML HOZ2iL 9i I m3L Kva YiN K9 sb48 NxwY NR0nx2 t5 b WCk x2a 31k a8 fUIa RGzr 7oigRX 5s m 9PQ 7Sr 5St ZE Ymp8 VIWS hdzgDI 9v R F5J 81x 33n Ne fjBT VvGP vGsxQh Al G Fbe 1bQ i6J ap OJJa ceGq 1vvb8r F2 F 3M6 8eD lEQ117}   \end{equation} for all~$n\in\NNp$, which is possible due to~\eqref{ERTWERTHWRTWERTSGDGHCFGSDFGQSERWDFGDSFGHSDRGTEHDFGHDSFGSDGHGYUHDFGSDFASDFASGTWRT08}. Then, for every constant $M\geq M_0$, we define a stopping time   \begin{equation}     \tau^n_M := \inf\biggl\{t>0: \sup_{[0, t]}\Vert \un\Vert _{p}^p + \int_{0}^{t} \Vert \un(s)\Vert_{3p}^p \,ds\geq MK^p \biggr\}    .    \label{ERTWERTHWRTWERTSGDGHCFGSDFGQSERWDFGDSFGHSDRGTEHDFGHDSFGSDGHGYUHDFGSDFASDFASGTWRT96} \end{equation} This definition ensures the (strict) positivity of~$\tau^n_M$, while the distinction between $M$ and $M_0$ is needed in Lemma~\ref{L07} below. Let $T>0$ be a arbitrarily prescribed deterministic time. It is shown below that the $L^p$-type energy corresponding to $\un$ up to $\tau^n_M$ is uniform with respect to~$n$. \par \cole \begin{Theorem} \label{T04} Let~$p>d=3$ and~$K\geq1$, and suppose that $\Vert u_{0}\Vert_{p} \leq K$. Then, the unique strong solution $\un$ of the  system~\eqref{ERTWERTHWRTWERTSGDGHCFGSDFGQSERWDFGDSFGHSDRGTEHDFGHDSFGSDGHGYUHDFGSDFASDFASGTWRT95} satisfies   \begin{equation}   \label{ERTWERTHWRTWERTSGDGHCFGSDFGQSERWDFGDSFGHSDRGTEHDFGHDSFGSDGHGYUHDFGSDFASDFASGTWRT99}   \EE\biggl[\sup_{[0, T\wedge \tau^n_M]}\Vert \un \Vert_{p}^p + \int_{0}^{T\wedge \tau^n_M} \int_{\RR^\dd}\sum_j |\nabla |u_j^{(n)}|^{p/2}|^2 \,dx ds\biggr] \leq C K^p,    \end{equation} where $C$ depends on $p$, $d$, $M$, and $T$, but is independent of~$n$ and~$K$. Moreover,    \begin{equation}   \label{ERTWERTHWRTWERTSGDGHCFGSDFGQSERWDFGDSFGHSDRGTEHDFGHDSFGSDGHGYUHDFGSDFASDFASGTWRT98}   \lim_{t\rightarrow0} \sup_n \PP\biggl(\sup_{[0, t\wedge \tau^n_M]}\Vert \un \Vert_{p}^p + \int_{0}^{t\wedge \tau^n_M} \int_{\RR^\dd}\sum_j |\nabla (|u_j^{(n)}|^{p/2})|^2 \,dx ds\geq M_0K^p \biggr) = 0.   \end{equation} \end{Theorem}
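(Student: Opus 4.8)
The plan is to run, for the fixed truncated system \eqref{ERTWERTHWRTWERTSGDGHCFGSDFGQSERWDFGDSFGHSDRGTEHDFGHDSFGSDGHGYUHDFGSDFASDFASGTWRT95}, the same It\^o/energy computation as in the proof of Theorem~\ref{T02}, but stopped at $\tau^n_M$. Writing \eqref{ERTWERTHWRTWERTSGDGHCFGSDFGQSERWDFGDSFGHSDRGTEHDFGHDSFGSDGHGYUHDFGSDFASDFASGTWRT95} componentwise as a stochastic heat equation with drift forcing $\sum_i\partial_i f^{(n)}_{ij}$, where $f^{(n)}_{ij}=-(\phin)^2\pkn\mathcal P(\un_i\pkn\un)_j$, and noise forcing $g^{(n)}_j=(\phin)^2\pkn\sigma_j(\pkn\un)$, and applying It\^o's formula to $\Vert\un_j\Vert_p^p$ on $[0,t\wedge\tau^n_M]$, one bounds the drift terms exactly as in \eqref{ERTWERTHWRTWERTSGDGHCFGSDFGQSERWDFGDSFGHSDRGTEHDFGHDSFGSDGHGYUHDFGSDFASDFASGTWRT44}--\eqref{ERTWERTHWRTWERTSGDGHCFGSDFGQSERWDFGDSFGHSDRGTEHDFGHDSFGSDGHGYUHDFGSDFASDFASGTWRT46}, absorbs the dissipation on the left, and, after taking expectations (the stochastic integral has zero mean), obtains
\[
\EE\Bigl[\int_0^{T\wedge\tau^n_M}\sum_j\int_{\RR^3}|\nabla(|\un_j|^{p/2})|^2\,dx\,ds\Bigr]\le C\,\EE[\Vert\un(0)\Vert_p^p]+C\,T\,\EE\Bigl[\sup_{[0,\tau^n_M]}\Vert\un\Vert_p^p\Bigr]+C\,\EE\Bigl[\int_0^{T\wedge\tau^n_M}\bigl(\Vert f^{(n)}\Vert_q^p+\Vert g^{(n)}\Vert_{\mathbb L^p}^p\bigr)\,ds\Bigr],
\]
for a fixed admissible $q\in(3p/(p+1),p]$ (admissibility of the whole range requires $p>3$). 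The first term is $\le CK^p$ by the choice of $M_0$ made just before \eqref{ERTWERTHWRTWERTSGDGHCFGSDFGQSERWDFGDSFGHSDRGTEHDFGHDSFGSDGHGYUHDFGSDFASDFASGTWRT96}, and by the very definition \eqref{ERTWERTHWRTWERTSGDGHCFGSDFGQSERWDFGDSFGHSDRGTEHDFGHDSFGSDGHGYUHDFGSDFASDFASGTWRT96} of $\tau^n_M$ we have $\sup_{[0,\tau^n_M]}\Vert\un\Vert_p^p\le MK^p$ deterministically, so the second term is $\le CK^p$ as well and the $\sup$-part of \eqref{ERTWERTHWRTWERTSGDGHCFGSDFGQSERWDFGDSFGHSDRGTEHDFGHDSFGSDGHGYUHDFGSDFASDFASGTWRT99} comes for free. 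Everything thus reduces to showing
\[
\EE\Bigl[\int_0^{T\wedge\tau^n_M}\bigl(\Vert f^{(n)}(s)\Vert_q^p+\Vert g^{(n)}(s)\Vert_{\mathbb L^p}^p\bigr)\,ds\Bigr]\le C(M,T,p)\,K^p\qquad\text{uniformly in }n.
\]

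This last estimate is the heart of the matter. The starting point is H\"older together with the $L^q$-boundedness of $\mathcal P$ and of $\pkn$ with a \emph{universal} constant (cf.\ \eqref{ERTWERTHWRTWERTSGDGHCFGSDFGQSERWDFGDSFGHSDRGTEHDFGHDSFGSDGHGYUHDFGSDFASDFASGTWRT10}), giving $\Vert f^{(n)}\Vert_q\lesssim\Vert\un\Vert_p\Vert\pkn\un\Vert_b$ with $1/q=1/p+1/b$, and, from the growth bound \eqref{ERTWERTHWRTWERTSGDGHCFGSDFGQSERWDFGDSFGHSDRGTEHDFGHDSFGSDGHGYUHDFGSDFASDFASGTWRT03}, $\Vert g^{(n)}\Vert_{\mathbb L^p}\lesssim\Vert\pkn\un\Vert_{(3p/2)-}^2+1$; one then interpolates the remaining norms of $\un$ between the two quantities controlled by $\tau^n_M$, namely $\sup\Vert\un\Vert_p^p\le MK^p$ and $\int_0^{T\wedge\tau^n_M}\Vert\un\Vert_{3p}^p\,ds\le MK^p$. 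By itself this produces the wrong power $K^{2p}$ (the nonlinearity being quadratic), and the remedy -- which is exactly what dictates the growth rate of $k(n)$ -- is to spend one of the factors differently: using the Poincar\'e-type gain \eqref{ERTWERTHWRTWERTSGDGHCFGSDFGQSERWDFGDSFGHSDRGTEHDFGHDSFGSDGHGYUHDFGSDFASDFASGTWRT11}, each occurrence of $\pkn\un$ in a high $L^b$-norm is replaced by $C_{k(n)}\Vert\un\Vert_2$, after which the $L^2$-energy estimate of Lemma~\ref{L0} -- whose right-hand side is independent of $k$ but grows with $n$ through $\Vert\un(0)\Vert_2$ -- is invoked (in its higher-moment form, obtained by the same Gr\"onwall/BDG argument since the $L^2$-noise bound \eqref{ERTWERTHWRTWERTSGDGHCFGSDFGQSERWDFGDSFGHSDRGTEHDFGHDSFGSDGHGYUHDFGSDFASDFASGTWRT003} is linear). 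Choosing $k(n)\to\infty$ slowly enough that the powers of $C_{k(n)}$ are absorbed, and keeping track of which interpolation and time-H\"older exponents are admissible for $p>3$ (so that each forcing integral still carries a positive power $t^{\gamma}$), one closes the bound with a constant $C(M,T,p)$ independent of $n$ and $K$. This bookkeeping -- balancing the $k(n)$-dependent constants against the $n$-dependence of the $L^2$-energy while retaining a $t$-power -- is the main technical obstacle; the remaining arguments are routine in spirit.

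Granting the displayed forcing bound, \eqref{ERTWERTHWRTWERTSGDGHCFGSDFGQSERWDFGDSFGHSDRGTEHDFGHDSFGSDGHGYUHDFGSDFASDFASGTWRT99} follows. For \eqref{ERTWERTHWRTWERTSGDGHCFGSDFGQSERWDFGDSFGHSDRGTEHDFGHDSFGSDGHGYUHDFGSDFASDFASGTWRT98} I would repeat the same computation on $[0,t\wedge\tau^n_M]$ while tracking the $t$-dependence. Put $Q^n_t:=\sup_{[0,t\wedge\tau^n_M]}\Vert\un\Vert_p^p+\int_0^{t\wedge\tau^n_M}\sum_j\int_{\RR^3}|\nabla(|\un_j|^{p/2})|^2$; since $s=0$ lies in the supremum and the integral is nonnegative, $Q^n_t\ge\Vert\un(0)\Vert_p^p$ and $Q^n_t-\Vert\un(0)\Vert_p^p\ge0$. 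Using the It\^o identity -- now keeping the martingale and estimating $\EE[\sup_{[0,t\wedge\tau^n_M]}|\mathrm{mart}|]$ by the BDG inequality, the deterministic bound $\Vert\un(s)\Vert_p\le(MK^p)^{1/p}$ on $[0,\tau^n_M]$ making it unnecessary to absorb $\sup\Vert\un\Vert_p^p$ with a loss of constant -- together with $\int_0^{t\wedge\tau^n_M}\Vert\un\Vert_p^p\,ds\le t\,MK^p$ and the forcing bound applied on $[0,t\wedge\tau^n_M]$ (which is then $\le C(M,T,p)\,t^{\gamma'}K^p$ for some $\gamma'>0$), one gets $\EE[Q^n_t-\Vert\un(0)\Vert_p^p]\le C(M,T,p)\,t^{\gamma'}K^p$ uniformly in $n$. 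Finally $\Vert\un(0)\Vert_p^p\le\frac14 M_0K^p$ by the choice of $M_0$, so $\{Q^n_t\ge M_0K^p\}\subseteq\{Q^n_t-\Vert\un(0)\Vert_p^p\ge\frac34 M_0K^p\}$, and Markov's inequality yields $\sup_n\PP(Q^n_t\ge M_0K^p)\le\frac{4C(M,T,p)}{3M_0}\,t^{\gamma'}\to0$ as $t\to0$, which is \eqref{ERTWERTHWRTWERTSGDGHCFGSDFGQSERWDFGDSFGHSDRGTEHDFGHDSFGSDGHGYUHDFGSDFASDFASGTWRT98}. (Should the BDG step cost a factor $1+\varepsilon$ on $\Vert\un(0)\Vert_p^p$, one fixes $\varepsilon$ small first, using the slack between $\frac14 M_0$ and $M_0$, and then lets $t\to0$.)
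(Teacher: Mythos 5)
Your overall skeleton for \eqref{ERTWERTHWRTWERTSGDGHCFGSDFGQSERWDFGDSFGHSDRGTEHDFGHDSFGSDGHGYUHDFGSDFASDFASGTWRT99} (apply Theorem~\ref{T02} to the stopped equation and reduce everything to a uniform bound on $\EE\int_0^{T\wedge\tau^n_M}(\Vert f^{(n)}\Vert_q^p+\Vert g^{(n)}\Vert_{\mathbb{L}^p}^p)\,ds$) is the paper's, and your treatment of \eqref{ERTWERTHWRTWERTSGDGHCFGSDFGQSERWDFGDSFGHSDRGTEHDFGHDSFGSDGHGYUHDFGSDFASDFASGTWRT98} (keep the martingale, use the deterministic bound $\Vert\un\Vert_p^p\le MK^p$ on $[0,\tau^n_M]$ inside BDG, then Chebyshev/Markov together with $\Vert\un(0)\Vert_p^p\le\tfrac14 M_0K^p$) is essentially the paper's argument, merely packaged as one expectation estimate instead of three separate probability estimates. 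The genuine gap is in the forcing bound itself, which is the heart of the theorem. Your diagnosis that using the stopping time twice yields $K^{2p}$ is correct, but your proposed remedy is not the mechanism that works, and as stated it fails. The paper's fix is the cutoff: since $\phin=\varphi(\Vert\un\Vert_p)$ vanishes for $\Vert\un\Vert_p$ large, one has $\phin\Vert\un\Vert_p\le C$ pathwise, so in \eqref{ERTWERTHWRTWERTSGDGHCFGSDFGQSERWDFGDSFGHSDRGTEHDFGHDSFGSDGHGYUHDFGSDFASDFASGTWRT100} one factor $\Vert\un\Vert_p^p$ is absorbed into a constant and the remaining factor $\Vert\un\Vert_l^p\le C(\Vert\un\Vert_p^p+\Vert\un\Vert_{3p}^p)$ is integrated using the definition \eqref{ERTWERTHWRTWERTSGDGHCFGSDFGQSERWDFGDSFGHSDRGTEHDFGHDSFGSDGHGYUHDFGSDFASDFASGTWRT96} of $\tau^n_M$, giving exactly one power $K^p$; the noise term is handled the same way via the interpolation $\Vert\un\Vert_{(3p/2)-}^{2p}\lesssim\Vert\un\Vert_p^{p+}\Vert\un\Vert_{3p}^{p-}$ under assumption \eqref{ERTWERTHWRTWERTSGDGHCFGSDFGQSERWDFGDSFGHSDRGTEHDFGHDSFGSDGHGYUHDFGSDFASDFASGTWRT03}, the cutoff, and the stopping time, with the exponent choice \eqref{ERTWERTHWRTWERTSGDGHCFGSDFGQSERWDFGDSFGHSDRGTEHDFGHDSFGSDGHGYUHDFGSDFASDFASGTWRT101} requiring $p>3$. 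No $L^2$ information and no property of $k(n)$ enter this step at all.

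Your alternative -- replace $\Vert\pkn\un\Vert_b$ by $C_{k(n)}\Vert\un\Vert_2$ via \eqref{ERTWERTHWRTWERTSGDGHCFGSDFGQSERWDFGDSFGHSDRGTEHDFGHDSFGSDGHGYUHDFGSDFASDFASGTWRT11} and then invoke Lemma~\ref{L0}, tuning $k(n)$ to grow ``slowly'' -- cannot produce a bound independent of $n$ (let alone of $K$), which is what the theorem asserts. The constant in \eqref{ERTWERTHWRTWERTSGDGHCFGSDFGQSERWDFGDSFGHSDRGTEHDFGHDSFGSDGHGYUHDFGSDFASDFASGTWRT11} blows up as $k(n)\to\infty$, while the $L^2$ bound from Lemma~\ref{L0} is controlled by $\Vert\pkn\mathcal{P}(\varphi(\cdot/n)u_0)\Vert_2$, which grows with $n$ through the spatial truncation of an initial datum that is only in $L^p$; this growth has nothing to do with $k(n)$, so no choice of $k(n)$ makes the product uniformly bounded. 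Moreover, the growth of $k(n)$ is not ``dictated'' here and is not slow: it is fixed in Lemma~\ref{L06} to be at least $(n+1)\EE[\Vert\varphi(\cdot/(n+1))u_0\Vert_2^2]$, i.e.\ fast, precisely so that the Poincar\'e-type gain $1/(k(m)\wedge k(n))$ coming from differences $P_{\le k(n)}-P_{\le k(m)}$ (Lemma~\ref{L03}) beats the $n$-growth of the $L^2$ energy in the Cauchy estimate; that is where Lemma~\ref{L0} and the projector calculus belong, not in Theorem~\ref{T04}. So to repair your proof you should discard the $k(n)$/$L^2$ mechanism and instead exploit the factor $(\phin)^2$ you already carry in $f^{(n)}$ and $g^{(n)}$ through the pathwise bound $\phin\Vert\un\Vert_p\le C$.
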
 \colb \par \begin{proof}[Proof of Theorem~\ref{T04}] To derive a uniform energy bound, we apply Theorem~\ref{T02}. First, we have   \begin{align}   \begin{split}   &   \EE\biggl[\int_0^{t\wedge \tau^n_M}\Vert    (\phin)^2 \pkn\mathcal{P}(\un_i \pkn\un)   \Vert_{q}^p \,ds   \biggr]   \leq C   \EE\biggl[\int_0^{t\wedge \tau^n_M}\Vert    (\phin)^2 \mathcal{P}(\un_i \pkn \un)   \Vert_{q}^p \,ds   \biggr]   \\&\indeq   \leq   C\EE\biggl[\int_0^{t\wedge \tau^n_M}   (\phin)^2   \Vert    \un_i    \Vert_{p}^p    \Vert    \pkn \un   \Vert_{l}^p    \,ds   \biggr]   \leq   C\EE\biggl[\int_0^{t\wedge \tau^n_M}   (\phin)^2   \Vert    \un_i    \Vert_{p}^p    \Vert     \un   \Vert_{l}^p    \,ds   \biggr]   \\&\indeq   \leq   C \EE\biggl[\int_0^{t\wedge \tau^n_M}   (\phin)^2   (\Vert   \un   \Vert_{3p}^p +   \Vert    \un   \Vert_{p}^p )   \,ds   \biggr]   \leq   C_{T, M}K^p    \comma  t\in (0,T]\comma i=1,2,3    ,   \end{split}   \label{ERTWERTHWRTWERTSGDGHCFGSDFGQSERWDFGDSFGHSDRGTEHDFGHDSFGSDGHGYUHDFGSDFASDFASGTWRT100}   \end{align} where we chose   \begin{equation}     \frac{3p}{p+1}< q< \frac{3p}{4}     \commaone     \frac{1}{p}+ \frac{1}{l}=\frac{1}{q},     \qquad{}\text{and}\qquad{}      p< l< 3p,     \label{ERTWERTHWRTWERTSGDGHCFGSDFGQSERWDFGDSFGHSDRGTEHDFGHDSFGSDGHGYUHDFGSDFASDFASGTWRT101}   \end{equation} which is possible if~$p>d=3$. For the stochastic term in \eqref{ERTWERTHWRTWERTSGDGHCFGSDFGQSERWDFGDSFGHSDRGTEHDFGHDSFGSDGHGYUHDFGSDFASDFASGTWRT95}, we have \begin{align} \begin{split} &\EE\biggl[ \int_0^{t\wedge \tau^n_M}\int_{\RR^\dd}   (\phin)^2 \Vert  \pkn\sigma( P_{\leq k(n)}\un) \Vert_{l^2( \mathcal{H},\RR^\dd)}^p\,dx\,ds \biggr] \leq C \EE\biggl[ \int_0^{t\wedge \tau^n_M}   (\phin)^2(\Vert  \un \Vert_{(3p/2)-}^{2p}+1) \,ds \biggr] \\&\indeq\quad \leq C \EE\biggl[\int_0^{t\wedge \tau^n_M}   (\phin)^2   (\Vert    \un   \Vert_{p}^{p+}   \Vert    \un   \Vert_{3p}^{p-}   +1) \,ds\biggr] \leq C_{T,M}K^p \comma t\in (0,T] , \end{split} \llabel{uVVK Y55n M7WqnB 2R C pGZ vHh WUN g9 3F2e RT8U umC62V H3 Z dJX LMS cca 1m xoOO 6oOL OVzfpO BO X 5Ev KuL z5s EW 8a9y otqk cKbDJN Us l pYM JpJ jOW Uy 2U4Y VKH6 kVC1Vx 1u v ykO yDs zo5 bz d36q WH1k J7Jtkg V1 J xqr Fnq mcU yZ JTp9 oFIc FAk0IT A9 3 SrL axO 9oU Z3 jG6f BRL1 iZ7ZE6 zj 8 G3M Hu8 6Ay jt 3flY cmTk jiTSYv CF t JLq cJP tN7 E3 POqG OKe0 3K3WV0 ep W XDQ C97 YSb AD ZUNp 81GF fCPbj3 iq E t0E NXy pLv fo Iz6z oFoF 9lkIun Xj Y yYL 52U bRB jx kQUS U9mm XtzIHO Cz 1 KH4 9ez 6Pz qW F223 C0Iz 3CsvuT R9 s VtQ CcM 1eo pD Py2l EEzL U0USJt Jb 9 zgy Gyf iQ4 fo Cx26 k4jL E0ula6 aS I rZQ HER 5HV CE BL55 WCtB 2LCmve TD z Vcp 7UR gI7 Qu FbFw 9VTx JwGrzs VW M 9sM JeJ Nd2 VG GFsi WuqC 3YxXoJ GK w Io7 1fg sGm 0P YFBz X8eX 7pf9GJ b1 o XUs 1q0 6KP Ls MucN ytQb L0Z0Qq m1 l SPj 9MT etk L6 KfsC 6Zob Yhc2qu Xy 9 GPm ZYj 1Go ei feJ3 pRAf n6Ypy6 jN s 4Y5 nSE pqN 4m Rmam AGfY HhSaBr Ls D THC SEl UyR Mh 66XU 7hNz pZVC5V nV 7 VjL 7kv WKf 7P 5hj6 t1vu gkLGdN X8 b gOX HWm 6W4 YE mxFG 4WaN EbGKsv 0p 4 OG0 Nrd uTe Za xNXq V4Bp mOdXIq 9a b PeD PbU Z4N Xt ohbY egCf xBNttE wc D YSD 637 jJ2 ms 6Ta1 J2xZ PtKnPw AX A tJA Rc8 n5d 93 TZi7 q6Wo nEDLwW Sz e Sue YFX 8cM hm Y6is 15pX aOYBbV fS C haL kBR Ks6 UO qG4j DVab fbdtny fi D BFI 7uh B39 FJ 6mYr CUUT f2X38J 43 K yZg 87i gFR 5R z1t3 jH9x lOg1h7 P7 W w8w jMJ qH3 l5 J5wU 8eH0 OogRCv L7 f JJg 1ug RfM XI GSuE Efbh 3hdNY3 x1 9 7jR qeP cdu sb fkuJ hEpw MvNBZV zL u qxJ 9b1 BTf Yk RJLj Oo1a EPIXvZ Aj v Xne fhK GsJ Ga wqjt U7r6 MPoydE H2 6 203 mGi JhF nT NCDB YlnP oKO6Pu XU 3 uu9 mSg 41v ma kk0E WUpS UtGBtD e6 d Kdx ZNT FuT i1 fMcM hq7P Ovf0hg Hl 8 fqv I3R K39 fn 9MaC Zgow 6e1iXj KC 5 lHO lpG pkK Xd Dxtz 0HxE fSMjXY L8 F vh7 dmJ kE8 QA KDo1 FqML HOZ2iL 9i I m3L Kva YiN K9 sb48 NxwY NR0nx2 t5 b WCk x2a 31k a8 fUIa RGzr 7oigRX 5s m 9PQ 7Sr 5St ZE Ymp8 VIWS hdzgDI 9v R F5J 81x 33n Ne fjBT VvGP vGsxQh Al G Fbe 1bQ i6J ap OJJa ceGq 1vvb8r F2 F 3M6 8eD lzG tX tVm5 y14v mwIXa2 OG Y hxU sXJ 0qg l5 ZGAt HPZd oDWrSb BS u NKi 6KW gr3 9s 9tc7 WM4A ws1PzI 5c C O7Z 8y9 lMT LA dwhz Mxz9 hjlWHj bJ 5 CqM jht y9l Mn 4rc7 6Amk KJimvH EQ102} \end{align} which after applying Theorem~\ref{T02} concludes the proof of~\eqref{ERTWERTHWRTWERTSGDGHCFGSDFGQSERWDFGDSFGHSDRGTEHDFGHDSFGSDGHGYUHDFGSDFASDFASGTWRT99}. To show \eqref{ERTWERTHWRTWERTSGDGHCFGSDFGQSERWDFGDSFGHSDRGTEHDFGHDSFGSDGHGYUHDFGSDFASDFASGTWRT98}, we apply It\^o's formula to~\eqref{ERTWERTHWRTWERTSGDGHCFGSDFGQSERWDFGDSFGHSDRGTEHDFGHDSFGSDGHGYUHDFGSDFASDFASGTWRT95} componentwise on $[0, T\wedge \tau^n_M]$, obtaining   \begin{align}   \begin{split}   \Vert\un_j(t)\Vert_{p}^{p}   &=   \left\Vert      \pkn \left(            \varphi              \left(\frac{\cdot}{n}              \right) u_{0,j}           \right)    \right\Vert_{p}^{p}   -\frac{4(p-1)}{p} \int_0^{t\wedge \tau^n_M}\int_{\RR^\dd} |\nabla (|\un_j|^{p/2})|^2\,dx ds   \\&\indeq +p\int_0^{t\wedge \tau^n_M}(\phin)^2 \int_{\RR^\dd} |\un_j|^{p-2}\un_j\pkn\bigl(\mathcal{P}\bigl(( \un\cdot \nabla)\pkn\un\bigr)\bigr)_j\,dx ds \\&\indeq + p\int_0^{t\wedge \tau^n_M}(\phin)^2 \int_{\RR^\dd} |\un_j|^{p-2}\un_j \pkn\sigma_j(\pkn\un) \,dxd\WW_s \\&\indeq +\frac{p(p-1)}{2}\int_0^{t\wedge \tau^n_M}(\phin)^4 \int_{\RR^\dd} |\un_j|^{p-2}\Vert \pkn\sigma_j(\pkn\un)\Vert_{l^2}^2\,dx ds \comma j=1,2,3 . \end{split}  \label{ERTWERTHWRTWERTSGDGHCFGSDFGQSERWDFGDSFGHSDRGTEHDFGHDSFGSDGHGYUHDFGSDFASDFASGTWRT103} \end{align} For the convective term, we recall \eqref{ERTWERTHWRTWERTSGDGHCFGSDFGQSERWDFGDSFGHSDRGTEHDFGHDSFGSDGHGYUHDFGSDFASDFASGTWRT43}--\eqref{ERTWERTHWRTWERTSGDGHCFGSDFGQSERWDFGDSFGHSDRGTEHDFGHDSFGSDGHGYUHDFGSDFASDFASGTWRT44} and obtain   \begin{align}   \begin{split}   &\int_0^{t\wedge\tau_M^n}(\phin)^2   \left|   \int_{\RR^\dd}  |\un_j|^{p-2}\un_j\pkn\bigl(\mathcal{P}\bigl(( \un\cdot \nabla)\pkn\un\bigr)\bigr)_j\,dx   \right|   \,ds   \\&\indeq   \leq    \frac{1}{p^2}   \int_{0}^{t\wedge\tau_M^n}   \Vert   \nabla (|\un_j|^{p/2})   \Vert_2^2   \,ds   + C   \int_{0}^{t\wedge\tau_M^n}    \biggl(   \Vert\un_j\Vert_p^p   + \sum_i   \Vert  (\phin)^2 \pkn\mathcal{P}(\un_i \pkn \un)\Vert_{ q}^p    \biggr)   \,ds     \\&\indeq   \leq    \frac{1}{p^2}   \int_{0}^{t\wedge\tau_M^n}   \Vert   \nabla (|\un_j|^{p/2})   \Vert_2^2   \,ds   + C   \int_{0}^{t\wedge\tau_M^n}    (\Vert\un\Vert_p^p   +    \Vert\un\Vert_{3p}^{\theta p})   \,ds   \comma j=1,2,3   ,   \end{split}    \llabel{kO yDs zo5 bz d36q WH1k J7Jtkg V1 J xqr Fnq mcU yZ JTp9 oFIc FAk0IT A9 3 SrL axO 9oU Z3 jG6f BRL1 iZ7ZE6 zj 8 G3M Hu8 6Ay jt 3flY cmTk jiTSYv CF t JLq cJP tN7 E3 POqG OKe0 3K3WV0 ep W XDQ C97 YSb AD ZUNp 81GF fCPbj3 iq E t0E NXy pLv fo Iz6z oFoF 9lkIun Xj Y yYL 52U bRB jx kQUS U9mm XtzIHO Cz 1 KH4 9ez 6Pz qW F223 C0Iz 3CsvuT R9 s VtQ CcM 1eo pD Py2l EEzL U0USJt Jb 9 zgy Gyf iQ4 fo Cx26 k4jL E0ula6 aS I rZQ HER 5HV CE BL55 WCtB 2LCmve TD z Vcp 7UR gI7 Qu FbFw 9VTx JwGrzs VW M 9sM JeJ Nd2 VG GFsi WuqC 3YxXoJ GK w Io7 1fg sGm 0P YFBz X8eX 7pf9GJ b1 o XUs 1q0 6KP Ls MucN ytQb L0Z0Qq m1 l SPj 9MT etk L6 KfsC 6Zob Yhc2qu Xy 9 GPm ZYj 1Go ei feJ3 pRAf n6Ypy6 jN s 4Y5 nSE pqN 4m Rmam AGfY HhSaBr Ls D THC SEl UyR Mh 66XU 7hNz pZVC5V nV 7 VjL 7kv WKf 7P 5hj6 t1vu gkLGdN X8 b gOX HWm 6W4 YE mxFG 4WaN EbGKsv 0p 4 OG0 Nrd uTe Za xNXq V4Bp mOdXIq 9a b PeD PbU Z4N Xt ohbY egCf xBNttE wc D YSD 637 jJ2 ms 6Ta1 J2xZ PtKnPw AX A tJA Rc8 n5d 93 TZi7 q6Wo nEDLwW Sz e Sue YFX 8cM hm Y6is 15pX aOYBbV fS C haL kBR Ks6 UO qG4j DVab fbdtny fi D BFI 7uh B39 FJ 6mYr CUUT f2X38J 43 K yZg 87i gFR 5R z1t3 jH9x lOg1h7 P7 W w8w jMJ qH3 l5 J5wU 8eH0 OogRCv L7 f JJg 1ug RfM XI GSuE Efbh 3hdNY3 x1 9 7jR qeP cdu sb fkuJ hEpw MvNBZV zL u qxJ 9b1 BTf Yk RJLj Oo1a EPIXvZ Aj v Xne fhK GsJ Ga wqjt U7r6 MPoydE H2 6 203 mGi JhF nT NCDB YlnP oKO6Pu XU 3 uu9 mSg 41v ma kk0E WUpS UtGBtD e6 d Kdx ZNT FuT i1 fMcM hq7P Ovf0hg Hl 8 fqv I3R K39 fn 9MaC Zgow 6e1iXj KC 5 lHO lpG pkK Xd Dxtz 0HxE fSMjXY L8 F vh7 dmJ kE8 QA KDo1 FqML HOZ2iL 9i I m3L Kva YiN K9 sb48 NxwY NR0nx2 t5 b WCk x2a 31k a8 fUIa RGzr 7oigRX 5s m 9PQ 7Sr 5St ZE Ymp8 VIWS hdzgDI 9v R F5J 81x 33n Ne fjBT VvGP vGsxQh Al G Fbe 1bQ i6J ap OJJa ceGq 1vvb8r F2 F 3M6 8eD lzG tX tVm5 y14v mwIXa2 OG Y hxU sXJ 0qg l5 ZGAt HPZd oDWrSb BS u NKi 6KW gr3 9s 9tc7 WM4A ws1PzI 5c C O7Z 8y9 lMT LA dwhz Mxz9 hjlWHj bJ 5 CqM jht y9l Mn 4rc7 6Amk KJimvH 9r O tbc tCK rsi B0 4cFV Dl1g cvfWh6 5n x y9Z S4W Pyo QB yr3v fBkj TZKtEZ 7r U fdM icd yCV qn D036 HJWM tYfL9f yX x O7m IcF E1O uL QsAQ NfWv 6kV8Im 7Q 6 GsX NCV 0YP oC jnWEQ104}   \end{align} where $C$ is independent of $n$ and~$M$, and $\theta\in(0,1)$ (cf.~\eqref{ERTWERTHWRTWERTSGDGHCFGSDFGQSERWDFGDSFGHSDRGTEHDFGHDSFGSDGHGYUHDFGSDFASDFASGTWRT100}--\eqref{ERTWERTHWRTWERTSGDGHCFGSDFGQSERWDFGDSFGHSDRGTEHDFGHDSFGSDGHGYUHDFGSDFASDFASGTWRT101}). Note that $\int_{0}^{t\wedge\tau_M^n} \Vert \nabla (|\un_j|^{p/2}) \Vert_2^2 \,ds$ is finite almost surely due to~\eqref{ERTWERTHWRTWERTSGDGHCFGSDFGQSERWDFGDSFGHSDRGTEHDFGHDSFGSDGHGYUHDFGSDFASDFASGTWRT99}. For the quadratic variation, we have    \begin{align}   \begin{split}   &\int_0^{t\wedge\tau_M^n}(\phin)^4\int_{\RR^\dd} |\un_j|^{p-2}\Vert \pkn\sigma_j(\pkn\un)\Vert_{l^2}^2\,dx ds    \\&\indeq   \leq    C\int_0^{t\wedge\tau_M^n}\phin   (\Vert\un_j\Vert_p^{p}+   \Vert\un\Vert_{(3p/2)-}^{2p}+1)\,ds   \\&\indeq   \leq   C\int_0^{t\wedge\tau_M^n}\phin   (\Vert\un_j\Vert_p^{p}+   \Vert\un\Vert_{p}^{p+}\Vert\un\Vert_{3p}^{p-}+1)\,ds     \\&\indeq   \leq C   \int_{0}^{t\wedge\tau_M^n}    (\Vert\un\Vert_p^p   +    \Vert\un\Vert_{3p}^{p-}+1)   \,ds   \comma j=1,2,3   .   \end{split}    \label{ERTWERTHWRTWERTSGDGHCFGSDFGQSERWDFGDSFGHSDRGTEHDFGHDSFGSDGHGYUHDFGSDFASDFASGTWRT105}   \end{align}
From \eqref{ERTWERTHWRTWERTSGDGHCFGSDFGQSERWDFGDSFGHSDRGTEHDFGHDSFGSDGHGYUHDFGSDFASDFASGTWRT103} and the definition of $\tau_M^n$, it follows that   \begin{align}   \begin{split}    &\sum_{j}\Vert\un_j\Vert_{p}^{p}    +     \sum_{j} \int_0^{t\wedge\tau_M^n}    \int_{\RR^\dd} |\nabla (|\un_j|^{p/2})|^2    \,dx ds     \\&\indeq   \leq     \sum_{j}\biggl\Vert \pkn \left(\varphi\left(\frac{\cdot}{n}\right) u_{0,j} \right) \biggr\Vert_{p}^{p}      +Ct(MK^p+1)      +      C\int_0^{t\wedge\tau_M^n}    (\Vert\un\Vert_{3p}^{p-}+\Vert\un\Vert_{3p}^{\theta p})\,ds    \\ & \indeq\indeq   + C\biggl| \sum_{j}   \int_0^{t\wedge\tau_M^n}\int_{\RR^\dd} |\un_j|^{p-2}\un_j \pkn\sigma_j(\pkn\un)\,dxd\WW_s      \biggr|   ,   \end{split}    \llabel{ 3K3WV0 ep W XDQ C97 YSb AD ZUNp 81GF fCPbj3 iq E t0E NXy pLv fo Iz6z oFoF 9lkIun Xj Y yYL 52U bRB jx kQUS U9mm XtzIHO Cz 1 KH4 9ez 6Pz qW F223 C0Iz 3CsvuT R9 s VtQ CcM 1eo pD Py2l EEzL U0USJt Jb 9 zgy Gyf iQ4 fo Cx26 k4jL E0ula6 aS I rZQ HER 5HV CE BL55 WCtB 2LCmve TD z Vcp 7UR gI7 Qu FbFw 9VTx JwGrzs VW M 9sM JeJ Nd2 VG GFsi WuqC 3YxXoJ GK w Io7 1fg sGm 0P YFBz X8eX 7pf9GJ b1 o XUs 1q0 6KP Ls MucN ytQb L0Z0Qq m1 l SPj 9MT etk L6 KfsC 6Zob Yhc2qu Xy 9 GPm ZYj 1Go ei feJ3 pRAf n6Ypy6 jN s 4Y5 nSE pqN 4m Rmam AGfY HhSaBr Ls D THC SEl UyR Mh 66XU 7hNz pZVC5V nV 7 VjL 7kv WKf 7P 5hj6 t1vu gkLGdN X8 b gOX HWm 6W4 YE mxFG 4WaN EbGKsv 0p 4 OG0 Nrd uTe Za xNXq V4Bp mOdXIq 9a b PeD PbU Z4N Xt ohbY egCf xBNttE wc D YSD 637 jJ2 ms 6Ta1 J2xZ PtKnPw AX A tJA Rc8 n5d 93 TZi7 q6Wo nEDLwW Sz e Sue YFX 8cM hm Y6is 15pX aOYBbV fS C haL kBR Ks6 UO qG4j DVab fbdtny fi D BFI 7uh B39 FJ 6mYr CUUT f2X38J 43 K yZg 87i gFR 5R z1t3 jH9x lOg1h7 P7 W w8w jMJ qH3 l5 J5wU 8eH0 OogRCv L7 f JJg 1ug RfM XI GSuE Efbh 3hdNY3 x1 9 7jR qeP cdu sb fkuJ hEpw MvNBZV zL u qxJ 9b1 BTf Yk RJLj Oo1a EPIXvZ Aj v Xne fhK GsJ Ga wqjt U7r6 MPoydE H2 6 203 mGi JhF nT NCDB YlnP oKO6Pu XU 3 uu9 mSg 41v ma kk0E WUpS UtGBtD e6 d Kdx ZNT FuT i1 fMcM hq7P Ovf0hg Hl 8 fqv I3R K39 fn 9MaC Zgow 6e1iXj KC 5 lHO lpG pkK Xd Dxtz 0HxE fSMjXY L8 F vh7 dmJ kE8 QA KDo1 FqML HOZ2iL 9i I m3L Kva YiN K9 sb48 NxwY NR0nx2 t5 b WCk x2a 31k a8 fUIa RGzr 7oigRX 5s m 9PQ 7Sr 5St ZE Ymp8 VIWS hdzgDI 9v R F5J 81x 33n Ne fjBT VvGP vGsxQh Al G Fbe 1bQ i6J ap OJJa ceGq 1vvb8r F2 F 3M6 8eD lzG tX tVm5 y14v mwIXa2 OG Y hxU sXJ 0qg l5 ZGAt HPZd oDWrSb BS u NKi 6KW gr3 9s 9tc7 WM4A ws1PzI 5c C O7Z 8y9 lMT LA dwhz Mxz9 hjlWHj bJ 5 CqM jht y9l Mn 4rc7 6Amk KJimvH 9r O tbc tCK rsi B0 4cFV Dl1g cvfWh6 5n x y9Z S4W Pyo QB yr3v fBkj TZKtEZ 7r U fdM icd yCV qn D036 HJWM tYfL9f yX x O7m IcF E1O uL QsAQ NfWv 6kV8Im 7Q 6 GsX NCV 0YP oC jnWn 6L25 qUMTe7 1v a hnH DAo XAb Tc zhPc fjrj W5M5G0 nz N M5T nlJ WOP Lh M6U2 ZFxw pg4Nej P8 U Q09 JX9 n7S kE WixE Rwgy Fvttzp 4A s v5F Tnn MzL Vh FUn5 6tFY CxZ1Bz Q3 E TfD EQ106}   \end{align} which implies    \begin{align}   \begin{split}   &\sup_{[0, t\wedge \tau_M^n]}\Vert\un\Vert_{p}^{p}   +    \sum_{j} \int_0^{t\wedge\tau_M^n}   \int_{\RR^\dd} |\nabla (|\un_j|^{p/2})|^2   \,dx ds    \\&\indeq   \leq    2\sum_{j}\left\Vert                \pkn \left(\varphi\left(\frac{\cdot}{n}\right) u_{0,j} \right)              \right\Vert_{p}^{p}   +Ct(MK^p+1)   +   C\int_0^{t\wedge\tau_M^n}   (\Vert\un\Vert_{3p}^{p-}+\Vert\un\Vert_{3p}^{\theta p})\,ds   \\ & \indeq\indeq     + C\sup_{[0, t\wedge \tau_M^n]}\biggl| \sum_{j}   \int_0^{s}(\phin)^2\int_{\RR^\dd} |\un_j|^{p-2}\un_j \pkn\sigma_j(\pkn \un)\,dxd\WW_r   \biggr|   .   \end{split}    \llabel{o pD Py2l EEzL U0USJt Jb 9 zgy Gyf iQ4 fo Cx26 k4jL E0ula6 aS I rZQ HER 5HV CE BL55 WCtB 2LCmve TD z Vcp 7UR gI7 Qu FbFw 9VTx JwGrzs VW M 9sM JeJ Nd2 VG GFsi WuqC 3YxXoJ GK w Io7 1fg sGm 0P YFBz X8eX 7pf9GJ b1 o XUs 1q0 6KP Ls MucN ytQb L0Z0Qq m1 l SPj 9MT etk L6 KfsC 6Zob Yhc2qu Xy 9 GPm ZYj 1Go ei feJ3 pRAf n6Ypy6 jN s 4Y5 nSE pqN 4m Rmam AGfY HhSaBr Ls D THC SEl UyR Mh 66XU 7hNz pZVC5V nV 7 VjL 7kv WKf 7P 5hj6 t1vu gkLGdN X8 b gOX HWm 6W4 YE mxFG 4WaN EbGKsv 0p 4 OG0 Nrd uTe Za xNXq V4Bp mOdXIq 9a b PeD PbU Z4N Xt ohbY egCf xBNttE wc D YSD 637 jJ2 ms 6Ta1 J2xZ PtKnPw AX A tJA Rc8 n5d 93 TZi7 q6Wo nEDLwW Sz e Sue YFX 8cM hm Y6is 15pX aOYBbV fS C haL kBR Ks6 UO qG4j DVab fbdtny fi D BFI 7uh B39 FJ 6mYr CUUT f2X38J 43 K yZg 87i gFR 5R z1t3 jH9x lOg1h7 P7 W w8w jMJ qH3 l5 J5wU 8eH0 OogRCv L7 f JJg 1ug RfM XI GSuE Efbh 3hdNY3 x1 9 7jR qeP cdu sb fkuJ hEpw MvNBZV zL u qxJ 9b1 BTf Yk RJLj Oo1a EPIXvZ Aj v Xne fhK GsJ Ga wqjt U7r6 MPoydE H2 6 203 mGi JhF nT NCDB YlnP oKO6Pu XU 3 uu9 mSg 41v ma kk0E WUpS UtGBtD e6 d Kdx ZNT FuT i1 fMcM hq7P Ovf0hg Hl 8 fqv I3R K39 fn 9MaC Zgow 6e1iXj KC 5 lHO lpG pkK Xd Dxtz 0HxE fSMjXY L8 F vh7 dmJ kE8 QA KDo1 FqML HOZ2iL 9i I m3L Kva YiN K9 sb48 NxwY NR0nx2 t5 b WCk x2a 31k a8 fUIa RGzr 7oigRX 5s m 9PQ 7Sr 5St ZE Ymp8 VIWS hdzgDI 9v R F5J 81x 33n Ne fjBT VvGP vGsxQh Al G Fbe 1bQ i6J ap OJJa ceGq 1vvb8r F2 F 3M6 8eD lzG tX tVm5 y14v mwIXa2 OG Y hxU sXJ 0qg l5 ZGAt HPZd oDWrSb BS u NKi 6KW gr3 9s 9tc7 WM4A ws1PzI 5c C O7Z 8y9 lMT LA dwhz Mxz9 hjlWHj bJ 5 CqM jht y9l Mn 4rc7 6Amk KJimvH 9r O tbc tCK rsi B0 4cFV Dl1g cvfWh6 5n x y9Z S4W Pyo QB yr3v fBkj TZKtEZ 7r U fdM icd yCV qn D036 HJWM tYfL9f yX x O7m IcF E1O uL QsAQ NfWv 6kV8Im 7Q 6 GsX NCV 0YP oC jnWn 6L25 qUMTe7 1v a hnH DAo XAb Tc zhPc fjrj W5M5G0 nz N M5T nlJ WOP Lh M6U2 ZFxw pg4Nej P8 U Q09 JX9 n7S kE WixE Rwgy Fvttzp 4A s v5F Tnn MzL Vh FUn5 6tFY CxZ1Bz Q3 E TfD lCa d7V fo MwPm ngrD HPfZV0 aY k Ojr ZUw 799 et oYuB MIC4 ovEY8D OL N URV Q5l ti1 iS NZAd wWr6 Q8oPFf ae 5 lAR 9gD RSi HO eJOW wxLv 20GoMt 2H z 7Yc aly PZx eR uFM0 7gaV 9UEQ107}   \end{align}  Since $\sum_{j}\Vert \pkn (\varphi(\fractext{\cdot}{n}) u_{0,j} ) \Vert_{p}^{p}\leq M_0K^p/4$, by subadditivity of the probability measure,   \begin{align}   \begin{split}   &\PP\biggl(   \sup_{[0, t\wedge \tau_M^n]} \Vert u^{(n)}\Vert_p^p   +\sum_{j} \int_0^{t\wedge \tau_M^n}    \int_{\RR^3} | \nabla (|u^{(n)}_j|^{p/2})|^2 \,dx ds   \geq  M_0K^p   \biggr)   \\&\indeq   \leq    \PP\biggl(  C t(MK^p+1)   \geq \frac{M_0K^p}{6}   \biggr)+\PP\biggl(   C\int_0^{t\wedge \tau_M^n} (\Vert\un\Vert_{3p}^{p-}+\Vert\un\Vert_{3p}^{\theta p})\,ds   \geq \frac{M_0K^p}{6}   \biggr)   \\&\indeq\indeq   +   \PP\biggl(   C\sum_j\sup_{[0,t\wedge \tau_M^n]}   \biggl| \int_0^s(\phin)^2\int_{\RR^\dd} |\un_j|^{p-2}\un_j \pkn\sigma_j(\pkn \un)\,dxd\WW_r\biggr|   \geq \frac{M_0K^p}{6}   \biggr).   \end{split}    \llabel{K w Io7 1fg sGm 0P YFBz X8eX 7pf9GJ b1 o XUs 1q0 6KP Ls MucN ytQb L0Z0Qq m1 l SPj 9MT etk L6 KfsC 6Zob Yhc2qu Xy 9 GPm ZYj 1Go ei feJ3 pRAf n6Ypy6 jN s 4Y5 nSE pqN 4m Rmam AGfY HhSaBr Ls D THC SEl UyR Mh 66XU 7hNz pZVC5V nV 7 VjL 7kv WKf 7P 5hj6 t1vu gkLGdN X8 b gOX HWm 6W4 YE mxFG 4WaN EbGKsv 0p 4 OG0 Nrd uTe Za xNXq V4Bp mOdXIq 9a b PeD PbU Z4N Xt ohbY egCf xBNttE wc D YSD 637 jJ2 ms 6Ta1 J2xZ PtKnPw AX A tJA Rc8 n5d 93 TZi7 q6Wo nEDLwW Sz e Sue YFX 8cM hm Y6is 15pX aOYBbV fS C haL kBR Ks6 UO qG4j DVab fbdtny fi D BFI 7uh B39 FJ 6mYr CUUT f2X38J 43 K yZg 87i gFR 5R z1t3 jH9x lOg1h7 P7 W w8w jMJ qH3 l5 J5wU 8eH0 OogRCv L7 f JJg 1ug RfM XI GSuE Efbh 3hdNY3 x1 9 7jR qeP cdu sb fkuJ hEpw MvNBZV zL u qxJ 9b1 BTf Yk RJLj Oo1a EPIXvZ Aj v Xne fhK GsJ Ga wqjt U7r6 MPoydE H2 6 203 mGi JhF nT NCDB YlnP oKO6Pu XU 3 uu9 mSg 41v ma kk0E WUpS UtGBtD e6 d Kdx ZNT FuT i1 fMcM hq7P Ovf0hg Hl 8 fqv I3R K39 fn 9MaC Zgow 6e1iXj KC 5 lHO lpG pkK Xd Dxtz 0HxE fSMjXY L8 F vh7 dmJ kE8 QA KDo1 FqML HOZ2iL 9i I m3L Kva YiN K9 sb48 NxwY NR0nx2 t5 b WCk x2a 31k a8 fUIa RGzr 7oigRX 5s m 9PQ 7Sr 5St ZE Ymp8 VIWS hdzgDI 9v R F5J 81x 33n Ne fjBT VvGP vGsxQh Al G Fbe 1bQ i6J ap OJJa ceGq 1vvb8r F2 F 3M6 8eD lzG tX tVm5 y14v mwIXa2 OG Y hxU sXJ 0qg l5 ZGAt HPZd oDWrSb BS u NKi 6KW gr3 9s 9tc7 WM4A ws1PzI 5c C O7Z 8y9 lMT LA dwhz Mxz9 hjlWHj bJ 5 CqM jht y9l Mn 4rc7 6Amk KJimvH 9r O tbc tCK rsi B0 4cFV Dl1g cvfWh6 5n x y9Z S4W Pyo QB yr3v fBkj TZKtEZ 7r U fdM icd yCV qn D036 HJWM tYfL9f yX x O7m IcF E1O uL QsAQ NfWv 6kV8Im 7Q 6 GsX NCV 0YP oC jnWn 6L25 qUMTe7 1v a hnH DAo XAb Tc zhPc fjrj W5M5G0 nz N M5T nlJ WOP Lh M6U2 ZFxw pg4Nej P8 U Q09 JX9 n7S kE WixE Rwgy Fvttzp 4A s v5F Tnn MzL Vh FUn5 6tFY CxZ1Bz Q3 E TfD lCa d7V fo MwPm ngrD HPfZV0 aY k Ojr ZUw 799 et oYuB MIC4 ovEY8D OL N URV Q5l ti1 iS NZAd wWr6 Q8oPFf ae 5 lAR 9gD RSi HO eJOW wxLv 20GoMt 2H z 7Yc aly PZx eR uFM0 7gaV 9UIz7S 43 k 5Tr ZiD Mt7 pE NCYi uHL7 gac7Gq yN 6 Z1u x56 YZh 2d yJVx 9MeU OMWBQf l0 E mIc 5Zr yfy 3i rahC y9Pi MJ7ofo Op d enn sLi xZx Jt CjC9 M71v O0fxiR 51 m FIB QRo 1oW IEQ108}   \end{align} Note that $\lim_{t\to 0}\PP(Ct(M^p+1) \geq M_0K^p/6)=0$. Then by H\"{o}lder's inequality and the definition of $\tau_M^n$,    \begin{equation}   \lim_{t\rightarrow0} \sup_n\PP\biggl(C\int_0^{t\wedge \tau_M^n} (\Vert\un\Vert_{3p}^{p-}+\Vert\un\Vert_{3p}^{\theta p})\,dr            \geq \frac{M_0K^p}{6}\biggr) = 0.    \llabel{ AGfY HhSaBr Ls D THC SEl UyR Mh 66XU 7hNz pZVC5V nV 7 VjL 7kv WKf 7P 5hj6 t1vu gkLGdN X8 b gOX HWm 6W4 YE mxFG 4WaN EbGKsv 0p 4 OG0 Nrd uTe Za xNXq V4Bp mOdXIq 9a b PeD PbU Z4N Xt ohbY egCf xBNttE wc D YSD 637 jJ2 ms 6Ta1 J2xZ PtKnPw AX A tJA Rc8 n5d 93 TZi7 q6Wo nEDLwW Sz e Sue YFX 8cM hm Y6is 15pX aOYBbV fS C haL kBR Ks6 UO qG4j DVab fbdtny fi D BFI 7uh B39 FJ 6mYr CUUT f2X38J 43 K yZg 87i gFR 5R z1t3 jH9x lOg1h7 P7 W w8w jMJ qH3 l5 J5wU 8eH0 OogRCv L7 f JJg 1ug RfM XI GSuE Efbh 3hdNY3 x1 9 7jR qeP cdu sb fkuJ hEpw MvNBZV zL u qxJ 9b1 BTf Yk RJLj Oo1a EPIXvZ Aj v Xne fhK GsJ Ga wqjt U7r6 MPoydE H2 6 203 mGi JhF nT NCDB YlnP oKO6Pu XU 3 uu9 mSg 41v ma kk0E WUpS UtGBtD e6 d Kdx ZNT FuT i1 fMcM hq7P Ovf0hg Hl 8 fqv I3R K39 fn 9MaC Zgow 6e1iXj KC 5 lHO lpG pkK Xd Dxtz 0HxE fSMjXY L8 F vh7 dmJ kE8 QA KDo1 FqML HOZ2iL 9i I m3L Kva YiN K9 sb48 NxwY NR0nx2 t5 b WCk x2a 31k a8 fUIa RGzr 7oigRX 5s m 9PQ 7Sr 5St ZE Ymp8 VIWS hdzgDI 9v R F5J 81x 33n Ne fjBT VvGP vGsxQh Al G Fbe 1bQ i6J ap OJJa ceGq 1vvb8r F2 F 3M6 8eD lzG tX tVm5 y14v mwIXa2 OG Y hxU sXJ 0qg l5 ZGAt HPZd oDWrSb BS u NKi 6KW gr3 9s 9tc7 WM4A ws1PzI 5c C O7Z 8y9 lMT LA dwhz Mxz9 hjlWHj bJ 5 CqM jht y9l Mn 4rc7 6Amk KJimvH 9r O tbc tCK rsi B0 4cFV Dl1g cvfWh6 5n x y9Z S4W Pyo QB yr3v fBkj TZKtEZ 7r U fdM icd yCV qn D036 HJWM tYfL9f yX x O7m IcF E1O uL QsAQ NfWv 6kV8Im 7Q 6 GsX NCV 0YP oC jnWn 6L25 qUMTe7 1v a hnH DAo XAb Tc zhPc fjrj W5M5G0 nz N M5T nlJ WOP Lh M6U2 ZFxw pg4Nej P8 U Q09 JX9 n7S kE WixE Rwgy Fvttzp 4A s v5F Tnn MzL Vh FUn5 6tFY CxZ1Bz Q3 E TfD lCa d7V fo MwPm ngrD HPfZV0 aY k Ojr ZUw 799 et oYuB MIC4 ovEY8D OL N URV Q5l ti1 iS NZAd wWr6 Q8oPFf ae 5 lAR 9gD RSi HO eJOW wxLv 20GoMt 2H z 7Yc aly PZx eR uFM0 7gaV 9UIz7S 43 k 5Tr ZiD Mt7 pE NCYi uHL7 gac7Gq yN 6 Z1u x56 YZh 2d yJVx 9MeU OMWBQf l0 E mIc 5Zr yfy 3i rahC y9Pi MJ7ofo Op d enn sLi xZx Jt CjC9 M71v O0fxiR 51 m FIB QRo 1oW Iq 3gDP stD2 ntfoX7 YU o S5k GuV IGM cf HZe3 7ZoG A1dDmk XO 2 KYR LpJ jII om M6Nu u8O0 jO5Nab Ub R nZn 15k hG9 4S 21V4 Ip45 7ooaiP u2 j hIz osW FDu O5 HdGr djvv tTLBjo vL LEQ109}   \end{equation} To get a similar bound for the terms involving $\sigma_j$, we appeal to the BDG inequality, obtaining   \begin{align}   \begin{split}   &\EE\biggl[   \sup_{0\leq s\leq t\wedge \tau_M^n}\biggl| \int_0^s(\phin)^2\int_{\RR^\dd} |\un_j|^{p-2}\un_j \pkn\sigma_j(\pkn\un)\,dxd\WW_r\biggr|   \biggr]   \\&\indeq   \leq    C\EE\biggl[\biggl(   \int_0^{t\wedge \tau_M^n}\biggl(\int_{\RR^\dd} \phin|\un_j|^{p-1}\Vert \pkn\sigma_j(\pkn\un)\Vert_{l^2}\,dx\biggr)^2\,ds   \biggr)^{1/2}\biggr]   \\&\indeq   \leq Ct^{1/2-1/p}\EE\biggl[ \sup_{0\leq s \leq t\wedge \tau_M^n}\Vert\un(s)\Vert_{p}^{p-1}   \biggl(   \int_0^{t\wedge \tau_M^n} \phin\int_{\RR^\dd}\Vert \pkn\sigma_j(\pkn\un)\Vert_{l^2}^p\,dx\,ds   \biggr)^{1/p}   \biggr]   \\&\indeq   \leq    Ct^{1/2-1/p}\EE\biggl[\sup_{0\leq s \leq t\wedge \tau_M^n}\Vert\un(s)\Vert_{p}^{p-1}   \biggl(   \int_0^{t\wedge \tau_M^n}\phin (\Vert \un\Vert_{(3p/2)-}^{2p}+1)\,ds   \biggr)^{1/p}   \biggr]   \\&\indeq    \leq Ct^{1/2-1/p}\EE\biggl[ \sup_{0\leq s \leq t\wedge \tau_M^n}\Vert\un(s)\Vert_{p}^{p-1}   \biggl(   \int_0^{t\wedge \tau_M^n} (\phin\Vert\un\Vert_{p}^{p+}\Vert\un\Vert_{3p}^{p-}+1)\,ds   \biggr)^{1/p}      \biggr]   \\&\indeq   \leq Ct^{(1/2-1/p)+}M^{(p-1)/p}K^{p-1}(MK^{p}+t)^{(1/p)-} \comma t\in (0,T]\commaone    j=1,2,3
  .   \end{split}    \llabel{bU Z4N Xt ohbY egCf xBNttE wc D YSD 637 jJ2 ms 6Ta1 J2xZ PtKnPw AX A tJA Rc8 n5d 93 TZi7 q6Wo nEDLwW Sz e Sue YFX 8cM hm Y6is 15pX aOYBbV fS C haL kBR Ks6 UO qG4j DVab fbdtny fi D BFI 7uh B39 FJ 6mYr CUUT f2X38J 43 K yZg 87i gFR 5R z1t3 jH9x lOg1h7 P7 W w8w jMJ qH3 l5 J5wU 8eH0 OogRCv L7 f JJg 1ug RfM XI GSuE Efbh 3hdNY3 x1 9 7jR qeP cdu sb fkuJ hEpw MvNBZV zL u qxJ 9b1 BTf Yk RJLj Oo1a EPIXvZ Aj v Xne fhK GsJ Ga wqjt U7r6 MPoydE H2 6 203 mGi JhF nT NCDB YlnP oKO6Pu XU 3 uu9 mSg 41v ma kk0E WUpS UtGBtD e6 d Kdx ZNT FuT i1 fMcM hq7P Ovf0hg Hl 8 fqv I3R K39 fn 9MaC Zgow 6e1iXj KC 5 lHO lpG pkK Xd Dxtz 0HxE fSMjXY L8 F vh7 dmJ kE8 QA KDo1 FqML HOZ2iL 9i I m3L Kva YiN K9 sb48 NxwY NR0nx2 t5 b WCk x2a 31k a8 fUIa RGzr 7oigRX 5s m 9PQ 7Sr 5St ZE Ymp8 VIWS hdzgDI 9v R F5J 81x 33n Ne fjBT VvGP vGsxQh Al G Fbe 1bQ i6J ap OJJa ceGq 1vvb8r F2 F 3M6 8eD lzG tX tVm5 y14v mwIXa2 OG Y hxU sXJ 0qg l5 ZGAt HPZd oDWrSb BS u NKi 6KW gr3 9s 9tc7 WM4A ws1PzI 5c C O7Z 8y9 lMT LA dwhz Mxz9 hjlWHj bJ 5 CqM jht y9l Mn 4rc7 6Amk KJimvH 9r O tbc tCK rsi B0 4cFV Dl1g cvfWh6 5n x y9Z S4W Pyo QB yr3v fBkj TZKtEZ 7r U fdM icd yCV qn D036 HJWM tYfL9f yX x O7m IcF E1O uL QsAQ NfWv 6kV8Im 7Q 6 GsX NCV 0YP oC jnWn 6L25 qUMTe7 1v a hnH DAo XAb Tc zhPc fjrj W5M5G0 nz N M5T nlJ WOP Lh M6U2 ZFxw pg4Nej P8 U Q09 JX9 n7S kE WixE Rwgy Fvttzp 4A s v5F Tnn MzL Vh FUn5 6tFY CxZ1Bz Q3 E TfD lCa d7V fo MwPm ngrD HPfZV0 aY k Ojr ZUw 799 et oYuB MIC4 ovEY8D OL N URV Q5l ti1 iS NZAd wWr6 Q8oPFf ae 5 lAR 9gD RSi HO eJOW wxLv 20GoMt 2H z 7Yc aly PZx eR uFM0 7gaV 9UIz7S 43 k 5Tr ZiD Mt7 pE NCYi uHL7 gac7Gq yN 6 Z1u x56 YZh 2d yJVx 9MeU OMWBQf l0 E mIc 5Zr yfy 3i rahC y9Pi MJ7ofo Op d enn sLi xZx Jt CjC9 M71v O0fxiR 51 m FIB QRo 1oW Iq 3gDP stD2 ntfoX7 YU o S5k GuV IGM cf HZe3 7ZoG A1dDmk XO 2 KYR LpJ jII om M6Nu u8O0 jO5Nab Ub R nZn 15k hG9 4S 21V4 Ip45 7ooaiP u2 j hIz osW FDu O5 HdGr djvv tTLBjo vL L iCo 6L5 Lwa Pm vD6Z pal6 9Ljn11 re T 2CP mvj rL3 xH mDYK uv5T npC1fM oU R RTo Loi lk0 FE ghak m5M9 cOIPdQ lG D LnX erC ykJ C1 0FHh vvnY aTGuqU rf T QPv wEq iHO vO hD6A nXEQ110}   \end{align} By Chebyshev's inequality, we have   \begin{align}   \begin{split}   \lim_{t\rightarrow 0} \sup_n \PP&\biggl(   \sum_j\sup_{0\leq s\leq t\wedge \tau^n_M}   \biggl| \int_0^s\int_{\RR^\dd} |\un_j|^{p-2}\un_j \pkn\sigma_j(\pkn\un)\,dxd\WW_r\biggr|    \geq \frac{M_0K^p}{6}   \biggr)   \\&  \leq  \lim_{t\rightarrow 0} \sup_n C_{M_0, M,K,p}t^{(1/2-1/p)+} = 0,   \end{split}   \llabel{tny fi D BFI 7uh B39 FJ 6mYr CUUT f2X38J 43 K yZg 87i gFR 5R z1t3 jH9x lOg1h7 P7 W w8w jMJ qH3 l5 J5wU 8eH0 OogRCv L7 f JJg 1ug RfM XI GSuE Efbh 3hdNY3 x1 9 7jR qeP cdu sb fkuJ hEpw MvNBZV zL u qxJ 9b1 BTf Yk RJLj Oo1a EPIXvZ Aj v Xne fhK GsJ Ga wqjt U7r6 MPoydE H2 6 203 mGi JhF nT NCDB YlnP oKO6Pu XU 3 uu9 mSg 41v ma kk0E WUpS UtGBtD e6 d Kdx ZNT FuT i1 fMcM hq7P Ovf0hg Hl 8 fqv I3R K39 fn 9MaC Zgow 6e1iXj KC 5 lHO lpG pkK Xd Dxtz 0HxE fSMjXY L8 F vh7 dmJ kE8 QA KDo1 FqML HOZ2iL 9i I m3L Kva YiN K9 sb48 NxwY NR0nx2 t5 b WCk x2a 31k a8 fUIa RGzr 7oigRX 5s m 9PQ 7Sr 5St ZE Ymp8 VIWS hdzgDI 9v R F5J 81x 33n Ne fjBT VvGP vGsxQh Al G Fbe 1bQ i6J ap OJJa ceGq 1vvb8r F2 F 3M6 8eD lzG tX tVm5 y14v mwIXa2 OG Y hxU sXJ 0qg l5 ZGAt HPZd oDWrSb BS u NKi 6KW gr3 9s 9tc7 WM4A ws1PzI 5c C O7Z 8y9 lMT LA dwhz Mxz9 hjlWHj bJ 5 CqM jht y9l Mn 4rc7 6Amk KJimvH 9r O tbc tCK rsi B0 4cFV Dl1g cvfWh6 5n x y9Z S4W Pyo QB yr3v fBkj TZKtEZ 7r U fdM icd yCV qn D036 HJWM tYfL9f yX x O7m IcF E1O uL QsAQ NfWv 6kV8Im 7Q 6 GsX NCV 0YP oC jnWn 6L25 qUMTe7 1v a hnH DAo XAb Tc zhPc fjrj W5M5G0 nz N M5T nlJ WOP Lh M6U2 ZFxw pg4Nej P8 U Q09 JX9 n7S kE WixE Rwgy Fvttzp 4A s v5F Tnn MzL Vh FUn5 6tFY CxZ1Bz Q3 E TfD lCa d7V fo MwPm ngrD HPfZV0 aY k Ojr ZUw 799 et oYuB MIC4 ovEY8D OL N URV Q5l ti1 iS NZAd wWr6 Q8oPFf ae 5 lAR 9gD RSi HO eJOW wxLv 20GoMt 2H z 7Yc aly PZx eR uFM0 7gaV 9UIz7S 43 k 5Tr ZiD Mt7 pE NCYi uHL7 gac7Gq yN 6 Z1u x56 YZh 2d yJVx 9MeU OMWBQf l0 E mIc 5Zr yfy 3i rahC y9Pi MJ7ofo Op d enn sLi xZx Jt CjC9 M71v O0fxiR 51 m FIB QRo 1oW Iq 3gDP stD2 ntfoX7 YU o S5k GuV IGM cf HZe3 7ZoG A1dDmk XO 2 KYR LpJ jII om M6Nu u8O0 jO5Nab Ub R nZn 15k hG9 4S 21V4 Ip45 7ooaiP u2 j hIz osW FDu O5 HdGr djvv tTLBjo vL L iCo 6L5 Lwa Pm vD6Z pal6 9Ljn11 re T 2CP mvj rL3 xH mDYK uv5T npC1fM oU R RTo Loi lk0 FE ghak m5M9 cOIPdQ lG D LnX erC ykJ C1 0FHh vvnY aTGuqU rf T QPv wEq iHO vO hD6A nXuv GlzVAv pz d Ok3 6ym yUo Fb AcAA BItO es52Vq d0 Y c7U 2gB t0W fF VQZh rJHr lBLdCx 8I o dWp AlD S8C HB rNLz xWp6 ypjuwW mg X toy 1vP bra uH yMNb kUrZ D6Ee2f zI D tkZ Eti EQ111}   \end{align} concluding the proof. \end{proof} \par Now we proceed to verify the Cauchy condition for the sequence~$\{\un\}$.  \par \cole \begin{Lemma} \label{L06}Let~$p>d=3$ and~$K\geq1$, and let $k(n+1)$ be the smallest integer that is greater than $(n+1) \EE[\Vert \varphi(\fractext{\cdot}{n+1}) u_0 \Vert_2^2]$ and $k(n)$ in~\eqref{ERTWERTHWRTWERTSGDGHCFGSDFGQSERWDFGDSFGHSDRGTEHDFGHDSFGSDGHGYUHDFGSDFASDFASGTWRT95}. Suppose that $\Vert u_{0}\Vert_{p} \leq K$. Then, there exists $t>0$ such that   \begin{align}   \begin{split}   \lim_{m\to\infty} \sup_{n>m}\EE\biggl[   \sup_{0\leq s\leq \tau_{n, m}}\Vert u^{(n)}(s)-u^{(m)}(s)\Vert_p^p   +\int_0^{\tau_{n, m}}    \Vert u^{(n)}(s)-u^{(m)}(s)\Vert_{3p}^p \,ds   \biggr]   =0,   \end{split}    \label{ERTWERTHWRTWERTSGDGHCFGSDFGQSERWDFGDSFGHSDRGTEHDFGHDSFGSDGHGYUHDFGSDFASDFASGTWRT112}   \end{align} where $\tau_{n, m} =\tau^n_M\wedge \tau^n_M\wedge t$, and the choice of~$t$ may depend on $K$ and $M$ (see~\eqref{ERTWERTHWRTWERTSGDGHCFGSDFGQSERWDFGDSFGHSDRGTEHDFGHDSFGSDGHGYUHDFGSDFASDFASGTWRT96}). \end{Lemma}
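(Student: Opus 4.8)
The plan is to bound the difference $w_{n,m}:=u^{(n)}-u^{(m)}$ by applying the energy estimate of Theorem~\ref{T02} to the equation satisfied by $w_{n,m}$, and then exploit the smallness of the time interval together with the smallness of $\|P_{\le k(n)}u_0-P_{\le k(m)}u_0\|_p$ for large $m$. Write $w=w_{n,m}$. Subtracting the two copies of \eqref{ERTWERTHWRTWERTSGDGHCFGSDFGQSERWDFGDSFGHSDRGTEHDFGHDSFGSDGHGYUHDFGSDFASDFASGTWRT95} (one at index $n$, one at index $m$, assuming $n>m$), we see that $w$ solves a stochastic heat equation
\begin{equation}
\partial_t w-\Delta w=\sum_i\partial_i f_i+g\,\dot\WW(t),\qquad \nabla\cdot w=0,\qquad w(0)=P_{\le k(n)}(\varphi(\tfrac{\cdot}{n})u_0)-P_{\le k(m)}(\varphi(\tfrac{\cdot}{m})u_0),
\end{equation}
on $[0,\tau_{n,m}]$, where $f$ collects the convective contributions and $g$ the noise contributions. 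Both $f$ and $g$ decompose, as in \eqref{ERTWERTHWRTWERTSGDGHCFGSDFGQSERWDFGDSFGHSDRGTEHDFGHDSFGSDGHGYUHDFGSDFASDFASGTWRT72}--\eqref{ERTWERTHWRTWERTSGDGHCFGSDFGQSERWDFGDSFGHSDRGTEHDFGHDSFGSDGHGYUHDFGSDFASDFASGTWRT73}, into three types of terms: (a) differences of the cut-off factors $\varphi(\|u^{(n)}\|_p)-\varphi(\|u^{(m)}\|_p)$, controlled by $\|w\|_p$ via \eqref{ERTWERTHWRTWERTSGDGHCFGSDFGQSERWDFGDSFGHSDRGTEHDFGHDSFGSDGHGYUHDFGSDFASDFASGTWRT146}; (b) differences of the solutions themselves inside the bilinear/noise nonlinearities, controlled by $\|w\|_p$ and $\|w\|_{2p}$ (using the structure of \eqref{ERTWERTHWRTWERTSGDGHCFGSDFGQSERWDFGDSFGHSDRGTEHDFGHDSFGSDGHGYUHDFGSDFASDFASGTWRT04} for the noise, as in \eqref{ERTWERTHWRTWERTSGDGHCFGSDFGQSERWDFGDSFGHSDRGTEHDFGHDSFGSDGHGYUHDFGSDFASDFASGTWRT79}); and (c) the genuinely new terms coming from $P_{\le k(n)}\ne P_{\le k(m)}$ applied to a common argument.

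Second, I would estimate $\EE\int_0^{\tau_{n,m}}\|f\|_q^p\,ds$ and $\EE\int_0^{\tau_{n,m}}\|g\|_{\mathbb L^p}^p\,ds$ exactly as in the proof of Theorem~\ref{T03} (estimates \eqref{ERTWERTHWRTWERTSGDGHCFGSDFGQSERWDFGDSFGHSDRGTEHDFGHDSFGSDGHGYUHDFGSDFASDFASGTWRT76}--\eqref{ERTWERTHWRTWERTSGDGHCFGSDFGQSERWDFGDSFGHSDRGTEHDFGHDSFGSDGHGYUHDFGSDFASDFASGTWRT81}), with the following two modifications. The operator norms of $P_{\le k(n)}$ and $P_{\le k(m)}$ in $L^q\to L^q$ and the Gagliardo--Nirenberg interpolation \eqref{ERTWERTHWRTWERTSGDGHCFGSDFGQSERWDFGDSFGHSDRGTEHDFGHDSFGSDGHGYUHDFGSDFASDFASGTWRT101} are now used in their $n$-\emph{independent} form \eqref{ERTWERTHWRTWERTSGDGHCFGSDFGQSERWDFGDSFGHSDRGTEHDFGHDSFGSDGHGYUHDFGSDFASDFASGTWRT08}, \eqref{ERTWERTHWRTWERTSGDGHCFGSDFGQSERWDFGDSFGHSDRGTEHDFGHDSFGSDGHGYUHDFGSDFASDFASGTWRT10}, so all resulting constants depend only on $p,M,T$ and not on $n,m$; here we crucially use that on $[0,\tau_{n,m}]$ the quantities $\sup_s\|u^{(n)}\|_p^p+\int_0^s\|u^{(n)}\|_{3p}^p\le MK^p$ and likewise for $u^{(m)}$ by definition of the stopping times. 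For the $P_{\le k(n)}-P_{\le k(m)}$ differences in (c), I would apply Lemma~\ref{L03}: for instance $\|(P_{\le k(n)}-P_{\le k(m)})\sigma(P_{\le k(m)}u^{(m)})\|_{\mathbb L^p}\le C|1/k(n)-1/k(m)|\,\|\nabla\sigma(P_{\le k(m)}u^{(m)})\|_{\mathbb L^p}$, and similarly $\|(P_{\le k(n)}-P_{\le k(m)})\mathcal P(u^{(m)}_i P_{\le k(m)}u^{(m)})\|_{q}$; since $k(m)\to\infty$ monotonically, these prefactors go to $0$ uniformly in $n>m$. The $L^p$-norms of $\nabla\sigma$ are controlled by the noise assumption \eqref{ERTWERTHWRTWERTSGDGHCFGSDFGQSERWDFGDSFGHSDRGTEHDFGHDSFGSDGHGYUHDFGSDFASDFASGTWRT05} and the Bernstein-type bound \eqref{ERTWERTHWRTWERTSGDGHCFGSDFGQSERWDFGDSFGHSDRGTEHDFGHDSFGSDGHGYUHDFGSDFASDFASGTWRT09} for $P_{\le k(m)}$ acting on $L^{3p}$, together with the uniform bound on $\int_0^{\tau_{n,m}}\|u^{(m)}\|_{3p}^p\,ds$. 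Also $\|w(0)\|_p\le\|(P_{\le k(n)}-P_{\le k(m)})u_0\|_p+\|P_{\le k(n)}((\varphi(\tfrac{\cdot}{n})-1)u_0)\|_p+\cdots\to 0$ as $m\to\infty$ uniformly in $n>m$ by Lemma~\ref{L02} and the remark after \eqref{ERTWERTHWRTWERTSGDGHCFGSDFGQSERWDFGDSFGHSDRGTEHDFGHDSFGSDGHGYUHDFGSDFASDFASGTWRT95}.

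Third, I would assemble these into an inequality of the form
\begin{equation}
\EE\Bigl[\sup_{[0,\tau_{n,m}]}\|w\|_p^p+\int_0^{\tau_{n,m}}\|w\|_{3p}^p\,ds\Bigr]\le C_{p,M,T}\,t\,\EE\Bigl[\sup_{[0,\tau_{n,m}]}\|w\|_p^p+\int_0^{\tau_{n,m}}\|w\|_{3p}^p\,ds\Bigr]+\delta_m,
\end{equation}
where $\delta_m\to 0$ as $m\to\infty$ uniformly over $n>m$ (coming from the $w(0)$ term and the $|1/k(n)-1/k(m)|$ terms), using Sobolev's inequality \eqref{ERTWERTHWRTWERTSGDGHCFGSDFGQSERWDFGDSFGHSDRGTEHDFGHDSFGSDGHGYUHDFGSDFASDFASGTWRT80} to turn $\|w\|_{3p}^p$ into $\|\nabla(|w_j|^{p/2})|^2\|$ on the left, exactly as in \eqref{ERTWERTHWRTWERTSGDGHCFGSDFGQSERWDFGDSFGHSDRGTEHDFGHDSFGSDGHGYUHDFGSDFASDFASGTWRT81}. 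Choosing $t$ small enough that $C_{p,M,T}\,t\le 1/2$ (independently of $n,m$ and also of $K$, since the constant $C_{p,M,T}$ absorbs the factors of $K$ only through the $K^p$-uniform stopping-time bound — one must check the scaling so that no negative power of $K$ appears, which holds because $K\ge1$), we absorb the first term on the right and obtain the bound $\le 2\delta_m\to0$. Taking $\sup_{n>m}$ and then $m\to\infty$ gives \eqref{ERTWERTHWRTWERTSGDGHCFGSDFGQSERWDFGDSFGHSDRGTEHDFGHDSFGSDGHGYUHDFGSDFASDFASGTWRT112}. \textbf{The main obstacle} I anticipate is bookkeeping the uniformity carefully: one must verify that every constant that appears is genuinely independent of $n$ and $m$ (this forces the exclusive use of \eqref{ERTWERTHWRTWERTSGDGHCFGSDFGQSERWDFGDSFGHSDRGTEHDFGHDSFGSDGHGYUHDFGSDFASDFASGTWRT08}, \eqref{ERTWERTHWRTWERTSGDGHCFGSDFGQSERWDFGDSFGHSDRGTEHDFGHDSFGSDGHGYUHDFGSDFASDFASGTWRT10} rather than the $n$-dependent \eqref{ERTWERTHWRTWERTSGDGHCFGSDFGQSERWDFGDSFGHSDRGTEHDFGHDSFGSDGHGYUHDFGSDFASDFASGTWRT09}, \eqref{ERTWERTHWRTWERTSGDGHCFGSDFGQSERWDFGDSFGHSDRGTEHDFGHDSFGSDGHGYUHDFGSDFASDFASGTWRT11}, wherever a difference rather than a single term is being estimated), and that in the term $\|(P_{\le k(n)}-P_{\le k(m)})\sigma(P_{\le k(m)}u^{(m)})\|$ the $L^{3p}$-control of $\nabla(P_{\le k(m)}u^{(m)})$ needed to invoke \eqref{ERTWERTHWRTWERTSGDGHCFGSDFGQSERWDFGDSFGHSDRGTEHDFGHDSFGSDGHGYUHDFGSDFASDFASGTWRT05} does not reintroduce an uncontrolled $k(m)$-dependence — but it does not, because the $|1/k(n)-1/k(m)|$ prefactor from Lemma~\ref{L03} dominates any polynomial growth in $k(m)$ coming from a single Bernstein estimate \eqref{ERTWERTHWRTWERTSGDGHCFGSDFGQSERWDFGDSFGHSDRGTEHDFGHDSFGSDGHGYUHDFGSDFASDFASGTWRT09}; here is precisely where the particular choice of $k(n)$ in terms of $\EE\|\varphi(\tfrac{\cdot}{n})u_0\|_2^2$ is designed to make the argument close, so one must track that the noise terms' $k(m)$-growth is at most linear and is beaten by the $1/k(m)$ decay.
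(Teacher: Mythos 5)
Your overall skeleton (the equation for $w=u^{(n)}-u^{(m)}$, the energy estimate of Theorem~\ref{T02}, the Lipschitz property \eqref{ERTWERTHWRTWERTSGDGHCFGSDFGQSERWDFGDSFGHSDRGTEHDFGHDSFGSDGHGYUHDFGSDFASDFASGTWRT146} of the cut-off, the Sobolev bound \eqref{ERTWERTHWRTWERTSGDGHCFGSDFGQSERWDFGDSFGHSDRGTEHDFGHDSFGSDGHGYUHDFGSDFASDFASGTWRT80}, absorption by taking $t$ small depending on $M,K$, and the initial-data term via Lemma~\ref{L02}) is the same as the paper's. The genuine gap is in your treatment of the projector-difference terms, which is exactly where the hypothesis on $k(n)$ enters, and your proposed mechanism there is not the right one. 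Writing $P_{n,m}=P_{\le k(n)}-P_{\le k(m)}$, the difference of the drift and noise terms produces not only \emph{outer} differences such as $P_{n,m}\mathcal{P}(u^{(m)}_iP_{\le k(m)}u^{(m)})$ and $P_{n,m}\sigma(P_{\le k(m)}u^{(m)})$ (your item (c)), but also \emph{inner} ones, namely $P_{\le k(n)}\mathcal{P}\bigl(u^{(n)}_i\,P_{n,m}u^{(m)}\bigr)$ and $\sigma(P_{\le k(n)}u^{(m)})-\sigma(P_{\le k(m)}u^{(m)})$, which your decomposition does not address. In all of these except the outer noise term, Lemma~\ref{L03} trades $P_{n,m}$ for $|1/k(n)-1/k(m)|$ times a \emph{gradient}: either $\nabla\bigl(u^{(m)}_iP_{\le k(m)}u^{(m)}\bigr)$, which contains $\nabla u^{(m)}$, or, for the inner terms, $\Vert\nabla u^{(m)}\Vert_{2}$ after a H\"older splitting that places a small fractional power of $P_{n,m}u^{(m)}$ in $L^2$ (the paper's exponents $\delta,\theta,\tilde\delta$ in \eqref{ERTWERTHWRTWERTSGDGHCFGSDFGQSERWDFGDSFGHSDRGTEHDFGHDSFGSDGHGYUHDFGSDFASDFASGTWRT115}, \eqref{ERTWERTHWRTWERTSGDGHCFGSDFGQSERWDFGDSFGHSDRGTEHDFGHDSFGSDGHGYUHDFGSDFASDFASGTWRT118}, \eqref{ERTWERTHWRTWERTSGDGHCFGSDFGQSERWDFGDSFGHSDRGTEHDFGHDSFGSDGHGYUHDFGSDFASDFASGTWRT126}). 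The only uniform control of this gradient is the $L^2$-energy estimate of Lemma~\ref{L0}, whose bound is $C(\EE[\Vert\varphi(\cdot/m)u_0\Vert_2^2]+1)$ and therefore grows with $m$ because $u_0$ is only in $L^p$; the stated choice $k(n)>n\,\EE[\Vert\varphi(\cdot/n)u_0\Vert_2^2]$ is made precisely so that the negative powers of $k(m)\wedge k(n)$ gained from Lemma~\ref{L03} beat this growth, and the fractional exponents are tuned so that the gradient enters with exactly the square power controlled by Lemma~\ref{L0}.

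Your proposal never invokes the $L^2$ estimate at all, so as written it cannot close: for instance your bound for the outer convective term requires $\Vert\nabla\mathcal{P}(u^{(m)}_iP_{\le k(m)}u^{(m)})\Vert_q$, and the factor $\nabla u^{(m)}$ inside is not controlled by any quantity in your scheme. Moreover, your explanation of the role of $k(n)$ is misplaced: for the outer noise term, assumption \eqref{ERTWERTHWRTWERTSGDGHCFGSDFGQSERWDFGDSFGHSDRGTEHDFGHDSFGSDGHGYUHDFGSDFASDFASGTWRT05} requires only $\Vert P_{\le k(m)}u^{(m)}\Vert_{3p/2}$, which carries no $k$-dependence by \eqref{ERTWERTHWRTWERTSGDGHCFGSDFGQSERWDFGDSFGHSDRGTEHDFGHDSFGSDGHGYUHDFGSDFASDFASGTWRT10} (this term is handled exactly as you say, cf.\ \eqref{ERTWERTHWRTWERTSGDGHCFGSDFGQSERWDFGDSFGHSDRGTEHDFGHDSFGSDGHGYUHDFGSDFASDFASGTWRT127}), and the heuristic that a $1/k(m)$ prefactor ``dominates any polynomial growth in $k(m)$ coming from a Bernstein estimate'' is false as a principle, since a factor $1/k(m)$ cannot beat a factor growing polynomially in $k(m)$. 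The missing ideas are thus the inner-difference terms, the H\"older splitting into $L^2$, and the use of Lemma~\ref{L0} together with the growth condition on $k(n)$.
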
 \colb \par \begin{proof}[Proof of Lemma~\ref{L06}]  By~\eqref{ERTWERTHWRTWERTSGDGHCFGSDFGQSERWDFGDSFGHSDRGTEHDFGHDSFGSDGHGYUHDFGSDFASDFASGTWRT96}, $\tau_{n,m}$ is a positive stopping time for every~$t>0$. Let $n>m$, and denote $\wnm=\un-\um$. Clearly, $\wnm$ satisfies    \begin{align}   \begin{split}   &\partial_t \wnm   =\Delta \wnm   -   \Bigl(     (\phin)^2 \pkn \mathcal{P}((u^{(n)}\cdot\nabla) \pkn u^{(n)})-(\varphi^{(m)})^2\pkm\mathcal{P}((u^{(m)}\cdot\nabla) \pkm u^{(m)})   \Bigr)   \\&\indeq\indeq\indeq\indeq\quad   +\Bigl(   (\phin)^2 \pkn\sigma(\pkn\un)-(\varphi^{(m)})^2\pkm\sigma(\pkm\um)   \Bigr)\,\dot{\WW}(t),   \\ & \nabla\cdot \wnm = 0,   \\& \wnm(0) = \pkn \mathcal{P}      \left(\varphi\left(\frac{\cdot}{n}\right) u_0 \right)-\pkm \mathcal{P}     \left(\varphi\left(\frac{\cdot}{m}\right) u_0 \right).   \end{split}   \llabel{ fkuJ hEpw MvNBZV zL u qxJ 9b1 BTf Yk RJLj Oo1a EPIXvZ Aj v Xne fhK GsJ Ga wqjt U7r6 MPoydE H2 6 203 mGi JhF nT NCDB YlnP oKO6Pu XU 3 uu9 mSg 41v ma kk0E WUpS UtGBtD e6 d Kdx ZNT FuT i1 fMcM hq7P Ovf0hg Hl 8 fqv I3R K39 fn 9MaC Zgow 6e1iXj KC 5 lHO lpG pkK Xd Dxtz 0HxE fSMjXY L8 F vh7 dmJ kE8 QA KDo1 FqML HOZ2iL 9i I m3L Kva YiN K9 sb48 NxwY NR0nx2 t5 b WCk x2a 31k a8 fUIa RGzr 7oigRX 5s m 9PQ 7Sr 5St ZE Ymp8 VIWS hdzgDI 9v R F5J 81x 33n Ne fjBT VvGP vGsxQh Al G Fbe 1bQ i6J ap OJJa ceGq 1vvb8r F2 F 3M6 8eD lzG tX tVm5 y14v mwIXa2 OG Y hxU sXJ 0qg l5 ZGAt HPZd oDWrSb BS u NKi 6KW gr3 9s 9tc7 WM4A ws1PzI 5c C O7Z 8y9 lMT LA dwhz Mxz9 hjlWHj bJ 5 CqM jht y9l Mn 4rc7 6Amk KJimvH 9r O tbc tCK rsi B0 4cFV Dl1g cvfWh6 5n x y9Z S4W Pyo QB yr3v fBkj TZKtEZ 7r U fdM icd yCV qn D036 HJWM tYfL9f yX x O7m IcF E1O uL QsAQ NfWv 6kV8Im 7Q 6 GsX NCV 0YP oC jnWn 6L25 qUMTe7 1v a hnH DAo XAb Tc zhPc fjrj W5M5G0 nz N M5T nlJ WOP Lh M6U2 ZFxw pg4Nej P8 U Q09 JX9 n7S kE WixE Rwgy Fvttzp 4A s v5F Tnn MzL Vh FUn5 6tFY CxZ1Bz Q3 E TfD lCa d7V fo MwPm ngrD HPfZV0 aY k Ojr ZUw 799 et oYuB MIC4 ovEY8D OL N URV Q5l ti1 iS NZAd wWr6 Q8oPFf ae 5 lAR 9gD RSi HO eJOW wxLv 20GoMt 2H z 7Yc aly PZx eR uFM0 7gaV 9UIz7S 43 k 5Tr ZiD Mt7 pE NCYi uHL7 gac7Gq yN 6 Z1u x56 YZh 2d yJVx 9MeU OMWBQf l0 E mIc 5Zr yfy 3i rahC y9Pi MJ7ofo Op d enn sLi xZx Jt CjC9 M71v O0fxiR 51 m FIB QRo 1oW Iq 3gDP stD2 ntfoX7 YU o S5k GuV IGM cf HZe3 7ZoG A1dDmk XO 2 KYR LpJ jII om M6Nu u8O0 jO5Nab Ub R nZn 15k hG9 4S 21V4 Ip45 7ooaiP u2 j hIz osW FDu O5 HdGr djvv tTLBjo vL L iCo 6L5 Lwa Pm vD6Z pal6 9Ljn11 re T 2CP mvj rL3 xH mDYK uv5T npC1fM oU R RTo Loi lk0 FE ghak m5M9 cOIPdQ lG D LnX erC ykJ C1 0FHh vvnY aTGuqU rf T QPv wEq iHO vO hD6A nXuv GlzVAv pz d Ok3 6ym yUo Fb AcAA BItO es52Vq d0 Y c7U 2gB t0W fF VQZh rJHr lBLdCx 8I o dWp AlD S8C HB rNLz xWp6 ypjuwW mg X toy 1vP bra uH yMNb kUrZ D6Ee2f zI D tkZ Eti Lmg re 1woD juLB BSdasY Vc F Uhy ViC xB1 5y Ltql qoUh gL3bZN YV k orz wa3 650 qW hF22 epiX cAjA4Z V4 b cXx uB3 NQN p0 GxW2 Vs1z jtqe2p LE B iS3 0E0 NKH gY N50v XaK6 pNpwdBEQ113}   \end{align} Denote $\pnm=\pkn-\pkm$. Under the divergence-free condition, the nonlinear term can be regarded as the sum of partial derivatives of    \begin{align}   \begin{split}   & -(\phin)^2 \pkn \mathcal{P}(u^{(n)}_i \pkn u^{(n)})+(\varphi^{(m)})^2\pkm\mathcal{P}(u^{(m)}_i \pkm u^{(m)})   \\ &\quad    =   -\phin(\phin-\varphi^{(m)})\pkn\mathcal{P}(u^{(n)}_i \pkn u^{(n)})   -   \phin\varphi^{(m)}\pkn\mathcal{P}(u^{(n)}_i \pkn u^{(n,m)} )   \\ & \quad   \quad   -   \phin\varphi^{(m)}\pkn\mathcal{P}(u^{(n)}_i \pnm u^{(m)} )   -   \phin\varphi^{(m)}\pkn\mathcal{P}(u^{(n, m)}_i  \pkm u^{(m)})    \\ & \quad   \quad   -   \phin\varphi^{(m)}\pnm\mathcal{P}(u^{(m)}_i \pkm u^{( m)})   -    (\phin-\varphi^{(m)})\varphi^{(m)}\pkm\mathcal{P}(\um_i\pkm\um)   \\ & \quad     =: \sum_{k=1}^{6} f_{i}^{(k)}   \comma i=1,2,3   .   \end{split}    \label{ERTWERTHWRTWERTSGDGHCFGSDFGQSERWDFGDSFGHSDRGTEHDFGHDSFGSDGHGYUHDFGSDFASDFASGTWRT114}   \end{align}  First, consider $f^{(3)}$ and apply Theorem \ref{T02} with $1/q=(4/3p)_{+}$. (For example, we may choose $1/q=(1-1/4p)(4/3p)+(1/4p)(p+1)/3p$). This leads to   \begin{align}  \begin{split}  &\EE   \left[  \int_{0}^{\tnm}\Vert f_i^{(3)}\Vert_{q}^p\,ds   \right]  \leq  C\EE \left[\int_{0}^{\tnm}\phin\varphi^{(m)}\Vert \mathcal{P}(u^{( n)}_i  \pnm u^{(m)}) \Vert_{q}^p\,ds  \right]  \\ &  \indeq  \leq   C\EE \left[\int_{0}^{\tnm}\phin\varphi^{(m)}\Vert u^{( n)} \Vert_{p}^p  \Vert | \pnm u^{(m)}|^{\delta} \Vert_{2/\delta}^p  \Vert | \pnm u^{(m)}|^{1-\delta} \Vert_{l}^p  \,ds  \right]  \\ &  \indeq   \leq  C\EE \left[\int_{0}^{\tnm}\phin\varphi^{(m)}  \Vert \pnm u^{(m)} \Vert_{2}^{\delta p}  \Vert \pnm u^{(m)}\Vert_{(1-\delta )l}^{(1-\delta)p}  \,ds  \right]  \\ &  \indeq  \leq  C\EE \left[\int_{0}^{\tnm}\varphi^{(m)}  \Vert \pnm u^{(m)} \Vert_{2}^{\delta p}  \Vert \pnm u^{(m)}\Vert_{p}^{(3p-(1-\delta)l)p/2l}  \Vert \pnm u^{(m)}\Vert_{3p}^{(3l(1-\delta)-3p)p/2l}  \,ds  \right]\comma i=1,2,3.  \end{split}    \llabel{Kdx ZNT FuT i1 fMcM hq7P Ovf0hg Hl 8 fqv I3R K39 fn 9MaC Zgow 6e1iXj KC 5 lHO lpG pkK Xd Dxtz 0HxE fSMjXY L8 F vh7 dmJ kE8 QA KDo1 FqML HOZ2iL 9i I m3L Kva YiN K9 sb48 NxwY NR0nx2 t5 b WCk x2a 31k a8 fUIa RGzr 7oigRX 5s m 9PQ 7Sr 5St ZE Ymp8 VIWS hdzgDI 9v R F5J 81x 33n Ne fjBT VvGP vGsxQh Al G Fbe 1bQ i6J ap OJJa ceGq 1vvb8r F2 F 3M6 8eD lzG tX tVm5 y14v mwIXa2 OG Y hxU sXJ 0qg l5 ZGAt HPZd oDWrSb BS u NKi 6KW gr3 9s 9tc7 WM4A ws1PzI 5c C O7Z 8y9 lMT LA dwhz Mxz9 hjlWHj bJ 5 CqM jht y9l Mn 4rc7 6Amk KJimvH 9r O tbc tCK rsi B0 4cFV Dl1g cvfWh6 5n x y9Z S4W Pyo QB yr3v fBkj TZKtEZ 7r U fdM icd yCV qn D036 HJWM tYfL9f yX x O7m IcF E1O uL QsAQ NfWv 6kV8Im 7Q 6 GsX NCV 0YP oC jnWn 6L25 qUMTe7 1v a hnH DAo XAb Tc zhPc fjrj W5M5G0 nz N M5T nlJ WOP Lh M6U2 ZFxw pg4Nej P8 U Q09 JX9 n7S kE WixE Rwgy Fvttzp 4A s v5F Tnn MzL Vh FUn5 6tFY CxZ1Bz Q3 E TfD lCa d7V fo MwPm ngrD HPfZV0 aY k Ojr ZUw 799 et oYuB MIC4 ovEY8D OL N URV Q5l ti1 iS NZAd wWr6 Q8oPFf ae 5 lAR 9gD RSi HO eJOW wxLv 20GoMt 2H z 7Yc aly PZx eR uFM0 7gaV 9UIz7S 43 k 5Tr ZiD Mt7 pE NCYi uHL7 gac7Gq yN 6 Z1u x56 YZh 2d yJVx 9MeU OMWBQf l0 E mIc 5Zr yfy 3i rahC y9Pi MJ7ofo Op d enn sLi xZx Jt CjC9 M71v O0fxiR 51 m FIB QRo 1oW Iq 3gDP stD2 ntfoX7 YU o S5k GuV IGM cf HZe3 7ZoG A1dDmk XO 2 KYR LpJ jII om M6Nu u8O0 jO5Nab Ub R nZn 15k hG9 4S 21V4 Ip45 7ooaiP u2 j hIz osW FDu O5 HdGr djvv tTLBjo vL L iCo 6L5 Lwa Pm vD6Z pal6 9Ljn11 re T 2CP mvj rL3 xH mDYK uv5T npC1fM oU R RTo Loi lk0 FE ghak m5M9 cOIPdQ lG D LnX erC ykJ C1 0FHh vvnY aTGuqU rf T QPv wEq iHO vO hD6A nXuv GlzVAv pz d Ok3 6ym yUo Fb AcAA BItO es52Vq d0 Y c7U 2gB t0W fF VQZh rJHr lBLdCx 8I o dWp AlD S8C HB rNLz xWp6 ypjuwW mg X toy 1vP bra uH yMNb kUrZ D6Ee2f zI D tkZ Eti Lmg re 1woD juLB BSdasY Vc F Uhy ViC xB1 5y Ltql qoUh gL3bZN YV k orz wa3 650 qW hF22 epiX cAjA4Z V4 b cXx uB3 NQN p0 GxW2 Vs1z jtqe2p LE B iS3 0E0 NKH gY N50v XaK6 pNpwdB X2 Y v7V 0Ud dTc Pi dRNN CLG4 7Fc3PL Bx K 3Be x1X zyX cj 0Z6a Jk0H KuQnwd Dh P Q1Q rwA 05v 9c 3pnz ttzt x2IirW CZ B oS5 xlO KCi D3 WFh4 dvCL QANAQJ Gg y vOD NTD FKj Mc 0REQ159}  \end{align}  where $0<\delta\ll 1$ so that $1/q=1/p+\delta/2+1/l$ and $p<(1-\delta)l<(1+\delta(p-3))l<3p$. (A possible choice corresponding to $1/q=(1-1/4p)(4/3p)+(1/4p)(p+1)/3p$ is $\delta=(p-3)/12p^2$). Then by the continuity properties of $\pnm$, we have  \begin{align}  \begin{split}  &\EE   \left[  \int_{0}^{\tnm}\Vert f^{(3)}\Vert_{q}^p\,ds   \right]  \leq  C\EE \left[\int_{0}^{\tnm}  \Vert \pnm u^{(m)} \Vert_{2}^{\delta p}  \Vert \pnm u^{(m)}\Vert_{3p}^{(3l(1-\delta)-3p)p/2l}  \,ds  \right]  \\ &  \indeq   \leq  \frac{C}{ (k(m)\wedge k(n))^{\delta p}}  \EE \left[\int_{0}^{\tnm}
 \Vert \nabla u^{(m)} \Vert_{2}^{\delta p}  \Vert \pnm u^{(m)}\Vert_{3p}^{(3l(1-\delta)-3p)p/2l}  \,ds  \right]  \\ &  \indeq  \leq \frac{C}{ (k(m)\wedge k(n))^{\delta p}}  \EE \left[\int_{0}^{\tnm}   \bigl(      \Vert u^{(m)}\Vert_{3p}^{p}     +\Vert \nabla u^{(m)} \Vert_{2}^{2l\delta p/(3l\delta-l+3p)}   \bigr)  \,ds  \right]  \\ &  \indeq  \leq \frac{C_T}{ (k(m)\wedge k(n))^{\delta p}}  \EE \left[\int_{0}^{\tnm}  \bigl(  \Vert  u^{(m)}\Vert_{3p}^{p}  +\Vert \nabla u^{(m)} \Vert_{2}^{2}  \bigr)  \,ds  \right].  \end{split} \label{ERTWERTHWRTWERTSGDGHCFGSDFGQSERWDFGDSFGHSDRGTEHDFGHDSFGSDGHGYUHDFGSDFASDFASGTWRT115}  \end{align}    Next, we estimate $f^{(5)}$ with the same~$q$. Let $\kappa=(p-2)/(p+2)$ and $\bar{l}\in (q, 3p/4)$, so that $1/(1+\kappa)=1/2+1/p$ and $1/q=\theta/(1+\kappa)+(1-\theta)/\bar{l}$ for some~$\theta \in (0, 1)$. Since $\pkn$ and $\pkm$ commute with the Helmholtz-Hodge projector $\mathcal{P}$ and the differentiation, we get   \begin{align}   \begin{split}   &\EE    \left[   \int_{0}^{\tnm}\Vert f_i^{(5)}\Vert_{q}^p\,ds    \right]   \\&\indeq   \leq   C\EE \left[\int_{0}^{\tnm}\varphi^{(m)}   \Vert      \pnm(u^{(m)}_i \pkm u^{(m)} )   \Vert_{1+\kappa}^{\theta p}   \Vert      \pnm(u^{(m)}_i \pkm u^{(m)} )   \Vert_{\bar{l}}^{(1-\theta) p}   \,ds\right]   \\ & \indeq   \leq    \frac{C}{ (k(m)\wedge k(n))^{\theta p}}   \EE \left[\int_{0}^{\tnm}\varphi^{(m)}   \Vert      \nabla(u^{(m)}_i \pkm u^{(m)} )   \Vert_{1+\kappa}^{\theta p}   \Vert      u^{(m)}_i \pkm u^{(m)}    \Vert_{\bar{l}}^{(1-\theta) p}   \,ds\right]   \\ & \indeq   \leq    \frac{C}{ (k(m)\wedge k(n))^{\theta p}}   \EE \left[\int_{0}^{\tnm}\varphi^{(m)}   \Vert      \nabla u^{(m)}   \Vert_{2}^{\theta p}   \Vert      u^{(m)}    \Vert_{p}^{\theta p}   \Vert      u^{(m)}    \Vert_{2\bar{l}}^{2(1-\theta) p}   \,ds\right]\\   &\quad \leq   \frac{C}{ (k(m)\wedge k(n))^{\theta p}}   \EE \left[\int_{0}^{\tnm}\varphi^{(m)}   \Vert      \nabla u^{(m)}   \Vert_{2}^{\theta p}   \Vert      u^{(m)}    \Vert_{p}^{\theta p+2\bar{\delta}(1-\theta) p}   \Vert      u^{(m)}    \Vert_{3p}^{2(1-\bar{\delta})(1-\theta) p}   \,ds\right]   \comma i=1,2,3   ,   \end{split}    \llabel{Y NR0nx2 t5 b WCk x2a 31k a8 fUIa RGzr 7oigRX 5s m 9PQ 7Sr 5St ZE Ymp8 VIWS hdzgDI 9v R F5J 81x 33n Ne fjBT VvGP vGsxQh Al G Fbe 1bQ i6J ap OJJa ceGq 1vvb8r F2 F 3M6 8eD lzG tX tVm5 y14v mwIXa2 OG Y hxU sXJ 0qg l5 ZGAt HPZd oDWrSb BS u NKi 6KW gr3 9s 9tc7 WM4A ws1PzI 5c C O7Z 8y9 lMT LA dwhz Mxz9 hjlWHj bJ 5 CqM jht y9l Mn 4rc7 6Amk KJimvH 9r O tbc tCK rsi B0 4cFV Dl1g cvfWh6 5n x y9Z S4W Pyo QB yr3v fBkj TZKtEZ 7r U fdM icd yCV qn D036 HJWM tYfL9f yX x O7m IcF E1O uL QsAQ NfWv 6kV8Im 7Q 6 GsX NCV 0YP oC jnWn 6L25 qUMTe7 1v a hnH DAo XAb Tc zhPc fjrj W5M5G0 nz N M5T nlJ WOP Lh M6U2 ZFxw pg4Nej P8 U Q09 JX9 n7S kE WixE Rwgy Fvttzp 4A s v5F Tnn MzL Vh FUn5 6tFY CxZ1Bz Q3 E TfD lCa d7V fo MwPm ngrD HPfZV0 aY k Ojr ZUw 799 et oYuB MIC4 ovEY8D OL N URV Q5l ti1 iS NZAd wWr6 Q8oPFf ae 5 lAR 9gD RSi HO eJOW wxLv 20GoMt 2H z 7Yc aly PZx eR uFM0 7gaV 9UIz7S 43 k 5Tr ZiD Mt7 pE NCYi uHL7 gac7Gq yN 6 Z1u x56 YZh 2d yJVx 9MeU OMWBQf l0 E mIc 5Zr yfy 3i rahC y9Pi MJ7ofo Op d enn sLi xZx Jt CjC9 M71v O0fxiR 51 m FIB QRo 1oW Iq 3gDP stD2 ntfoX7 YU o S5k GuV IGM cf HZe3 7ZoG A1dDmk XO 2 KYR LpJ jII om M6Nu u8O0 jO5Nab Ub R nZn 15k hG9 4S 21V4 Ip45 7ooaiP u2 j hIz osW FDu O5 HdGr djvv tTLBjo vL L iCo 6L5 Lwa Pm vD6Z pal6 9Ljn11 re T 2CP mvj rL3 xH mDYK uv5T npC1fM oU R RTo Loi lk0 FE ghak m5M9 cOIPdQ lG D LnX erC ykJ C1 0FHh vvnY aTGuqU rf T QPv wEq iHO vO hD6A nXuv GlzVAv pz d Ok3 6ym yUo Fb AcAA BItO es52Vq d0 Y c7U 2gB t0W fF VQZh rJHr lBLdCx 8I o dWp AlD S8C HB rNLz xWp6 ypjuwW mg X toy 1vP bra uH yMNb kUrZ D6Ee2f zI D tkZ Eti Lmg re 1woD juLB BSdasY Vc F Uhy ViC xB1 5y Ltql qoUh gL3bZN YV k orz wa3 650 qW hF22 epiX cAjA4Z V4 b cXx uB3 NQN p0 GxW2 Vs1z jtqe2p LE B iS3 0E0 NKH gY N50v XaK6 pNpwdB X2 Y v7V 0Ud dTc Pi dRNN CLG4 7Fc3PL Bx K 3Be x1X zyX cj 0Z6a Jk0H KuQnwd Dh P Q1Q rwA 05v 9c 3pnz ttzt x2IirW CZ B oS5 xlO KCi D3 WFh4 dvCL QANAQJ Gg y vOD NTD FKj Mc 0RJP m4HU SQkLnT Q4 Y 6CC MvN jAR Zb lir7 RFsI NzHiJl cg f xSC Hts ZOG 1V uOzk 5G1C LtmRYI eD 3 5BB uxZ JdY LO CwS9 lokS NasDLj 5h 8 yni u7h u3c di zYh1 PdwE l3m8Xt yX Q RCAEQ160}   \end{align} where $\bar{\delta}\in(0,1)$ solves $1/(2\bar{l})=\bar{\delta}/p+(1-\bar{\delta})/(3p)$. Using the properties of the cut-off function $\varphi$, we obtain \begin{align} \begin{split} &\EE  \left[ \int_{0}^{\tnm}\Vert f^{(5)}\Vert_{q}^p\,ds  \right] \leq \frac{C}{ (k(m)\wedge k(n))^{\theta p}} \EE \left[\int_{0}^{\tnm} \Vert    \nabla u^{(m)} \Vert_{2}^{\theta p} \Vert    u^{(m)}  \Vert_{3p}^{2(1-\bar{\delta})(1-\theta) p} \,ds\right] \\ & \indeq \leq  \frac{C}{ (k(m)\wedge k(n))^{\theta p}} \EE  \left[\int_{0}^{\tnm} \bigl( \Vert    \nabla u^{(m)} \Vert_{2}^{2} + \Vert    u^{(m)}  \Vert_{3p}^{4(1-\bar{\delta})(1-\theta) p/(2-\theta p)} \bigr) \,ds\right] \\ & \indeq \leq  \frac{C_T}{ (k(m)\wedge k(n))^{\theta p}} \EE \left[ \int_{0}^{\tnm} \bigl( \Vert    \nabla u^{(m)} \Vert_{2}^{2} + \Vert    u^{(m)}  \Vert_{3p}^{ p} \bigr) \,ds\right]. \end{split}   \label{ERTWERTHWRTWERTSGDGHCFGSDFGQSERWDFGDSFGHSDRGTEHDFGHDSFGSDGHGYUHDFGSDFASDFASGTWRT118} \end{align} Above, we required $4(1-\bar{\delta})(1-\theta) p/(2-\theta p)<p$, which is equivalent to  \begin{equation}\label{ERTWERTHWRTWERTSGDGHCFGSDFGQSERWDFGDSFGHSDRGTEHDFGHDSFGSDGHGYUHDFGSDFASDFASGTWRT119} \bar{\delta}>\frac{2-4\theta+\theta p}{4-4\theta}=1-\frac{p}{4}+\frac{p-2}{4(1-\theta)}. \end{equation} Note that the right side of this inequality is an increasing and continuous function of $\theta$ when $\theta\in [0,1)$. In particular, it is equal to $1/2$ when~$\theta=0$. On the other hand, $\bar{l}=q$ if~$\theta=0$. In this situation, $\bar{\delta}>1/2$, and thus we can achieve \eqref{ERTWERTHWRTWERTSGDGHCFGSDFGQSERWDFGDSFGHSDRGTEHDFGHDSFGSDGHGYUHDFGSDFASDFASGTWRT119} by allowing $\theta$ sufficiently close to zero, i.e., by setting $\bar{l}$ close enough to~$q$.  With the same $q$, there exist an arbitrarily small constant $\eps>0$ and a constant $C_{\eps}$ such that    \begin{align}   \begin{split}   &\EE    \left[   \int_{0}^{\tnm}\Vert  f^{(2)}+f^{(4)}\Vert_{q}^p\,ds    \right]   \\&\indeq
  \le     \EE    \left[   \int_{0}^{\tnm} \phin\varphi^{(m)}   (\Vert \un\Vert_{p}^p + \Vert \um\Vert_{p}^p)   (C_{\eps}\Vert \wnm\Vert_{p}^p + \eps\Vert \wnm\Vert_{3p}^p)   \,ds   \right]   \\ & \indeq   \leq   C_{\eps}t\EE    \left[   \sup_{s\in [0,\tau_{n, m}]}\Vert \wnm(s)\Vert_p^p   \right]    + \eps \EE    \left[   \int_0^{\tau_{n, m}}    \Vert \wnm\Vert_{3p}^p\,ds   \right].   \end{split}    \llabel{zG tX tVm5 y14v mwIXa2 OG Y hxU sXJ 0qg l5 ZGAt HPZd oDWrSb BS u NKi 6KW gr3 9s 9tc7 WM4A ws1PzI 5c C O7Z 8y9 lMT LA dwhz Mxz9 hjlWHj bJ 5 CqM jht y9l Mn 4rc7 6Amk KJimvH 9r O tbc tCK rsi B0 4cFV Dl1g cvfWh6 5n x y9Z S4W Pyo QB yr3v fBkj TZKtEZ 7r U fdM icd yCV qn D036 HJWM tYfL9f yX x O7m IcF E1O uL QsAQ NfWv 6kV8Im 7Q 6 GsX NCV 0YP oC jnWn 6L25 qUMTe7 1v a hnH DAo XAb Tc zhPc fjrj W5M5G0 nz N M5T nlJ WOP Lh M6U2 ZFxw pg4Nej P8 U Q09 JX9 n7S kE WixE Rwgy Fvttzp 4A s v5F Tnn MzL Vh FUn5 6tFY CxZ1Bz Q3 E TfD lCa d7V fo MwPm ngrD HPfZV0 aY k Ojr ZUw 799 et oYuB MIC4 ovEY8D OL N URV Q5l ti1 iS NZAd wWr6 Q8oPFf ae 5 lAR 9gD RSi HO eJOW wxLv 20GoMt 2H z 7Yc aly PZx eR uFM0 7gaV 9UIz7S 43 k 5Tr ZiD Mt7 pE NCYi uHL7 gac7Gq yN 6 Z1u x56 YZh 2d yJVx 9MeU OMWBQf l0 E mIc 5Zr yfy 3i rahC y9Pi MJ7ofo Op d enn sLi xZx Jt CjC9 M71v O0fxiR 51 m FIB QRo 1oW Iq 3gDP stD2 ntfoX7 YU o S5k GuV IGM cf HZe3 7ZoG A1dDmk XO 2 KYR LpJ jII om M6Nu u8O0 jO5Nab Ub R nZn 15k hG9 4S 21V4 Ip45 7ooaiP u2 j hIz osW FDu O5 HdGr djvv tTLBjo vL L iCo 6L5 Lwa Pm vD6Z pal6 9Ljn11 re T 2CP mvj rL3 xH mDYK uv5T npC1fM oU R RTo Loi lk0 FE ghak m5M9 cOIPdQ lG D LnX erC ykJ C1 0FHh vvnY aTGuqU rf T QPv wEq iHO vO hD6A nXuv GlzVAv pz d Ok3 6ym yUo Fb AcAA BItO es52Vq d0 Y c7U 2gB t0W fF VQZh rJHr lBLdCx 8I o dWp AlD S8C HB rNLz xWp6 ypjuwW mg X toy 1vP bra uH yMNb kUrZ D6Ee2f zI D tkZ Eti Lmg re 1woD juLB BSdasY Vc F Uhy ViC xB1 5y Ltql qoUh gL3bZN YV k orz wa3 650 qW hF22 epiX cAjA4Z V4 b cXx uB3 NQN p0 GxW2 Vs1z jtqe2p LE B iS3 0E0 NKH gY N50v XaK6 pNpwdB X2 Y v7V 0Ud dTc Pi dRNN CLG4 7Fc3PL Bx K 3Be x1X zyX cj 0Z6a Jk0H KuQnwd Dh P Q1Q rwA 05v 9c 3pnz ttzt x2IirW CZ B oS5 xlO KCi D3 WFh4 dvCL QANAQJ Gg y vOD NTD FKj Mc 0RJP m4HU SQkLnT Q4 Y 6CC MvN jAR Zb lir7 RFsI NzHiJl cg f xSC Hts ZOG 1V uOzk 5G1C LtmRYI eD 3 5BB uxZ JdY LO CwS9 lokS NasDLj 5h 8 yni u7h u3c di zYh1 PdwE l3m8Xt yX Q RCA bwe aLi N8 qA9N 6DRE wy6gZe xs A 4fG EKH KQP PP KMbk sY1j M4h3Jj gS U One p1w RqN GA grL4 c18W v4kchD gR x 7Gj jIB zcK QV f7gA TrZx Oy6FF7 y9 3 iuu AQt 9TK Rx S5GO TFGx 4EQ120}   \end{align} We also conclude, with $\eps$ representing an arbitrarily small constant, that   \begin{align}   \begin{split}   & \EE    \left[   \int_{0}^{\tnm}\Vert f^{(1)} + f^{(6)}\Vert_{q}^p\,ds    \right]   \leq     \EE    \left[   \int_{0}^{\tnm}  \phin(\phin-\varphi^{(m)})^p   \Vert \un\Vert_{p}^p    (C_{\eps}\Vert \un\Vert_{p}^p + \eps\Vert \un\Vert_{3p}^p)   \,ds   \right]   \\ & \indeq\indeq   +\EE    \left[   \int_{0}^{\tnm}  \varphi^{(m)}(\phin-\varphi^{(m)})^p   \Vert \um\Vert_{p}^p    (C_{\eps}\Vert \um\Vert_{p}^p + \eps\Vert \um\Vert_{3p}^p)   \,ds   \right]   \\ & \indeq   \leq    (\eps+ C_{\eps} t)C_MK^p\EE    \left[   \sup_{s\in [0,\tau_{n, m}]}\Vert \wnm(s)\Vert_p^p   \right] .   \end{split}    \llabel{9r O tbc tCK rsi B0 4cFV Dl1g cvfWh6 5n x y9Z S4W Pyo QB yr3v fBkj TZKtEZ 7r U fdM icd yCV qn D036 HJWM tYfL9f yX x O7m IcF E1O uL QsAQ NfWv 6kV8Im 7Q 6 GsX NCV 0YP oC jnWn 6L25 qUMTe7 1v a hnH DAo XAb Tc zhPc fjrj W5M5G0 nz N M5T nlJ WOP Lh M6U2 ZFxw pg4Nej P8 U Q09 JX9 n7S kE WixE Rwgy Fvttzp 4A s v5F Tnn MzL Vh FUn5 6tFY CxZ1Bz Q3 E TfD lCa d7V fo MwPm ngrD HPfZV0 aY k Ojr ZUw 799 et oYuB MIC4 ovEY8D OL N URV Q5l ti1 iS NZAd wWr6 Q8oPFf ae 5 lAR 9gD RSi HO eJOW wxLv 20GoMt 2H z 7Yc aly PZx eR uFM0 7gaV 9UIz7S 43 k 5Tr ZiD Mt7 pE NCYi uHL7 gac7Gq yN 6 Z1u x56 YZh 2d yJVx 9MeU OMWBQf l0 E mIc 5Zr yfy 3i rahC y9Pi MJ7ofo Op d enn sLi xZx Jt CjC9 M71v O0fxiR 51 m FIB QRo 1oW Iq 3gDP stD2 ntfoX7 YU o S5k GuV IGM cf HZe3 7ZoG A1dDmk XO 2 KYR LpJ jII om M6Nu u8O0 jO5Nab Ub R nZn 15k hG9 4S 21V4 Ip45 7ooaiP u2 j hIz osW FDu O5 HdGr djvv tTLBjo vL L iCo 6L5 Lwa Pm vD6Z pal6 9Ljn11 re T 2CP mvj rL3 xH mDYK uv5T npC1fM oU R RTo Loi lk0 FE ghak m5M9 cOIPdQ lG D LnX erC ykJ C1 0FHh vvnY aTGuqU rf T QPv wEq iHO vO hD6A nXuv GlzVAv pz d Ok3 6ym yUo Fb AcAA BItO es52Vq d0 Y c7U 2gB t0W fF VQZh rJHr lBLdCx 8I o dWp AlD S8C HB rNLz xWp6 ypjuwW mg X toy 1vP bra uH yMNb kUrZ D6Ee2f zI D tkZ Eti Lmg re 1woD juLB BSdasY Vc F Uhy ViC xB1 5y Ltql qoUh gL3bZN YV k orz wa3 650 qW hF22 epiX cAjA4Z V4 b cXx uB3 NQN p0 GxW2 Vs1z jtqe2p LE B iS3 0E0 NKH gY N50v XaK6 pNpwdB X2 Y v7V 0Ud dTc Pi dRNN CLG4 7Fc3PL Bx K 3Be x1X zyX cj 0Z6a Jk0H KuQnwd Dh P Q1Q rwA 05v 9c 3pnz ttzt x2IirW CZ B oS5 xlO KCi D3 WFh4 dvCL QANAQJ Gg y vOD NTD FKj Mc 0RJP m4HU SQkLnT Q4 Y 6CC MvN jAR Zb lir7 RFsI NzHiJl cg f xSC Hts ZOG 1V uOzk 5G1C LtmRYI eD 3 5BB uxZ JdY LO CwS9 lokS NasDLj 5h 8 yni u7h u3c di zYh1 PdwE l3m8Xt yX Q RCA bwe aLi N8 qA9N 6DRE wy6gZe xs A 4fG EKH KQP PP KMbk sY1j M4h3Jj gS U One p1w RqN GA grL4 c18W v4kchD gR x 7Gj jIB zcK QV f7gA TrZx Oy6FF7 y9 3 iuu AQt 9TK Rx S5GO TFGx 4Xx1U3 R4 s 7U1 mpa bpD Hg kicx aCjk hnobr0 p4 c ody xTC kVj 8t W4iP 2OhT RF6kU2 k2 o oZJ Fsq Y4B FS NI3u W2fj OMFf7x Jv e ilb UVT ArC Tv qWLi vbRp g2wpAJ On l RUE PKh j9h EQ121}   \end{align} \par On the other hand, the stochastic coefficient can be written as   \begin{align}   \begin{split}   & (\phin)^2\pkn\sigma(\pkn\un)-(\varphi^{(m)})^2\pkm\sigma(\pkm\um)   \\&\indeq   =   \phin(\phin-\varphi^{(m)})\pkn\sigma(\pkn\un)   +  \phin\varphi^{(m)}\pkn\bigl(\sigma(\pkn\un)-\sigma(\pkn\um)\bigr)    \\&\indeq\indeq   +   \phin\varphi^{(m)}\pkn\bigl(\sigma(\pkn\um)-\sigma(\pkm\um) \bigr)   \\&\indeq\indeq     +    \phin\varphi^{(m)}\pnm\sigma(\pkm\um)    +    (\phin-\varphi^{(m)})\varphi^{(m)}\pkm\sigma(\pkm\um)    \\ &    = \sum_{k=1}^{5} g^{(k)}   .    \end{split}    \llabel{n 6L25 qUMTe7 1v a hnH DAo XAb Tc zhPc fjrj W5M5G0 nz N M5T nlJ WOP Lh M6U2 ZFxw pg4Nej P8 U Q09 JX9 n7S kE WixE Rwgy Fvttzp 4A s v5F Tnn MzL Vh FUn5 6tFY CxZ1Bz Q3 E TfD lCa d7V fo MwPm ngrD HPfZV0 aY k Ojr ZUw 799 et oYuB MIC4 ovEY8D OL N URV Q5l ti1 iS NZAd wWr6 Q8oPFf ae 5 lAR 9gD RSi HO eJOW wxLv 20GoMt 2H z 7Yc aly PZx eR uFM0 7gaV 9UIz7S 43 k 5Tr ZiD Mt7 pE NCYi uHL7 gac7Gq yN 6 Z1u x56 YZh 2d yJVx 9MeU OMWBQf l0 E mIc 5Zr yfy 3i rahC y9Pi MJ7ofo Op d enn sLi xZx Jt CjC9 M71v O0fxiR 51 m FIB QRo 1oW Iq 3gDP stD2 ntfoX7 YU o S5k GuV IGM cf HZe3 7ZoG A1dDmk XO 2 KYR LpJ jII om M6Nu u8O0 jO5Nab Ub R nZn 15k hG9 4S 21V4 Ip45 7ooaiP u2 j hIz osW FDu O5 HdGr djvv tTLBjo vL L iCo 6L5 Lwa Pm vD6Z pal6 9Ljn11 re T 2CP mvj rL3 xH mDYK uv5T npC1fM oU R RTo Loi lk0 FE ghak m5M9 cOIPdQ lG D LnX erC ykJ C1 0FHh vvnY aTGuqU rf T QPv wEq iHO vO hD6A nXuv GlzVAv pz d Ok3 6ym yUo Fb AcAA BItO es52Vq d0 Y c7U 2gB t0W fF VQZh rJHr lBLdCx 8I o dWp AlD S8C HB rNLz xWp6 ypjuwW mg X toy 1vP bra uH yMNb kUrZ D6Ee2f zI D tkZ Eti Lmg re 1woD juLB BSdasY Vc F Uhy ViC xB1 5y Ltql qoUh gL3bZN YV k orz wa3 650 qW hF22 epiX cAjA4Z V4 b cXx uB3 NQN p0 GxW2 Vs1z jtqe2p LE B iS3 0E0 NKH gY N50v XaK6 pNpwdB X2 Y v7V 0Ud dTc Pi dRNN CLG4 7Fc3PL Bx K 3Be x1X zyX cj 0Z6a Jk0H KuQnwd Dh P Q1Q rwA 05v 9c 3pnz ttzt x2IirW CZ B oS5 xlO KCi D3 WFh4 dvCL QANAQJ Gg y vOD NTD FKj Mc 0RJP m4HU SQkLnT Q4 Y 6CC MvN jAR Zb lir7 RFsI NzHiJl cg f xSC Hts ZOG 1V uOzk 5G1C LtmRYI eD 3 5BB uxZ JdY LO CwS9 lokS NasDLj 5h 8 yni u7h u3c di zYh1 PdwE l3m8Xt yX Q RCA bwe aLi N8 qA9N 6DRE wy6gZe xs A 4fG EKH KQP PP KMbk sY1j M4h3Jj gS U One p1w RqN GA grL4 c18W v4kchD gR x 7Gj jIB zcK QV f7gA TrZx Oy6FF7 y9 3 iuu AQt 9TK Rx S5GO TFGx 4Xx1U3 R4 s 7U1 mpa bpD Hg kicx aCjk hnobr0 p4 c ody xTC kVj 8t W4iP 2OhT RF6kU2 k2 o oZJ Fsq Y4B FS NI3u W2fj OMFf7x Jv e ilb UVT ArC Tv qWLi vbRp g2wpAJ On l RUE PKh j9h dG M0Mi gcqQ wkyunB Jr T LDc Pgn OSC HO sSgQ sR35 MB7Bgk Pk 6 nJh 01P Cxd Ds w514 O648 VD8iJ5 4F W 6rs 6Sy qGz MK fXop oe4e o52UNB 4Q 8 f8N Uz8 u2n GO AXHW gKtG AtGGJs bm EQ122}   \end{align}   Using continuity of $\pkm$ and $\pkn$ and the Lipschitz condition of $\varphi$ in \eqref{ERTWERTHWRTWERTSGDGHCFGSDFGQSERWDFGDSFGHSDRGTEHDFGHDSFGSDGHGYUHDFGSDFASDFASGTWRT146}, we obtain   \begin{align}   \begin{split} &\EE\left[ \int_0^{\tnm} \Vert   g^{(1)}+ g^{(5)} \Vert_{\mathbb{L}^p}^p\,ds \right] \leq  C \EE \left[ \int_0^{\tnm} \Vert  \wnm \Vert_{p}^{p}(\varphi^{(n)}\Vert \un \Vert_{(3p/2)-}^{2p} +\varphi^{(m)}\Vert \um \Vert_{(3p/2)-}^{2p}+ 1) \,ds \right] \\& \quad \leq  C \EE \left[ \int_0^{\tnm} \Vert  \wnm \Vert_{p}^{p} (\Vert \un \Vert_{3p}^{p-} +\Vert \um \Vert_{3p}^{p-}+ 1) \,ds \right] \\& \quad  \leq  C\EE \left[ \sup_{s\in [0,\tau^n_M\wedge \tau^n_M\wedge t]}\Vert \wnm(s)\Vert_p^p \int_0^{\tau^n_M\wedge \tau^n_M\wedge t} (\Vert \un \Vert_{3p}^{p-} +\Vert \um \Vert_{3p}^{p-}+1) \,ds \right]  \\ & \quad \leq  C_MK^pt^{\alpha}\EE  \left[ \sup_{s\in [0,\tau_{n, m}]}\Vert \wnm(s)\Vert_p^p \right] \end{split}\label{ERTWERTHWRTWERTSGDGHCFGSDFGQSERWDFGDSFGHSDRGTEHDFGHDSFGSDGHGYUHDFGSDFASDFASGTWRT123} \end{align} for some~$\alpha\in (0,1)$. Next, by \eqref{ERTWERTHWRTWERTSGDGHCFGSDFGQSERWDFGDSFGHSDRGTEHDFGHDSFGSDGHGYUHDFGSDFASDFASGTWRT04}, \begin{align}   \begin{split} &\EE\left[ \int_0^{\tnm} \Vert  g^{(2)} \Vert_{\mathbb{L}^p}^p\,ds \right] \leq  C \EE \left[ \int_0^{\tnm}\varphi^{(n)}\varphi^{(m)} \Vert (  |\pkn\un| + |\pkn\um|)^{1/2} |\pkn\wnm| \Vert_{p}^p \,ds \right]
\\&\quad  \leq  C \EE \left[ \int_0^{\tnm} \Bigl( \varphi^{(n)} \Vert  \pkn\un \Vert_{p}^{p/2} \Vert   \pkn\wnm \Vert_{2p}^p + \varphi^{(m)} \Vert  \pkn\um \Vert_{p}^{p/2} \Vert   \pkn\wnm \Vert_{2p}^p \Bigr) \,ds \right] \\&\quad  \leq   C_{\epsilon}t\EE  \biggl[ \sup_{s\in [0,\tau_{n, m}]}\Vert \wnm(s)\Vert_p^p \biggr]  + \epsilon\EE  \biggl[ \int_0^{\tnm} \Vert \wnm(s)\Vert_{3p}^p \,ds \biggr], \end{split}    \label{ERTWERTHWRTWERTSGDGHCFGSDFGQSERWDFGDSFGHSDRGTEHDFGHDSFGSDGHGYUHDFGSDFASDFASGTWRT124} \end{align} where $\epsilon>0$ can be arbitrarily small. It follows from the same assumption \eqref{ERTWERTHWRTWERTSGDGHCFGSDFGQSERWDFGDSFGHSDRGTEHDFGHDSFGSDGHGYUHDFGSDFASDFASGTWRT04} that \begin{align}   \begin{split} &\EE\left[ \int_0^{\tnm} \Vert  g^{(3)} \Vert_{\mathbb{L}^p}^p\,ds \right] \leq  C \EE \left[ \int_0^{\tnm}\varphi^{(m)} \Vert (  |\pkn\um| + |\pkm\um|)^{1/2} |\pnm\um| \Vert_{p}^p \,ds \right] \\&\quad  \leq  C \EE \left[ \int_0^{\tnm}\varphi^{(m)} \Bigl( \Vert  \pkn\um \Vert_{p}^{p/2} + \Vert  \pkm\um \Vert_{p}^{p/2} \Bigr) \Vert   P_{n,m}\um \Vert_{2p}^p \,ds \right] \\&\quad  \leq  C \EE \left[ \int_0^{\tnm}\varphi^{(m)} \Vert  |P_{n,m}\um|^{\tilde{\delta}} \Vert_{2/\tilde{\delta}}^{p} \Vert   |P_{n,m}\um|^{1-\tilde{\delta}} \Vert_{\tilde{l}}^p \,ds \right] \\&\quad  \leq  C \EE \left[ \int_0^{\tnm}\varphi^{(m)} \Vert  P_{n,m}\um \Vert_{2}^{\tilde{\delta}p} \Vert   P_{n,m}\um \Vert_{(1-\tilde{\delta})\tilde{l}}^{(1-\tilde{\delta})p} \,ds \right], \end{split}    \llabel{lCa d7V fo MwPm ngrD HPfZV0 aY k Ojr ZUw 799 et oYuB MIC4 ovEY8D OL N URV Q5l ti1 iS NZAd wWr6 Q8oPFf ae 5 lAR 9gD RSi HO eJOW wxLv 20GoMt 2H z 7Yc aly PZx eR uFM0 7gaV 9UIz7S 43 k 5Tr ZiD Mt7 pE NCYi uHL7 gac7Gq yN 6 Z1u x56 YZh 2d yJVx 9MeU OMWBQf l0 E mIc 5Zr yfy 3i rahC y9Pi MJ7ofo Op d enn sLi xZx Jt CjC9 M71v O0fxiR 51 m FIB QRo 1oW Iq 3gDP stD2 ntfoX7 YU o S5k GuV IGM cf HZe3 7ZoG A1dDmk XO 2 KYR LpJ jII om M6Nu u8O0 jO5Nab Ub R nZn 15k hG9 4S 21V4 Ip45 7ooaiP u2 j hIz osW FDu O5 HdGr djvv tTLBjo vL L iCo 6L5 Lwa Pm vD6Z pal6 9Ljn11 re T 2CP mvj rL3 xH mDYK uv5T npC1fM oU R RTo Loi lk0 FE ghak m5M9 cOIPdQ lG D LnX erC ykJ C1 0FHh vvnY aTGuqU rf T QPv wEq iHO vO hD6A nXuv GlzVAv pz d Ok3 6ym yUo Fb AcAA BItO es52Vq d0 Y c7U 2gB t0W fF VQZh rJHr lBLdCx 8I o dWp AlD S8C HB rNLz xWp6 ypjuwW mg X toy 1vP bra uH yMNb kUrZ D6Ee2f zI D tkZ Eti Lmg re 1woD juLB BSdasY Vc F Uhy ViC xB1 5y Ltql qoUh gL3bZN YV k orz wa3 650 qW hF22 epiX cAjA4Z V4 b cXx uB3 NQN p0 GxW2 Vs1z jtqe2p LE B iS3 0E0 NKH gY N50v XaK6 pNpwdB X2 Y v7V 0Ud dTc Pi dRNN CLG4 7Fc3PL Bx K 3Be x1X zyX cj 0Z6a Jk0H KuQnwd Dh P Q1Q rwA 05v 9c 3pnz ttzt x2IirW CZ B oS5 xlO KCi D3 WFh4 dvCL QANAQJ Gg y vOD NTD FKj Mc 0RJP m4HU SQkLnT Q4 Y 6CC MvN jAR Zb lir7 RFsI NzHiJl cg f xSC Hts ZOG 1V uOzk 5G1C LtmRYI eD 3 5BB uxZ JdY LO CwS9 lokS NasDLj 5h 8 yni u7h u3c di zYh1 PdwE l3m8Xt yX Q RCA bwe aLi N8 qA9N 6DRE wy6gZe xs A 4fG EKH KQP PP KMbk sY1j M4h3Jj gS U One p1w RqN GA grL4 c18W v4kchD gR x 7Gj jIB zcK QV f7gA TrZx Oy6FF7 y9 3 iuu AQt 9TK Rx S5GO TFGx 4Xx1U3 R4 s 7U1 mpa bpD Hg kicx aCjk hnobr0 p4 c ody xTC kVj 8t W4iP 2OhT RF6kU2 k2 o oZJ Fsq Y4B FS NI3u W2fj OMFf7x Jv e ilb UVT ArC Tv qWLi vbRp g2wpAJ On l RUE PKh j9h dG M0Mi gcqQ wkyunB Jr T LDc Pgn OSC HO sSgQ sR35 MB7Bgk Pk 6 nJh 01P Cxd Ds w514 O648 VD8iJ5 4F W 6rs 6Sy qGz MK fXop oe4e o52UNB 4Q 8 f8N Uz8 u2n GO AXHW gKtG AtGGJs bm z 2qj vSv GBu 5e 4JgL Aqrm gMmS08 ZF s xQm 28M 3z4 Ho 1xxj j8Uk bMbm8M 0c L PL5 TS2 kIQ jZ Kb9Q Ux2U i5Aflw 1S L DGI uWU dCP jy wVVM 2ct8 cmgOBS 7d Q ViX R8F bta 1m tEFj TEQ125} \end{align} where $\tilde{\delta}$ is a constant in~$(0,1/(3p-2))$. Suppose that $1/(1-\tilde{\delta})\tilde{l}=\tilde{\theta}/p+(1-\tilde{\theta})/(3p)$ for some $\tilde{\theta}\in (0,1)$. Then, \begin{align}   \begin{split} &\EE\left[ \int_0^{\tnm} \Vert  g^{(3)} \Vert_{\mathbb{L}^p}^p\,ds \right] \leq  \frac{C}{ (k(m)\wedge k(n))^{\tilde{\delta} p}} \EE \left[ \int_0^{\tnm}\varphi^{(m)} \Vert  \nabla\um \Vert_{2}^{\tilde{\delta}p} \Vert   \um \Vert_{p}^{(1-\tilde{\delta})\tilde{\theta} p} \Vert   \um \Vert_{3p}^{(1-\tilde{\delta})(1-\tilde{\theta})p} \,ds \right] \\&\qquad  \leq  \frac{C}{ (k(m)\wedge k(n))^{\tilde{\delta} p}} \EE \left[ \int_0^{\tnm} \Bigl( \Vert  \nabla\um \Vert_{2}^{2} + \Vert    \um \Vert_{3p}^{2(1-\tilde{\delta})(1-\tilde{\theta})p/(2-\tilde{\delta}p)} \Bigr) \,ds \right] \\&\qquad  \leq  \frac{C_T}{ (k(m)\wedge k(n))^{\tilde{\delta} p}} \EE \left[ \int_0^{\tnm} ( \Vert  \nabla\um \Vert_{2}^{2} + \Vert    \um \Vert_{3p}^{p} ) \,ds \right] . \end{split}    \label{ERTWERTHWRTWERTSGDGHCFGSDFGQSERWDFGDSFGHSDRGTEHDFGHDSFGSDGHGYUHDFGSDFASDFASGTWRT126} \end{align} Above, we required $2(1-\tilde{\delta})(1-\tilde{\theta})p/(2-\tilde{\delta}p)<p$, which is equivalent to $\tilde{\theta}>1-(2-\tilde{\delta}p)/2(1-\tilde{\delta})$ and holds when $\tilde{\delta}$ is sufficiently small. Lastly, we use the growth assumption on $\sigma$ and conclude \begin{align}   \begin{split} &\EE\left[ \int_0^{\tnm} \Vert  g^{(4)} \Vert_{\mathbb{L}^p}^p\,ds
\right] \leq  \frac{C}{ (k(m)\wedge k(n))^{p}} \EE \left[ \int_0^{\tnm} (\varphi^{(m)}\Vert \um \Vert_{3p/2}^{2p} + 1) \,ds \right] \leq  \frac{C_MK^p}{ (k(m)\wedge k(n))^{p}}. \end{split}    \label{ERTWERTHWRTWERTSGDGHCFGSDFGQSERWDFGDSFGHSDRGTEHDFGHDSFGSDGHGYUHDFGSDFASDFASGTWRT127} \end{align} \par Combining the estimates above, choosing an appropriate $\epsilon$, and making $t$ sufficiently small, we arrive at   \begin{align}   \begin{split}   & \EE\biggl[   \sup_{0\leq s\leq \tau_{n, m}}\Vert u^{(n)}(s)-u^{(m)}(s)\Vert_p^p   +\int_0^{\tau_{n, m}}    \sum_{j}    \int_{\RR^d} | \nabla (|u^{(n)}_j-u^{(m)}_j|^{p/2})|^2 \,dx\,ds   \biggr]   \\&\indeq   \leq    \frac{C_T}{ (k(m)\wedge k(n))^{\beta p}}\EE   \left[   \int_0^{\tnm}   (   \Vert    \nabla\um   \Vert_{2}^{2}   +   \Vert      \um   \Vert_{3p}^{p}   )   \,ds   \right]+   \frac{C_MK^p}{ (k(m)\wedge k(n))^{p}}   \\ &\qquad +   \EE\left[   \left\Vert    \pkn \mathcal{P} \left(\varphi\left(\frac{\cdot}{n}\right) u_0 \right)      -\pkm \mathcal{P} \left(\varphi      \left(\frac{\cdot}{m}\right) u_0 \right)   \right\Vert_p^p   \right]   \end{split}   \llabel{Iz7S 43 k 5Tr ZiD Mt7 pE NCYi uHL7 gac7Gq yN 6 Z1u x56 YZh 2d yJVx 9MeU OMWBQf l0 E mIc 5Zr yfy 3i rahC y9Pi MJ7ofo Op d enn sLi xZx Jt CjC9 M71v O0fxiR 51 m FIB QRo 1oW Iq 3gDP stD2 ntfoX7 YU o S5k GuV IGM cf HZe3 7ZoG A1dDmk XO 2 KYR LpJ jII om M6Nu u8O0 jO5Nab Ub R nZn 15k hG9 4S 21V4 Ip45 7ooaiP u2 j hIz osW FDu O5 HdGr djvv tTLBjo vL L iCo 6L5 Lwa Pm vD6Z pal6 9Ljn11 re T 2CP mvj rL3 xH mDYK uv5T npC1fM oU R RTo Loi lk0 FE ghak m5M9 cOIPdQ lG D LnX erC ykJ C1 0FHh vvnY aTGuqU rf T QPv wEq iHO vO hD6A nXuv GlzVAv pz d Ok3 6ym yUo Fb AcAA BItO es52Vq d0 Y c7U 2gB t0W fF VQZh rJHr lBLdCx 8I o dWp AlD S8C HB rNLz xWp6 ypjuwW mg X toy 1vP bra uH yMNb kUrZ D6Ee2f zI D tkZ Eti Lmg re 1woD juLB BSdasY Vc F Uhy ViC xB1 5y Ltql qoUh gL3bZN YV k orz wa3 650 qW hF22 epiX cAjA4Z V4 b cXx uB3 NQN p0 GxW2 Vs1z jtqe2p LE B iS3 0E0 NKH gY N50v XaK6 pNpwdB X2 Y v7V 0Ud dTc Pi dRNN CLG4 7Fc3PL Bx K 3Be x1X zyX cj 0Z6a Jk0H KuQnwd Dh P Q1Q rwA 05v 9c 3pnz ttzt x2IirW CZ B oS5 xlO KCi D3 WFh4 dvCL QANAQJ Gg y vOD NTD FKj Mc 0RJP m4HU SQkLnT Q4 Y 6CC MvN jAR Zb lir7 RFsI NzHiJl cg f xSC Hts ZOG 1V uOzk 5G1C LtmRYI eD 3 5BB uxZ JdY LO CwS9 lokS NasDLj 5h 8 yni u7h u3c di zYh1 PdwE l3m8Xt yX Q RCA bwe aLi N8 qA9N 6DRE wy6gZe xs A 4fG EKH KQP PP KMbk sY1j M4h3Jj gS U One p1w RqN GA grL4 c18W v4kchD gR x 7Gj jIB zcK QV f7gA TrZx Oy6FF7 y9 3 iuu AQt 9TK Rx S5GO TFGx 4Xx1U3 R4 s 7U1 mpa bpD Hg kicx aCjk hnobr0 p4 c ody xTC kVj 8t W4iP 2OhT RF6kU2 k2 o oZJ Fsq Y4B FS NI3u W2fj OMFf7x Jv e ilb UVT ArC Tv qWLi vbRp g2wpAJ On l RUE PKh j9h dG M0Mi gcqQ wkyunB Jr T LDc Pgn OSC HO sSgQ sR35 MB7Bgk Pk 6 nJh 01P Cxd Ds w514 O648 VD8iJ5 4F W 6rs 6Sy qGz MK fXop oe4e o52UNB 4Q 8 f8N Uz8 u2n GO AXHW gKtG AtGGJs bm z 2qj vSv GBu 5e 4JgL Aqrm gMmS08 ZF s xQm 28M 3z4 Ho 1xxj j8Uk bMbm8M 0c L PL5 TS2 kIQ jZ Kb9Q Ux2U i5Aflw 1S L DGI uWU dCP jy wVVM 2ct8 cmgOBS 7d Q ViX R8F bta 1m tEFj TO0k owcK2d 6M Z iW8 PrK PI1 sX WJNB cREV Y4H5QQ GH b plP bwd Txp OI 5OQZ AKyi ix7Qey YI 9 1Ea 16r KXK L2 ifQX QPdP NL6EJi Hc K rBs 2qG tQb aq edOj Lixj GiNWr1 Pb Y SZe SxxEQ128}   \end{align} for some~$\beta\in(0,1)$. With~\eqref{ERTWERTHWRTWERTSGDGHCFGSDFGQSERWDFGDSFGHSDRGTEHDFGHDSFGSDGHGYUHDFGSDFASDFASGTWRT80},~\eqref{ERTWERTHWRTWERTSGDGHCFGSDFGQSERWDFGDSFGHSDRGTEHDFGHDSFGSDGHGYUHDFGSDFASDFASGTWRT14}, and Lemmas~\ref{L02} and~\ref{L0}, we conclude~\eqref{ERTWERTHWRTWERTSGDGHCFGSDFGQSERWDFGDSFGHSDRGTEHDFGHDSFGSDGHGYUHDFGSDFASDFASGTWRT112}. \end{proof} \par The following lemma states a pointwise Cauchy condition in the probability space. Moreover, it asserts the existence of a positive stopping time up to which the Cauchy condition uniformly holds for a subsequence of approximate solutions. \par \cole  \begin{Lemma} \label{L07} Let~$p>d=3$ and~$K\geq1$. Suppose that $\Vert u_{0}\Vert_{p} \leq K$. Then, there exist a stopping time $\tau$ with $\PP(\tau>0)=1$, a subsequence $\{u^{(n_k)}\}$, and an adapted process $u\in L^p(\Omega, C([0,\tau], L^p))\cap L^{p}(\Omega, L^{p}([0,\tau], L^{3p}))$ such that    \begin{align}   \begin{split}   \lim_{k\to\infty}   \left(   \sup_{0\leq s\leq \tau}\Vert u^{(n_k)}(s)-u(s)\Vert_p^p   +\int_0^{\tau}    \Vert u^{(n_k)}(s)-u(s)\Vert_{3p}^p\,ds   \right)   =0\quad{}    \Pas,   \end{split}   \label{ERTWERTHWRTWERTSGDGHCFGSDFGQSERWDFGDSFGHSDRGTEHDFGHDSFGSDGHGYUHDFGSDFASDFASGTWRT129}   \end{align} and $\tau$ depends on~$K$. Moreover,    \begin{equation}   \label{ERTWERTHWRTWERTSGDGHCFGSDFGQSERWDFGDSFGHSDRGTEHDFGHDSFGSDGHGYUHDFGSDFASDFASGTWRT130}   \EE\biggl[\sup_{[0, \tau]}\Vert u \Vert_{p}^p + \int_{0}^{\tau} \int_{\RR^\dd}\sum_j |\nabla |u_j|^{p/2}|^2 \,dx ds\biggr] \leq C K^p,    \end{equation} where $C$ is independent of~$K$. \end{Lemma}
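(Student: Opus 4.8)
The plan is to produce the pair $(u,\tau)$ from three already-established ingredients: the Cauchy-in-expectation estimate \eqref{ERTWERTHWRTWERTSGDGHCFGSDFGQSERWDFGDSFGHSDRGTEHDFGHDSFGSDGHGYUHDFGSDFASDFASGTWRT112} of Lemma~\ref{L06}, the uniform bound \eqref{ERTWERTHWRTWERTSGDGHCFGSDFGQSERWDFGDSFGHSDRGTEHDFGHDSFGSDGHGYUHDFGSDFASDFASGTWRT99} of Theorem~\ref{T04}, and the small-time probability estimate \eqref{ERTWERTHWRTWERTSGDGHCFGSDFGQSERWDFGDSFGHSDRGTEHDFGHDSFGSDGHGYUHDFGSDFASDFASGTWRT98}. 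First I would fix $k(n)$ as in Lemma~\ref{L06} together with the associated time $t>0$; shrinking $t$ (keeping $t\le T$) I may in addition require, via~\eqref{ERTWERTHWRTWERTSGDGHCFGSDFGQSERWDFGDSFGHSDRGTEHDFGHDSFGSDGHGYUHDFGSDFASDFASGTWRT98}, that $\sup_n\PP(\,\cdot\,\ge M_0K^p)$ evaluated at time $t$ is as small as desired. Using~\eqref{ERTWERTHWRTWERTSGDGHCFGSDFGQSERWDFGDSFGHSDRGTEHDFGHDSFGSDGHGYUHDFGSDFASDFASGTWRT112} I then extract an increasing subsequence $\{n_k\}$ along which $\EE\bigl[\sup_{0\le s\le\tnm}\Vert\un(s)-\um(s)\Vert_p^p+\int_0^{\tnm}\Vert\un(s)-\um(s)\Vert_{3p}^p\,ds\bigr]$ decays fast enough in $\min(n,m)$ to feed the Borel--Cantelli steps below, where $\tnm=\tau^n_M\wedge\tau^m_M\wedge t$. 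For $K\in\NNp$ set $\eta_K:=\inf_{j\ge K}(\tau^{n_j}_M\wedge t)$, a nondecreasing family of stopping times, and let $\tau:=\lim_{K\to\infty}\eta_K=\liminf_k(\tau^{n_k}_M\wedge t)$; then $\eta_K\le\tau^{n_j}_M\wedge t$ for $j\ge K$, hence $\eta_K\le\tau_{n_i,n_{i+1}}$ for all $i\ge K$.

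Second, I would pass to the limit. For $K\le k\le l$, telescoping $u^{(n_k)}-u^{(n_l)}=\sum_{i=k}^{l-1}\bigl(u^{(n_i)}-u^{(n_{i+1})}\bigr)$ and applying the triangle inequality in $L^p(\Omega;C([0,\eta_K],L^p))$ and in $L^p(\Omega;L^p([0,\eta_K],L^{3p}))$, together with $\eta_K\le\tau_{n_i,n_{i+1}}$ for $i\ge K$, bounds these norms of $u^{(n_k)}-u^{(n_l)}$ by a tail $\rho_k\to0$ whose size is independent of $K$. Monotone convergence in $K$ (the continuity of each $u^{(n_k)}$ on $[0,T]$ controlling the endpoint $s=\tau$) then shows $\{u^{(n_k)}\}$ is Cauchy in $L^p(\Omega;C([0,\tau],L^p))\cap L^p(\Omega;L^p([0,\tau],L^{3p}))$; its limit $u$ is progressively measurable, lies in the asserted space, and satisfies $u(0)=u_0$ by the convergence of the truncated initial data. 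A Chebyshev plus Borel--Cantelli argument, using the fast decay of $\rho_k$, upgrades this to the a.s. statement \eqref{ERTWERTHWRTWERTSGDGHCFGSDFGQSERWDFGDSFGHSDRGTEHDFGHDSFGSDGHGYUHDFGSDFASDFASGTWRT129}. For \eqref{ERTWERTHWRTWERTSGDGHCFGSDFGQSERWDFGDSFGHSDRGTEHDFGHDSFGSDGHGYUHDFGSDFASDFASGTWRT130}: since $\eta_K\le\tau^{n_k}_M$ when $k\ge K$, the bound \eqref{ERTWERTHWRTWERTSGDGHCFGSDFGQSERWDFGDSFGHSDRGTEHDFGHDSFGSDGHGYUHDFGSDFASDFASGTWRT99} gives $\EE\bigl[\sup_{[0,\eta_K]}\Vert u^{(n_k)}\Vert_p^p+\int_0^{\eta_K}\int_{\RR^3}\sum_j|\nabla(|u^{(n_k)}_j|^{p/2})|^2\,dx\,ds\bigr]\le CK^p$ uniformly in $k\ge K$; the supremum term passes to the limit along the $L^p(\Omega;C)$-convergence, the gradient term by Lemma~\ref{L04} (weak lower semicontinuity on $[0,\eta_K]$, using $u^{(n_k)}\to u$ in $L^p(\Omega;L^\infty([0,\eta_K],L^p))$ and $L^2$-boundedness of the gradients), and a final monotone convergence in $K$ yields \eqref{ERTWERTHWRTWERTSGDGHCFGSDFGQSERWDFGDSFGHSDRGTEHDFGHDSFGSDGHGYUHDFGSDFASDFASGTWRT130}.

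The hard part will be the strict positivity $\PP(\tau>0)=1$, i.e. ruling out that the infima $\eta_K$ all vanish. Here the strict gap between $M_0$ and $M$ in \eqref{ERTWERTHWRTWERTSGDGHCFGSDFGQSERWDFGDSFGHSDRGTEHDFGHDSFGSDGHGYUHDFGSDFASDFASGTWRT96} is crucial: since $s\mapsto\sup_{[0,s]}\Vert\un\Vert_p^p+\int_0^s\Vert\un(r)\Vert_{3p}^p\,dr$ is continuous, nondecreasing, and starts strictly below $M_0K^p$, on $\{\tau^n_M\le s\}$ it reaches $MK^p$ before time $s$, so (taking $M$ a fixed multiple of $M_0$, via the Sobolev inequality \eqref{ERTWERTHWRTWERTSGDGHCFGSDFGQSERWDFGDSFGHSDRGTEHDFGHDSFGSDGHGYUHDFGSDFASDFASGTWRT80}) the quantity appearing in \eqref{ERTWERTHWRTWERTSGDGHCFGSDFGQSERWDFGDSFGHSDRGTEHDFGHDSFGSDGHGYUHDFGSDFASDFASGTWRT98} at time $s\wedge\tau^n_M$ is then $\ge M_0K^p$; hence $\sup_n\PP(\tau^n_M\le s)\to0$ as $s\to0$. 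This uniform-in-$n$ decay is not by itself enough — the $\tau^{n_k}_M$ could still accumulate at $0$ on a set of positive probability — so I would combine it with the a.s. convergence already obtained: that convergence forces the running energies of the $u^{(n_k)}$ to converge, uniformly in $s$ up to $\eta_K$, to the continuous running energy of $u$, which starts at $\Vert u_0\Vert_p^p\le K^p<M_0K^p$, and stability of hitting times of continuous increasing functions then prevents $\liminf_k(\tau^{n_k}_M\wedge t)$ from vanishing. Making this rigorous amounts to disentangling the circular dependence between ``the interval on which $u^{(n_k)}\to u$'' and ``$\tau^{n_k}_M$ stays away from $0$'', which I would handle by the stopping-time exhaustion scheme of \cite[\S5]{KXZ}; it is precisely through $t$ and the window in \eqref{ERTWERTHWRTWERTSGDGHCFGSDFGQSERWDFGDSFGHSDRGTEHDFGHDSFGSDGHGYUHDFGSDFASDFASGTWRT98} that the resulting $\tau$ depends on $K$. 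With $\PP(\tau>0)=1$ in hand, \eqref{ERTWERTHWRTWERTSGDGHCFGSDFGQSERWDFGDSFGHSDRGTEHDFGHDSFGSDGHGYUHDFGSDFASDFASGTWRT129} and \eqref{ERTWERTHWRTWERTSGDGHCFGSDFGQSERWDFGDSFGHSDRGTEHDFGHDSFGSDGHGYUHDFGSDFASDFASGTWRT130} follow as above and the proof is complete.
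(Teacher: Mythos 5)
Your construction of the limit process and of the energy bound \eqref{ERTWERTHWRTWERTSGDGHCFGSDFGQSERWDFGDSFGHSDRGTEHDFGHDSFGSDGHGYUHDFGSDFASDFASGTWRT130} (telescoping with \eqref{ERTWERTHWRTWERTSGDGHCFGSDFGQSERWDFGDSFGHSDRGTEHDFGHDSFGSDGHGYUHDFGSDFASDFASGTWRT112}, Borel--Cantelli, then Lemma~\ref{L04} and \eqref{ERTWERTHWRTWERTSGDGHCFGSDFGQSERWDFGDSFGHSDRGTEHDFGHDSFGSDGHGYUHDFGSDFASDFASGTWRT99}) parallels the paper and is fine. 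The genuine gap is exactly the step you flag as ``the hard part'': with your definition $\tau=\liminf_k(\tau^{n_k}_M\wedge t)$ you never prove $\PP(\tau>0)=1$. You correctly observe that $\sup_n\PP(\tau^n_M\le s)\to0$ as $s\to0$ (which does follow from \eqref{ERTWERTHWRTWERTSGDGHCFGSDFGQSERWDFGDSFGHSDRGTEHDFGHDSFGSDGHGYUHDFGSDFASDFASGTWRT98} and \eqref{ERTWERTHWRTWERTSGDGHCFGSDFGQSERWDFGDSFGHSDRGTEHDFGHDSFGSDGHGYUHDFGSDFASDFASGTWRT80}) is not enough, since it is not summable in $k$; but the repair you propose --- use the a.s.\ convergence on $[0,\tau]$ plus ``stability of hitting times'' --- cannot close the argument as stated: on the event where the $\tau^{n_k}_M$ accumulate at $0$, your convergence statement is a statement on the degenerate interval $[0,0]$ and carries no information, so it cannot be used to exclude that event. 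Deferring to an unspecified ``stopping-time exhaustion scheme'' leaves precisely the central mechanism of the lemma unproved.

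What the paper does instead is a concrete device you would need to reproduce: it does not work with $\tau^{n_k}_M$ directly, but introduces auxiliary hitting times $\eta_k=\inf\{t:\ \sup_{[0,t]}\Vert u^{(n_k)}\Vert_p+(\int_0^t\Vert u^{(n_k)}\Vert_{3p}^p\,ds)^{1/p}\ge M_1^{1/p}K+2^{-k}\}$, with constants $M_0<M_1$ and $M>2^{p+1}M_1$ chosen so that $\eta_k\le\tau^{n_k}_M$ automatically, together with the events $\Omega_k$ of \eqref{ERTWERTHWRTWERTSGDGHCFGSDFGQSERWDFGDSFGHSDRGTEHDFGHDSFGSDGHGYUHDFGSDFASDFASGTWRT135} on which consecutive differences exceed $4^{-k}$ (summable by \eqref{ERTWERTHWRTWERTSGDGHCFGSDFGQSERWDFGDSFGHSDRGTEHDFGHDSFGSDGHGYUHDFGSDFASDFASGTWRT133} and Chebyshev). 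The point of the shrinking slack $2^{-k}$ against the difference size $4^{-k}$ is a purely pathwise comparison: on $\bigcap_{k\ge N}\Omega_k^c$ the times $\eta_k\wedge\bar t$ are nonincreasing in $k$, so $\tau:=\lim_k\eta_k\wedge\bar t$ exists and $\PP(\tau<\epsilon)=\lim_N\PP(\eta_N<\epsilon)$; positivity then reduces to the small-time estimate for a \emph{single} $u^{(n_N)}$ on a deterministic interval $[0,1/j]$, which is supplied by Theorem~\ref{T04}. This monotone comparison is what breaks the circularity you identified (the interval of convergence is tied to the $\eta_k$, not to the convergence itself), and it is the missing idea in your proposal; without it, or an equivalent substitute, the claim $\PP(\tau>0)=1$ --- and with it the nondegeneracy of \eqref{ERTWERTHWRTWERTSGDGHCFGSDFGQSERWDFGDSFGHSDRGTEHDFGHDSFGSDGHGYUHDFGSDFASDFASGTWRT129}--\eqref{ERTWERTHWRTWERTSGDGHCFGSDFGQSERWDFGDSFGHSDRGTEHDFGHDSFGSDGHGYUHDFGSDFASDFASGTWRT130} --- remains unproved.
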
 \colb \par \begin{proof}[Proof of Lemma~\ref{L07}]  By Theorem~\ref{T04} and \eqref{ERTWERTHWRTWERTSGDGHCFGSDFGQSERWDFGDSFGHSDRGTEHDFGHDSFGSDGHGYUHDFGSDFASDFASGTWRT80}, there exist constants $M$ and $M_1$ such that $M_1>M_0$, $M>2^{p+1} M_1$, and     \begin{align}   \begin{split}   &\lim_{t\to 0} \sup_{n} \PP\biggl(   \sup_{s\in [0,\tau^n_{M} \wedge t] }\Vert u^{(n)}(s)\Vert_p^p   +\int_0^{\tau^n_{M}\wedge t}    \Vert u^{(n)}(s)\Vert_{3p}^{p} \, ds\geq \frac{M_1}{2^p}K^p   \biggr)   \\&\quad\le   \lim_{t\rightarrow0} \sup_n \PP\paren{\sup_{s\in [0,\tau^n_{M} \wedge t] }\Vert u^{(n)}(s)\Vert_p^p + \int_{0}^{\tau^n_{M}\wedge  t} \int_{\RR^\dd}\sum_j |\nabla |u_j^{(n)}|^{p/2}|^2 \,dx ds \geq     M_0K^p }     =0   .   \end{split}\llabel{q 3gDP stD2 ntfoX7 YU o S5k GuV IGM cf HZe3 7ZoG A1dDmk XO 2 KYR LpJ jII om M6Nu u8O0 jO5Nab Ub R nZn 15k hG9 4S 21V4 Ip45 7ooaiP u2 j hIz osW FDu O5 HdGr djvv tTLBjo vL L iCo 6L5 Lwa Pm vD6Z pal6 9Ljn11 re T 2CP mvj rL3 xH mDYK uv5T npC1fM oU R RTo Loi lk0 FE ghak m5M9 cOIPdQ lG D LnX erC ykJ C1 0FHh vvnY aTGuqU rf T QPv wEq iHO vO hD6A nXuv GlzVAv pz d Ok3 6ym yUo Fb AcAA BItO es52Vq d0 Y c7U 2gB t0W fF VQZh rJHr lBLdCx 8I o dWp AlD S8C HB rNLz xWp6 ypjuwW mg X toy 1vP bra uH yMNb kUrZ D6Ee2f zI D tkZ Eti Lmg re 1woD juLB BSdasY Vc F Uhy ViC xB1 5y Ltql qoUh gL3bZN YV k orz wa3 650 qW hF22 epiX cAjA4Z V4 b cXx uB3 NQN p0 GxW2 Vs1z jtqe2p LE B iS3 0E0 NKH gY N50v XaK6 pNpwdB X2 Y v7V 0Ud dTc Pi dRNN CLG4 7Fc3PL Bx K 3Be x1X zyX cj 0Z6a Jk0H KuQnwd Dh P Q1Q rwA 05v 9c 3pnz ttzt x2IirW CZ B oS5 xlO KCi D3 WFh4 dvCL QANAQJ Gg y vOD NTD FKj Mc 0RJP m4HU SQkLnT Q4 Y 6CC MvN jAR Zb lir7 RFsI NzHiJl cg f xSC Hts ZOG 1V uOzk 5G1C LtmRYI eD 3 5BB uxZ JdY LO CwS9 lokS NasDLj 5h 8 yni u7h u3c di zYh1 PdwE l3m8Xt yX Q RCA bwe aLi N8 qA9N 6DRE wy6gZe xs A 4fG EKH KQP PP KMbk sY1j M4h3Jj gS U One p1w RqN GA grL4 c18W v4kchD gR x 7Gj jIB zcK QV f7gA TrZx Oy6FF7 y9 3 iuu AQt 9TK Rx S5GO TFGx 4Xx1U3 R4 s 7U1 mpa bpD Hg kicx aCjk hnobr0 p4 c ody xTC kVj 8t W4iP 2OhT RF6kU2 k2 o oZJ Fsq Y4B FS NI3u W2fj OMFf7x Jv e ilb UVT ArC Tv qWLi vbRp g2wpAJ On l RUE PKh j9h dG M0Mi gcqQ wkyunB Jr T LDc Pgn OSC HO sSgQ sR35 MB7Bgk Pk 6 nJh 01P Cxd Ds w514 O648 VD8iJ5 4F W 6rs 6Sy qGz MK fXop oe4e o52UNB 4Q 8 f8N Uz8 u2n GO AXHW gKtG AtGGJs bm z 2qj vSv GBu 5e 4JgL Aqrm gMmS08 ZF s xQm 28M 3z4 Ho 1xxj j8Uk bMbm8M 0c L PL5 TS2 kIQ jZ Kb9Q Ux2U i5Aflw 1S L DGI uWU dCP jy wVVM 2ct8 cmgOBS 7d Q ViX R8F bta 1m tEFj TO0k owcK2d 6M Z iW8 PrK PI1 sX WJNB cREV Y4H5QQ GH b plP bwd Txp OI 5OQZ AKyi ix7Qey YI 9 1Ea 16r KXK L2 ifQX QPdP NL6EJi Hc K rBs 2qG tQb aq edOj Lixj GiNWr1 Pb Y SZe Sxx Fin aK 9Eki CHV2 a13f7G 3G 3 oDK K0i bKV y4 53E2 nFQS 8Hnqg0 E3 2 ADd dEV nmJ 7H Bc1t 2K2i hCzZuy 9k p sHn 8Ko uAR kv sHKP y8Yo dOOqBi hF 1 Z3C vUF hmj gB muZq 7ggW Lg5dQEQ131}   \end{align} Let $\bar{t}$ be the deterministic time suggested by Lemma~\ref{L06}, and denote $\tau_{n, m} =\tau^n_M\wedge \tau^n_M\wedge \bar{t}$. From \eqref{ERTWERTHWRTWERTSGDGHCFGSDFGQSERWDFGDSFGHSDRGTEHDFGHDSFGSDGHGYUHDFGSDFASDFASGTWRT112}, we infer the existence of a subsequence $\{u^{(n_k)}\}$ for which    \begin{align}   \begin{split}   \EE\biggl[   \sup_{s\in [0,\tau_{n_{k+1}, n_k}] }\Vert u^{(n_{k+1})}(s)-u^{(n_k)}(s)\Vert_p^p   +\int_0^{\tau_{n_{k+1}, n_k}}    \Vert u^{(n_{k+1})}(s)-u^{(n_k)}(s)\Vert_{3p}^p\,ds   \biggr]   \leq\frac{1}{8^{kp}}    .   \end{split}\label{ERTWERTHWRTWERTSGDGHCFGSDFGQSERWDFGDSFGHSDRGTEHDFGHDSFGSDGHGYUHDFGSDFASDFASGTWRT133}   \end{align} Consider the stopping time   \begin{align}   \begin{split}    \eta_k = \inf\biggl\{t>0: \sup_{[0, t]}\Vert u^{(n_k)}\Vert _{p} + \biggl(\int_{0}^{t} \Vert u^{(n_k)}\Vert _{3p}^{p}\, ds\biggr)^{1/p}\geq M_1^{1/p}K+\frac{1}{2^k}\biggr\},   \end{split}   \llabel{ iCo 6L5 Lwa Pm vD6Z pal6 9Ljn11 re T 2CP mvj rL3 xH mDYK uv5T npC1fM oU R RTo Loi lk0 FE ghak m5M9 cOIPdQ lG D LnX erC ykJ C1 0FHh vvnY aTGuqU rf T QPv wEq iHO vO hD6A nXuv GlzVAv pz d Ok3 6ym yUo Fb AcAA BItO es52Vq d0 Y c7U 2gB t0W fF VQZh rJHr lBLdCx 8I o dWp AlD S8C HB rNLz xWp6 ypjuwW mg X toy 1vP bra uH yMNb kUrZ D6Ee2f zI D tkZ Eti Lmg re 1woD juLB BSdasY Vc F Uhy ViC xB1 5y Ltql qoUh gL3bZN YV k orz wa3 650 qW hF22 epiX cAjA4Z V4 b cXx uB3 NQN p0 GxW2 Vs1z jtqe2p LE B iS3 0E0 NKH gY N50v XaK6 pNpwdB X2 Y v7V 0Ud dTc Pi dRNN CLG4 7Fc3PL Bx K 3Be x1X zyX cj 0Z6a Jk0H KuQnwd Dh P Q1Q rwA 05v 9c 3pnz ttzt x2IirW CZ B oS5 xlO KCi D3 WFh4 dvCL QANAQJ Gg y vOD NTD FKj Mc 0RJP m4HU SQkLnT Q4 Y 6CC MvN jAR Zb lir7 RFsI NzHiJl cg f xSC Hts ZOG 1V uOzk 5G1C LtmRYI eD 3 5BB uxZ JdY LO CwS9 lokS NasDLj 5h 8 yni u7h u3c di zYh1 PdwE l3m8Xt yX Q RCA bwe aLi N8 qA9N 6DRE wy6gZe xs A 4fG EKH KQP PP KMbk sY1j M4h3Jj gS U One p1w RqN GA grL4 c18W v4kchD gR x 7Gj jIB zcK QV f7gA TrZx Oy6FF7 y9 3 iuu AQt 9TK Rx S5GO TFGx 4Xx1U3 R4 s 7U1 mpa bpD Hg kicx aCjk hnobr0 p4 c ody xTC kVj 8t W4iP 2OhT RF6kU2 k2 o oZJ Fsq Y4B FS NI3u W2fj OMFf7x Jv e ilb UVT ArC Tv qWLi vbRp g2wpAJ On l RUE PKh j9h dG M0Mi gcqQ wkyunB Jr T LDc Pgn OSC HO sSgQ sR35 MB7Bgk Pk 6 nJh 01P Cxd Ds w514 O648 VD8iJ5 4F W 6rs 6Sy qGz MK fXop oe4e o52UNB 4Q 8 f8N Uz8 u2n GO AXHW gKtG AtGGJs bm z 2qj vSv GBu 5e 4JgL Aqrm gMmS08 ZF s xQm 28M 3z4 Ho 1xxj j8Uk bMbm8M 0c L PL5 TS2 kIQ jZ Kb9Q Ux2U i5Aflw 1S L DGI uWU dCP jy wVVM 2ct8 cmgOBS 7d Q ViX R8F bta 1m tEFj TO0k owcK2d 6M Z iW8 PrK PI1 sX WJNB cREV Y4H5QQ GH b plP bwd Txp OI 5OQZ AKyi ix7Qey YI 9 1Ea 16r KXK L2 ifQX QPdP NL6EJi Hc K rBs 2qG tQb aq edOj Lixj GiNWr1 Pb Y SZe Sxx Fin aK 9Eki CHV2 a13f7G 3G 3 oDK K0i bKV y4 53E2 nFQS 8Hnqg0 E3 2 ADd dEV nmJ 7H Bc1t 2K2i hCzZuy 9k p sHn 8Ko uAR kv sHKP y8Yo dOOqBi hF 1 Z3C vUF hmj gB muZq 7ggW Lg5dQB 1k p Fxk k35 GFo dk 00YD 13qI qqbLwy QC c yZR wHA fp7 9o imtC c5CV 8cEuwU w7 k 8Q7 nCq WkM gY rtVR IySM tZUGCH XV 9 mr9 GHZ ol0 VE eIjQ vwgw 17pDhX JS F UcY bqU gnG V8 IEQ134}   \end{align} and the probability events    \begin{equation}    \Omega_k = \biggl\{\omega: \sup_{s\in [0,\tau_{n_{k+1}, n_k}] }\Vert u^{(n_{k+1})}(s)-u^{(n_k)}(s)\Vert_p     +\biggl(\int_0^{\tau_{n_{k+1}, n_k}}         \Vert u^{(n_{k+1})}(s)-u^{(n_k)}(s)\Vert_{3p}^p\,ds      \biggr)^{1/p}     \geq \frac{1}{4^k}               \biggr\} . \label{ERTWERTHWRTWERTSGDGHCFGSDFGQSERWDFGDSFGHSDRGTEHDFGHDSFGSDGHGYUHDFGSDFASDFASGTWRT135} \end{equation} Our choices of $M_1$ and $M$ ensure that $0<\eta_k\leq \tau^{n_{k}}_{M}$ for every~$k\in \NNp$. Moreover, $\eta_k\wedge \bar{t}\geq \eta_{k+1}\wedge \bar{t}$ in the set $\bigcap_{k\geq N} \Omega_k^c$ for every $N\in\NNp$ and~$k\geq~N$. Hence, $\lim_{k\to\infty} \eta_k\wedge \bar{t}$ exists in $\bigcap_{k\geq N} \Omega_k^c$. In fact, this limit is well-defined almost everywhere in the probability space $\Omega$ because by \eqref{ERTWERTHWRTWERTSGDGHCFGSDFGQSERWDFGDSFGHSDRGTEHDFGHDSFGSDGHGYUHDFGSDFASDFASGTWRT133} and Chebyshev's inequality,   \begin{align}   \begin{split}    &\lim_{N\to\infty} \PP\biggl(    \bigcap_{k\geq N} \Omega_k^c    \biggr)    =    \PP\biggl(    \bigcup_N \bigcap_{k\geq N} \Omega_k^c    \biggr)    =    1-\PP\biggl(    \bigcap_N \bigcup_{k\geq N} \Omega_k    \biggr)    \geq \lim_{N\to\infty}    \biggl(1- \sum_{k\geq N}    \PP(\Omega_k)    \biggr)    \geq 1.    \end{split}    \llabel{uv GlzVAv pz d Ok3 6ym yUo Fb AcAA BItO es52Vq d0 Y c7U 2gB t0W fF VQZh rJHr lBLdCx 8I o dWp AlD S8C HB rNLz xWp6 ypjuwW mg X toy 1vP bra uH yMNb kUrZ D6Ee2f zI D tkZ Eti Lmg re 1woD juLB BSdasY Vc F Uhy ViC xB1 5y Ltql qoUh gL3bZN YV k orz wa3 650 qW hF22 epiX cAjA4Z V4 b cXx uB3 NQN p0 GxW2 Vs1z jtqe2p LE B iS3 0E0 NKH gY N50v XaK6 pNpwdB X2 Y v7V 0Ud dTc Pi dRNN CLG4 7Fc3PL Bx K 3Be x1X zyX cj 0Z6a Jk0H KuQnwd Dh P Q1Q rwA 05v 9c 3pnz ttzt x2IirW CZ B oS5 xlO KCi D3 WFh4 dvCL QANAQJ Gg y vOD NTD FKj Mc 0RJP m4HU SQkLnT Q4 Y 6CC MvN jAR Zb lir7 RFsI NzHiJl cg f xSC Hts ZOG 1V uOzk 5G1C LtmRYI eD 3 5BB uxZ JdY LO CwS9 lokS NasDLj 5h 8 yni u7h u3c di zYh1 PdwE l3m8Xt yX Q RCA bwe aLi N8 qA9N 6DRE wy6gZe xs A 4fG EKH KQP PP KMbk sY1j M4h3Jj gS U One p1w RqN GA grL4 c18W v4kchD gR x 7Gj jIB zcK QV f7gA TrZx Oy6FF7 y9 3 iuu AQt 9TK Rx S5GO TFGx 4Xx1U3 R4 s 7U1 mpa bpD Hg kicx aCjk hnobr0 p4 c ody xTC kVj 8t W4iP 2OhT RF6kU2 k2 o oZJ Fsq Y4B FS NI3u W2fj OMFf7x Jv e ilb UVT ArC Tv qWLi vbRp g2wpAJ On l RUE PKh j9h dG M0Mi gcqQ wkyunB Jr T LDc Pgn OSC HO sSgQ sR35 MB7Bgk Pk 6 nJh 01P Cxd Ds w514 O648 VD8iJ5 4F W 6rs 6Sy qGz MK fXop oe4e o52UNB 4Q 8 f8N Uz8 u2n GO AXHW gKtG AtGGJs bm z 2qj vSv GBu 5e 4JgL Aqrm gMmS08 ZF s xQm 28M 3z4 Ho 1xxj j8Uk bMbm8M 0c L PL5 TS2 kIQ jZ Kb9Q Ux2U i5Aflw 1S L DGI uWU dCP jy wVVM 2ct8 cmgOBS 7d Q ViX R8F bta 1m tEFj TO0k owcK2d 6M Z iW8 PrK PI1 sX WJNB cREV Y4H5QQ GH b plP bwd Txp OI 5OQZ AKyi ix7Qey YI 9 1Ea 16r KXK L2 ifQX QPdP NL6EJi Hc K rBs 2qG tQb aq edOj Lixj GiNWr1 Pb Y SZe Sxx Fin aK 9Eki CHV2 a13f7G 3G 3 oDK K0i bKV y4 53E2 nFQS 8Hnqg0 E3 2 ADd dEV nmJ 7H Bc1t 2K2i hCzZuy 9k p sHn 8Ko uAR kv sHKP y8Yo dOOqBi hF 1 Z3C vUF hmj gB muZq 7ggW Lg5dQB 1k p Fxk k35 GFo dk 00YD 13qI qqbLwy QC c yZR wHA fp7 9o imtC c5CV 8cEuwU w7 k 8Q7 nCq WkM gY rtVR IySM tZUGCH XV 9 mr9 GHZ ol0 VE eIjQ vwgw 17pDhX JS F UcY bqU gnG V8 IFWb S1GX az0ZTt 81 w 7En IhF F72 v2 PkWO Xlkr w6IPu5 67 9 vcW 1f6 z99 lM 2LI1 Y6Na axfl18 gT 0 gDp tVl CN4 jf GSbC ro5D v78Cxa uk Y iUI WWy YDR w8 z7Kj Px7C hC7zJv b1 b 0rEQ136} \end{align} We then show that $\tau:=\lim_{k\to\infty} \eta_k\wedge \bar{t}$ is non-degenerate. For $\epsilon\in (0, \bar{t})$,  \begin{align} \begin{split} &\PP ( \tau<\epsilon ) =  \PP\biggl(      \bigcup_N \bigcap_{k\geq N} \big\{\eta_k\wedge \bar{t}<\epsilon\big\}    \biggr) = \lim_{N\to\infty} \PP\biggl( \bigcap_{k\geq N} \Omega_k^c\cap\{\eta_N<\epsilon\}  \biggr) = \lim_{N\to\infty} \PP( \eta_N<\epsilon ) , \end{split}    \llabel{Lmg re 1woD juLB BSdasY Vc F Uhy ViC xB1 5y Ltql qoUh gL3bZN YV k orz wa3 650 qW hF22 epiX cAjA4Z V4 b cXx uB3 NQN p0 GxW2 Vs1z jtqe2p LE B iS3 0E0 NKH gY N50v XaK6 pNpwdB X2 Y v7V 0Ud dTc Pi dRNN CLG4 7Fc3PL Bx K 3Be x1X zyX cj 0Z6a Jk0H KuQnwd Dh P Q1Q rwA 05v 9c 3pnz ttzt x2IirW CZ B oS5 xlO KCi D3 WFh4 dvCL QANAQJ Gg y vOD NTD FKj Mc 0RJP m4HU SQkLnT Q4 Y 6CC MvN jAR Zb lir7 RFsI NzHiJl cg f xSC Hts ZOG 1V uOzk 5G1C LtmRYI eD 3 5BB uxZ JdY LO CwS9 lokS NasDLj 5h 8 yni u7h u3c di zYh1 PdwE l3m8Xt yX Q RCA bwe aLi N8 qA9N 6DRE wy6gZe xs A 4fG EKH KQP PP KMbk sY1j M4h3Jj gS U One p1w RqN GA grL4 c18W v4kchD gR x 7Gj jIB zcK QV f7gA TrZx Oy6FF7 y9 3 iuu AQt 9TK Rx S5GO TFGx 4Xx1U3 R4 s 7U1 mpa bpD Hg kicx aCjk hnobr0 p4 c ody xTC kVj 8t W4iP 2OhT RF6kU2 k2 o oZJ Fsq Y4B FS NI3u W2fj OMFf7x Jv e ilb UVT ArC Tv qWLi vbRp g2wpAJ On l RUE PKh j9h dG M0Mi gcqQ wkyunB Jr T LDc Pgn OSC HO sSgQ sR35 MB7Bgk Pk 6 nJh 01P Cxd Ds w514 O648 VD8iJ5 4F W 6rs 6Sy qGz MK fXop oe4e o52UNB 4Q 8 f8N Uz8 u2n GO AXHW gKtG AtGGJs bm z 2qj vSv GBu 5e 4JgL Aqrm gMmS08 ZF s xQm 28M 3z4 Ho 1xxj j8Uk bMbm8M 0c L PL5 TS2 kIQ jZ Kb9Q Ux2U i5Aflw 1S L DGI uWU dCP jy wVVM 2ct8 cmgOBS 7d Q ViX R8F bta 1m tEFj TO0k owcK2d 6M Z iW8 PrK PI1 sX WJNB cREV Y4H5QQ GH b plP bwd Txp OI 5OQZ AKyi ix7Qey YI 9 1Ea 16r KXK L2 ifQX QPdP NL6EJi Hc K rBs 2qG tQb aq edOj Lixj GiNWr1 Pb Y SZe Sxx Fin aK 9Eki CHV2 a13f7G 3G 3 oDK K0i bKV y4 53E2 nFQS 8Hnqg0 E3 2 ADd dEV nmJ 7H Bc1t 2K2i hCzZuy 9k p sHn 8Ko uAR kv sHKP y8Yo dOOqBi hF 1 Z3C vUF hmj gB muZq 7ggW Lg5dQB 1k p Fxk k35 GFo dk 00YD 13qI qqbLwy QC c yZR wHA fp7 9o imtC c5CV 8cEuwU w7 k 8Q7 nCq WkM gY rtVR IySM tZUGCH XV 9 mr9 GHZ ol0 VE eIjQ vwgw 17pDhX JS F UcY bqU gnG V8 IFWb S1GX az0ZTt 81 w 7En IhF F72 v2 PkWO Xlkr w6IPu5 67 9 vcW 1f6 z99 lM 2LI1 Y6Na axfl18 gT 0 gDp tVl CN4 jf GSbC ro5D v78Cxa uk Y iUI WWy YDR w8 z7Kj Px7C hC7zJv b1 b 0rF d7n Mxk 09 1wHv y4u5 vLLsJ8 Nm A kWt xuf 4P5 Nw P23b 06sF NQ6xgD hu R GbK 7j2 O4g y4 p4BL top3 h2kfyI 9w O 4Aa EWb 36Y yH YiI1 S3CO J7aN1r 0s Q OrC AC4 vL7 yr CGkI RlNu EQ137} \end{align} and thus, \begin{align} \begin{split} &\PP ( \tau=0 ) = \lim_{j\to\infty} \PP\left( \tau<\frac{1}{j} \right) = \lim_{j\to\infty}\lim_{N\to\infty}
\PP\left( \eta_N<\frac{1}{j} \right) \\ &\quad \leq  \lim_{j\to\infty}\lim_{k\to\infty} \PP\biggl( \sup_{[0, 1/j]}\Vert u^{(n_k)}\Vert _{p} + \biggl(\int_{0}^{1/j} \Vert u^{(n_k)}\Vert _{3p}^{p}\, ds\biggr)^{1/p}> M_1^{1/p}K+\frac{1}{2^k} \biggr) \\ &\quad \leq  \lim_{t\to 0}\sup_k \PP\left( \sup_{[0, t]}\Vert u^{(n_k)}\Vert _{p} + \left(\int_{0}^{t} \Vert u^{(n_k)}\Vert _{3p}^{p}\, ds\right)^{1/p} >M_1^{1/p}K+\frac{1}{2^k} \right)=0. \end{split}    \llabel{ X2 Y v7V 0Ud dTc Pi dRNN CLG4 7Fc3PL Bx K 3Be x1X zyX cj 0Z6a Jk0H KuQnwd Dh P Q1Q rwA 05v 9c 3pnz ttzt x2IirW CZ B oS5 xlO KCi D3 WFh4 dvCL QANAQJ Gg y vOD NTD FKj Mc 0RJP m4HU SQkLnT Q4 Y 6CC MvN jAR Zb lir7 RFsI NzHiJl cg f xSC Hts ZOG 1V uOzk 5G1C LtmRYI eD 3 5BB uxZ JdY LO CwS9 lokS NasDLj 5h 8 yni u7h u3c di zYh1 PdwE l3m8Xt yX Q RCA bwe aLi N8 qA9N 6DRE wy6gZe xs A 4fG EKH KQP PP KMbk sY1j M4h3Jj gS U One p1w RqN GA grL4 c18W v4kchD gR x 7Gj jIB zcK QV f7gA TrZx Oy6FF7 y9 3 iuu AQt 9TK Rx S5GO TFGx 4Xx1U3 R4 s 7U1 mpa bpD Hg kicx aCjk hnobr0 p4 c ody xTC kVj 8t W4iP 2OhT RF6kU2 k2 o oZJ Fsq Y4B FS NI3u W2fj OMFf7x Jv e ilb UVT ArC Tv qWLi vbRp g2wpAJ On l RUE PKh j9h dG M0Mi gcqQ wkyunB Jr T LDc Pgn OSC HO sSgQ sR35 MB7Bgk Pk 6 nJh 01P Cxd Ds w514 O648 VD8iJ5 4F W 6rs 6Sy qGz MK fXop oe4e o52UNB 4Q 8 f8N Uz8 u2n GO AXHW gKtG AtGGJs bm z 2qj vSv GBu 5e 4JgL Aqrm gMmS08 ZF s xQm 28M 3z4 Ho 1xxj j8Uk bMbm8M 0c L PL5 TS2 kIQ jZ Kb9Q Ux2U i5Aflw 1S L DGI uWU dCP jy wVVM 2ct8 cmgOBS 7d Q ViX R8F bta 1m tEFj TO0k owcK2d 6M Z iW8 PrK PI1 sX WJNB cREV Y4H5QQ GH b plP bwd Txp OI 5OQZ AKyi ix7Qey YI 9 1Ea 16r KXK L2 ifQX QPdP NL6EJi Hc K rBs 2qG tQb aq edOj Lixj GiNWr1 Pb Y SZe Sxx Fin aK 9Eki CHV2 a13f7G 3G 3 oDK K0i bKV y4 53E2 nFQS 8Hnqg0 E3 2 ADd dEV nmJ 7H Bc1t 2K2i hCzZuy 9k p sHn 8Ko uAR kv sHKP y8Yo dOOqBi hF 1 Z3C vUF hmj gB muZq 7ggW Lg5dQB 1k p Fxk k35 GFo dk 00YD 13qI qqbLwy QC c yZR wHA fp7 9o imtC c5CV 8cEuwU w7 k 8Q7 nCq WkM gY rtVR IySM tZUGCH XV 9 mr9 GHZ ol0 VE eIjQ vwgw 17pDhX JS F UcY bqU gnG V8 IFWb S1GX az0ZTt 81 w 7En IhF F72 v2 PkWO Xlkr w6IPu5 67 9 vcW 1f6 z99 lM 2LI1 Y6Na axfl18 gT 0 gDp tVl CN4 jf GSbC ro5D v78Cxa uk Y iUI WWy YDR w8 z7Kj Px7C hC7zJv b1 b 0rF d7n Mxk 09 1wHv y4u5 vLLsJ8 Nm A kWt xuf 4P5 Nw P23b 06sF NQ6xgD hu R GbK 7j2 O4g y4 p4BL top3 h2kfyI 9w O 4Aa EWb 36Y yH YiI1 S3CO J7aN1r 0s Q OrC AC4 vL7 yr CGkI RlNu GbOuuk 1a w LDK 2zl Ka4 0h yJnD V4iF xsqO00 1r q CeO AO2 es7 DR aCpU G54F 2i97xS Qr c bPZ 6K8 Kud n9 e6SY o396 Fr8LUx yX O jdF sMr l54 Eh T8vr xxF2 phKPbs zr l pMA ubE RMGEQ138} \end{align} To prove that $\{u^{(n_k)}\}$ has an $\omega$-pointwise limit in $L^p(\Omega, C([0,\tau], L^p))\cap L^p(\Omega, L^{p}([0,\tau], L^{3p}))$, we resort to \cite[Lemma~5.2]{KXZ} and~\eqref{ERTWERTHWRTWERTSGDGHCFGSDFGQSERWDFGDSFGHSDRGTEHDFGHDSFGSDGHGYUHDFGSDFASDFASGTWRT133}. For all $N\in \NN$, \begin{equation} u^{(n_k)}\chi_{\cap_{j\geq N} \Omega_j^c} \xrightarrow{k \to \infty} u_N \mbox{ in }C([0,\tau], L^p)\cap L^{p}([0,\tau], L^{3p}) \Pas ,    \llabel{JP m4HU SQkLnT Q4 Y 6CC MvN jAR Zb lir7 RFsI NzHiJl cg f xSC Hts ZOG 1V uOzk 5G1C LtmRYI eD 3 5BB uxZ JdY LO CwS9 lokS NasDLj 5h 8 yni u7h u3c di zYh1 PdwE l3m8Xt yX Q RCA bwe aLi N8 qA9N 6DRE wy6gZe xs A 4fG EKH KQP PP KMbk sY1j M4h3Jj gS U One p1w RqN GA grL4 c18W v4kchD gR x 7Gj jIB zcK QV f7gA TrZx Oy6FF7 y9 3 iuu AQt 9TK Rx S5GO TFGx 4Xx1U3 R4 s 7U1 mpa bpD Hg kicx aCjk hnobr0 p4 c ody xTC kVj 8t W4iP 2OhT RF6kU2 k2 o oZJ Fsq Y4B FS NI3u W2fj OMFf7x Jv e ilb UVT ArC Tv qWLi vbRp g2wpAJ On l RUE PKh j9h dG M0Mi gcqQ wkyunB Jr T LDc Pgn OSC HO sSgQ sR35 MB7Bgk Pk 6 nJh 01P Cxd Ds w514 O648 VD8iJ5 4F W 6rs 6Sy qGz MK fXop oe4e o52UNB 4Q 8 f8N Uz8 u2n GO AXHW gKtG AtGGJs bm z 2qj vSv GBu 5e 4JgL Aqrm gMmS08 ZF s xQm 28M 3z4 Ho 1xxj j8Uk bMbm8M 0c L PL5 TS2 kIQ jZ Kb9Q Ux2U i5Aflw 1S L DGI uWU dCP jy wVVM 2ct8 cmgOBS 7d Q ViX R8F bta 1m tEFj TO0k owcK2d 6M Z iW8 PrK PI1 sX WJNB cREV Y4H5QQ GH b plP bwd Txp OI 5OQZ AKyi ix7Qey YI 9 1Ea 16r KXK L2 ifQX QPdP NL6EJi Hc K rBs 2qG tQb aq edOj Lixj GiNWr1 Pb Y SZe Sxx Fin aK 9Eki CHV2 a13f7G 3G 3 oDK K0i bKV y4 53E2 nFQS 8Hnqg0 E3 2 ADd dEV nmJ 7H Bc1t 2K2i hCzZuy 9k p sHn 8Ko uAR kv sHKP y8Yo dOOqBi hF 1 Z3C vUF hmj gB muZq 7ggW Lg5dQB 1k p Fxk k35 GFo dk 00YD 13qI qqbLwy QC c yZR wHA fp7 9o imtC c5CV 8cEuwU w7 k 8Q7 nCq WkM gY rtVR IySM tZUGCH XV 9 mr9 GHZ ol0 VE eIjQ vwgw 17pDhX JS F UcY bqU gnG V8 IFWb S1GX az0ZTt 81 w 7En IhF F72 v2 PkWO Xlkr w6IPu5 67 9 vcW 1f6 z99 lM 2LI1 Y6Na axfl18 gT 0 gDp tVl CN4 jf GSbC ro5D v78Cxa uk Y iUI WWy YDR w8 z7Kj Px7C hC7zJv b1 b 0rF d7n Mxk 09 1wHv y4u5 vLLsJ8 Nm A kWt xuf 4P5 Nw P23b 06sF NQ6xgD hu R GbK 7j2 O4g y4 p4BL top3 h2kfyI 9w O 4Aa EWb 36Y yH YiI1 S3CO J7aN1r 0s Q OrC AC4 vL7 yr CGkI RlNu GbOuuk 1a w LDK 2zl Ka4 0h yJnD V4iF xsqO00 1r q CeO AO2 es7 DR aCpU G54F 2i97xS Qr c bPZ 6K8 Kud n9 e6SY o396 Fr8LUx yX O jdF sMr l54 Eh T8vr xxF2 phKPbs zr l pMA ubE RMG QA aCBu 2Lqw Gasprf IZ O iKV Vbu Vae 6a bauf y9Kc Fk6cBl Z5 r KUj htW E1C nt 9Rmd whJR ySGVSO VT v 9FY 4uz yAH Sp 6yT9 s6R6 oOi3aq Zl L 7bI vWZ 18c Fa iwpt C1nd Fyp4oK xDEQ139} \end{equation} for some adapted process $u_{N}$, and $\{u^{(n_k)}\}$ converges to $u=\lim_{N\to\infty} u_{N}$ in the same topology almost surely. By \eqref{ERTWERTHWRTWERTSGDGHCFGSDFGQSERWDFGDSFGHSDRGTEHDFGHDSFGSDGHGYUHDFGSDFASDFASGTWRT99}, $\{u^{(n_k)}\chi_{\Omega_N}\}$ are uniformly bounded in $L^p(\Omega, L^{\infty}([0,\tau], L^p))\cap L^p(\Omega, L^{p}([0,\tau], L^{3p}))$. Therefore, $u_{N}$ lives in $L^p(\Omega, L^{\infty}([0,\tau], L^p))\cap L^p(\Omega, L^{p}([0,\tau], L^{3p}))$, and so does~$u$. The stopping time $\tau$ depends on $M$, $M_1$, and $K$. Since both $M$ and $M_1$ are determined by $K$, $\tau$ is essentially determined by~$K$. Recall that $\tau$ is the lower bound of $\{\tau^{n_{k}}_{M}\wedge \bar{t}\}_{k\geq N}$ in $\bigcap_{k\geq N} \Omega_k^c$. Hence, for all $k\geq N$,  \begin{equation}   \EE\biggl[ \sup_{[0, \tau]}\Vert   \indic_{\cap_{i\geq N} \Omega_i^c} u^{(n_k)} \Vert_{p}^p  + \int_{0}^{\tau} \int_{\RR^\dd}\sum_j  |\nabla | \indic_{\cap_{i\geq N} \Omega_i^c} u^{(n_k)}_j|^{p/2}|^2  \,dx ds  \biggr]   \leq C K^p,     \llabel{ bwe aLi N8 qA9N 6DRE wy6gZe xs A 4fG EKH KQP PP KMbk sY1j M4h3Jj gS U One p1w RqN GA grL4 c18W v4kchD gR x 7Gj jIB zcK QV f7gA TrZx Oy6FF7 y9 3 iuu AQt 9TK Rx S5GO TFGx 4Xx1U3 R4 s 7U1 mpa bpD Hg kicx aCjk hnobr0 p4 c ody xTC kVj 8t W4iP 2OhT RF6kU2 k2 o oZJ Fsq Y4B FS NI3u W2fj OMFf7x Jv e ilb UVT ArC Tv qWLi vbRp g2wpAJ On l RUE PKh j9h dG M0Mi gcqQ wkyunB Jr T LDc Pgn OSC HO sSgQ sR35 MB7Bgk Pk 6 nJh 01P Cxd Ds w514 O648 VD8iJ5 4F W 6rs 6Sy qGz MK fXop oe4e o52UNB 4Q 8 f8N Uz8 u2n GO AXHW gKtG AtGGJs bm z 2qj vSv GBu 5e 4JgL Aqrm gMmS08 ZF s xQm 28M 3z4 Ho 1xxj j8Uk bMbm8M 0c L PL5 TS2 kIQ jZ Kb9Q Ux2U i5Aflw 1S L DGI uWU dCP jy wVVM 2ct8 cmgOBS 7d Q ViX R8F bta 1m tEFj TO0k owcK2d 6M Z iW8 PrK PI1 sX WJNB cREV Y4H5QQ GH b plP bwd Txp OI 5OQZ AKyi ix7Qey YI 9 1Ea 16r KXK L2 ifQX QPdP NL6EJi Hc K rBs 2qG tQb aq edOj Lixj GiNWr1 Pb Y SZe Sxx Fin aK 9Eki CHV2 a13f7G 3G 3 oDK K0i bKV y4 53E2 nFQS 8Hnqg0 E3 2 ADd dEV nmJ 7H Bc1t 2K2i hCzZuy 9k p sHn 8Ko uAR kv sHKP y8Yo dOOqBi hF 1 Z3C vUF hmj gB muZq 7ggW Lg5dQB 1k p Fxk k35 GFo dk 00YD 13qI qqbLwy QC c yZR wHA fp7 9o imtC c5CV 8cEuwU w7 k 8Q7 nCq WkM gY rtVR IySM tZUGCH XV 9 mr9 GHZ ol0 VE eIjQ vwgw 17pDhX JS F UcY bqU gnG V8 IFWb S1GX az0ZTt 81 w 7En IhF F72 v2 PkWO Xlkr w6IPu5 67 9 vcW 1f6 z99 lM 2LI1 Y6Na axfl18 gT 0 gDp tVl CN4 jf GSbC ro5D v78Cxa uk Y iUI WWy YDR w8 z7Kj Px7C hC7zJv b1 b 0rF d7n Mxk 09 1wHv y4u5 vLLsJ8 Nm A kWt xuf 4P5 Nw P23b 06sF NQ6xgD hu R GbK 7j2 O4g y4 p4BL top3 h2kfyI 9w O 4Aa EWb 36Y yH YiI1 S3CO J7aN1r 0s Q OrC AC4 vL7 yr CGkI RlNu GbOuuk 1a w LDK 2zl Ka4 0h yJnD V4iF xsqO00 1r q CeO AO2 es7 DR aCpU G54F 2i97xS Qr c bPZ 6K8 Kud n9 e6SY o396 Fr8LUx yX O jdF sMr l54 Eh T8vr xxF2 phKPbs zr l pMA ubE RMG QA aCBu 2Lqw Gasprf IZ O iKV Vbu Vae 6a bauf y9Kc Fk6cBl Z5 r KUj htW E1C nt 9Rmd whJR ySGVSO VT v 9FY 4uz yAH Sp 6yT9 s6R6 oOi3aq Zl L 7bI vWZ 18c Fa iwpt C1nd Fyp4oK xD f Qz2 813 6a8 zX wsGl Ysh9 Gp3Tal nr R UKt tBK eFr 45 43qU 2hh3 WbYw09 g2 W LIX zvQ zMk j5 f0xL seH9 dscinG wu P JLP 1gE N5W qY sSoW Peqj MimTyb Hj j cbn 0NO 5hz P9 W40r EQ154} \end{equation} where $C$ is a positive constant independent of~$K$. Finally, we send $k\to \infty$ first using Lemma~\ref{L04}, \eqref{ERTWERTHWRTWERTSGDGHCFGSDFGQSERWDFGDSFGHSDRGTEHDFGHDSFGSDGHGYUHDFGSDFASDFASGTWRT99}, and \eqref{ERTWERTHWRTWERTSGDGHCFGSDFGQSERWDFGDSFGHSDRGTEHDFGHDSFGSDGHGYUHDFGSDFASDFASGTWRT112}, and then send~$N\to \infty$, arriving at~\eqref{ERTWERTHWRTWERTSGDGHCFGSDFGQSERWDFGDSFGHSDRGTEHDFGHDSFGSDGHGYUHDFGSDFASDFASGTWRT130}.  \end{proof} \par Next, we prove the pathwise uniqueness of strong solutions to the limit model \begin{align} \begin{split} &\partial_t\uu( t,x) =\Delta \uu( t,x) - \varphi(\Vert\uu(t)\Vert_{p})^2\mathcal{P}\bigl(( \uu( t,x)\cdot \nabla)\uu( t,x)\bigr) \\&\indeq\indeq\indeq\indeq\indeq +\varphi(\Vert\uu(t)\Vert_{p})^2\sigma(\uu( t,x))\dot{\WW}(t), \\&   \nabla\cdot \uu( t,x) = 0 , \\& \uu( 0,x)= \uu_0 (x)  \Pas \commaone x\in\RR^d. \end{split} \label{ERTWERTHWRTWERTSGDGHCFGSDFGQSERWDFGDSFGHSDRGTEHDFGHDSFGSDGHGYUHDFGSDFASDFASGTWRT140} \end{align} \par \cole \begin{Theorem}   \label{L08} Let $p>d=3$ and~$T>0$. Suppose that $\nabla\cdot \uu_0=0$ and $u_0\in L^p(\Omega, L^p)$. If a pair of local strong solutions $(u^{(1)}, \tau\wedge T)$ and $(u^{(2)},\tau\wedge T)$ of \eqref{ERTWERTHWRTWERTSGDGHCFGSDFGQSERWDFGDSFGHSDRGTEHDFGHDSFGSDGHGYUHDFGSDFASDFASGTWRT140} obey   \begin{equation}\label{eng}   \EE\biggl[\sup_{0\leq s\leq \tau}\Vert u(s)\Vert_p^p   +\int_0^{\tau} \sum_j\int_{\RR^d}| \nabla (|u_j(s,x)| ^{p/2})|^2 \,dx ds\biggr]\leq C\EE[\Vert u_0\Vert^p_p+1]         ,   \end{equation} for some positive constant $C$, then we have   $   \PP(u^{(1)}(t)=u^{(2)}(t), ~\forall t\in [0,\tau\wedge T])=1.   $ \end{Theorem}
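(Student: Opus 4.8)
The plan is to mimic the pathwise uniqueness argument carried out for the projected truncated system in the proof of Theorem~\ref{T03}, but now without the smoothing operator $P_{\le k}$, so that every nonlinear contribution must be absorbed by the dissipation quantity $\int\sum_j|\nabla(|u_j|^{p/2})|^2\,dx$ supplied by Theorem~\ref{T02} together with a Sobolev embedding. First I would set $w=u^{(1)}-u^{(2)}$, which is divergence-free, vanishes at $t=0$, and (using that $\mathcal P$ commutes with differentiation and the divergence-free condition) solves componentwise a stochastic heat equation of the form $\partial_t w_j-\Delta w_j=\sum_i\partial_i f_{ij}+g_j\dot\WW(t)$ on $(0,\tau\wedge T]\times\RR^3$, where, writing $\varphi_u=\varphi(\|u^{(1)}\|_p)$, $\varphi_v=\varphi(\|u^{(2)}\|_p)$,
\begin{align*}
f_{ij}&=-\varphi_u(\varphi_u-\varphi_v)\mathcal P(u^{(1)}_iu^{(1)})_j-\varphi_u\varphi_v\mathcal P(w_iu^{(1)})_j-\varphi_u\varphi_v\mathcal P(u^{(2)}_iw)_j-\varphi_v(\varphi_u-\varphi_v)\mathcal P(u^{(2)}_iu^{(2)})_j,\\
g_j&=\varphi_u(\varphi_u-\varphi_v)\sigma_j(u^{(1)})+\varphi_v(\varphi_u-\varphi_v)\sigma_j(u^{(2)})+\varphi_u\varphi_v\bigl(\sigma_j(u^{(1)})-\sigma_j(u^{(2)})\bigr).
\end{align*}
Then I would apply the energy estimate \eqref{ERTWERTHWRTWERTSGDGHCFGSDFGQSERWDFGDSFGHSDRGTEHDFGHDSFGSDGHGYUHDFGSDFASDFASGTWRT29} of Theorem~\ref{T02} to $w$, which reduces everything to bounding $\EE\bigl[\int_0^{t\wedge\tau}\|f\|_q^p\,ds\bigr]$ and $\EE\bigl[\int_0^{t\wedge\tau}\|g\|_{\mathbb L^p}^p\,ds\bigr]$ for a well-chosen $q$.

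Next I would carry out these two estimates, exactly in the spirit of \eqref{ERTWERTHWRTWERTSGDGHCFGSDFGQSERWDFGDSFGHSDRGTEHDFGHDSFGSDGHGYUHDFGSDFASDFASGTWRT43}--\eqref{ERTWERTHWRTWERTSGDGHCFGSDFGQSERWDFGDSFGHSDRGTEHDFGHDSFGSDGHGYUHDFGSDFASDFASGTWRT44} and the exponent choice \eqref{ERTWERTHWRTWERTSGDGHCFGSDFGQSERWDFGDSFGHSDRGTEHDFGHDSFGSDGHGYUHDFGSDFASDFASGTWRT101}, i.e.\ I pick $q$ with $3p/(p+1)<q<3p/4$ and $1/p+1/l=1/q$, $p<l<3p$, which is possible precisely because $p>3$. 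For the terms containing the factor $\varphi_u-\varphi_v$, the Lipschitz property \eqref{ERTWERTHWRTWERTSGDGHCFGSDFGQSERWDFGDSFGHSDRGTEHDFGHDSFGSDGHGYUHDFGSDFASDFASGTWRT146} gives $|\varphi_u-\varphi_v|\le C\bigl|\,\|u^{(1)}\|_p-\|u^{(2)}\|_p\bigr|\le C\|w\|_p$, and then Hölder together with the cut-off bound $\varphi_u\|u^{(1)}\|_p\le C$ (and likewise for $u^{(2)}$) and the growth assumptions on $\sigma$ yield a contribution $\le C_K\,t\,\EE[\sup_{[0,t\wedge\tau]}\|w\|_p^p]$. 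For the genuinely bilinear difference terms $\varphi_u\varphi_v\mathcal P(w_iu^{(1)})$, $\varphi_u\varphi_v\mathcal P(u^{(2)}_iw)$ I would use $\|w_iu^{(1)}\|_q\le\|u^{(1)}\|_p\|w\|_l$, then interpolate $\|w\|_l$ between $\|w\|_p$ and $\|w\|_{3p}$, and use $\varphi\|u^{(1)}\|_p\le C$, producing $\le C_\epsilon t\,\EE[\sup_{[0,t\wedge\tau]}\|w\|_p^p]+\epsilon\,\EE[\int_0^{t\wedge\tau}\|w\|_{3p}^p\,ds]$ for arbitrarily small $\epsilon$; for the term $\varphi_u\varphi_v(\sigma(u^{(1)})-\sigma(u^{(2)}))$ I would invoke the Lipschitz-type bound \eqref{ERTWERTHWRTWERTSGDGHCFGSDFGQSERWDFGDSFGHSDRGTEHDFGHDSFGSDGHGYUHDFGSDFASDFASGTWRT04}, Hölder, $\varphi\|u^{(i)}\|_p\le C$, and again split $\|w\|_{2p}$ between $\|w\|_p$ and $\|w\|_{3p}$, giving the same form of bound. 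Combining with \eqref{ERTWERTHWRTWERTSGDGHCFGSDFGQSERWDFGDSFGHSDRGTEHDFGHDSFGSDGHGYUHDFGSDFASDFASGTWRT29} and the Sobolev inequality $\|w_j\|_{3p}^p\le C\|\nabla(|w_j|^{p/2})\|_2^2$ (as in \eqref{ERTWERTHWRTWERTSGDGHCFGSDFGQSERWDFGDSFGHSDRGTEHDFGHDSFGSDGHGYUHDFGSDFASDFASGTWRT80}) to convert the dissipation term into an $L^{3p}$ norm, I obtain, for $t\le T$,
\begin{align*}
\EE\Bigl[\sup_{[0,t\wedge\tau]}\|w\|_p^p+\int_0^{t\wedge\tau}\|w\|_{3p}^p\,ds\Bigr]\le C_\epsilon\,t\,\EE\Bigl[\sup_{[0,t\wedge\tau]}\|w\|_p^p\Bigr]+C\epsilon\,\EE\Bigl[\int_0^{t\wedge\tau}\|w\|_{3p}^p\,ds\Bigr].
\end{align*}
Choosing $\epsilon$ small and then $t=t^{*}$ small (both depending only on $p$, $N$, and the structural constants, not on $u_0$) absorbs the right-hand side into the left, forcing $w\equiv0$ on $[0,t^{*}\wedge\tau]$.

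Finally, to pass from the small interval $[0,t^{*}\wedge\tau]$ to all of $[0,\tau\wedge T]$, I would use a stopping-time restart: let $\theta=\inf\{t\ge0:u^{(1)}(t)\ne u^{(2)}(t)\}\wedge(\tau\wedge T)$, argue on the event $\{\theta<\tau\wedge T\}$ by applying the local estimate to the solutions translated to start at $\theta$ (legitimate because $t^{*}$ is independent of the — now random — initial state $u^{(1)}(\theta)=u^{(2)}(\theta)$), and conclude $w\equiv0$ on a further interval $[\theta,(\theta+t^{*})\wedge\tau\wedge T]$, contradicting the definition of $\theta$ unless $\theta=\tau\wedge T$; equivalently, one partitions $[0,T]$ into finitely many subintervals of length $\le t^{*}$ and iterates. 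This yields $\PP(u^{(1)}(t)=u^{(2)}(t),\ \forall t\in[0,\tau\wedge T])=1$. The main obstacle, and the place where $p>d=3$ is genuinely needed, is the second paragraph: without the convolution projector there is no gain of integrability in the convective term, so the bilinear contributions must be controlled solely through Gagliardo--Nirenberg interpolation into $L^{3p}$ and then absorbed by the dissipation quantity from Theorem~\ref{T02}; a secondary technical point is ensuring all the expectations above are a priori finite before absorption, which I would handle exactly as in Lemma~\ref{L0} by first inserting a localizing stopping time $\eta_M=\inf\{t:\|w(t)\|_p\ge M\}$, deriving the estimate up to $\eta_M$, and letting $M\to\infty$ using that the constants do not depend on $M$.
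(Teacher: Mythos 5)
Your overall skeleton (difference equation for $w$, decomposition of the drift and noise as in Lemma~\ref{L06} but without the projectors, Theorem~\ref{T02} plus the Sobolev inequality \eqref{ERTWERTHWRTWERTSGDGHCFGSDFGQSERWDFGDSFGHSDRGTEHDFGHDSFGSDGHGYUHDFGSDFASDFASGTWRT80}, then absorption) matches the paper, but there is a genuine gap in the step where you claim the terms carrying the factor $\varphi_u-\varphi_v$ contribute only $C_K\,t\,\EE[\sup\Vert w\Vert_p^p]$ with a deterministic constant, and consequently in your claim that the absorption constants, and hence the small time $t^{*}$, depend only on $p$, $N$ and structural constants. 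The truncation only controls $\varphi_u\Vert u^{(i)}\Vert_p$. In the terms $\varphi_u(\varphi_u-\varphi_v)\mathcal P(u^{(i)}_i u^{(i)})$ and $\varphi_u(\varphi_u-\varphi_v)\sigma(u^{(i)})$, after using $|\varphi_u-\varphi_v|\leq C\Vert w\Vert_p$ and H\"older you are left with one factor $\Vert u^{(i)}\Vert_l$ with $l>p$ (drift) or $\Vert u^{(i)}\Vert_{(3p/2)-}^{2}$ (noise, via \eqref{ERTWERTHWRTWERTSGDGHCFGSDFGQSERWDFGDSFGHSDRGTEHDFGHDSFGSDGHGYUHDFGSDFASDFASGTWRT03}), which the cut-off cannot bound; it must be interpolated into $L^{3p}$ of the \emph{solution itself}. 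This produces expectations of products such as $\EE\bigl[\sup_{[0,\cdot]}\Vert w\Vert_p^p\int_0^{\cdot}\Vert u^{(i)}\Vert_{3p}^{p}\,ds\bigr]$, which cannot be split into $C\,\EE[\sup\Vert w\Vert_p^p]$ unless $\int_0^{\cdot}\Vert u^{(i)}\Vert_{3p}^p\,ds$ is bounded pathwise by a deterministic constant. Your localization $\eta_M=\inf\{t:\Vert w(t)\Vert_p\geq M\}$ caps only $\Vert w\Vert_p$ and does not provide this; note also that your argument never uses the hypothesis \eqref{eng}, which is exactly what is needed here and cannot be dispensed with.

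The paper closes this gap by defining stopping times $\eta_i^M$ from the full energy functional of each solution $u^{(i)}$ (sup of $\Vert u^{(i)}\Vert_p^p$ plus the dissipation integral); the hypothesis \eqref{eng} guarantees these are a.s.\ positive for $\Vert u_0\Vert_p<M$ and that $\PP(\eta^M=\tau)\to1$ as $M\to\infty$. On $[0,\eta^M\wedge S]$ one has the pathwise bound $\int_0^{\cdot}\Vert u^{(i)}\Vert_{3p}^p\,ds\leq CM^p$ via \eqref{ERTWERTHWRTWERTSGDGHCFGSDFGQSERWDFGDSFGHSDRGTEHDFGHDSFGSDGHGYUHDFGSDFASDFASGTWRT80}, so all product terms are bounded with constants $C_{M,T}$, the $\epsilon$-terms are absorbed, and the resulting inequality $\EE[\sup_{[0,\eta^M\wedge S]}\Vert w\Vert_p^p]\leq C_{M,T}\int_0^S\EE[\sup_{[0,\eta^M\wedge t]}\Vert w\Vert_p^p]\,dt$ yields $w\equiv0$ on $[0,\eta^M\wedge S]$ by Gr\"onwall for \emph{every} $S<T$ — no smallness of the time interval and no restart/iteration at a random time are needed; one then lets $M\to\infty$ and $S\to T$. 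So the correct fix is to replace your $\Vert w\Vert_p$-localization and the small-time-plus-restart scheme by the energy-based stopping times $\eta_i^M$ and a direct Gr\"onwall argument; with the constants genuinely depending on $M$, your claim that $t^{*}$ is independent of the (random) state at the restart time would otherwise fail.
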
 \par \colb \begin{proof}[Proof of Theorem~\ref{L08}]  Let $M>0$, and introduce the stopping times   \begin{align}   \begin{split}   \eta_i^M=\inf\biggl\{ t>0:    \sup_{0\leq s\leq t}\Vert u^{(i)}(s)\Vert_p^p   +\int_0^{t} \sum_j\int_{\RR^3}| \nabla (|u^{(i)}_j(s,x)| ^{p/2})|^2 \,dx ds   \geq M^p   \biggr\}   .   \end{split}    \llabel{Xx1U3 R4 s 7U1 mpa bpD Hg kicx aCjk hnobr0 p4 c ody xTC kVj 8t W4iP 2OhT RF6kU2 k2 o oZJ Fsq Y4B FS NI3u W2fj OMFf7x Jv e ilb UVT ArC Tv qWLi vbRp g2wpAJ On l RUE PKh j9h dG M0Mi gcqQ wkyunB Jr T LDc Pgn OSC HO sSgQ sR35 MB7Bgk Pk 6 nJh 01P Cxd Ds w514 O648 VD8iJ5 4F W 6rs 6Sy qGz MK fXop oe4e o52UNB 4Q 8 f8N Uz8 u2n GO AXHW gKtG AtGGJs bm z 2qj vSv GBu 5e 4JgL Aqrm gMmS08 ZF s xQm 28M 3z4 Ho 1xxj j8Uk bMbm8M 0c L PL5 TS2 kIQ jZ Kb9Q Ux2U i5Aflw 1S L DGI uWU dCP jy wVVM 2ct8 cmgOBS 7d Q ViX R8F bta 1m tEFj TO0k owcK2d 6M Z iW8 PrK PI1 sX WJNB cREV Y4H5QQ GH b plP bwd Txp OI 5OQZ AKyi ix7Qey YI 9 1Ea 16r KXK L2 ifQX QPdP NL6EJi Hc K rBs 2qG tQb aq edOj Lixj GiNWr1 Pb Y SZe Sxx Fin aK 9Eki CHV2 a13f7G 3G 3 oDK K0i bKV y4 53E2 nFQS 8Hnqg0 E3 2 ADd dEV nmJ 7H Bc1t 2K2i hCzZuy 9k p sHn 8Ko uAR kv sHKP y8Yo dOOqBi hF 1 Z3C vUF hmj gB muZq 7ggW Lg5dQB 1k p Fxk k35 GFo dk 00YD 13qI qqbLwy QC c yZR wHA fp7 9o imtC c5CV 8cEuwU w7 k 8Q7 nCq WkM gY rtVR IySM tZUGCH XV 9 mr9 GHZ ol0 VE eIjQ vwgw 17pDhX JS F UcY bqU gnG V8 IFWb S1GX az0ZTt 81 w 7En IhF F72 v2 PkWO Xlkr w6IPu5 67 9 vcW 1f6 z99 lM 2LI1 Y6Na axfl18 gT 0 gDp tVl CN4 jf GSbC ro5D v78Cxa uk Y iUI WWy YDR w8 z7Kj Px7C hC7zJv b1 b 0rF d7n Mxk 09 1wHv y4u5 vLLsJ8 Nm A kWt xuf 4P5 Nw P23b 06sF NQ6xgD hu R GbK 7j2 O4g y4 p4BL top3 h2kfyI 9w O 4Aa EWb 36Y yH YiI1 S3CO J7aN1r 0s Q OrC AC4 vL7 yr CGkI RlNu GbOuuk 1a w LDK 2zl Ka4 0h yJnD V4iF xsqO00 1r q CeO AO2 es7 DR aCpU G54F 2i97xS Qr c bPZ 6K8 Kud n9 e6SY o396 Fr8LUx yX O jdF sMr l54 Eh T8vr xxF2 phKPbs zr l pMA ubE RMG QA aCBu 2Lqw Gasprf IZ O iKV Vbu Vae 6a bauf y9Kc Fk6cBl Z5 r KUj htW E1C nt 9Rmd whJR ySGVSO VT v 9FY 4uz yAH Sp 6yT9 s6R6 oOi3aq Zl L 7bI vWZ 18c Fa iwpt C1nd Fyp4oK xD f Qz2 813 6a8 zX wsGl Ysh9 Gp3Tal nr R UKt tBK eFr 45 43qU 2hh3 WbYw09 g2 W LIX zvQ zMk j5 f0xL seH9 dscinG wu P JLP 1gE N5W qY sSoW Peqj MimTyb Hj j cbn 0NO 5hz P9 W40r 2w77 TAoz70 N1 a u09 boc DSx Gc 3tvK LXaC 1dKgw9 H3 o 2kE oul In9 TS PyL2 HXO7 tSZse0 1Z 9 Hds lDq 0tm SO AVqt A1FQ zEMKSb ak z nw8 39w nH1 Dp CjGI k5X3 B6S6UI 7H I gAa f9EQ161}   \end{align}   If $\Vert u_0(\omega)\Vert_p< M$, then $\eta_i^M(\omega)>0$; otherwise $\eta_i^M(\omega)=0$ for~$i=1,2$. Define $\eta^M=\eta_1^M\wedge \eta_2^M\wedge \tau$. Due to \eqref{eng}, $\lim_{M\to\infty}\PP(\eta^M= \tau)=1$. Let $S\in (0, T)$, and denote $w=u^{(1)}-u^{(2)}$, $\varphi_1=\varphi(\Vert u^{(1)}(t)\Vert_{p})$, and $\varphi_2=\varphi(\Vert u^{(2)}(t)\Vert_{p})$. On $[0,\eta^M\wedge S]$, the difference $w$ satisfies   \begin{align}   \begin{split}   &\partial_t w   =\Delta w    -   \Bigl((\varphi_1)^2\mathcal{P}((u^{(1)}\cdot\nabla)  u^{(1)})-(\varphi_2)^2\mathcal{P}((u^{(2)}\cdot\nabla)  u^{(2)})\Bigr)   \\&\indeq\indeq\indeq\indeq\quad   +\Bigl(   (\varphi_1)^2\sigma(u^{(1)})-(\varphi_2)^2\sigma(u^{(2)})    \Bigr)\,\dot{\WW}(t),    \\   &\nabla\cdot w = 0,   \\   &w( 0)= 0\Pas   \end{split}    \llabel{dG M0Mi gcqQ wkyunB Jr T LDc Pgn OSC HO sSgQ sR35 MB7Bgk Pk 6 nJh 01P Cxd Ds w514 O648 VD8iJ5 4F W 6rs 6Sy qGz MK fXop oe4e o52UNB 4Q 8 f8N Uz8 u2n GO AXHW gKtG AtGGJs bm z 2qj vSv GBu 5e 4JgL Aqrm gMmS08 ZF s xQm 28M 3z4 Ho 1xxj j8Uk bMbm8M 0c L PL5 TS2 kIQ jZ Kb9Q Ux2U i5Aflw 1S L DGI uWU dCP jy wVVM 2ct8 cmgOBS 7d Q ViX R8F bta 1m tEFj TO0k owcK2d 6M Z iW8 PrK PI1 sX WJNB cREV Y4H5QQ GH b plP bwd Txp OI 5OQZ AKyi ix7Qey YI 9 1Ea 16r KXK L2 ifQX QPdP NL6EJi Hc K rBs 2qG tQb aq edOj Lixj GiNWr1 Pb Y SZe Sxx Fin aK 9Eki CHV2 a13f7G 3G 3 oDK K0i bKV y4 53E2 nFQS 8Hnqg0 E3 2 ADd dEV nmJ 7H Bc1t 2K2i hCzZuy 9k p sHn 8Ko uAR kv sHKP y8Yo dOOqBi hF 1 Z3C vUF hmj gB muZq 7ggW Lg5dQB 1k p Fxk k35 GFo dk 00YD 13qI qqbLwy QC c yZR wHA fp7 9o imtC c5CV 8cEuwU w7 k 8Q7 nCq WkM gY rtVR IySM tZUGCH XV 9 mr9 GHZ ol0 VE eIjQ vwgw 17pDhX JS F UcY bqU gnG V8 IFWb S1GX az0ZTt 81 w 7En IhF F72 v2 PkWO Xlkr w6IPu5 67 9 vcW 1f6 z99 lM 2LI1 Y6Na axfl18 gT 0 gDp tVl CN4 jf GSbC ro5D v78Cxa uk Y iUI WWy YDR w8 z7Kj Px7C hC7zJv b1 b 0rF d7n Mxk 09 1wHv y4u5 vLLsJ8 Nm A kWt xuf 4P5 Nw P23b 06sF NQ6xgD hu R GbK 7j2 O4g y4 p4BL top3 h2kfyI 9w O 4Aa EWb 36Y yH YiI1 S3CO J7aN1r 0s Q OrC AC4 vL7 yr CGkI RlNu GbOuuk 1a w LDK 2zl Ka4 0h yJnD V4iF xsqO00 1r q CeO AO2 es7 DR aCpU G54F 2i97xS Qr c bPZ 6K8 Kud n9 e6SY o396 Fr8LUx yX O jdF sMr l54 Eh T8vr xxF2 phKPbs zr l pMA ubE RMG QA aCBu 2Lqw Gasprf IZ O iKV Vbu Vae 6a bauf y9Kc Fk6cBl Z5 r KUj htW E1C nt 9Rmd whJR ySGVSO VT v 9FY 4uz yAH Sp 6yT9 s6R6 oOi3aq Zl L 7bI vWZ 18c Fa iwpt C1nd Fyp4oK xD f Qz2 813 6a8 zX wsGl Ysh9 Gp3Tal nr R UKt tBK eFr 45 43qU 2hh3 WbYw09 g2 W LIX zvQ zMk j5 f0xL seH9 dscinG wu P JLP 1gE N5W qY sSoW Peqj MimTyb Hj j cbn 0NO 5hz P9 W40r 2w77 TAoz70 N1 a u09 boc DSx Gc 3tvK LXaC 1dKgw9 H3 o 2kE oul In9 TS PyL2 HXO7 tSZse0 1Z 9 Hds lDq 0tm SO AVqt A1FQ zEMKSb ak z nw8 39w nH1 Dp CjGI k5X3 B6S6UI 7H I gAa f9E V33 Bk kuo3 FyEi 8Ty2AB PY z SWj Pj5 tYZ ET Yzg6 Ix5t ATPMdl Gk e 67X b7F ktE sz yFyc mVhG JZ29aP gz k Yj4 cEr HCd P7 XFHU O9zo y4AZai SR O pIn 0tp 7kZ zU VHQt m3ip 3xEdEQ161}   \end{align} As in Lemma~\ref{L06}, we apply It\^{o}'s formula. (We refer to the estimates for $f^{(2)}, f^{(4)}, f^{(1)}, f^{(6)}$ and $g^{(1)}, g^{(2)}, g^{(5)}$ in Lemma~\ref{L06}. Since the limit model~\eqref{ERTWERTHWRTWERTSGDGHCFGSDFGQSERWDFGDSFGHSDRGTEHDFGHDSFGSDGHGYUHDFGSDFASDFASGTWRT140} does not contain the convolution operator, the terms resulting from the difference of two convolution projects are not present here.) We arrive at \begin{align*} \begin{split} &\EE\biggl[ \sup_{0\leq s\leq \eta^M\wedge S}\Vert w(s)\Vert_p^p +\int_0^{\eta^M\wedge S} \sum_j\int_{\RR^d}| \nabla (|w_j(s,x)| ^{p/2})|^2 \,dx ds \biggr] \\ &\indeq \leq  \EE\biggl[ \int_0^{\eta^{M}\wedge S} (\varepsilon\Vert  w  \Vert_{3p}^p+C_{\varepsilon,M,T} \Vert  w  \Vert_{p}^p ) \,dt \biggr]+ \EE  \left[ \int_{0}^{\eta^{M}\wedge S}   \Vert  w  \Vert_{p}^p (C_{\varepsilon,M,T}+ \eps\varphi_1\Vert u^{(1)}\Vert_{3p}^p +\eps\varphi_2\Vert u^{(2)}\Vert_{3p}^p) \,ds \right] \\ &\indeq \quad+ \EE \left[ \int_0^{\eta^{M}\wedge S} \Vert  w \Vert_{p}^{p}(\varphi_1\Vert u^{(2)} \Vert_{(3p/2)-}^{2p} +\varphi_2\Vert u^{(2)} \Vert_{(3p/2)-}^{2p}+ 1) \,ds \right] . \end{split} \end{align*} Utilizing the truncations $\varphi_1,\varphi_2$ and the stopping time $\eta^M$, and applying Young's inequality to the last term, we obtain   \begin{align} \begin{split} &\EE\biggl[ \sup_{0\leq s\leq \eta^M\wedge S}\Vert w(s)\Vert_p^p +\int_0^{\eta^M\wedge S} \sum_j\int_{\RR^d}| \nabla (|w_j(s,x)| ^{p/2})|^2 \,dx ds \biggr] \\ &\indeq \leq  \EE\biggl[ \int_0^{\eta^{M}\wedge S} (\varepsilon\Vert  w  \Vert_{3p}^p+C_{\varepsilon,M,T} \Vert  w  \Vert_{p}^p ) \,dt \biggr] + \varepsilon \EE\biggl[ \sup_{0\leq s\leq \eta^M\wedge S} \Vert  w  \Vert_{p}^p  \int_0^{\eta^{M}\wedge S} ( \Vert  u^{(1)} \Vert_{3p}^p + \Vert  u^{(2)} \Vert_{3p}^p  ) \,dt\biggr] , \end{split} \llabel{z 2qj vSv GBu 5e 4JgL Aqrm gMmS08 ZF s xQm 28M 3z4 Ho 1xxj j8Uk bMbm8M 0c L PL5 TS2 kIQ jZ Kb9Q Ux2U i5Aflw 1S L DGI uWU dCP jy wVVM 2ct8 cmgOBS 7d Q ViX R8F bta 1m tEFj TO0k owcK2d 6M Z iW8 PrK PI1 sX WJNB cREV Y4H5QQ GH b plP bwd Txp OI 5OQZ AKyi ix7Qey YI 9 1Ea 16r KXK L2 ifQX QPdP NL6EJi Hc K rBs 2qG tQb aq edOj Lixj GiNWr1 Pb Y SZe Sxx Fin aK 9Eki CHV2 a13f7G 3G 3 oDK K0i bKV y4 53E2 nFQS 8Hnqg0 E3 2 ADd dEV nmJ 7H Bc1t 2K2i hCzZuy 9k p sHn 8Ko uAR kv sHKP y8Yo dOOqBi hF 1 Z3C vUF hmj gB muZq 7ggW Lg5dQB 1k p Fxk k35 GFo dk 00YD 13qI qqbLwy QC c yZR wHA fp7 9o imtC c5CV 8cEuwU w7 k 8Q7 nCq WkM gY rtVR IySM tZUGCH XV 9 mr9 GHZ ol0 VE eIjQ vwgw 17pDhX JS F UcY bqU gnG V8 IFWb S1GX az0ZTt 81 w 7En IhF F72 v2 PkWO Xlkr w6IPu5 67 9 vcW 1f6 z99 lM 2LI1 Y6Na axfl18 gT 0 gDp tVl CN4 jf GSbC ro5D v78Cxa uk Y iUI WWy YDR w8 z7Kj Px7C hC7zJv b1 b 0rF d7n Mxk 09 1wHv y4u5 vLLsJ8 Nm A kWt xuf 4P5 Nw P23b 06sF NQ6xgD hu R GbK 7j2 O4g y4 p4BL top3 h2kfyI 9w O 4Aa EWb 36Y yH YiI1 S3CO J7aN1r 0s Q OrC AC4 vL7 yr CGkI RlNu GbOuuk 1a w LDK 2zl Ka4 0h yJnD V4iF xsqO00 1r q CeO AO2 es7 DR aCpU G54F 2i97xS Qr c bPZ 6K8 Kud n9 e6SY o396 Fr8LUx yX O jdF sMr l54 Eh T8vr xxF2 phKPbs zr l pMA ubE RMG QA aCBu 2Lqw Gasprf IZ O iKV Vbu Vae 6a bauf y9Kc Fk6cBl Z5 r KUj htW E1C nt 9Rmd whJR ySGVSO VT v 9FY 4uz yAH Sp 6yT9 s6R6 oOi3aq Zl L 7bI vWZ 18c Fa iwpt C1nd Fyp4oK xD f Qz2 813 6a8 zX wsGl Ysh9 Gp3Tal nr R UKt tBK eFr 45 43qU 2hh3 WbYw09 g2 W LIX zvQ zMk j5 f0xL seH9 dscinG wu P JLP 1gE N5W qY sSoW Peqj MimTyb Hj j cbn 0NO 5hz P9 W40r 2w77 TAoz70 N1 a u09 boc DSx Gc 3tvK LXaC 1dKgw9 H3 o 2kE oul In9 TS PyL2 HXO7 tSZse0 1Z 9 Hds lDq 0tm SO AVqt A1FQ zEMKSb ak z nw8 39w nH1 Dp CjGI k5X3 B6S6UI 7H I gAa f9E V33 Bk kuo3 FyEi 8Ty2AB PY z SWj Pj5 tYZ ET Yzg6 Ix5t ATPMdl Gk e 67X b7F ktE sz yFyc mVhG JZ29aP gz k Yj4 cEr HCd P7 XFHU O9zo y4AZai SR O pIn 0tp 7kZ zU VHQt m3ip 3xEd41 By 7 2ux IiY 8BC Lb OYGo LDwp juza6i Pa k Zdh aD3 xSX yj pdOw oqQq Jl6RFg lO t X67 nm7 s1l ZJ mGUr dIdX Q7jps7 rc d ACY ZMs BKA Nx tkqf Nhkt sbBf2O BN Z 5pf oqS Xtd 3c EQ161} \end{align} which can be further simplified to    \begin{align}   \begin{split}   \EE\biggl[   \sup_{0\leq s\leq \eta^M\wedge S}\Vert w(s)\Vert_p^p   \biggr]   \leq    C_{M,T}\int_0^{S} \EE\biggl[   \sup_{0\leq s\leq \eta^M\wedge t}   \Vert    w    \Vert_{p}^p \biggr]   \,dt.   \end{split}    \llabel{O0k owcK2d 6M Z iW8 PrK PI1 sX WJNB cREV Y4H5QQ GH b plP bwd Txp OI 5OQZ AKyi ix7Qey YI 9 1Ea 16r KXK L2 ifQX QPdP NL6EJi Hc K rBs 2qG tQb aq edOj Lixj GiNWr1 Pb Y SZe Sxx Fin aK 9Eki CHV2 a13f7G 3G 3 oDK K0i bKV y4 53E2 nFQS 8Hnqg0 E3 2 ADd dEV nmJ 7H Bc1t 2K2i hCzZuy 9k p sHn 8Ko uAR kv sHKP y8Yo dOOqBi hF 1 Z3C vUF hmj gB muZq 7ggW Lg5dQB 1k p Fxk k35 GFo dk 00YD 13qI qqbLwy QC c yZR wHA fp7 9o imtC c5CV 8cEuwU w7 k 8Q7 nCq WkM gY rtVR IySM tZUGCH XV 9 mr9 GHZ ol0 VE eIjQ vwgw 17pDhX JS F UcY bqU gnG V8 IFWb S1GX az0ZTt 81 w 7En IhF F72 v2 PkWO Xlkr w6IPu5 67 9 vcW 1f6 z99 lM 2LI1 Y6Na axfl18 gT 0 gDp tVl CN4 jf GSbC ro5D v78Cxa uk Y iUI WWy YDR w8 z7Kj Px7C hC7zJv b1 b 0rF d7n Mxk 09 1wHv y4u5 vLLsJ8 Nm A kWt xuf 4P5 Nw P23b 06sF NQ6xgD hu R GbK 7j2 O4g y4 p4BL top3 h2kfyI 9w O 4Aa EWb 36Y yH YiI1 S3CO J7aN1r 0s Q OrC AC4 vL7 yr CGkI RlNu GbOuuk 1a w LDK 2zl Ka4 0h yJnD V4iF xsqO00 1r q CeO AO2 es7 DR aCpU G54F 2i97xS Qr c bPZ 6K8 Kud n9 e6SY o396 Fr8LUx yX O jdF sMr l54 Eh T8vr xxF2 phKPbs zr l pMA ubE RMG QA aCBu 2Lqw Gasprf IZ O iKV Vbu Vae 6a bauf y9Kc Fk6cBl Z5 r KUj htW E1C nt 9Rmd whJR ySGVSO VT v 9FY 4uz yAH Sp 6yT9 s6R6 oOi3aq Zl L 7bI vWZ 18c Fa iwpt C1nd Fyp4oK xD f Qz2 813 6a8 zX wsGl Ysh9 Gp3Tal nr R UKt tBK eFr 45 43qU 2hh3 WbYw09 g2 W LIX zvQ zMk j5 f0xL seH9 dscinG wu P JLP 1gE N5W qY sSoW Peqj MimTyb Hj j cbn 0NO 5hz P9 W40r 2w77 TAoz70 N1 a u09 boc DSx Gc 3tvK LXaC 1dKgw9 H3 o 2kE oul In9 TS PyL2 HXO7 tSZse0 1Z 9 Hds lDq 0tm SO AVqt A1FQ zEMKSb ak z nw8 39w nH1 Dp CjGI k5X3 B6S6UI 7H I gAa f9E V33 Bk kuo3 FyEi 8Ty2AB PY z SWj Pj5 tYZ ET Yzg6 Ix5t ATPMdl Gk e 67X b7F ktE sz yFyc mVhG JZ29aP gz k Yj4 cEr HCd P7 XFHU O9zo y4AZai SR O pIn 0tp 7kZ zU VHQt m3ip 3xEd41 By 7 2ux IiY 8BC Lb OYGo LDwp juza6i Pa k Zdh aD3 xSX yj pdOw oqQq Jl6RFg lO t X67 nm7 s1l ZJ mGUr dIdX Q7jps7 rc d ACY ZMs BKA Nx tkqf Nhkt sbBf2O BN Z 5pf oqS Xtd 3c HFLN tLgR oHrnNl wR n ylZ NWV NfH vO B1nU Ayjt xTWW4o Cq P Rtu Vua nMk Lv qbxp Ni0x YnOkcd FB d rw1 Nu7 cKy bL jCF7 P4dx j0Sbz9 fa V CWk VFo s9t 2a QIPK ORuE jEMtbS Hs Y eEQ161}   \end{align}   By Gr\"{o}nwall's lemma, we conclude that $w\equiv 0$ a.s.~on $[0, \eta^M\wedge S]$. Note that $S$ is independent of~$M$. Sending $M$ to infinity and $S$ to $T$, we obtain the pathwise uniqueness on $[0,\tau\wedge T]$.  \end{proof} \par \begin{proof}[Proof of Theorem~\ref{T01}]  We first assume $\Vert u_0\Vert_p\leq K$~a.s.\ for some positive value $K$ and apply Lemma~\ref{L07}. Let $\tau_K$ be the stopping time suggested by the theorem. Recall that $\tau^{n_{N}}_{M}\geq \tau_K$ in $\cap_{k\geq N} \Omega_k^c$ if $\Omega_k$ is defined as~\eqref{ERTWERTHWRTWERTSGDGHCFGSDFGQSERWDFGDSFGHSDRGTEHDFGHDSFGSDGHGYUHDFGSDFASDFASGTWRT135}. For simplicity, we denote the convergent subsequence by~$\{u^{(n)}\}$ and the corresponding stopping time by $\tau^n$ (cf.~\eqref{ERTWERTHWRTWERTSGDGHCFGSDFGQSERWDFGDSFGHSDRGTEHDFGHDSFGSDGHGYUHDFGSDFASDFASGTWRT96}). By Theorem~\ref{T03}, all $\{u^{(n)}\}$ are strong solutions of \eqref{ERTWERTHWRTWERTSGDGHCFGSDFGQSERWDFGDSFGHSDRGTEHDFGHDSFGSDGHGYUHDFGSDFASDFASGTWRT95} up to an arbitrary deterministic time $T$, and thus   \begin{align}   \begin{split}   &\bigl(\indic_{[0,\tau_K\wedge T]}(t)u^{(n)} (t),\phi\bigr)   = (u_0^{(n)},\phi)+\int_0^t \indic_{[0,\tau_K\wedge T]}(s)   \left(u^{(n)} ,\Delta\phi\right)   \,ds   \\&   \indeq\indeq+   \sum_{j}\int_0^t \indic_{[0,\tau_K\wedge T]}(s)(\phin)^2\left(\pkn \mathcal{P}\left( u^{(n)}_j \pkn u^{(n)} \right),\partial_{j}\phi\right)\,ds   \\&   \indeq\indeq   +\int_0^t \indic_{[0,\tau_K\wedge T]}(s)(\phin)^2   \left(\pkn\sigma(\pkn u^{(n)} ),\phi\right)   \,d\WW_s   \comma  (t,\omega)\text{-a.e.}         ,   \end{split}\label{ERTWERTHWRTWERTSGDGHCFGSDFGQSERWDFGDSFGHSDRGTEHDFGHDSFGSDGHGYUHDFGSDFASDFASGTWRT141}   \end{align} for all $\phi\in C_c^{\infty}(\RR^3)$ and~$t>0$. Utilizing \eqref{ERTWERTHWRTWERTSGDGHCFGSDFGQSERWDFGDSFGHSDRGTEHDFGHDSFGSDGHGYUHDFGSDFASDFASGTWRT129}, we may pass to the limit in \eqref{ERTWERTHWRTWERTSGDGHCFGSDFGQSERWDFGDSFGHSDRGTEHDFGHDSFGSDGHGYUHDFGSDFASDFASGTWRT141} and conclude   \begin{align}   \begin{split}   &\bigl(\indic_{[0,\tau_K\wedge T]}(t)u^{(n)} (t),\phi\bigr)   -   \int_0^t \indic_{[0,\tau_K\wedge T]}(s)\left(u^{(n)}(s) ,\Delta\phi\right)\,ds   -   (u_0^{(n)},\phi)   \\&\indeq   \to   \bigl(\indic_{[0,\tau_K\wedge T]}(t)u (t),\phi\bigr)   -   \int_0^t\indic_{[0,\tau_K\wedge T]}(s) (\uu(s),\Delta\phi)\,ds   -   (u_0,\phi)   \comma  (t,\omega)\text{-a.e.}   \end{split}
  \llabel{ Fin aK 9Eki CHV2 a13f7G 3G 3 oDK K0i bKV y4 53E2 nFQS 8Hnqg0 E3 2 ADd dEV nmJ 7H Bc1t 2K2i hCzZuy 9k p sHn 8Ko uAR kv sHKP y8Yo dOOqBi hF 1 Z3C vUF hmj gB muZq 7ggW Lg5dQB 1k p Fxk k35 GFo dk 00YD 13qI qqbLwy QC c yZR wHA fp7 9o imtC c5CV 8cEuwU w7 k 8Q7 nCq WkM gY rtVR IySM tZUGCH XV 9 mr9 GHZ ol0 VE eIjQ vwgw 17pDhX JS F UcY bqU gnG V8 IFWb S1GX az0ZTt 81 w 7En IhF F72 v2 PkWO Xlkr w6IPu5 67 9 vcW 1f6 z99 lM 2LI1 Y6Na axfl18 gT 0 gDp tVl CN4 jf GSbC ro5D v78Cxa uk Y iUI WWy YDR w8 z7Kj Px7C hC7zJv b1 b 0rF d7n Mxk 09 1wHv y4u5 vLLsJ8 Nm A kWt xuf 4P5 Nw P23b 06sF NQ6xgD hu R GbK 7j2 O4g y4 p4BL top3 h2kfyI 9w O 4Aa EWb 36Y yH YiI1 S3CO J7aN1r 0s Q OrC AC4 vL7 yr CGkI RlNu GbOuuk 1a w LDK 2zl Ka4 0h yJnD V4iF xsqO00 1r q CeO AO2 es7 DR aCpU G54F 2i97xS Qr c bPZ 6K8 Kud n9 e6SY o396 Fr8LUx yX O jdF sMr l54 Eh T8vr xxF2 phKPbs zr l pMA ubE RMG QA aCBu 2Lqw Gasprf IZ O iKV Vbu Vae 6a bauf y9Kc Fk6cBl Z5 r KUj htW E1C nt 9Rmd whJR ySGVSO VT v 9FY 4uz yAH Sp 6yT9 s6R6 oOi3aq Zl L 7bI vWZ 18c Fa iwpt C1nd Fyp4oK xD f Qz2 813 6a8 zX wsGl Ysh9 Gp3Tal nr R UKt tBK eFr 45 43qU 2hh3 WbYw09 g2 W LIX zvQ zMk j5 f0xL seH9 dscinG wu P JLP 1gE N5W qY sSoW Peqj MimTyb Hj j cbn 0NO 5hz P9 W40r 2w77 TAoz70 N1 a u09 boc DSx Gc 3tvK LXaC 1dKgw9 H3 o 2kE oul In9 TS PyL2 HXO7 tSZse0 1Z 9 Hds lDq 0tm SO AVqt A1FQ zEMKSb ak z nw8 39w nH1 Dp CjGI k5X3 B6S6UI 7H I gAa f9E V33 Bk kuo3 FyEi 8Ty2AB PY z SWj Pj5 tYZ ET Yzg6 Ix5t ATPMdl Gk e 67X b7F ktE sz yFyc mVhG JZ29aP gz k Yj4 cEr HCd P7 XFHU O9zo y4AZai SR O pIn 0tp 7kZ zU VHQt m3ip 3xEd41 By 7 2ux IiY 8BC Lb OYGo LDwp juza6i Pa k Zdh aD3 xSX yj pdOw oqQq Jl6RFg lO t X67 nm7 s1l ZJ mGUr dIdX Q7jps7 rc d ACY ZMs BKA Nx tkqf Nhkt sbBf2O BN Z 5pf oqS Xtd 3c HFLN tLgR oHrnNl wR n ylZ NWV NfH vO B1nU Ayjt xTWW4o Cq P Rtu Vua nMk Lv qbxp Ni0x YnOkcd FB d rw1 Nu7 cKy bL jCF7 P4dx j0Sbz9 fa V CWk VFo s9t 2a QIPK ORuE jEMtbS Hs Y eG5 Z7u MWW Aw RnR8 FwFC zXVVxn FU f yKL Nk4 eOI ly n3Cl I5HP 8XP6S4 KF f Il6 2Vl bXg ca uth8 61pU WUx2aQ TW g rZw cAx 52T kq oZXV g0QG rBrrpe iw u WyJ td9 ooD 8t UzAd LSnIEQ142}   \end{align}   as~$n\to\infty$. We can also prove convergence for the nonlinear term by a similar splitting as~\eqref{ERTWERTHWRTWERTSGDGHCFGSDFGQSERWDFGDSFGHSDRGTEHDFGHDSFGSDGHGYUHDFGSDFASDFASGTWRT114}. But the proof is simpler than that following~\eqref{ERTWERTHWRTWERTSGDGHCFGSDFGQSERWDFGDSFGHSDRGTEHDFGHDSFGSDGHGYUHDFGSDFASDFASGTWRT114} because we have more flexibility in choosing exponents. As for the noise term, we apply the BDG inequality, obtaining   \begin{align}   \begin{split}   &\EE\biggl[\indic_{\cap_{i\geq N} \Omega_i^c}\sup_{t\in[0,T]}\biggl|\int_0^t   \indic_{[0,\tau_K\wedge T]}(s)   ((\phin)^2\pkn\sigma(\pkn u^{(n)} )-(\varphi^{(u)})^2\sigma(\uu),\phi)\,d\WW_s\biggr|   \biggr]   \\  & \quad\leq   C\EE\biggl[   \biggl(   \int_0^{\tau_K\wedge \tau^n\wedge T} (\phin)^2(\phin-\varphi^{(u)})^2   \bigl\Vert   \bigl(   \pkn(\sigma(\pkn u^{(n)} )),\phi   \bigr)   \bigr\Vert_{ l^2}^2\, ds   \biggr)^{1/2}\biggr]   \\  & \qquad +   C\EE\biggl[   \biggl(   \int_0^{\tau_K\wedge \tau^n\wedge T} (\phin)^2(\varphi^{(u)})^2   \Bigl\Vert   \bigl(   \pkn(\sigma(\pkn u^{(n)} )-\pkn\sigma(\pkn u)),\phi   \bigr)   \Bigr\Vert_{ l^2}^2\, ds   \biggr)^{1/2}\biggr]   \\  & \qquad +   C\EE\biggl[   \biggl(   \int_0^{\tau_K\wedge \tau^n\wedge T} (\phin)^2(\varphi^{(u)})^2   \bigl\Vert   \bigl(   \pkn(\sigma(\pkn u )-\sigma( u )),\phi   \bigr)   \bigr\Vert_{ l^2}^2\, ds   \biggr)^{1/2}\biggr]     \\  & \qquad +   C\EE\biggl[   \biggl(   \int_0^{\tau_K\wedge \tau^n\wedge T} (\phin)^2(\varphi^{(u)})^2   \bigl\Vert   \bigl(   (\pkn \sigma( u )-\sigma( u )),\phi   \bigr)   \bigr\Vert_{ l^2}^2\, ds   \biggr)^{1/2}\biggr]   \\  & \qquad   +   C\EE\biggl[\biggl(   \int_0^{\tau_K\wedge \tau^n\wedge T} (\phin-\varphi^{(u)})^2(\varphi^{(u)})^2\bigl\Vert\bigl(   \sigma(\uu),\phi   \bigr)\bigr\Vert_{ l^2}^2\, ds   \biggr)^{1/2}\biggr]=\sum_{i=1}^5 G_i   \end{split}    \llabel{B 1k p Fxk k35 GFo dk 00YD 13qI qqbLwy QC c yZR wHA fp7 9o imtC c5CV 8cEuwU w7 k 8Q7 nCq WkM gY rtVR IySM tZUGCH XV 9 mr9 GHZ ol0 VE eIjQ vwgw 17pDhX JS F UcY bqU gnG V8 IFWb S1GX az0ZTt 81 w 7En IhF F72 v2 PkWO Xlkr w6IPu5 67 9 vcW 1f6 z99 lM 2LI1 Y6Na axfl18 gT 0 gDp tVl CN4 jf GSbC ro5D v78Cxa uk Y iUI WWy YDR w8 z7Kj Px7C hC7zJv b1 b 0rF d7n Mxk 09 1wHv y4u5 vLLsJ8 Nm A kWt xuf 4P5 Nw P23b 06sF NQ6xgD hu R GbK 7j2 O4g y4 p4BL top3 h2kfyI 9w O 4Aa EWb 36Y yH YiI1 S3CO J7aN1r 0s Q OrC AC4 vL7 yr CGkI RlNu GbOuuk 1a w LDK 2zl Ka4 0h yJnD V4iF xsqO00 1r q CeO AO2 es7 DR aCpU G54F 2i97xS Qr c bPZ 6K8 Kud n9 e6SY o396 Fr8LUx yX O jdF sMr l54 Eh T8vr xxF2 phKPbs zr l pMA ubE RMG QA aCBu 2Lqw Gasprf IZ O iKV Vbu Vae 6a bauf y9Kc Fk6cBl Z5 r KUj htW E1C nt 9Rmd whJR ySGVSO VT v 9FY 4uz yAH Sp 6yT9 s6R6 oOi3aq Zl L 7bI vWZ 18c Fa iwpt C1nd Fyp4oK xD f Qz2 813 6a8 zX wsGl Ysh9 Gp3Tal nr R UKt tBK eFr 45 43qU 2hh3 WbYw09 g2 W LIX zvQ zMk j5 f0xL seH9 dscinG wu P JLP 1gE N5W qY sSoW Peqj MimTyb Hj j cbn 0NO 5hz P9 W40r 2w77 TAoz70 N1 a u09 boc DSx Gc 3tvK LXaC 1dKgw9 H3 o 2kE oul In9 TS PyL2 HXO7 tSZse0 1Z 9 Hds lDq 0tm SO AVqt A1FQ zEMKSb ak z nw8 39w nH1 Dp CjGI k5X3 B6S6UI 7H I gAa f9E V33 Bk kuo3 FyEi 8Ty2AB PY z SWj Pj5 tYZ ET Yzg6 Ix5t ATPMdl Gk e 67X b7F ktE sz yFyc mVhG JZ29aP gz k Yj4 cEr HCd P7 XFHU O9zo y4AZai SR O pIn 0tp 7kZ zU VHQt m3ip 3xEd41 By 7 2ux IiY 8BC Lb OYGo LDwp juza6i Pa k Zdh aD3 xSX yj pdOw oqQq Jl6RFg lO t X67 nm7 s1l ZJ mGUr dIdX Q7jps7 rc d ACY ZMs BKA Nx tkqf Nhkt sbBf2O BN Z 5pf oqS Xtd 3c HFLN tLgR oHrnNl wR n ylZ NWV NfH vO B1nU Ayjt xTWW4o Cq P Rtu Vua nMk Lv qbxp Ni0x YnOkcd FB d rw1 Nu7 cKy bL jCF7 P4dx j0Sbz9 fa V CWk VFo s9t 2a QIPK ORuE jEMtbS Hs Y eG5 Z7u MWW Aw RnR8 FwFC zXVVxn FU f yKL Nk4 eOI ly n3Cl I5HP 8XP6S4 KF f Il6 2Vl bXg ca uth8 61pU WUx2aQ TW g rZw cAx 52T kq oZXV g0QG rBrrpe iw u WyJ td9 ooD 8t UzAd LSnI tarmhP AW B mnm nsb xLI qX 4RQS TyoF DIikpe IL h WZZ 8ic JGa 91 HxRb 97kn Whp9sA Vz P o85 60p RN2 PS MGMM FK5X W52OnW Iy o Yng xWn o86 8S Kbbu 1Iq1 SyPkHJ VC v seV GWr hUEQ144}   \end{align} for~$n\geq N$. We analyze these terms by employing Minkowski's inequality and the assumptions on $\sigma$, like what we did in \eqref{ERTWERTHWRTWERTSGDGHCFGSDFGQSERWDFGDSFGHSDRGTEHDFGHDSFGSDGHGYUHDFGSDFASDFASGTWRT86}--\eqref{ERTWERTHWRTWERTSGDGHCFGSDFGQSERWDFGDSFGHSDRGTEHDFGHDSFGSDGHGYUHDFGSDFASDFASGTWRT88}. First, note that \begin{align} \begin{split} G_1+G_5 &\leq C_{p}T^{((p-2)/2p)+} \biggl(\EE\Big[ \sup_{r\in[0,\tau_K\wedge \tau^n\wedge T]}\Vert\un -\uu\Vert_{p}^p \Big] \biggr)^{1/p} \biggl(\EE\biggl[ \int_0^{\tau_K\wedge \tau^n\wedge T}  \bigl(\Vert \un \Vert_{3p}^p+\Vert u \Vert_{3p}^p+1\bigr)\, dr\biggr]\biggr)^{(1/p)-} \\& \leq C_{p,T,K}\biggl(\EE\Big[ \sup_{r\in[0,\tau_K\wedge \tau^n\wedge T]}\Vert\un -\uu\Vert_{p}^p \Big] \biggr)^{1/p}. \end{split} \llabel{FWb S1GX az0ZTt 81 w 7En IhF F72 v2 PkWO Xlkr w6IPu5 67 9 vcW 1f6 z99 lM 2LI1 Y6Na axfl18 gT 0 gDp tVl CN4 jf GSbC ro5D v78Cxa uk Y iUI WWy YDR w8 z7Kj Px7C hC7zJv b1 b 0rF d7n Mxk 09 1wHv y4u5 vLLsJ8 Nm A kWt xuf 4P5 Nw P23b 06sF NQ6xgD hu R GbK 7j2 O4g y4 p4BL top3 h2kfyI 9w O 4Aa EWb 36Y yH YiI1 S3CO J7aN1r 0s Q OrC AC4 vL7 yr CGkI RlNu GbOuuk 1a w LDK 2zl Ka4 0h yJnD V4iF xsqO00 1r q CeO AO2 es7 DR aCpU G54F 2i97xS Qr c bPZ 6K8 Kud n9 e6SY o396 Fr8LUx yX O jdF sMr l54 Eh T8vr xxF2 phKPbs zr l pMA ubE RMG QA aCBu 2Lqw Gasprf IZ O iKV Vbu Vae 6a bauf y9Kc Fk6cBl Z5 r KUj htW E1C nt 9Rmd whJR ySGVSO VT v 9FY 4uz yAH Sp 6yT9 s6R6 oOi3aq Zl L 7bI vWZ 18c Fa iwpt C1nd Fyp4oK xD f Qz2 813 6a8 zX wsGl Ysh9 Gp3Tal nr R UKt tBK eFr 45 43qU 2hh3 WbYw09 g2 W LIX zvQ zMk j5 f0xL seH9 dscinG wu P JLP 1gE N5W qY sSoW Peqj MimTyb Hj j cbn 0NO 5hz P9 W40r 2w77 TAoz70 N1 a u09 boc DSx Gc 3tvK LXaC 1dKgw9 H3 o 2kE oul In9 TS PyL2 HXO7 tSZse0 1Z 9 Hds lDq 0tm SO AVqt A1FQ zEMKSb ak z nw8 39w nH1 Dp CjGI k5X3 B6S6UI 7H I gAa f9E V33 Bk kuo3 FyEi 8Ty2AB PY z SWj Pj5 tYZ ET Yzg6 Ix5t ATPMdl Gk e 67X b7F ktE sz yFyc mVhG JZ29aP gz k Yj4 cEr HCd P7 XFHU O9zo y4AZai SR O pIn 0tp 7kZ zU VHQt m3ip 3xEd41 By 7 2ux IiY 8BC Lb OYGo LDwp juza6i Pa k Zdh aD3 xSX yj pdOw oqQq Jl6RFg lO t X67 nm7 s1l ZJ mGUr dIdX Q7jps7 rc d ACY ZMs BKA Nx tkqf Nhkt sbBf2O BN Z 5pf oqS Xtd 3c HFLN tLgR oHrnNl wR n ylZ NWV NfH vO B1nU Ayjt xTWW4o Cq P Rtu Vua nMk Lv qbxp Ni0x YnOkcd FB d rw1 Nu7 cKy bL jCF7 P4dx j0Sbz9 fa V CWk VFo s9t 2a QIPK ORuE jEMtbS Hs Y eG5 Z7u MWW Aw RnR8 FwFC zXVVxn FU f yKL Nk4 eOI ly n3Cl I5HP 8XP6S4 KF f Il6 2Vl bXg ca uth8 61pU WUx2aQ TW g rZw cAx 52T kq oZXV g0QG rBrrpe iw u WyJ td9 ooD 8t UzAd LSnI tarmhP AW B mnm nsb xLI qX 4RQS TyoF DIikpe IL h WZZ 8ic JGa 91 HxRb 97kn Whp9sA Vz P o85 60p RN2 PS MGMM FK5X W52OnW Iy o Yng xWn o86 8S Kbbu 1Iq1 SyPkHJ VC v seV GWr hUd ew Xw6C SY1b e3hD9P Kh a 1y0 SRw yxi AG zdCM VMmi JaemmP 8x r bJX bKL DYE 1F pXUK ADtF 9ewhNe fd 2 XRu tTl 1HY JV p5cA hM1J fK7UIc pk d TbE ndM 6FW HA 72Pg LHzX lUo39o WEQ145} \end{align} Next,    \begin{align} \begin{split} &G_2+G_3\leq  C_p \EE\biggl[\biggl(\int_0^{\tau_K\wedge \tau^n\wedge T} (\varphi^{(n)}\varphi^{(u)})^2 (\Vert \pkn \un \Vert_p + \Vert \pkn u\Vert_p) \Vert\pkn\un- \pkn u\Vert_{2p}^2 \, dr\biggr)^{1/2}\biggr] \\&\indeq\quad + C_p \EE\biggl[\biggl(\int_0^{\tau_K\wedge \tau^n\wedge T} (\varphi^{(n)}\varphi^{(u)})^2 (\Vert \pkn u \Vert_p + \Vert u\Vert_p) \Vert \pkn u-u\Vert_{2p}^2 \, dr\biggr)^{1/2}\biggr] \\&\indeq \leq  C_{p}T^{(2p-3)/4p} \biggl(\EE\Big[ \sup_{r\in[0,\tau_K\wedge \tau^n\wedge T]}\Vert\un -\uu\Vert_{p}^p \Big] \biggr)^{1/4p} \biggl(\EE\biggl[ \int_0^{\tau_K\wedge \tau^n\wedge T}  \bigl(\Vert \un \Vert_{3p}^p+\Vert u \Vert_{3p}^p\bigr)\, dr\biggr]\biggr)^{3/4p} \\&\indeq\quad + C_pT^{(2p-3)/4p} \biggl(\EE\Bigl[ \sup_{r\in[0,\tau_K\wedge \tau^n\wedge T]}\Vert \pkn \uu -\uu\Vert_{p}^p \Bigr] \biggr)^{1/4p} \biggl(\EE\biggl[ \int_0^{\tau_K\wedge \tau^n\wedge T}  \Vert u \Vert_{3p}^p \, dr\biggr]\biggr)^{3/4p}. \end{split} \llabel{F d7n Mxk 09 1wHv y4u5 vLLsJ8 Nm A kWt xuf 4P5 Nw P23b 06sF NQ6xgD hu R GbK 7j2 O4g y4 p4BL top3 h2kfyI 9w O 4Aa EWb 36Y yH YiI1 S3CO J7aN1r 0s Q OrC AC4 vL7 yr CGkI RlNu GbOuuk 1a w LDK 2zl Ka4 0h yJnD V4iF xsqO00 1r q CeO AO2 es7 DR aCpU G54F 2i97xS Qr c bPZ 6K8 Kud n9 e6SY o396 Fr8LUx yX O jdF sMr l54 Eh T8vr xxF2 phKPbs zr l pMA ubE RMG QA aCBu 2Lqw Gasprf IZ O iKV Vbu Vae 6a bauf y9Kc Fk6cBl Z5 r KUj htW E1C nt 9Rmd whJR ySGVSO VT v 9FY 4uz yAH Sp 6yT9 s6R6 oOi3aq Zl L 7bI vWZ 18c Fa iwpt C1nd Fyp4oK xD f Qz2 813 6a8 zX wsGl Ysh9 Gp3Tal nr R UKt tBK eFr 45 43qU 2hh3 WbYw09 g2 W LIX zvQ zMk j5 f0xL seH9 dscinG wu P JLP 1gE N5W qY sSoW Peqj MimTyb Hj j cbn 0NO 5hz P9 W40r 2w77 TAoz70 N1 a u09 boc DSx Gc 3tvK LXaC 1dKgw9 H3 o 2kE oul In9 TS PyL2 HXO7 tSZse0 1Z 9 Hds lDq 0tm SO AVqt A1FQ zEMKSb ak z nw8 39w nH1 Dp CjGI k5X3 B6S6UI 7H I gAa f9E V33 Bk kuo3 FyEi 8Ty2AB PY z SWj Pj5 tYZ ET Yzg6 Ix5t ATPMdl Gk e 67X b7F ktE sz yFyc mVhG JZ29aP gz k Yj4 cEr HCd P7 XFHU O9zo y4AZai SR O pIn 0tp 7kZ zU VHQt m3ip 3xEd41 By 7 2ux IiY 8BC Lb OYGo LDwp juza6i Pa k Zdh aD3 xSX yj pdOw oqQq Jl6RFg lO t X67 nm7 s1l ZJ mGUr dIdX Q7jps7 rc d ACY ZMs BKA Nx tkqf Nhkt sbBf2O BN Z 5pf oqS Xtd 3c HFLN tLgR oHrnNl wR n ylZ NWV NfH vO B1nU Ayjt xTWW4o Cq P Rtu Vua nMk Lv qbxp Ni0x YnOkcd FB d rw1 Nu7 cKy bL jCF7 P4dx j0Sbz9 fa V CWk VFo s9t 2a QIPK ORuE jEMtbS Hs Y eG5 Z7u MWW Aw RnR8 FwFC zXVVxn FU f yKL Nk4 eOI ly n3Cl I5HP 8XP6S4 KF f Il6 2Vl bXg ca uth8 61pU WUx2aQ TW g rZw cAx 52T kq oZXV g0QG rBrrpe iw u WyJ td9 ooD 8t UzAd LSnI tarmhP AW B mnm nsb xLI qX 4RQS TyoF DIikpe IL h WZZ 8ic JGa 91 HxRb 97kn Whp9sA Vz P o85 60p RN2 PS MGMM FK5X W52OnW Iy o Yng xWn o86 8S Kbbu 1Iq1 SyPkHJ VC v seV GWr hUd ew Xw6C SY1b e3hD9P Kh a 1y0 SRw yxi AG zdCM VMmi JaemmP 8x r bJX bKL DYE 1F pXUK ADtF 9ewhNe fd 2 XRu tTl 1HY JV p5cA hM1J fK7UIc pk d TbE ndM 6FW HA 72Pg LHzX lUo39o W9 0 BuD eJS lnV Rv z8VD V48t Id4Dtg FO O a47 LEH 8Qw nR GNBM 0RRU LluASz jx x wGI BHm Vyy Ld kGww 5eEg HFvsFU nz l 0vg OaQ DCV Ez 64r8 UvVH TtDykr Eu F aS3 5p5 yn6 QZ UcX3EQ146} \end{align} Lastly,    \begin{align} \begin{split} &G_4\leq C_p \EE\biggl[\biggl(\int_0^{\tau_K\wedge \tau^n\wedge T} (\varphi^{(n)}\varphi^{(u)})^2   \Vert \pkn\sigma(u)-\sigma(u)\Vert_{\mathbb{L}^q} ^2 \, dr\biggr)^{1/2}\biggr]. \end{split}    \llabel{GbOuuk 1a w LDK 2zl Ka4 0h yJnD V4iF xsqO00 1r q CeO AO2 es7 DR aCpU G54F 2i97xS Qr c bPZ 6K8 Kud n9 e6SY o396 Fr8LUx yX O jdF sMr l54 Eh T8vr xxF2 phKPbs zr l pMA ubE RMG QA aCBu 2Lqw Gasprf IZ O iKV Vbu Vae 6a bauf y9Kc Fk6cBl Z5 r KUj htW E1C nt 9Rmd whJR ySGVSO VT v 9FY 4uz yAH Sp 6yT9 s6R6 oOi3aq Zl L 7bI vWZ 18c Fa iwpt C1nd Fyp4oK xD f Qz2 813 6a8 zX wsGl Ysh9 Gp3Tal nr R UKt tBK eFr 45 43qU 2hh3 WbYw09 g2 W LIX zvQ zMk j5 f0xL seH9 dscinG wu P JLP 1gE N5W qY sSoW Peqj MimTyb Hj j cbn 0NO 5hz P9 W40r 2w77 TAoz70 N1 a u09 boc DSx Gc 3tvK LXaC 1dKgw9 H3 o 2kE oul In9 TS PyL2 HXO7 tSZse0 1Z 9 Hds lDq 0tm SO AVqt A1FQ zEMKSb ak z nw8 39w nH1 Dp CjGI k5X3 B6S6UI 7H I gAa f9E V33 Bk kuo3 FyEi 8Ty2AB PY z SWj Pj5 tYZ ET Yzg6 Ix5t ATPMdl Gk e 67X b7F ktE sz yFyc mVhG JZ29aP gz k Yj4 cEr HCd P7 XFHU O9zo y4AZai SR O pIn 0tp 7kZ zU VHQt m3ip 3xEd41 By 7 2ux IiY 8BC Lb OYGo LDwp juza6i Pa k Zdh aD3 xSX yj pdOw oqQq Jl6RFg lO t X67 nm7 s1l ZJ mGUr dIdX Q7jps7 rc d ACY ZMs BKA Nx tkqf Nhkt sbBf2O BN Z 5pf oqS Xtd 3c HFLN tLgR oHrnNl wR n ylZ NWV NfH vO B1nU Ayjt xTWW4o Cq P Rtu Vua nMk Lv qbxp Ni0x YnOkcd FB d rw1 Nu7 cKy bL jCF7 P4dx j0Sbz9 fa V CWk VFo s9t 2a QIPK ORuE jEMtbS Hs Y eG5 Z7u MWW Aw RnR8 FwFC zXVVxn FU f yKL Nk4 eOI ly n3Cl I5HP 8XP6S4 KF f Il6 2Vl bXg ca uth8 61pU WUx2aQ TW g rZw cAx 52T kq oZXV g0QG rBrrpe iw u WyJ td9 ooD 8t UzAd LSnI tarmhP AW B mnm nsb xLI qX 4RQS TyoF DIikpe IL h WZZ 8ic JGa 91 HxRb 97kn Whp9sA Vz P o85 60p RN2 PS MGMM FK5X W52OnW Iy o Yng xWn o86 8S Kbbu 1Iq1 SyPkHJ VC v seV GWr hUd ew Xw6C SY1b e3hD9P Kh a 1y0 SRw yxi AG zdCM VMmi JaemmP 8x r bJX bKL DYE 1F pXUK ADtF 9ewhNe fd 2 XRu tTl 1HY JV p5cA hM1J fK7UIc pk d TbE ndM 6FW HA 72Pg LHzX lUo39o W9 0 BuD eJS lnV Rv z8VD V48t Id4Dtg FO O a47 LEH 8Qw nR GNBM 0RRU LluASz jx x wGI BHm Vyy Ld kGww 5eEg HFvsFU nz l 0vg OaQ DCV Ez 64r8 UvVH TtDykr Eu F aS3 5p5 yn6 QZ UcX3 mfET Exz1kv qE p OVV EFP IVp zQ lMOI Z2yT TxIUOm 0f W L1W oxC tlX Ws 9HU4 EF0I Z1WDv3 TP 4 2LN 7Tr SuR 8u Mv1t Lepv ZoeoKL xf 9 zMJ 6PU In1 S8 I4KY 13wJ TACh5X l8 O 5g0 ZEQ55} \end{align} Since the right-hand sides of the above estimates all approach to zero as $n\to \infty$, we may extract a further subsequence, which for brevity we still denote by $\{u^{(n)}\}$, such that the convergence is exponentially rapid and thus   \begin{equation}   \int_0^t \indic_{[0,\tau_K\wedge T]}(s)(\pkn\sigma(\pkn u^{(n)}),\phi)\,d\WW_s   \rightarrow   \int_0^t \indic_{[0,\tau_K\wedge T]}(s)(\sigma(\uu),\phi)\,d\WW_s   \comma (t,\omega)\text{-a.e.}   \label{ERTWERTHWRTWERTSGDGHCFGSDFGQSERWDFGDSFGHSDRGTEHDFGHDSFGSDGHGYUHDFGSDFASDFASGTWRT147}   \end{equation} in $\bigcap_{i\geq N} \Omega_i^c$. Note that $\bigcap_{i\geq N} \Omega_i^c$ expands to the whole probability space as $N\to\infty$ and that $T$ is arbitrary. Combining \eqref{ERTWERTHWRTWERTSGDGHCFGSDFGQSERWDFGDSFGHSDRGTEHDFGHDSFGSDGHGYUHDFGSDFASDFASGTWRT141}--\eqref{ERTWERTHWRTWERTSGDGHCFGSDFGQSERWDFGDSFGHSDRGTEHDFGHDSFGSDGHGYUHDFGSDFASDFASGTWRT147} yields   \begin{align}   \begin{split}   &\indic_{[0,\tau_K]}(t)(u ( t),\phi)   = (\uu_0,\phi)   +\indic_{[0,\tau_K]}(t)\int_0^t    (u ,\Delta\phi)   \,ds+\indic_{[0,\tau_K]}(t)\int_0^t    \varphi^2(\sigma(u ),\phi)   \,d\WW_s   \\&\indeq\indeq   +\sum_{j}\indic_{[0,\tau_K]}(t)\int_0^t \varphi^2(\mathcal{P}( u_j u ),\partial_{j}\phi)\,ds   \comma  (t,\omega)\text{-a.e.}   ,   \end{split}   \llabel{ QA aCBu 2Lqw Gasprf IZ O iKV Vbu Vae 6a bauf y9Kc Fk6cBl Z5 r KUj htW E1C nt 9Rmd whJR ySGVSO VT v 9FY 4uz yAH Sp 6yT9 s6R6 oOi3aq Zl L 7bI vWZ 18c Fa iwpt C1nd Fyp4oK xD f Qz2 813 6a8 zX wsGl Ysh9 Gp3Tal nr R UKt tBK eFr 45 43qU 2hh3 WbYw09 g2 W LIX zvQ zMk j5 f0xL seH9 dscinG wu P JLP 1gE N5W qY sSoW Peqj MimTyb Hj j cbn 0NO 5hz P9 W40r 2w77 TAoz70 N1 a u09 boc DSx Gc 3tvK LXaC 1dKgw9 H3 o 2kE oul In9 TS PyL2 HXO7 tSZse0 1Z 9 Hds lDq 0tm SO AVqt A1FQ zEMKSb ak z nw8 39w nH1 Dp CjGI k5X3 B6S6UI 7H I gAa f9E V33 Bk kuo3 FyEi 8Ty2AB PY z SWj Pj5 tYZ ET Yzg6 Ix5t ATPMdl Gk e 67X b7F ktE sz yFyc mVhG JZ29aP gz k Yj4 cEr HCd P7 XFHU O9zo y4AZai SR O pIn 0tp 7kZ zU VHQt m3ip 3xEd41 By 7 2ux IiY 8BC Lb OYGo LDwp juza6i Pa k Zdh aD3 xSX yj pdOw oqQq Jl6RFg lO t X67 nm7 s1l ZJ mGUr dIdX Q7jps7 rc d ACY ZMs BKA Nx tkqf Nhkt sbBf2O BN Z 5pf oqS Xtd 3c HFLN tLgR oHrnNl wR n ylZ NWV NfH vO B1nU Ayjt xTWW4o Cq P Rtu Vua nMk Lv qbxp Ni0x YnOkcd FB d rw1 Nu7 cKy bL jCF7 P4dx j0Sbz9 fa V CWk VFo s9t 2a QIPK ORuE jEMtbS Hs Y eG5 Z7u MWW Aw RnR8 FwFC zXVVxn FU f yKL Nk4 eOI ly n3Cl I5HP 8XP6S4 KF f Il6 2Vl bXg ca uth8 61pU WUx2aQ TW g rZw cAx 52T kq oZXV g0QG rBrrpe iw u WyJ td9 ooD 8t UzAd LSnI tarmhP AW B mnm nsb xLI qX 4RQS TyoF DIikpe IL h WZZ 8ic JGa 91 HxRb 97kn Whp9sA Vz P o85 60p RN2 PS MGMM FK5X W52OnW Iy o Yng xWn o86 8S Kbbu 1Iq1 SyPkHJ VC v seV GWr hUd ew Xw6C SY1b e3hD9P Kh a 1y0 SRw yxi AG zdCM VMmi JaemmP 8x r bJX bKL DYE 1F pXUK ADtF 9ewhNe fd 2 XRu tTl 1HY JV p5cA hM1J fK7UIc pk d TbE ndM 6FW HA 72Pg LHzX lUo39o W9 0 BuD eJS lnV Rv z8VD V48t Id4Dtg FO O a47 LEH 8Qw nR GNBM 0RRU LluASz jx x wGI BHm Vyy Ld kGww 5eEg HFvsFU nz l 0vg OaQ DCV Ez 64r8 UvVH TtDykr Eu F aS3 5p5 yn6 QZ UcX3 mfET Exz1kv qE p OVV EFP IVp zQ lMOI Z2yT TxIUOm 0f W L1W oxC tlX Ws 9HU4 EF0I Z1WDv3 TP 4 2LN 7Tr SuR 8u Mv1t Lepv ZoeoKL xf 9 zMJ 6PU In1 S8 I4KY 13wJ TACh5X l8 O 5g0 ZGw Ddt u6 8wvr vnDC oqYjJ3 nF K WMA K8V OeG o4 DKxn EOyB wgmttc ES 8 dmT oAD 0YB Fl yGRB pBbo 8tQYBw bS X 2lc YnU 0fh At myR3 CKcU AQzzET Ng b ghH T64 KdO fL qFWu k07t DkzEQ148}   \end{align}    i.e.,   $\uu$ is a strong solution to the truncated model \eqref{ERTWERTHWRTWERTSGDGHCFGSDFGQSERWDFGDSFGHSDRGTEHDFGHDSFGSDGHGYUHDFGSDFASDFASGTWRT140} on $[0, \tau_{K}]$. Moreover, we have   \begin{align}   \begin{split}   \EE\biggl[   \sup_{0\leq s\leq \tau_K}\Vert u(s,\cdot)\Vert_p^p   +\int_0^{\tau_K}    \sum_{j}    \int_{\RR^d} | \nabla (|u_j(s,x)|^{p/2})|^2 \,dx\,ds   \biggr]   \leq    C\EE\bigl[\Vert\uu_0\Vert_p^p   +1   \bigr].   \label{ERTWERTHWRTWERTSGDGHCFGSDFGQSERWDFGDSFGHSDRGTEHDFGHDSFGSDGHGYUHDFGSDFASDFASGTWRT149}   \end{split}   \end{align} \par Note that the conclusion about the local existence and pathwise uniqueness is independent of the truncation level of the model~\eqref{ERTWERTHWRTWERTSGDGHCFGSDFGQSERWDFGDSFGHSDRGTEHDFGHDSFGSDGHGYUHDFGSDFASDFASGTWRT140}.  Namely, let $\varphi$ be a smooth function from $[0,\infty)$ to $[0,1]$ such that $\varphi\equiv 1$ on $[0,2]$ and  $\varphi\equiv 0$ on~$[4,\infty)$ such that \eqref{ERTWERTHWRTWERTSGDGHCFGSDFGQSERWDFGDSFGHSDRGTEHDFGHDSFGSDGHGYUHDFGSDFASDFASGTWRT146} holds. Then, for every $N\in\NNp$,  the model   \begin{align}   \begin{split}   &\partial_t\uu( t,x)   =\Delta \uu( t,x)   - \varphi\left(\frac{\Vert\uu(t)\Vert_{p}}{N}\right)^2\mathcal{P}\bigl(( \uu( t,x)\cdot \nabla)\uu( t,x)\bigr)   \\&\indeq\indeq\indeq\indeq\indeq   +\varphi\left(\frac{\Vert\uu(t)\Vert_{p}}{N}\right)^2\sigma(\uu( t,x))\dot{\WW}(t),   \\&   \nabla\cdot \uu( t,x) = 0   ,   \\&   \uu( 0,x)= \uu_0 (x)  \Pas   \commaone x\in\RR^d   \end{split}   \label{ERTWERTHWRTWERTSGDGHCFGSDFGQSERWDFGDSFGHSDRGTEHDFGHDSFGSDGHGYUHDFGSDFASDFASGTWRT150}   \end{align} has a unique local strong solution $(u_{(N)}, \bar\tau_N)$ satisfying \eqref{ERTWERTHWRTWERTSGDGHCFGSDFGQSERWDFGDSFGHSDRGTEHDFGHDSFGSDGHGYUHDFGSDFASDFASGTWRT149} if $\Vert u_0\Vert_p\leq N$~almost surely. Also note that \eqref{ERTWERTHWRTWERTSGDGHCFGSDFGQSERWDFGDSFGHSDRGTEHDFGHDSFGSDGHGYUHDFGSDFASDFASGTWRT01}--\eqref{ERTWERTHWRTWERTSGDGHCFGSDFGQSERWDFGDSFGHSDRGTEHDFGHDSFGSDGHGYUHDFGSDFASDFASGTWRT02} agrees with the model \eqref{ERTWERTHWRTWERTSGDGHCFGSDFGQSERWDFGDSFGHSDRGTEHDFGHDSFGSDGHGYUHDFGSDFASDFASGTWRT150} up to a positive time    \[   \eta_N:=\inf\left\{ t>0:    \sup_{0\leq s\leq t}\Vert \bar{u}_{(N)}(s)\Vert_p   \geq 2N   \right\}.   \]   Now, we remove the constraint~$\Vert u_0\Vert_p\leq N$ and define   \begin{equation}   u=\sum_{N=1}^{\infty} u_{(N)} \indic_{\{N-1\leq \Vert u_0\Vert_p< N\}}         \qquad{}\text{and}\qquad{}         \bar\tau=\sum_{N=1}^{\infty} (\bar{\tau}_N\wedge \eta_N) \indic_{\{N-1\leq \Vert u_0\Vert_p< N\}}.         \llabel{ f Qz2 813 6a8 zX wsGl Ysh9 Gp3Tal nr R UKt tBK eFr 45 43qU 2hh3 WbYw09 g2 W LIX zvQ zMk j5 f0xL seH9 dscinG wu P JLP 1gE N5W qY sSoW Peqj MimTyb Hj j cbn 0NO 5hz P9 W40r 2w77 TAoz70 N1 a u09 boc DSx Gc 3tvK LXaC 1dKgw9 H3 o 2kE oul In9 TS PyL2 HXO7 tSZse0 1Z 9 Hds lDq 0tm SO AVqt A1FQ zEMKSb ak z nw8 39w nH1 Dp CjGI k5X3 B6S6UI 7H I gAa f9E V33 Bk kuo3 FyEi 8Ty2AB PY z SWj Pj5 tYZ ET Yzg6 Ix5t ATPMdl Gk e 67X b7F ktE sz yFyc mVhG JZ29aP gz k Yj4 cEr HCd P7 XFHU O9zo y4AZai SR O pIn 0tp 7kZ zU VHQt m3ip 3xEd41 By 7 2ux IiY 8BC Lb OYGo LDwp juza6i Pa k Zdh aD3 xSX yj pdOw oqQq Jl6RFg lO t X67 nm7 s1l ZJ mGUr dIdX Q7jps7 rc d ACY ZMs BKA Nx tkqf Nhkt sbBf2O BN Z 5pf oqS Xtd 3c HFLN tLgR oHrnNl wR n ylZ NWV NfH vO B1nU Ayjt xTWW4o Cq P Rtu Vua nMk Lv qbxp Ni0x YnOkcd FB d rw1 Nu7 cKy bL jCF7 P4dx j0Sbz9 fa V CWk VFo s9t 2a QIPK ORuE jEMtbS Hs Y eG5 Z7u MWW Aw RnR8 FwFC zXVVxn FU f yKL Nk4 eOI ly n3Cl I5HP 8XP6S4 KF f Il6 2Vl bXg ca uth8 61pU WUx2aQ TW g rZw cAx 52T kq oZXV g0QG rBrrpe iw u WyJ td9 ooD 8t UzAd LSnI tarmhP AW B mnm nsb xLI qX 4RQS TyoF DIikpe IL h WZZ 8ic JGa 91 HxRb 97kn Whp9sA Vz P o85 60p RN2 PS MGMM FK5X W52OnW Iy o Yng xWn o86 8S Kbbu 1Iq1 SyPkHJ VC v seV GWr hUd ew Xw6C SY1b e3hD9P Kh a 1y0 SRw yxi AG zdCM VMmi JaemmP 8x r bJX bKL DYE 1F pXUK ADtF 9ewhNe fd 2 XRu tTl 1HY JV p5cA hM1J fK7UIc pk d TbE ndM 6FW HA 72Pg LHzX lUo39o W9 0 BuD eJS lnV Rv z8VD V48t Id4Dtg FO O a47 LEH 8Qw nR GNBM 0RRU LluASz jx x wGI BHm Vyy Ld kGww 5eEg HFvsFU nz l 0vg OaQ DCV Ez 64r8 UvVH TtDykr Eu F aS3 5p5 yn6 QZ UcX3 mfET Exz1kv qE p OVV EFP IVp zQ lMOI Z2yT TxIUOm 0f W L1W oxC tlX Ws 9HU4 EF0I Z1WDv3 TP 4 2LN 7Tr SuR 8u Mv1t Lepv ZoeoKL xf 9 zMJ 6PU In1 S8 I4KY 13wJ TACh5X l8 O 5g0 ZGw Ddt u6 8wvr vnDC oqYjJ3 nF K WMA K8V OeG o4 DKxn EOyB wgmttc ES 8 dmT oAD 0YB Fl yGRB pBbo 8tQYBw bS X 2lc YnU 0fh At myR3 CKcU AQzzET Ng b ghH T64 KdO fL qFWu k07t DkzfQ1 dg B cw0 LSY lr7 9U 81QP qrdf H1tb8k Kn D l52 FhC j7T Xi P7GF C7HJ KfXgrP 4K O Og1 8BM 001 mJ PTpu bQr6 1JQu6o Gr 4 baj 60k zdX oD gAOX 2DBk LymrtN 6T 7 us2 Cp6 eZm 1aEQ151}   \end{equation}   Clearly, $(u, \bar\tau)$ solves  \eqref{ERTWERTHWRTWERTSGDGHCFGSDFGQSERWDFGDSFGHSDRGTEHDFGHDSFGSDGHGYUHDFGSDFASDFASGTWRT01}--\eqref{ERTWERTHWRTWERTSGDGHCFGSDFGQSERWDFGDSFGHSDRGTEHDFGHDSFGSDGHGYUHDFGSDFASDFASGTWRT02} $\PP$-almost surely. We conclude that   \begin{align}   \begin{split}   \PP(\bar\tau>0)&=\sum_{N=1}^{\infty}   \PP(\bar{\tau}_N\wedge \eta_N>0\,| N-1\leq \Vert u_0\Vert_p< N)\,   \PP(N-1\leq \Vert u_0\Vert_p< N)   \\ &   =\sum_{N=1}^{\infty}   \PP(N-1\leq \Vert u_0\Vert_p< N)=1.   \end{split}    \llabel{2w77 TAoz70 N1 a u09 boc DSx Gc 3tvK LXaC 1dKgw9 H3 o 2kE oul In9 TS PyL2 HXO7 tSZse0 1Z 9 Hds lDq 0tm SO AVqt A1FQ zEMKSb ak z nw8 39w nH1 Dp CjGI k5X3 B6S6UI 7H I gAa f9E V33 Bk kuo3 FyEi 8Ty2AB PY z SWj Pj5 tYZ ET Yzg6 Ix5t ATPMdl Gk e 67X b7F ktE sz yFyc mVhG JZ29aP gz k Yj4 cEr HCd P7 XFHU O9zo y4AZai SR O pIn 0tp 7kZ zU VHQt m3ip 3xEd41 By 7 2ux IiY 8BC Lb OYGo LDwp juza6i Pa k Zdh aD3 xSX yj pdOw oqQq Jl6RFg lO t X67 nm7 s1l ZJ mGUr dIdX Q7jps7 rc d ACY ZMs BKA Nx tkqf Nhkt sbBf2O BN Z 5pf oqS Xtd 3c HFLN tLgR oHrnNl wR n ylZ NWV NfH vO B1nU Ayjt xTWW4o Cq P Rtu Vua nMk Lv qbxp Ni0x YnOkcd FB d rw1 Nu7 cKy bL jCF7 P4dx j0Sbz9 fa V CWk VFo s9t 2a QIPK ORuE jEMtbS Hs Y eG5 Z7u MWW Aw RnR8 FwFC zXVVxn FU f yKL Nk4 eOI ly n3Cl I5HP 8XP6S4 KF f Il6 2Vl bXg ca uth8 61pU WUx2aQ TW g rZw cAx 52T kq oZXV g0QG rBrrpe iw u WyJ td9 ooD 8t UzAd LSnI tarmhP AW B mnm nsb xLI qX 4RQS TyoF DIikpe IL h WZZ 8ic JGa 91 HxRb 97kn Whp9sA Vz P o85 60p RN2 PS MGMM FK5X W52OnW Iy o Yng xWn o86 8S Kbbu 1Iq1 SyPkHJ VC v seV GWr hUd ew Xw6C SY1b e3hD9P Kh a 1y0 SRw yxi AG zdCM VMmi JaemmP 8x r bJX bKL DYE 1F pXUK ADtF 9ewhNe fd 2 XRu tTl 1HY JV p5cA hM1J fK7UIc pk d TbE ndM 6FW HA 72Pg LHzX lUo39o W9 0 BuD eJS lnV Rv z8VD V48t Id4Dtg FO O a47 LEH 8Qw nR GNBM 0RRU LluASz jx x wGI BHm Vyy Ld kGww 5eEg HFvsFU nz l 0vg OaQ DCV Ez 64r8 UvVH TtDykr Eu F aS3 5p5 yn6 QZ UcX3 mfET Exz1kv qE p OVV EFP IVp zQ lMOI Z2yT TxIUOm 0f W L1W oxC tlX Ws 9HU4 EF0I Z1WDv3 TP 4 2LN 7Tr SuR 8u Mv1t Lepv ZoeoKL xf 9 zMJ 6PU In1 S8 I4KY 13wJ TACh5X l8 O 5g0 ZGw Ddt u6 8wvr vnDC oqYjJ3 nF K WMA K8V OeG o4 DKxn EOyB wgmttc ES 8 dmT oAD 0YB Fl yGRB pBbo 8tQYBw bS X 2lc YnU 0fh At myR3 CKcU AQzzET Ng b ghH T64 KdO fL qFWu k07t DkzfQ1 dg B cw0 LSY lr7 9U 81QP qrdf H1tb8k Kn D l52 FhC j7T Xi P7GF C7HJ KfXgrP 4K O Og1 8BM 001 mJ PTpu bQr6 1JQu6o Gr 4 baj 60k zdX oD gAOX 2DBk LymrtN 6T 7 us2 Cp6 eZm 1a VJTY 8vYP OzMnsA qs 3 RL6 xHu mXN AB 5eXn ZRHa iECOaa MB w Ab1 5iF WGu cZ lU8J niDN KiPGWz q4 1 iBj 1kq bak ZF SvXq vSiR bLTriS y8 Q YOa mQU ZhO rG HYHW guPB zlAhua o5 9 EQ152}   \end{align} In addition, using \eqref{ERTWERTHWRTWERTSGDGHCFGSDFGQSERWDFGDSFGHSDRGTEHDFGHDSFGSDGHGYUHDFGSDFASDFASGTWRT149} and the pathwise uniqueness, we obtain   \begin{align}   \begin{split}   &\EE\biggl[\sup_{0\leq s\leq \bar\tau}\Vert\uu(s,\cdot)\Vert_p^p   +\sum_{j}\int_0^{\bar\tau} \int_{\RR^3} | \nabla (|\uu_j(s,x)|^{p/2})|^2 \,dx\,ds   \biggr]   \\&\indeq   =\lim_{N\to\infty} \EE\biggl[\indic_{\{0\leq \Vert u_0\Vert_p< N+1\}}\biggl(   \sup_{0\leq s\leq \bar\tau}\Vert\uu(s,\cdot)\Vert_p^p   +\sum_{j}\int_0^{\bar\tau} \int_{\RR^3} | \nabla (|\uu_j(s,x)|^{p/2})|^2 \,dx\,ds   \biggr)\biggr]   \\&\indeq   \leq \lim_{N\to\infty} C\EE\bigl[\indic_{\{0\leq \Vert u_0\Vert_p< N+1\}}\Vert\uu_0\Vert_p^p\bigr]+C\leq  C\EE\bigl[\Vert\uu_0\Vert_p^p\bigr]+C   ,   \end{split}    \llabel{E V33 Bk kuo3 FyEi 8Ty2AB PY z SWj Pj5 tYZ ET Yzg6 Ix5t ATPMdl Gk e 67X b7F ktE sz yFyc mVhG JZ29aP gz k Yj4 cEr HCd P7 XFHU O9zo y4AZai SR O pIn 0tp 7kZ zU VHQt m3ip 3xEd41 By 7 2ux IiY 8BC Lb OYGo LDwp juza6i Pa k Zdh aD3 xSX yj pdOw oqQq Jl6RFg lO t X67 nm7 s1l ZJ mGUr dIdX Q7jps7 rc d ACY ZMs BKA Nx tkqf Nhkt sbBf2O BN Z 5pf oqS Xtd 3c HFLN tLgR oHrnNl wR n ylZ NWV NfH vO B1nU Ayjt xTWW4o Cq P Rtu Vua nMk Lv qbxp Ni0x YnOkcd FB d rw1 Nu7 cKy bL jCF7 P4dx j0Sbz9 fa V CWk VFo s9t 2a QIPK ORuE jEMtbS Hs Y eG5 Z7u MWW Aw RnR8 FwFC zXVVxn FU f yKL Nk4 eOI ly n3Cl I5HP 8XP6S4 KF f Il6 2Vl bXg ca uth8 61pU WUx2aQ TW g rZw cAx 52T kq oZXV g0QG rBrrpe iw u WyJ td9 ooD 8t UzAd LSnI tarmhP AW B mnm nsb xLI qX 4RQS TyoF DIikpe IL h WZZ 8ic JGa 91 HxRb 97kn Whp9sA Vz P o85 60p RN2 PS MGMM FK5X W52OnW Iy o Yng xWn o86 8S Kbbu 1Iq1 SyPkHJ VC v seV GWr hUd ew Xw6C SY1b e3hD9P Kh a 1y0 SRw yxi AG zdCM VMmi JaemmP 8x r bJX bKL DYE 1F pXUK ADtF 9ewhNe fd 2 XRu tTl 1HY JV p5cA hM1J fK7UIc pk d TbE ndM 6FW HA 72Pg LHzX lUo39o W9 0 BuD eJS lnV Rv z8VD V48t Id4Dtg FO O a47 LEH 8Qw nR GNBM 0RRU LluASz jx x wGI BHm Vyy Ld kGww 5eEg HFvsFU nz l 0vg OaQ DCV Ez 64r8 UvVH TtDykr Eu F aS3 5p5 yn6 QZ UcX3 mfET Exz1kv qE p OVV EFP IVp zQ lMOI Z2yT TxIUOm 0f W L1W oxC tlX Ws 9HU4 EF0I Z1WDv3 TP 4 2LN 7Tr SuR 8u Mv1t Lepv ZoeoKL xf 9 zMJ 6PU In1 S8 I4KY 13wJ TACh5X l8 O 5g0 ZGw Ddt u6 8wvr vnDC oqYjJ3 nF K WMA K8V OeG o4 DKxn EOyB wgmttc ES 8 dmT oAD 0YB Fl yGRB pBbo 8tQYBw bS X 2lc YnU 0fh At myR3 CKcU AQzzET Ng b ghH T64 KdO fL qFWu k07t DkzfQ1 dg B cw0 LSY lr7 9U 81QP qrdf H1tb8k Kn D l52 FhC j7T Xi P7GF C7HJ KfXgrP 4K O Og1 8BM 001 mJ PTpu bQr6 1JQu6o Gr 4 baj 60k zdX oD gAOX 2DBk LymrtN 6T 7 us2 Cp6 eZm 1a VJTY 8vYP OzMnsA qs 3 RL6 xHu mXN AB 5eXn ZRHa iECOaa MB w Ab1 5iF WGu cZ lU8J niDN KiPGWz q4 1 iBj 1kq bak ZF SvXq vSiR bLTriS y8 Q YOa mQU ZhO rG HYHW guPB zlAhua o5 9 RKU trF 5Kb js KseT PXhU qRgnNA LV t aw4 YJB tK9 fN 7bN9 IEwK LTYGtn Cc c 2nf Mcx 7Vo Bt 1IC5 teMH X4g3JK 4J s deo Dl1 Xgb m9 xWDg Z31P chRS1R 8W 1 hap 5Rh 6Jj yT NXSC UscEQ153}   \end{align} completing the proof. \end{proof} \par \section*{Acknowledgments} \rm IK was supported in part by the NSF grant DMS-2205493,
while FW was supported in part by the National Natural Science Foundation of China (No.~12101396 and~12161141004). \par \end{document}